\tikzset{
    partial ellipse/.style args={#1:#2:#3}{
        insert path={+ (#1:#3) arc (#1:#2:#3)}
    }
}
\newtheorem{theorem}{Theorem}[section]
\newtheorem{lemma}[theorem]{Lemma}
\newtheorem{corollary}[theorem]{Corollary}
\newtheorem{proposition}[theorem]{Proposition}
\newtheorem{claim}[theorem]{Claim}
\newtheorem{remark}[theorem]{Remark}
\newtheorem{challenge}[theorem]{Challenge}
\theoremstyle{definition}
\newtheorem{convention}[theorem]{Convention}
\newcommand{\eps}{\varepsilon}
\title[{Ancient noncollapsed flows in $\mathbb{R}^4$}]{Classification of ancient noncollapsed flows in $\mathbb{R}^4$}
\author{Kyeongsu Choi, Robert Haslhofer}
\begin{document}

\begin{abstract}
In this paper, we classify all noncollapsed singularities of the mean curvature flow in $\mathbb{R}^4$. Specifically, we prove that any ancient noncollapsed solution either is one of the classical historical examples (namely $\mathbb{R}^j\times S^{3-j}$, $\mathbb{R}\times $2d-bowl, $\mathbb{R}\times $2d-oval, the rotationally symmetric 3d-bowl, or a cohomogeneity-one 3d-oval), or belongs to the 1-parameter family of $\mathbb{Z}_2\times \mathrm{O}_2$-symmetric 3d-translators constructed by Hoffman-Ilmanen-Martin-White, or belongs to the 1-parameter family of $\mathbb{Z}_2^2\times \mathrm{O}_2$-symmetric ancient 3d-ovals constructed by Du-Haslhofer.

In light of the five prior papers on the classification program in $\mathbb{R}^4$ from our collaborations with Du, Hershkovits, and Choi-Daskalopoulos-Sesum, the major remaining challenge is the case of mixed behaviour, where the convergence to the round bubble-sheet is fast in $x_1$-direction, but logarithmically slow in $x_2$-direction. To address this, we prove a differential neck theorem, which allows us to capture the (dauntingly small) slope in $x_1$-direction. To establish the differential neck theorem, we introduce a slew of new ideas of independent interest, including switch and differential Merle-Zaag dynamics, anisotropic barriers, and propagation of smallness estimates. Applying our differential neck theorem, we show that every noncompact strictly convex solution is selfsimilarly translating, and also rule out exotic ovals.

\end{abstract}

\maketitle

\tableofcontents

\section{Introduction}

In the study of mean curvature flow, ever since the pioneering work of Brakke \cite{Brakke_book} and Huisken \cite{Huisken_convex}, the main challenge has been to analyze its singularities. To study singularities, like for any geometric PDE, one rescales by a sequence of factors going to infinity and passes to a limit. Any such blowup limit is an ancient solution, i.e. a solution in entire  space that is defined for all sufficiently negative times.\\

The most important ancient solutions are the noncollapsed ones \cite{Andrews_noncollapsing,ShengWang,HaslhoferKleiner_meanconvex}, i.e. ancient mean-convex flows $M_t$, such that at any $p\in M_t$ the inscribed and exterior radius is at least $\alpha/H(p,t)$ for some uniform $\alpha>0$. It is known since the work of White \cite{White_size,White_nature} (see also \cite{White_subsequent,HaslhoferHershkovits_subsequent}) that all blowup limits of mean-convex mean curvature flow are noncollapsed. In fact, it is expected that any blowup limit near any generic singularity is noncollapsed \cite{Ilmanen_problems,CM_generic}. In particular, this is known in $\mathbb{R}^3$ thanks to the recent proofs of the mean-convex neighborhood conjecture \cite{CHH}, the genericity conjecture \cite{CCMS,CCS,CCMS_rev} and the multiplicity-one conjecture \cite{BK_mult_one}.\\

Understanding all noncollapsed singularities thus amounts to classifying all ancient noncollapsed flows. In $\mathbb{R}^3$ this classification problem has been studied intensively over the last 15 years, initially under additional assumptions such as selfsimilarity \cite{Wang_convex,MSHS,Haslhofer_bowl,BW}, and a definite answer has been obtained in \cite{ADS1,ADS2,BC}:

\begin{theorem}[Angenent-Daskalopoulos-Sesum, Brendle-Choi]\label{thm_ADS_BC}
Any ancient noncollapsed flow in $\mathbb{R}^3$ is, up to scaling and rigid motion, either the static plane $\mathbb{R}^2$, the round shrinking $S^2$, the round shrinking $\mathbb{R}\times S^1$, the rotationally symmetric 2d-bowl, or the rotationally symmetric 2d-oval from \cite{White_nature}.
\end{theorem}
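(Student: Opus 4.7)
The plan is to perform the analysis backwards in time: pass to the tangent flow at $-\infty$ and split into cases. By Huisken's classification of noncollapsed selfshrinkers in $\rth$, any blowdown of $M_t$ as $t \to -\infty$ must be a multiplicity-one plane $\rtw$, the round shrinking sphere $S^2$, or the round shrinking cylinder $\mathbb{R}\times S^1$. If the tangent flow is a plane, standard Liouville and noncollapsing arguments force $M_t$ to be the static plane. If it is a sphere, $M_t$ is compact and close to a shrinking sphere for all $t\ll 0$, and a rigidity argument based on Huisken's monotonicity identifies $M_t$ as the round shrinking $S^2$. The heart of the proof is therefore the cylindrical case, to which the remainder of the plan is devoted.

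Assuming the tangent flow at $-\infty$ is $\mathbb{R}\times S^1(\sqrt{2})$, I would pass to the parabolically rescaled flow $\bar{M}_\tau$ with $\tau=-\log(-t)$ and write $\bar{M}_\tau$ as the normal graph of a small function $u(y,\theta,\tau)$ over $\Gamma = \mathbb{R}\times S^1(\sqrt{2})$, locally in $y$. To leading order $u$ satisfies $\partial_\tau u = \cL u$, where $\cL = \partial_y^2 - \tfrac{y}{2}\partial_y + \tfrac{1}{2}\partial_\theta^2 + 1$ is the Ornstein-Uhlenbeck operator on $\Gamma$. Its Gaussian $L^2$ spectrum has unstable eigenfunctions $\{1,y\}$, neutral eigenfunctions $\{y^2-2,\cos\theta,\sin\theta\}$, and a strictly stable tail. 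Decomposing $u = u_+ + u_0 + u_-$ along this spectral splitting and applying the Merle-Zaag ODE lemma yields the dichotomy: as $\tau\to -\infty$, either $u_+$ dominates $u_0+u_-$, or $u_0$ dominates $u_+ + u_-$.

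In the unstable-dominant case, after killing the constant mode by time translation and the $\sin\theta,\cos\theta$ modes by rotation, one has $u\sim a(\tau)y$ with $|a(\tau)|\to \infty$. This forces $M_t$ to be noncompact and translator-like along the $y$-axis; a Liouville-type maximum principle argument then shows that $M_t$ is itself a translator. To identify it as the $2$d-bowl, I would follow the Brendle-Choi rotational symmetry scheme: apply the linearized evolution to the Lie derivative of the second fundamental form along a rotation vector field, then combine the strong maximum principle with sharp cylindrical asymptotics at the ends to force this Lie derivative to vanish identically, yielding exact rotational symmetry; the classification of rotationally symmetric noncollapsed translators pins down the $2$d-bowl uniquely. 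In the neutral-dominant case, symmetry and coercivity rule out $\cos\theta,\sin\theta$ modes (they correspond to rigid motions of the cylinder), leaving the ADS profile $u(y,\theta,\tau) \sim -\tfrac{y^2-2}{2\sqrt{|\tau|}}$. This asymptotic is incompatible with noncompactness, so $M_t$ is compact, and an ADS rigidity theorem, comparing two solutions with identical leading asymptotics via a rescaled maximum principle in which the neutral contributions cancel and a subleading term gains a strict sign, identifies $M_t$ as White's rotationally symmetric $2$d-oval.

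The main obstacle lies in the unstable-dominant branch: proving exact rotational symmetry of the translator without any a priori axis or symmetry assumption is highly nontrivial. The Brendle-Choi strategy requires a sharp neck improvement estimate to propagate cylindrical symmetry from the asymptotic ends inward, combined with a delicate application of the strong maximum principle to a Lie-derivative quantity that is not manifestly signed. The neutral-dominant branch is also subtle because rigidity must be pushed to subleading order, but the availability of White's explicit oval as a comparison solution makes it more tractable than the translator case.
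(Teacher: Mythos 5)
This theorem is not proved in the paper at all: it is quoted as the known classification from \cite{ADS1,ADS2,BC}, so your sketch has to be measured against those proofs. Your overall architecture (tangent-flow trichotomy; rescaled graph over the cylinder; Merle--Zaag dichotomy; Brendle--Choi symmetry for the noncompact case; ADS asymptotics and rigidity for the compact case) is indeed the architecture of the literature, but several concrete steps are wrong or missing. The spectrum of $\mathcal L=\partial_y^2-\tfrac{y}{2}\partial_y+\tfrac12\partial_\theta^2+1$ is misidentified: $\cos\theta,\sin\theta$ have eigenvalue $\tfrac12$ and are \emph{unstable}, while the neutral space is spanned by $y^2-2$, $y\cos\theta$, $y\sin\theta$ (compare the bubble-sheet spectrum below \eqref{prel_OU}). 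As a result your neutral-dominant branch never confronts the rotation modes $y\cos\theta,y\sin\theta$, whose exclusion is a genuine step (its $\mathbb{R}^4$ analogue occupies the separate paper \cite{DH_no_rotation}), and the ADS asymptotics decay like $|\tau|^{-1}$ (namely $u\approx(2-y^2)/(\sqrt8\,|\tau|)$, the analogue of \eqref{DH_asymp}), not like $|\tau|^{-1/2}$. In the unstable-dominant branch the surviving coefficient behaves like $ae^{\tau/2}\to 0$ as $\tau\to-\infty$; your claim $|a(\tau)|\to\infty$ is false, the $\cos\theta,\sin\theta$ modes are killed by spatial recentering rather than by rotation, and you never explain where the round shrinking $\mathbb{R}\times S^1$ (the case of vanishing slope) appears in your dichotomy.

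The most serious gap is the sentence asserting that ``a Liouville-type maximum principle argument then shows that $M_t$ is itself a translator.'' No such soft argument exists: as the introduction of this paper emphasizes (citing \cite{White_nature} and \cite{Perelman1}), proving selfsimilarity of a noncompact ancient solution directly is precisely the subtle point, and in \cite{BC}, as in all prior works, selfsimilarity is obtained only \emph{a posteriori}, after rotational symmetry is established via the quantitative neck improvement theorem and rotationally symmetric solutions are classified; your proposed order (translator first, symmetry afterwards) assumes the hardest content. The neck improvement step is also not a strong-maximum-principle argument applied to a Lie derivative, but an iteration/improvement estimate propagated from the asymptotically cylindrical region. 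Likewise, in the compact branch the uniqueness argument of \cite{ADS2} requires rotational symmetry as an input (again via neck improvement) and a matched analysis in the cylindrical, intermediate and tip regions; ``comparing two solutions with identical leading asymptotics via a rescaled maximum principle'' does not by itself close that case. Until these points are addressed, the proposal is a correct outline of the known strategy but not a proof.
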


Theorem \ref{thm_ADS_BC} confirms the intuitive picture that for the flow of surfaces all noncollapsed singularities look like neck-pinches, degenerate neck-pinches or potentially an accumulation of neck-pinches \cite{CHH_note}. The result and the techniques introduced in its proof had far reaching consequences, including the above mentioned resolution of the mean-convex neighborhood conjecture and genericity conjecture, and the well-posedness for the flow of embedded 2-spheres \cite{Brendle_sphere,HershkovitsWhite,CHH,BK_mult_one}. Moreover, it also led to a corresponding classification of $\kappa$-solutions in 3d Ricci flow \cite{Brendle_3dRicci,ABDS,BDS}, which in turn played a key role in the uniqueness of 3d Ricci flow through singularities \cite{BK_uniqueness}, and its topological and geometric applications \cite{BK_top1,BK_top2,BK_contr}.\\

In stark contrast, a classification of ancient noncollapsed flows in $\mathbb{R}^4$ until recently seemed out of reach. Fundamentally, this is because while all solutions in $\mathbb{R}^3$ turned out to be rotationally symmetric, it is known since the pioneering work of Wang \cite{Wang_convex} that the Bernstein (i.e. symmetry) theorem fails for flows in $\mathbb{R}^4$. More precisely, Hoffman-Ilmanen-Martin-White constructed a 1-parameter family of 3d-translators that interpolate between the round 3d-bowl and $\mathbb{R}\times$2d-bowl, and are only $\mathbb{Z}_2\times \mathrm{O}_2$-symmetric \cite{HIMW}. Similarly, Du and the second author constructed a 1-parameter family of 3d-ovals that interpolate between the $\mathrm{O}_2\times \mathrm{O}_2$-symmetric 3d-oval and $\mathbb{R}\times$2d-oval, and are only $\mathbb{Z}_2^2\times \mathrm{O}_2$-symmetric \cite{DH_ovals}.\\

Our main theorem provides a complete classification of all ancient noncollapsed flows in $\mathbb{R}^4$:

\begin{theorem}[classification]\label{thm_main}
Any ancient noncollapsed flow in $\mathbb{R}^4$ is, up to scaling and rigid motion,
\begin{itemize}
\item either one of the standard shrinkers $S^3$, $\mathbb{R}\times S^2$, $\mathbb{R}^2\times S^1$ or $\mathbb{R}^3$,
\item or the 3d-bowl, or $\mathbb{R}\times$2d-bowl, or belongs to the 1-parameter family of $\mathbb{Z}_2\times \mathrm{O}_2$-symmetric translators from \cite{HIMW},
\item or the $\mathbb{Z}_2\times \mathrm{O}_3$-symmetric 3d-oval, or the $\mathrm{O}_2\times \mathrm{O}_2$-symmetric 3d-oval, or $\mathbb{R}\times$2d-oval,
or belongs to the one-parameter family of $\mathbb{Z}_2^2\times \mathrm{O}_2$-symmetric 3d-ovals from \cite{DH_ovals}.
\end{itemize}
\end{theorem}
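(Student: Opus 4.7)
The plan is to classify by the tangent flow at $-\infty$ of the renormalized flow $\bar M_\tau = e^{\tau/2} M_{-e^{-\tau}}$. By the monotonicity formula together with Brendle--Choi cylindrical rigidity, any such tangent flow is a noncollapsed shrinker in $\mathbb{R}^4$, hence one of $S^3$, $\mathbb{R}\times S^2$, the bubble-sheet $\mathbb{R}^2\times S^1$, or $\mathbb{R}^3$; the last is ruled out unless the flow is itself a static hyperplane. The $S^3$ case forces the round shrinking sphere. The $\mathbb{R}\times S^2$ case is treated by the prior papers in the program, which upgrade the neck improvement of \cite{ADS1,ADS2,BC} to $\mathbb{R}^4$: the compact subcase yields the $\mathbb{Z}_2\times \mathrm{O}_3$-symmetric 3d-oval, and the noncompact subcase the round 3d-bowl.

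The genuinely four-dimensional case is the bubble-sheet tangent flow. In renormalized coordinates on $\mathbb{R}^2\times S^1$ one decomposes the perturbation from the cylinder into unstable modes (spanned by $1, x_1, x_2$), neutral quadratic modes (spanned by $x_1^2-1, x_2^2-1, x_1x_2$ in a suitable normalization), and stable modes. After a rotation in the $(x_1,x_2)$-plane removing the cross term, the Merle--Zaag alternative splits the analysis according to whether the $x_1^2-1$ and $x_2^2-1$ coefficients decay fast (exponentially) or at the canonical logarithmic rate $1/|\tau|$. The five prior papers in the program with Du, Hershkovits, and Choi--Daskalopoulos--Sesum settle the two \emph{pure} regimes: both modes slow yields the $\mathrm{O}_2\times\mathrm{O}_2$-symmetric compact oval endpoint from \cite{DH_ovals}, while both modes fast yields the translators (3d-bowl, $\mathbb{R}\times$ 2d-bowl, and HIMW interpolants). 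The $\mathbb{R}$-invariant degenerate limits $\mathbb{R}\times$ 2d-oval and $\mathbb{R}\times$ 2d-bowl also fall out of these arguments.

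The remaining and novel case is the \emph{mixed} regime: the $x_2^2-1$ mode dominates at logarithmic rate $1/|\tau|$, while the $x_1^2-1$ mode decays fast. Here the slow mode wants to open a neck in the $x_2$-direction, so to arbitrary finite order the solution is $C^k$-close to the $\mathbb{R}\times$ 2d-oval or $\mathbb{R}\times$ 2d-bowl; yet a dauntingly small but globally nonzero $x_1$-slope could instead place the solution in the Du--Haslhofer or HIMW family. To capture this invisible slope I would prove the \emph{differential neck theorem}: differentiating the renormalized equation in $x_1$ produces a new linearized evolution for the slope mode, and introducing the switch / differential Merle--Zaag dynamics shows that the slope mode itself satisfies a Merle--Zaag dichotomy --- either vanishing identically (forcing $\partial_{x_1}$-invariance and reducing to the $\mathbb{R}\times$ 2d-oval/2d-bowl classification) or being quantitatively nontrivial and matching the slope of a unique translating or oval profile. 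Making the dichotomy effective requires anisotropic barriers, scaling differently in $x_1$ and $x_2$ so as to sandwich the solution between the $\mathbb{R}\times$ 2d-oval on the $x_2$-scale and HIMW-type translating profiles on the $x_1$-scale, together with a propagation-of-smallness estimate ruling out a solution whose tip-slope is tiny but whose far-field slope is not.

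Given the differential neck theorem, the strictly convex noncompact solutions in the mixed regime must be selfsimilarly translating and $\mathbb{Z}_2\times\mathrm{O}_2$-symmetric, and hence coincide with an HIMW translator by the known uniqueness in that family; the strictly convex compact solutions must be $\mathbb{Z}_2^2\times\mathrm{O}_2$-symmetric and hence lie in the Du--Haslhofer family similarly. The main obstacle throughout is precisely this mixed case: the $x_1$- and $x_2$-directions evolve on vastly different scales (polynomial versus logarithmic), so no single asymptotic model controls the full solution, and the $C^k$ neck improvement that suffices in the previously settled cases is blind to the small-but-nonzero slope that distinguishes the candidate families. The differential neck theorem, the anisotropic barriers, and the propagation-of-smallness estimates are tailored in tandem to overcome exactly this scale mismatch.
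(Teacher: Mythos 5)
Your outline follows the paper's broad architecture (reduction via the tangent flow at $-\infty$, the fast/slow/mixed trichotomy from the normal form, the prior papers for the settled regimes, and a differential neck theorem with anisotropic barriers and propagation of smallness for the mixed regime), but it misplaces the known families within that trichotomy in a way that creates a genuine gap. In the paper, fast convergence yields only the round shrinking $\mathbb{R}^2\times S^1$ or $\mathbb{R}\times$2d-bowl (Theorem \ref{thm_no_wings}); the 3d-bowl arises in the neck case, the HIMW interpolants are precisely the noncompact solutions of the \emph{mixed} regime, and the whole Du--Haslhofer family (not just its $\mathrm{O}_2\times\mathrm{O}_2$ endpoint) is produced by the \emph{slow} regime via \cite{CDDHS}. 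Consequently the compact mixed case must be shown to be \emph{empty} --- this is exactly Challenge \ref{challange1} and Theorem \ref{thm:compact_intro} (no exotic ovals) --- whereas you assert that compact mixed solutions are $\mathbb{Z}_2^2\times\mathrm{O}_2$-symmetric and hence lie in the Du--Haslhofer family. No argument is offered for that symmetry, there is no uniqueness statement characterizing the DH family by its symmetry that one could invoke, and in fact such a solution cannot be a DH oval at all, since those have slow asymptotics. So your scheme leaves the central exotic-oval scenario (a compact solution whose $x_1$-axis is much longer than $\sqrt{2|t|\log|t|}$, possibly with bubble-sheet scale blowing up along the caps) entirely unaddressed; in the paper this consumes the sharp error estimate of the differential neck theorem together with the eccentricity/translator-ratio analysis and the area--width ODEs of Sections \ref{sec_bdd}--\ref{sec_unbdd}.

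A second gap is the degenerate branch of your differential Merle--Zaag dichotomy. If the slope coefficient $a$ vanishes, this only means that $u_1^X$ decays faster than $e^{\tau/2}$; it does not force $\partial_{x_1}$-invariance or a reduction to the $\mathbb{R}\times$2d-oval/bowl classification. Ruling out $a=0$ is itself a theorem (Theorem \ref{thm:non_degenerate_neck}), proved by propagating super-smallness of $\nu_1$, inserting very elongated barriers, and contradicting the Harnack bound $\bar d(\tau)\leq Ce^{-\tau/2}$ on the cap distance. Likewise, in the noncompact case, ``matching the slope of a translating profile'' does not by itself yield selfsimilarity: one must first dispose of the bounded versus unbounded bubble-sheet scale dichotomy, show $H(P_h)\equiv 1$ via limits to translators and the sharp lower bound for the cap speed, and only then conclude with the rigidity case of Hamilton's Harnack inequality and the classification of translators from \cite{CHH_translator} --- not with a uniqueness statement inside the HIMW family predicated on $\mathbb{Z}_2\times\mathrm{O}_2$ symmetry, which is never established beforehand.
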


Theorem \ref{thm_main} gives a complete classification of all noncollapsed singularities of mean curvature flow in $\mathbb{R}^4$. We note that in addition to the 1-parameter families of translators and ovals discussed above, the list of course also contains all classical historical examples, in particular the rotationally symmetric 3d-bowl \cite{CSS}, and the two examples of cohomogeneity-one 3d-ovals from \cite{White_nature} and \cite{HaslhoferHershkovits_ancient}, respectively.\\

As an immediate consequence we obtain a classification of all potential blowup limits (and thus a canonical neighborhood theorem) for the mean curvature flow of mean-convex hypersurfaces in $\mathbb{R}^4$:

\begin{corollary}[blowup limits and canonical neighborhoods]\label{cor_main}
For the mean curvature flow of mean-convex hypersurfaces in $\mathbb{R}^4$ (or more generally in 4-manifolds) every blowup limit is given by one of the solutions from the above list. In particular, for every $\eps>0$ there is an $H_\eps=H_\eps(M_0)<\infty$, such that around any space-time point $(p,t)$ with $H(p,t)\geq H_\eps$ the flow is $\eps$-close to one of the solutions from the above list.
\end{corollary}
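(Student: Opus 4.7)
The plan is to deduce Corollary \ref{cor_main} directly from the classification in Theorem \ref{thm_main} via two standard compactness arguments, one for each assertion of the Corollary. Both rest on the well-developed regularity and compactness theory for mean-convex noncollapsed mean curvature flows due to Andrews, Sheng--Wang, Haslhofer--Kleiner, and White, together with the basic fact that parabolic rescalings centered at points with $H\to\infty$ produce solutions on arbitrarily negative time intervals.

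For the first assertion, consider a sequence of space-time points $(p_i,t_i)$ in a mean-convex flow $M_t$ in $\mathbb{R}^4$ with $H(p_i,t_i)\to\infty$, and look at the parabolic rescalings by factor $H(p_i,t_i)$ centered at $(p_i,t_i)$. Preservation of mean-convexity \cite{Huisken_convex}, together with the noncollapsing results \cite{Andrews_noncollapsing,ShengWang,HaslhoferKleiner_meanconvex}, guarantees that each rescaled flow is $\alpha$-noncollapsed with $\alpha$ depending only on $M_0$. The mean-convex noncollapsed compactness theorem of Haslhofer--Kleiner then yields uniform smooth estimates on compact subsets of space-time, and extracts a subsequential smooth limit $M^{\infty}_t$ which is mean-convex, $\alpha$-noncollapsed, and ancient. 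Theorem \ref{thm_main} identifies $M^{\infty}_t$ with one of the solutions from the list.

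For the second assertion, we argue by contradiction. If the conclusion fails for some $\eps>0$, there is a sequence $(p_i,t_i)$ in the flow with $H(p_i,t_i)\to\infty$ whose parabolic rescalings are not $\eps$-close (in a fixed parabolic $C^k$-neighborhood of the origin) to any element of the list. The argument of the previous paragraph extracts a subsequential smooth ancient noncollapsed limit $M^{\infty}_t$, which by Theorem \ref{thm_main} is an element of the list; but smooth convergence on the fixed parabolic neighborhood forces the rescaled flows to be eventually $\eps$-close to $M^{\infty}_t$, contradicting the assumption. For mean-convex flows in an ambient $4$-manifold, the same argument is carried out in geodesic normal coordinates at $p_i$: after rescaling by $H(p_i,t_i)\to\infty$ the ambient metric flattens in the limit, so the limiting rescaled flow is still an ancient noncollapsed flow in $\mathbb{R}^4$ and Theorem \ref{thm_main} applies verbatim.

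Granted Theorem \ref{thm_main}, there is no substantial obstacle in the Corollary; the only real technical ingredient is the mean-convex noncollapsed smooth compactness package, which is well-established in the literature. The Corollary is thus essentially a packaging of the classification into a canonical-neighborhood statement, analogous to how the corresponding statement in $\mathbb{R}^3$ is a packaging of Theorem \ref{thm_ADS_BC}.
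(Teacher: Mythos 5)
Your proposal is correct and is essentially the argument the paper intends: the corollary is stated as an immediate consequence of Theorem \ref{thm_main}, since blowup limits of mean-convex flows are ancient noncollapsed solutions by \cite{White_size,White_nature,Andrews_noncollapsing,ShengWang,HaslhoferKleiner_meanconvex}, and the canonical neighborhood statement follows by the standard rescaling/compactness contradiction argument (including the flattening of the ambient metric in the 4-manifold case). No substantive difference from the paper's (implicit) reasoning.
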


Corollary \ref{cor_main} answers several old questions of White \cite{White_nature} and Wang \cite{Wang_convex} regarding the nature of singularities in mean-convex flows. More generally, in light of \cite{CHH,CCMS_rev,BK_mult_one}, the conclusion of the corollary is also expected to hold for blowup limits near any generic singularity.\\

To conclude this overview, let us mention that Theorem \ref{thm_main}, in a spirit similar to Theorem \ref{thm_ADS_BC}, can be viewed both as the end and as the beginning of a story. On the one hand, it gives a definite and final answer to the classification problem for noncollapsed singularities in $\mathbb{R}^4$. On the other hand, it opens the door to several other major open problems that seemed previously out of reach, in particular the construction of mean-convex flow with surgery in $\mathbb{R}^4$, and the mean-convex neighborhood conjecture in $\mathbb{R}^4$. Moreover,  Theorem \ref{thm_main} also suggests a corresponding picture for $\kappa$-solutions in 4d Ricci flow, see \cite{Haslhofer_4dRicci}.
\bigskip

\subsection{The classification program in $\mathbb{R}^4$}

In this subsection, we first review what has been accomplished previously on ancient noncollapsed flows in $\mathbb{R}^4$ in the prior papers by Du and the second author \cite{DH_shape,DH_no_rotation}, our joint work with Hershkovits \cite{CHH_wing,CHH_translator}, and the joint work of the second author with Choi-Daskalopoulos-Du-Sesum \cite{CDDHS}, and then discuss what the remaining major challenges are.\\

Let $\mathcal{M}=\{M_t\}$ be an ancient noncollapsed mean curvature flow in $\mathbb{R}^4$ that is neither a static plane nor a round shrinking sphere. By general theory \cite{CM_uniqueness,HaslhoferKleiner_meanconvex} the tangent flow at $-\infty$ is either a neck or a bubble-sheet. In the neck case, thanks to the classification from \cite{ADS2,BC2} the flow $\mathcal{M}$ is either a round shrinking $\mathbb{R}\times S^2$, or the rotationally symmetric 3d-bowl or 3d-oval. We can thus assume that
\begin{equation}\label{bubble-sheet_tangent_intro1.1}
\lim_{\lambda \rightarrow 0} \lambda M_{\lambda^{-2}t}=\mathbb{R}^{2}\times S^{1}(\sqrt{-2t}).
\end{equation}
The analysis of such ancient solutions starts by consider the bubble-sheet function $u=u(y,\vartheta,\tau)$, which captures the deviation of the renormalized flow $\bar{M}_\tau=e^{\tau/2} M_{-e^{-\tau}}$ from the round bubble-sheet $\mathbb{R}^2\times S^1(\sqrt{2})$. The evolution of $u$ is governed by the Ornstein-Uhlenbeck operator
\begin{equation}\label{prel_OU}
\mathcal L=\partial_{y_1}^2+\partial_{y_2}^2-\tfrac{y_1}{2} \partial_{y_1}-\tfrac{y_2}{2} \partial_{y_2}+\tfrac{1}{2} \partial_{\vartheta}^2+1,
\end{equation}
which is self-adjoint on the Gaussian $L^2$-space, and has the unstable eigenfunctions $1,y_1,y_2,\cos \vartheta, \sin \vartheta$, 
and the neutral eigenfunctions $y_1^2-2,y_2^2-2,y_1y_2,y_1\cos\vartheta,y_1\sin\vartheta,y_2\cos\vartheta,y_2\sin\vartheta$. 
Based on these spectral properties, and taking also into account that the $\vartheta$-dependence is tiny thanks to Zhu's symmetry improvement result for bubble-sheets \cite{Zhu}, which generalizes the neck-improvement result by Brendle and the first author \cite{BC}, Du and the second author \cite{DH_shape,DH_no_rotation} proved:

\begin{theorem}[normal form]\label{thm_norm_form}
For $\tau\to -\infty$, in suitable coordinates, in Gaussian $L^2$-norm we have
\begin{equation}\label{bubble_sheet_quant1.1}
u = O(e^{\tau/2})\quad\mathrm{or}\quad
u= \frac{4-y_1^2-y_2^2}{\sqrt{8}|\tau|}+O(|\tau|^{-1-\gamma})\quad\mathrm{or}\quad 
u= \frac{2-y_2^2}{\sqrt{8}|\tau|}+O(|\tau|^{-1-\gamma}).
\end{equation}
\end{theorem}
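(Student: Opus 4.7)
\emph{Proof plan.} The plan is to analyze $u$ via the spectral decomposition of $\mathcal L$ on the Gaussian $L^2$-space, in the spirit of Angenent-Daskalopoulos-Sesum, but adapted to the richer eigenspace structure of the bubble-sheet in $\mathbb{R}^4$.

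I would first choose coordinates (rigid motion in space, translation in time, rotation of the axis of the cylinder) that kill the projections of $u$ onto $1,y_1,y_2,\cos\vartheta,\sin\vartheta$ and the tilt-type neutral modes $y_i\cos\vartheta,y_i\sin\vartheta$, since these eigenfunctions correspond to ambient isometries of the bubble-sheet. Combined with Zhu's bubble-sheet symmetry improvement \cite{Zhu}, which yields rapid decay of the residual $\vartheta$-dependence, the effectively surviving neutral eigenspace is $\mathrm{span}\{y_1^2-2,\,y_2^2-2,\,y_1 y_2\}$. A further rotation in the $(y_1,y_2)$-plane (the remainder of the ``suitable coordinates'') diagonalises the associated quadratic form and removes the $y_1y_2$-coefficient.

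Next, decompose $u=\mathcal U+\mathcal N+\mathcal S$ into unstable, neutral and stable parts of $\mathcal L$. A weighted $L^2$ energy identity, using that the normal graph equation over $\mathbb{R}^2\times S^1(\sqrt 2)$ is quadratic at leading order, delivers a Merle-Zaag trichotomy as $\tau\to-\infty$: either $\mathcal U$ dominates, or $\mathcal N$ dominates, or $\mathcal S$ does, the last being excluded by coercivity of $-\mathcal L$ on the stable subspace. Ancient-ness automatically bounds $\mathcal U=O(e^{\tau/2})$ (the slowest decay rate coming from the eigenvalue $1/2$), so in the $\mathcal U$-dominated case Merle-Zaag transfers the same rate to $\mathcal N$ and $\mathcal S$, producing the first alternative $u=O(e^{\tau/2})$. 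In the $\mathcal N$-dominated case, the previous step gives
\begin{equation*}
\mathcal N(\tau)=a_1(\tau)\bigl(y_1^2-2\bigr)+a_2(\tau)\bigl(y_2^2-2\bigr)+o(\|\mathcal N\|_G).
\end{equation*}
Projecting the evolution equation onto each $y_i^2-2$ extracts a coupled Riccati-type system $\dot a_i = Q_i(a_1,a_2)+(\text{error})$ with $Q_i$ an explicit quadratic form coming from the nonlinearity. Rescaling $b_i(\tau):=-|\tau|a_i(\tau)$ turns this into a nearly autonomous flow on $\mathbb{R}^2$ whose admissible $\omega$-limits as $\tau\to-\infty$ are exactly $(0,0)$, $(\tfrac{1}{\sqrt 8},\tfrac{1}{\sqrt 8})$, and $(0,\tfrac{1}{\sqrt 8})$ (the last up to the permutation of axes fixed by the coordinate choice): the first recycles into the $\mathcal U$-regime and the first alternative, while the other two yield the second and third alternatives respectively.

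The main obstacle I anticipate is upgrading bare $\omega$-limit convergence to the sharp $|\tau|^{-1-\gamma}$ remainder. This requires linearising the vector-valued Riccati system at each equilibrium while simultaneously controlling couplings to $\mathcal U$ and $\mathcal S$ via a bootstrap in Gaussian $L^2$ with uniform elliptic regularity. Unlike the scalar Riccati ODE in the 2d ADS case, the mixed equilibrium $(0,\tfrac{1}{\sqrt 8})$ is a saddle for the $b$-dynamics whose unstable direction lies along $b_1$, so preventing drift away from it and establishing the asserted polynomial rate is the core technical difficulty.
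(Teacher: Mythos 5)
First, for context: the paper does not prove Theorem \ref{thm_norm_form} itself — it imports it from \cite{DH_shape,DH_no_rotation} — and the proof given there (echoed quantitatively, with the mixed asymptotics \eqref{DH_asymp} as input, in Section \ref{sec_MZ_switch} of this paper) follows exactly the route you outline: Gaussian spectral decomposition for $\mathcal L$, Zhu's symmetry improvement \cite{Zhu} to suppress the $\vartheta$-modes, a Merle-Zaag trichotomy, and a Riccati-type system for the coefficients of $y_1^2-2$, $y_2^2-2$, $y_1y_2$, whose rescaled dynamics has equilibria corresponding precisely to the three alternatives. So at the level of strategy your plan is the right one, and you correctly single out the remainder rate and the saddle nature of the mixed equilibrium as genuine difficulties.

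There is, however, one concrete step that fails as written: the claim that ``a further rotation in the $(y_1,y_2)$-plane diagonalises the associated quadratic form and removes the $y_1y_2$-coefficient.'' A fixed rotation can diagonalise the neutral quadratic form only at a single time; since the coefficients evolve, the cross-coefficient $\alpha_3$ reappears and the Riccati system is genuinely three-dimensional, not the decoupled two-dimensional system $\dot a_i=Q_i(a_1,a_2)$ you then analyse. The scenario that must be excluded is precisely that the principal axes rotate slowly as $\tau\to-\infty$, in which case no fixed choice of coordinates puts $u$ in the stated normal form; ruling this out is the main content of \cite{DH_no_rotation}, and the analogous step in the present paper is the decay estimate \eqref{alpha3_decay} for $z=|\tau|\alpha_3$, obtained from the ODE \eqref{ODE_for_Z} by a variation-of-constants argument once the trace is pinned near its limiting value, together with the convexity input \eqref{a_priori_convexity} — which is also what constrains the signs of the coefficients, an ingredient your list of ``admissible $\omega$-limits'' uses implicitly. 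With $\alpha_3$ tracked dynamically and the signs controlled by convexity/noncollapsedness, the rest of your plan coincides with the actual argument; as you anticipate, the upgrade to the $O(|\tau|^{-1-\gamma})$ remainder then requires the finer spectral/bootstrap analysis carried out in \cite{DH_shape}.
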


According to Theorem \ref{thm_norm_form} (normal form), the classification problem can be split up into 3 cases, which we call the case of fast convergence, slow convergence, and mixed convergence, respectively.\\

The case of fast convergence, which is easiest case, has been settled in a joint paper with Hershkovits \cite{CHH_wing}:
\begin{theorem}[no wings]\label{thm_no_wings}
There are no wing-like ancient noncollapsed flows in $\mathbb{R}^4$. In particular, if the convergence is fast, then $\mathcal{M}$ is either a round shrinking $\mathbb{R}^2\times S^1$ or a translating $\mathbb{R}\times$2d-bowl.
\end{theorem}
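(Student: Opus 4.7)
The plan is to exploit the spectral structure of the Ornstein--Uhlenbeck operator $\mathcal{L}$ in the fast-convergence regime to pin down the possible leading asymptotic profiles of the bubble-sheet function, and then to use a rigidity/splitting argument to identify the flow geometrically. Since $\|u(\cdot,\tau)\|_{L^2_G} = O(e^{\tau/2})$, the projection onto the $1$-eigenmode of $\mathcal{L}$ contributes only at order $e^{\tau}$ (a mere time-translation of the shrinker), so the genuine leading contribution sits in the $\tfrac{1}{2}$-eigenspace $\mathrm{span}\{y_1,y_2,\cos\vartheta,\sin\vartheta\}$. This yields the ansatz
\begin{equation*}
u(y,\vartheta,\tau) = a_1(\tau)\, y_1 + a_2(\tau)\, y_2 + c_1(\tau)\cos\vartheta + c_2(\tau)\sin\vartheta + o(e^{\tau/2}),
\end{equation*}
with coefficients of size $O(e^{\tau/2})$.

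Next, I would normalize by geometric symmetries. The $\cos\vartheta, \sin\vartheta$ coefficients encode a translation of the $S^1$-axis inside the normal 2-plane; a (time-dependent) recentering absorbs them into the subleading error. A rotation in the $(y_1,y_2)$-plane then reduces the remaining linear part to $u \sim a(\tau)\, y_1$. If $a(\tau) \to 0$, every $\tfrac{1}{2}$-mode is subleading; combined with fast convergence, a Merle--Zaag type dichotomy rules out both neutral and stable modes as sustaining the exponential bound, forcing $u \equiv 0$ (after normalization), so $\mathcal{M}$ is the round shrinking $\mathbb{R}^2 \times S^1$.

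The core case is $a(\tau) \not\to 0$. A direct computation with the 2d-bowl (shifted and rescaled) shows that its bubble-sheet function has precisely this leading behavior, so the goal is a splitting/rigidity theorem: the isolated linear mode in the $y_1$-direction must force $\mathcal{M}$ to split off an $\mathbb{R}$-factor in the $y_2$-direction. I would establish this by combining ancient noncollapsed convexity (which provides uniform curvature control on the neutral direction), maximum-principle monotonicity of the directional second fundamental form, and Hamilton's strong maximum principle on the rescaled evolution, to conclude that the second fundamental form vanishes identically along the $y_2$-vector field. Once $\mathcal{M} = \mathbb{R} \times \mathcal{M}'$ is established, $\mathcal{M}'$ is an ancient noncollapsed flow in $\mathbb{R}^3$, and Theorem \ref{thm_ADS_BC} identifies it as the 2d-bowl (the shrinking $S^1$ would contradict $a(\tau) \not\to 0$, and the compact options are incompatible with the bubble-sheet tangent at $-\infty$).

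The main obstacle I expect is the splitting step: the $L^2_G$ estimate is an integral statement with a Gaussian weight, so upgrading an isolated linear mode into a pointwise, \emph{geometric} line-splitting requires careful control of $u$ and its derivatives in the noncompact tip region where the weight is weak. The ``no wings'' conclusion is precisely the statement that a persistent linear profile cannot coexist with noncollapsing and convexity without being accompanied by such a line-splitting; equivalently, no genuine wedge-like or wing-like end is allowed among ancient noncollapsed flows.
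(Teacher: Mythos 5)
Your first half runs parallel to the actual argument (which this paper does not reprove but imports from \cite{CHH_wing}): fast convergence kills the constant mode at order $e^{\tau}$, the $\cos\vartheta,\sin\vartheta$ coefficients are absorbed by recentering, and the surviving leading term is a linear mode $(a_1y_1+a_2y_2)e^{\tau/2}$; if the linear coefficient vanishes asymptotically, a Merle--Zaag type argument forces the round shrinking $\mathbb{R}^2\times S^1$. Up to there you are on track. However, you treat the expansion only at a single (normalized) center, whereas the crucial point of the fine bubble-sheet theorem is that the \emph{same} constant vector $a=a(\mathcal{M})\neq 0$ appears in the expansion centered at \emph{every} space-time point $X$, with the threshold time controlled only by the bubble-sheet scale $Z(X)$. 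It is this uniformity over center points that powers the geometric conclusions: compactness is excluded because the expansions centered near opposing caps would force $a$ to point in opposite directions, and the splitting and selfsimilar translation are obtained by comparing expansions at centers drifting off to infinity, together with convexity, Harnack, and barrier arguments.

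The genuine gap is your splitting step. You propose to conclude that $A$ vanishes along the $y_2$-direction via ``maximum-principle monotonicity of the directional second fundamental form'' and Hamilton's strong maximum principle. But Hamilton's strong maximum principle only yields a splitting once the second fundamental form already has a nontrivial kernel at some point; for an ancient noncollapsed flow the standard dichotomy is precisely ``splits off a line or is strictly convex,'' and the whole content of the no-wings theorem is to rule out the strictly convex (wing-like) possibility, where $A>0$ everywhere and the strong maximum principle gives nothing. There is also no monotone quantity known that drives $A(e_2^\top,e_2^\top)$ to zero pointwise. So as written, the key step is circular: you would be assuming the null direction whose existence is the theorem. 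The actual route is global and asymptotic rather than local rigidity: one uses the center-point-independent expansion (the geometric meaning of $a$ as the gradient of the $S^1$-fiber size), strict convexity, and the cap/Harnack analysis to derive a contradiction with the wing-like shape, concluding that the flow is noncompact, splits off a line orthogonal to $a$, and translates, whence Theorem \ref{thm_ADS_BC} (or the translator classification) identifies it as $\mathbb{R}\times$2d-bowl.
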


Since this will play an important role later on, let us briefly recall the idea how this was proved. Assuming that the convergence is fast and that $\mathcal{M}$ is not a round shrinking $\mathbb{R}^2\times S^1$ it has been shown that there exists some nonvanishing constant vector $a=a(\mathcal{M})\in \mathbb{R}^2$ such that the bubble-sheet function $u^X$ of the renormalized flow suitably (re)centered at any space-time point $X\in \mathcal{M}$ satisfies
\begin{equation}\label{fine_expansion_fst}
u^X=(a_1 y_1 + a_2 y_2)e^{\tau/2}+ o(e^{\tau/2})
\end{equation}
for all $\tau$ sufficiently negative depending only on the bubble-sheet scale $Z(X)$. Geometrically speaking, the vector $a$ captures how the size of the $S^1$-fibres changes as one moves along the $\mathbb{R}^2$-directions. Using this fine bubble-sheet expansion, it was easy to see that the flow must be noncompact (since otherwise near opposing caps the vector $a$ would point in opposite directions), and with a bit more work it has been concluded that $\mathcal{M}$ in fact splits off a line (hence is not wing-like) and is selfsimilarly translating.\\

The case of slow convergence has been settled in joint work of the second author with Choi, Daskalopoulos, Du and Sesum \cite{CDDHS}:

\begin{theorem}[bubble-sheet ovals]\label{thm_bubble_sheet_ovals}
If the convergence is slow, then $\mathcal{M}$ is either the $\mathrm{O}_2\times \mathrm{O}_2$-symmetric 3d-oval, or belongs to the one-parameter family of $\mathbb{Z}_2^2\times \mathrm{O}_2$-symmetric 3d-ovals from \cite{DH_ovals}.
\end{theorem}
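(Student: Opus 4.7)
The plan is to follow an ADS-type strategy adapted to the 3d bubble-sheet setting. The key inputs are the slow convergence expansion in Theorem \ref{thm_norm_form} and Zhu's bubble-sheet symmetry improvement \cite{Zhu}. First I observe that the leading profile $(4-y_1^2-y_2^2)/\sqrt{8}|\tau|$ is negative-definite in both $y_1$ and $y_2$; a barrier comparison with the $\mathrm{O}_2\times\mathrm{O}_2$-symmetric 3d-oval as outer barrier then pins the renormalized hypersurfaces $\bar{M}_\tau$ inside a uniformly bounded region, forcing $\mathcal{M}$ to be compact. Next, I would upgrade the ambient symmetry to $\mathbb{Z}_2^2\times\mathrm{O}_2$: Zhu's result provides the $\mathrm{O}_2$-factor in $\vartheta$, and a Brendle-Choi style reflection improvement (decomposing antisymmetric perturbations into eigenmodes of $\mathcal{L}$, absorbing the unstable modes $y_1,y_2$ by recentering and the neutral mode $y_1y_2$ by a planar rotation of the $(y_1,y_2)$-frame) yields the $\mathbb{Z}_2^2$-factor.

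After symmetrization, only the neutral modes $y_1^2-2$ and $y_2^2-2$ remain at leading order, so I would write
\begin{equation*}
u = a_1(\tau)(y_1^2-2) + a_2(\tau)(y_2^2-2) + (\text{lower order})
\end{equation*}
and project the rescaled evolution equation onto these two directions to derive a coupled Merle-Zaag system for $(a_1,a_2)$. Theorem \ref{thm_norm_form} fixes $a_1+a_2 = -\tfrac{1}{\sqrt{2}\,|\tau|}+o(|\tau|^{-1})$, while the resulting scalar ODE for the eccentricity parameter $\alpha(\tau):=(a_1-a_2)/(a_1+a_2)$ shows that $\alpha(\tau)$ stabilizes to a limit $\alpha_\infty\in[0,1]$ as $\tau\to -\infty$ (after possibly swapping $y_1\leftrightarrow y_2$). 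Compactness from the first step rules out the endpoint $\alpha_\infty=1$, which would correspond to the noncompact $\mathbb{R}\times$2d-oval degeneration.

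The last and hardest step is a rigidity theorem: the scalar invariant $\alpha_\infty$ determines $\mathcal{M}$ uniquely up to scaling and rigid motion within the $\mathbb{Z}_2^2\times\mathrm{O}_2$-symmetric ancient noncollapsed class. Given two such solutions with the same $\alpha_\infty$, I would project their difference onto the eigenspaces of $\mathcal{L}$, show that the projections onto the unstable and leading neutral modes vanish, and then run a backward-in-time propagation of smallness in Gaussian $L^2$-norm (adapted from \cite{ADS2}) to conclude that the difference is identically zero. This matches $\mathcal{M}$ to the $\mathrm{O}_2\times\mathrm{O}_2$-symmetric oval when $\alpha_\infty=0$, and to the corresponding element of the Du-Haslhofer family \cite{DH_ovals} when $\alpha_\infty\in(0,1)$; existence for every parameter value is precisely the content of \cite{DH_ovals}. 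The main obstacle is this last rigidity step: unlike the scalar 2d case in \cite{ADS2}, here the neutral subspace is genuinely two-dimensional, and propagating smallness backward without losing control of $\alpha$ requires a Carleman-type estimate adapted to the mixed spectral structure of $\mathcal{L}$ on the bubble-sheet.
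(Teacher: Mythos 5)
You should first be aware that this paper does not prove Theorem \ref{thm_bubble_sheet_ovals} at all: it is quoted from \cite{CDDHS}, and the authors only remark that in the slow case the inwards quadratic bending in all directions yields compactness with axes of length about $\sqrt{2|t|\log|t|}$, after which the proof in \cite{CDDHS} proceeds by a delicate tensor maximum principle estimate (quadratic almost-concavity of the profile), sharp asymptotics in the cylindrical, intermediate and tip regions in the spirit of \cite{ADS1,ADS2}, and a uniqueness argument against the family of \cite{DH_ovals}. So your sketch has to be judged on its own merits as a substitute for that paper, and as such it has genuine gaps.

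Concretely: (1) the compactness step is misstated — the renormalized hypersurfaces $\bar M_\tau$ are not confined to a uniformly bounded region (their diameter grows like $\sqrt{2|\tau|}$), and an ancient oval is not selfsimilar, so it cannot be used as an outer barrier for the renormalized flow in any direct way; the correct mechanism is quadratic inwards bending plus convexity, respectively the ADS shrinker barriers $\Sigma_a$. (2) The proposed a priori $\mathbb{Z}_2^2$ reflection symmetrization has no known analogue of the Brendle--Choi/Zhu improvement theorems: absorbing $y_1,y_2$ by recentering and $y_1y_2$ by rotating the frame only normalizes the dominant antisymmetric modes in the cylindrical region and gives no control in the tip regions, which is where the real difficulty sits; in the actual classification the $\mathbb{Z}_2^2\times\mathrm{O}_2$ symmetry is a consequence of the final identification with the family, not an input. (3) Most seriously, your parametrization collapses: in the slow case the normal form \eqref{bubble_sheet_quant1.1} forces $a_1-a_2=O(|\tau|^{-1-\gamma})$ while $a_1+a_2\sim -2/(\sqrt{8}\,|\tau|)$, so your invariant $\alpha_\infty=\lim_{\tau\to-\infty}(a_1-a_2)/(a_1+a_2)$ equals $0$ for every solution under consideration, including every member of the Du--Haslhofer family, all of which share the same isotropic leading cylindrical asymptotics. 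Hence $\alpha_\infty$ cannot distinguish the $\mathrm{O}_2\times\mathrm{O}_2$ oval from the other members of the family; the distinguishing parameter only shows up at finer order and in the tip regions, which is precisely what the quadratic-concavity and tip-asymptotics machinery of \cite{CDDHS} is designed to capture, and which your final ``Carleman-type rigidity'' step leaves entirely unproved.
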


Regarding the proof, let us just mention that \eqref{bubble_sheet_quant1.1} in the case of slow convergence means inwards quadratic bending in all directions, which yields that $M_t$ is compact with axes of length approximately $\sqrt{2 |t|\log |t|}$, and hence the problem turned out to be amenable to the techniques from \cite{ADS1,ADS2}.\footnote{There have been numerous technical difficulties, most notably a delicate tensor maximum principle estimate to get quadratic concavity, but since the techniques are rather orthogonal to the techniques of the present paper let us not dwell on these points.}\\

Finally, let us recall that the special case of selfsimilarly translating solutions (with any rate of convergence) has been dealt with in an earlier joint paper with Hershkovits \cite{CHH_translator}:
\begin{theorem}[translators]\label{thm_translators_intro}
If $\mathcal{M}$ is selfsimilarly translating, then it is either $\mathbb{R}\times$2d-bowl, or belongs to the 1-parameter family of $\mathbb{Z}_2\times \mathrm{O}_2$-symmetric entire translators from \cite{HIMW}.
\end{theorem}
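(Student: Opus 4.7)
The plan is to exploit the bubble-sheet tangent flow to establish an exact $\mathrm{O}_2$-rotational symmetry of $\mathcal{M}$, and then classify $\mathrm{O}_2$-symmetric noncollapsed translators in $\mathbb{R}^4$ by reducing to the analysis underlying~\cite{HIMW}. The main technical step is the symmetry improvement.

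First, since $\mathcal{M}$ is noncollapsed and mean-convex, it is (weakly) convex by~\cite{HaslhoferKleiner_meanconvex}, and the translator equation $H=\langle v,\nu\rangle$ with $v\neq 0$ (the trivial plane being excluded) places $\mathcal{M}$ in the setting of a convex entire translating soliton; after a rigid motion we may assume $v=e_2$. The bubble-sheet tangent flow~\eqref{bubble-sheet_tangent_intro1.1} gives, at every space-time point $X\in\mathcal{M}$, an approximate $\mathrm{O}_2$-symmetry on the bubble-sheet scale $Z(X)$: the rescaled flow is $\varepsilon$-close to $\mathbb{R}^2\times S^1(\sqrt{-2t})$, whose isometry group rotates the $S^1$-factor.

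To upgrade this to exact symmetry, the approach is to apply Zhu's bubble-sheet symmetry improvement~\cite{Zhu} (the higher-dimensional analogue of the Brendle-Choi neck improvement~\cite{BC}) iteratively across geometrically growing scales. This produces centers of approximate rotation at every scale with errors decreasing geometrically. The translating structure enforces geometric homogeneity along $v$, and a rigidity argument forces these approximate centers to align onto a single 2-plane $\Pi$ parallel to $v$. Passing to the limit, one obtains that $\mathcal{M}$ is invariant under the full $\mathrm{O}_2$-subgroup of $\mathrm{O}_4$ fixing $\Pi$ pointwise.

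With exact $\mathrm{O}_2$-symmetry in hand, $\mathcal{M}$ is encoded by a profile on a 2-dimensional half-plane satisfying a quasilinear elliptic PDE. Its classification among convex complete translators, via the ODE-type shooting analysis of~\cite{HIMW} combined with a moving-plane argument along the direction perpendicular to $v$ in $\Pi$, yields, up to scaling and rigid motion, either the rotationally symmetric 3d-bowl, the $\mathbb{R}\times$2d-bowl, or one member of the $\mathbb{Z}_2\times\mathrm{O}_2$-symmetric HIMW family. The 3d-bowl has a shrinking neck as its blowdown, which contradicts~\eqref{bubble-sheet_tangent_intro1.1}, leaving precisely the two cases in the theorem. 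The principal obstacle is the symmetry improvement step: one must carefully control the drift of the approximate centers across scales and base points so that they genuinely align along one axis, and this is where the translating structure enters crucially, rigidly propagating the approximate symmetry along $v$ and ruling out rotational drift that could otherwise occur for a merely ancient solution.
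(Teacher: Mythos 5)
You are proposing a proof of a statement that the present paper does not prove at all: Theorem \ref{thm_translators_intro} is quoted from the earlier work \cite{CHH_translator}, so the comparison has to be with the argument there. Your first step (circular symmetry via iterated bubble-sheet symmetry improvement in the spirit of \cite{Zhu} and \cite{BC}) is indeed in the spirit of the actual proof, which does reduce to $\mathrm{O}_2$-symmetric translators. The problem is the second half of your plan.

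After imposing $\mathrm{O}_2$-symmetry the translator is still governed by a quasilinear elliptic PDE in two variables (it is of cohomogeneity two), not by an ODE, so there is no ``ODE-type shooting analysis of \cite{HIMW}'' to invoke: Hoffman--Ilmanen--Martin--White construct their family by PDE limits/barriers and prove existence only, with no uniqueness or exhaustion statement. A moving-plane argument can at best recover the discrete $\mathbb{Z}_2$ reflection symmetry; it cannot decide which member of a one-parameter family a given solution is, nor rule out exotic $\mathrm{O}_2$-symmetric translators with different asymptotic behaviour (ruling these out is exactly the open problem of Wang that \cite{CHH_translator} resolves, so appealing to ``classification among convex complete translators'' is circular). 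What is genuinely needed, and missing from your proposal, is (i) sharp asymptotics for the level sets $M\cap\{x_1=h\}$, which behave like the 2d ancient ovals of \cite{ADS1} and must be controlled in matched cylindrical, intermediate and tip regions, and (ii) a uniqueness-given-asymptotics argument: one shows the HIMW family realizes every admissible value of the relevant asymptotic (eccentricity/spectral) parameter by a continuity argument, selects the member matching the given translator, and then kills the difference by an Angenent--Daskalopoulos--Sesum/Merle--Zaag type spectral rigidity analysis as in \cite{ADS2,BC}. Without this core, the argument does not close; the symmetry step alone reduces the problem but does not classify.
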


We also recall that for these $\mathbb{Z}_2\times \mathrm{O}_2$-symmetric translators moving with unit speed in $x_1$-direction, the level sets $M_0\cap \{x_1 = -t\}$, have the same sharp asymptotics for $t\to -\infty$ as the 2d-ovals in \cite{ADS1}.

\medskip

In light of the results reviewed above, the remaining case to address is thus the most subtle case of mixed convergence (without selfsimilarity assumption), namely
\begin{equation}\label{bubble_sheet_mixed1.1}
u= \frac{2-y_2^2}{\sqrt{8}|\tau|}+O(|\tau|^{-1-\gamma}).
\end{equation}
Here, a major challenge is that the expansion gives inwards quadratic bending in $y_2$-direction, but does not tell anything about the behaviour in $y_1$-direction. Geometrically speaking, the first challenge is thus:

\begin{challenge}[compact solutions]\label{challange1} Rule out exotic ovals, i.e. show that there is no compact ancient noncollapsed solution that satisfies \eqref{bubble_sheet_mixed1.1}.
\end{challenge}

For the potential scenario of exotic ovals, the axis in $x_2$-direction would have length approximately $\sqrt{2 |t|\log |t|}$, but the axis in $x_1$-direction would be much longer. E.g. one could image that if one could compute many further terms in the expansion \eqref{bubble_sheet_mixed1.1}, then potentially the quadratic eigenfunction $2-y_1^2$ would eventually show up with some much smaller coefficient, say with coefficient $|\tau|^{-42}=(\log |t|)^{-42}$.\\

Second, assuming one somehow solved the first challenge, one wants to show that every noncompact solution with mixed convergence is either $\mathbb{R}\times$ 2d-oval or belongs to the 1-parameter family of $\mathbb{Z}_2\times \mathrm{O}_2$-symmetric entire translators from \cite{HIMW}. In light of the above, the second major challenge is thus:

\begin{challenge}[noncompact solutions]\label{challange2} Show that every noncompact ancient noncollapsed solution with mixed convergence is a translator unless it splits off a line.
\end{challenge}

Specifically, one has to rule out the potential scenario that the solution slows down going backwards in time and/or speeds up going forwards in time. Moreover, in light of the 1-parameter family of examples from  \cite{HIMW}, the problem comes with several a priori unrelated scales, namely the curvature scale, the bubble-sheet scale and the quadratic scale. In particular, one has to rule out the potential scenario that for very negative or very positive times the solution looks almost like a 3d-bowl close to the cap and then has an uncontrollably long neck-region before it eventually transitions to a bubble-sheet region.\\

In general, the question whether an ancient (or eternal) noncompact solution is selfsimilar is very subtle, as pointed out first by White \cite[Conjecture 1]{White_nature} and Perelman \cite[Remark 11.9]{Perelman1}. There are some prior instances in the literature where selfsimilarity has been established successfully, namely \cite{BC,BC2,CHH,CHHW,Brendle_3dRicci,BN}. However, the method of proof in all these prior papers was to establish rotational symmetry, and then selfsimilarity only followed a posteriori by uniqueness of rotationally symmetric solutions. Also, if one drops the noncollapsing condition then there are (many) examples of eternal strictly convex solutions that are not selfsimilar, see \cite{BLT_GeomTop}.

\bigskip

\subsection{The differential neck theorem and its consequences} In this subsection, we state our differential neck theorem, and explain how it can be used to overcome the major challenges described above.\\

Let $\mathcal{M}=\{M_t\}$ be a strictly convex ancient noncollapsed mean curvature flow in $\mathbb{R}^4$, whose tangent flow at $-\infty$ is given by \eqref{bubble-sheet_tangent_intro1.1}, and whose bubble-sheet function has the mixed convergence behaviour \eqref{bubble_sheet_mixed1.1}.\\

Motivated by the analysis of the fast convergence, one might hope in the case of mixed convergence that $u$ as a function of $y_1$ still behaves like $ay_1 e^{\tau/2}$. The problem with this approach is that the leading term $(2-y_2^2)/(\sqrt{8}|\tau|)$ is way larger, and thus trying to detect an exponentially small correction term sounds like a rather hopeless endeavour. However, we can turn this into a more meaningful approach, by instead considering the derivative of the profile function with respect to $y_1$, which kills the leading term and thus gives some hope to be able to detect the (still dauntingly small) correction term.\\

Loosely speaking, we will show that the slope $u_1^X=\partial u^X/\partial y_1$ behaves like $ae^{\tau/2}$, where $a=a(\mathcal{M})$ is a nonvanishing constant. To describe this in more detail,
fixing a small constant $\varepsilon_0>0$, similarly as in \cite[Section 2.2]{CHH_wing}, we denote by $Z(X)$ the bubble-sheet scale of $X$, i.e. the smallest radius $r$ such that $\mathcal{D}_{1/r}(\mathcal M-X)$ is $\varepsilon_0$-close in $C^{\lfloor 1/\eps_0\rfloor}$ in $B(0,1/\eps_0)\times [-2,-1]$ to $\mathbb{R}^2\times S^1(\sqrt{-2t})$. Fixing a suitable graphical radius function $\rho$ and a suitable cutoff function $\eta$, as described later, we then prove:

\begin{theorem}[differential neck theorem]\label{thm:strong_differential_neck_intro}
There exists a constant $a=a(\mathcal{M})\neq 0$, such that for all $X\in\mathcal{M}$ the function $w^X(y,\vartheta,\tau)=\eta(|y|/\rho(\tau)) u_1^X(y,\vartheta,\tau)$ for all $\tau\leq \tau_*-\log \max\{1,Z(X)^2\}$ satisfies
\begin{equation}\label{diff_neck_exp_intr}
w^X=ae^{\frac{1}{2}\tau}+E^X,
\end{equation}
where $\tau_\ast>-\infty$ is a universal constant, and where the Gaussian $L^2$-norm of the error term can be estimated by
\begin{equation}\label{error_est}
\|E^X\|_{\mathcal{H}}\leq\frac{Ce^{\frac{1}{2}\tau}}{|\tau+\log \max\{1,Z(X)^2\}|^{\frac{1}{2}}} \left(|a|+\max\big\{1,Z(X)^{\frac{7}{3}}\big\}e^{\frac{2}{3}\tau}\right).
\end{equation}
\end{theorem}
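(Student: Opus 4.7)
The plan is to extract the claimed $e^{\tau/2}$ behaviour by analyzing the derivative $u_1 = \partial_{y_1} u$ rather than $u$ directly: the leading slow term $(2-y_2^2)/(\sqrt{8}|\tau|)$ in \eqref{bubble_sheet_mixed1.1} is $y_1$-independent, so differentiating in $y_1$ annihilates it and exposes an exponentially small correction whose unstable mode should behave like $a e^{\tau/2}$. Since the bubble-sheet function evolves by $u_\tau = \mathcal{L}u + N(u)$ for a quadratic-in-derivatives nonlinearity $N$, and since $[\partial_{y_1},\mathcal{L}]=-\tfrac12\partial_{y_1}$, the derivative satisfies
\[
(u_1)_\tau = \bigl(\mathcal{L}-\tfrac12\bigr)u_1 + \partial_{y_1}N(u),
\]
and multiplying by $\eta(|y|/\rho(\tau))$ yields an equation for $w = w^X$ with extra commutator terms supported near the graphical boundary. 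The operator $\mathcal{L}-\tfrac12$ on Gaussian $L^2$ has a one-dimensional unstable subspace spanned by the constant $1$ (eigenvalue $\tfrac12$), a four-dimensional neutral subspace spanned by $y_1,y_2,\cos\vartheta,\sin\vartheta$, and a stable complement with gap $\tfrac12$. Decomposing $w = \alpha(\tau)\cdot 1 + w_0 + w_-$ with $\alpha = \langle w,1\rangle_{\mathcal H}/\|1\|_{\mathcal H}^2$, the unstable coefficient obeys $\alpha'(\tau) = \tfrac12\alpha(\tau) + F(\tau)$ with $F$ coming from $\partial_{y_1}N(u)$, from cutoff commutators, and from couplings with $w_0, w_-$; this integrates to $\alpha(\tau) = a\,e^{\tau/2} - e^{\tau/2}\!\!\int_\tau^{\tau_\ast}\! e^{-s/2}F(s)\,ds$ provided $e^{-s/2}F(s)$ is integrable as $s\to-\infty$.

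The heart of the argument is then the control of the remaining modes and of $F$. For the modes I would run a \emph{differential Merle--Zaag dichotomy}: Zhu's bubble-sheet symmetry improvement places all $\vartheta$-dependent modes into the error, and a Merle--Zaag-type ODE argument on the $y_1,y_2$-neutral and stable parts shows that either $\alpha$ dominates $\|w_0\|_{\mathcal H}+\|w_-\|_{\mathcal H}$ or conversely. The \emph{switch} presumably enters in ruling out the neutral-dominant scenario: a nontrivial neutral mode of $w$ translates, upon switching back from the differential picture to the integrated picture, into a $y_1 y_j$- or $y_1$-term in $u$ incompatible with the given normal form \eqref{bubble_sheet_mixed1.1}. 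The chief obstacle is the sharp estimation of $\|F(\tau)\|_{\mathcal H}$: since $u$ has essentially no decay in $y_1$ and only $|\tau|^{-1}$-decay in $y_2$, naive Gaussian $L^2$-bounds on $\partial_{y_1}N(u)$ lose far too much. The proposed remedy is \emph{anisotropic barriers}---weighted supersolutions adapted to the asymmetry between $y_1$ and $y_2$ and to the approximate $y_1$-translation invariance of the renormalized flow in the bubble-sheet region---coupled with \emph{propagation of smallness estimates} that convert $L^2$-smallness of $w^X$ near the origin into integrable control of the forcing across the entire graphical region $|y|\leq \rho(\tau)$. Tracking the resulting bounds produces the announced error: the factor $|\tau + \log\max\{1,Z(X)^2\}|^{-1/2}$ from integrating a $|\tau|^{-1}$-type forcing against $e^{-\tau/2}$, the factor $|a|$ from self-coupling of the unstable mode through $N$, and the term $\max\{1,Z(X)^{7/3}\}e^{2\tau/3}$ from matching the derivative estimate across the scale transition at $\tau\approx -\log Z(X)^2$.

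It remains to verify that the constant $a=a(\mathcal{M})$ is independent of the base-point $X$ and nonvanishing. Independence would follow by comparing $w^X$ and $w^{X'}$ on a common Gaussian domain via the propagation-of-smallness estimate, checking that the difference of their unstable-mode contributions decays faster than $e^{\tau/2}$ as $\tau\to -\infty$, so that the limiting coefficients coincide. For nonvanishing, assume towards contradiction that $a=0$; then the theorem just proved yields $\|w^X\|_{\mathcal H}=o(e^{\tau/2})$ uniformly in $X$, which, combined with the mixed convergence hypothesis and Theorem \ref{thm_no_wings}, should force $\mathcal{M}$ to split off a line in the $x_1$-direction, contradicting the standing strict convexity assumption.
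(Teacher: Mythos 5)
Your overall skeleton is the same as the paper's: differentiate in $y_1$, use the commutator to get $\mathcal{L}-\tfrac12$, run a Merle--Zaag analysis for the localized slope with the neutral-dominant branch excluded via the asymptotics \eqref{DH_asymp}, control the forcing and cutoff errors by anisotropic barriers and propagation of smallness, integrate the unstable ODE for $\langle w,1\rangle_{\mathcal H}$, and get base-point independence of $a$ by comparing expansions at very negative times. (One misreading: the ``switch'' in the paper refers to the switch time $\tau_s(X)$ for the spectral coefficients of $u$ itself, which is what guarantees definite inwards quadratic bending so that the level-set propagation of smallness controlling the cutoff error $\omega$ is applicable; it is not the mechanism for ruling out the neutral mode of $w$, which is done through the ODEs for $\langle w,y_k\rangle_{\mathcal H}$ against \eqref{DH_asymp}.)

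The genuine gap is your nondegeneracy step $a\neq 0$. Invoking Theorem \ref{thm_no_wings} is not legitimate here: that theorem concerns the fast-convergence case, where the whole bubble-sheet function $u$ decays like $e^{\tau/2}$, whereas in the present mixed case $u$ has the fat $(2-y_2^2)/(\sqrt 8|\tau|)$ profile \eqref{bubble_sheet_mixed1.1} and only its $y_1$-derivative would be super-small if $a=0$; super-smallness of $u_1$ (equivalently of $\nu_1$) on the graphical region does not by itself force the flow to split off a line in the $x_1$-direction --- the danger is precisely that the solution is merely extremely long, with the caps very far away (the exotic-oval-type scenario), which is perfectly compatible with strict convexity. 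Ruling this out is the hard quantitative content: the paper first propagates the super-smallness $|\nu_1|\leq e^{100\tau}$ across a strip of length comparable to $\beta\bar d(\tau)$ using an explicit supersolution of the Jacobi equation (Proposition \ref{thm:super_small}), then inserts very elongated barriers (Corollary \ref{cor_elongated_barr}) with eccentricity tied to $\bar d(\tau_i)$, which forces $\bar d(\tau_i)\geq \tfrac{\beta}{2L}\bar d(\tau_i)|\tau_i|^{1/2}$, absurd as $\tau_i\to-\infty$; the Harnack bound $\bar d(\tau)\leq Ce^{-\tau/2}$ and the cap-distance lower bound are both needed to set this up (Theorem \ref{thm:non_degenerate_neck}). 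Without an argument of this kind your contradiction does not materialize, and since the entire application of the theorem (no exotic ovals, selfsimilarity) hinges on $a\neq 0$, this is a substantive missing piece rather than a detail. Relatedly, the precise error exponents ($e^{\frac23\tau}$, $\hat Z(X)^{7/3}$, and the fact that the estimate kicks in already at $\tau\leq\tau_\ast-\tfrac72\log\hat Z(X)$) are not ``bookkeeping'': they come from a second round of improved propagation-of-smallness estimates run after the weak theorem and the nondegeneracy are in place, which your sketch does not account for.
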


Geometrically, the slope $ae^{\tau/2}$ of course captures how the size of the $S^1$-fibres changes as the observer moves along the $y_1$-direction. For example, for the selfsimilarly translating solutions from \cite{HIMW}, normalized such that they translate in positive $x_1$-direction with speed $1$, one has $a=1/\sqrt{2}$.  

We emphasize that Theorem \ref{thm:strong_differential_neck_intro} (differential neck theorem) is a theorem in hard analysis, where the details matter. In particular, thanks to the numerical exponents in \eqref{error_est} the error term becomes small for $\tau\leq \tau_*-\tfrac{7}{4}\log \max\{1,Z(X)^2\}$, and in fact we will use that this time shift appears with  prefactor $\tfrac{7}{4}<2$. Moreover, in the proof it will be important that we cut off close to the optimal graphical radius $\sqrt{2|\tau|}$.\\

Applying Theorem \ref{thm:strong_differential_neck_intro} (differential neck theorem) we overcome Challenge \ref{challange1} (compact solutions) and Challenge \ref{challange2} (noncompact solutions), namely we prove the following two theorems.

\begin{theorem}[no exotic ovals]\label{thm:compact_intro}
There exist no exotic ovals in $\mathbb{R}^4$.
\end{theorem}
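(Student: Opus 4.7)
The plan is to argue by contradiction. I would start by assuming that a compact ancient noncollapsed flow $\mathcal{M}$ in $\mathbb{R}^4$ with the mixed convergence behavior \eqref{bubble_sheet_mixed1.1} exists. Since compact ancient noncollapsed flows are strictly convex, the hypotheses of Theorem \ref{thm:strong_differential_neck_intro} are met, yielding a universal nonvanishing constant $a=a(\mathcal{M})\neq 0$ such that $w^X=ae^{\tau/2}+E^X$ at every $X\in\mathcal{M}$, with the stated error control. The key idea is that $a$ measures a universal linear tilt of the $S^1$-radius along the $y_1$-direction, whereas a compact oval must close off on both ends of the $y_1$-axis, so the Gaussian-averaged slope cannot be the same nonzero constant in both halves.

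Concretely, I would choose two space-time points $X^+(t)$ and $X^-(t)$ in $M_t$, roughly in the right and left halves of the oval respectively, staying far enough from the geometric tips so that the bubble-sheet scales $Z(X^\pm(t))$ remain controlled. Applying Theorem \ref{thm:strong_differential_neck_intro} at each $X^\pm$ and pairing the expansion $w^{X^\pm}=ae^{\tau/2}+E^{X^\pm}$ with a positive Gaussian test function $\varphi$ supported in the graphical region gives
\begin{equation*}
\langle w^{X^\pm},\varphi\rangle_{\mathcal{H}}=a\langle 1,\varphi\rangle_{\mathcal{H}}\, e^{\tau/2}+o(|a|e^{\tau/2})
\end{equation*}
for all $\tau$ sufficiently negative. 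The left-hand side is the Gaussian-averaged slope of the bubble-sheet function near $X^\pm$: near $X^+$, moving in the $+y_1$-direction approaches the right cap and the $S^1$-radius decreases, so the average is negative, while near $X^-$, moving in $+y_1$ points toward the interior and the $S^1$-radius increases, so the average is positive. Combining with the above identity would force $a<0$ and $a>0$ simultaneously, hence $a=0$, contradicting Theorem \ref{thm:strong_differential_neck_intro}.

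The hard part will be producing the points $X^\pm$ with the claimed signs of the averaged slopes using only the qualitative assumption of compactness and the expansion \eqref{bubble_sheet_mixed1.1}, which is itself silent about the $y_1$-behavior. In particular, one must rule out pathological profiles where the oval fails to be graphical in the $y_1$-direction over a Gaussian-sized neighborhood, or where the neighborhood of $X^\pm$ is contaminated by the opposite cap; this seems to require quantitative a priori convexity or capping estimates beyond what \eqref{bubble_sheet_mixed1.1} provides. A subsidiary technical point is to carefully balance the bubble-sheet scales $Z(X^\pm(t))$ against the time window of validity $\tau\leq\tau_\ast-\log\max\{1,Z(X^\pm)^2\}$ so that the error estimate \eqref{error_est} yields $\|E^{X^\pm}\|_{\mathcal{H}}\ll|a|e^{\tau/2}$; this is precisely where the $\tfrac{7}{4}<2$ prefactor highlighted after the theorem statement becomes crucial.
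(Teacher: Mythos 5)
Your argument, as written, only covers half of the problem, and it is the easy half. Choosing points $X^\pm(t)$ near the two caps ``so that the bubble-sheet scales $Z(X^\pm(t))$ remain controlled'' and then reading off opposite signs of the Gaussian-averaged slope is essentially the paper's proof of Theorem \ref{class_comp_bdd}: if $\liminf_{h\to\infty}Z(P_h)<\infty$, one centers the expansion \eqref{eq_diff_bubb_exp} at the cap $(p_t^+,t)$ and the sign of $u_1$ there contradicts $a>0$. The genuine difficulty, which your proposal does not address (and which you cannot simply assume away by ``staying far enough from the geometric tips''), is the scenario in which $Z(P_h)\to\infty$ as the cap height $h\to\infty$. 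In that case there is no sequence of cap-adjacent points with controlled bubble-sheet scale: by Proposition \ref{prop_dichotomy}, rescaling by $Z(P_h)^{-1}$ around $P_h$ gives convergence to $\mathbb{R}\times$2d-oval and $(ZH)(P_h)\to\infty$, and under this rescaling the constant of the rescaled flow is $Z(P_h)^{-1}a\to 0$, so the differential-neck tilt becomes invisible at the cap's bubble-sheet scale and no sign contradiction can be extracted there. The $\tfrac74<2$ prefactor you invoke does make the error term usable for moderately large $Z(X)$, but it does not rescue the sign argument when $Z(P_h)$ diverges.

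The paper handles this remaining case by an entirely different mechanism (Section \ref{sec_unbdd}): it shows the quadratic scale is comparable to the bubble-sheet scale, $Q(P_h)\leq CZ(P_h)$ (Proposition \ref{prop_quad_scale}), proves via the differential neck theorem plus further propagation-of-smallness and comparison arguments that the translator ratio $\Theta=-\nu_1/H$ is close to $1$ on every level set whose eccentricity satisfies $\varsigma(h,t)\geq N+\tfrac74\log Z_h$ (Theorem \ref{thm:speed_level}, Corollary \ref{cor:speed_level_ecc}), and then integrates the resulting $h$-derivatives $\partial_h\mathcal{A}\approx 2\pi$ and $\partial_h\log\mathcal{W}\approx \tfrac{1}{2|t_h|}$ (Proposition \ref{thm:width_derivative}) from a transition height $h_N(t)$ to obtain incompatible growth rates for $\mathcal{W}^2$, using Hamilton's Harnack inequality to bound $h_N(t)/|t|$ and $\log Z_{h_N(t)}\leq\tfrac12\log|t|+C$. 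None of this is present or replaceable in your outline; your closing paragraph correctly senses that something beyond \eqref{bubble_sheet_mixed1.1} is needed, but identifies the wrong obstruction (graphicality/contamination rather than unbounded $Z(P_h)$), so the proposal has a genuine gap precisely where the theorem is hard.
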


In case $Z(X)$ is bounded along some sequences of cap points this follows directly. Indeed, near opposing caps the slope $u_1$ would have opposite signs, contradicting \eqref{diff_neck_exp_intr}. Ruling out the potential scenario of exotic ovals with unbounded $Z$ is more involved, and requires further ideas, which will be described later.

 \begin{theorem}[selfsimilarity]\label{thm:noncompact_intro}
Every noncompact strictly convex ancient noncollapsed flow in $\mathbb{R}^4$ is selfsimilarly translating.
\end{theorem}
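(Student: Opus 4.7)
My plan is to reduce to the mixed-convergence regime and then use Theorem \ref{thm:strong_differential_neck_intro} to pin down a candidate translation velocity, before running a spectral/rigidity argument to promote this leading-order match to exact selfsimilar translation.

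\textbf{Reduction.} If the tangent flow at $-\infty$ is a neck, then the classification of \cite{ADS2,BC2} leaves only $\mathbb{R}\times S^2$, the rotationally symmetric 3d-bowl, or the rotationally symmetric 3d-oval, and strict convexity together with noncompactness single out the 3d-bowl, which is translating. Otherwise the tangent flow is a bubble-sheet, as in \eqref{bubble-sheet_tangent_intro1.1}, and Theorem \ref{thm_norm_form} splits the analysis into three subcases. Fast convergence via Theorem \ref{thm_no_wings} yields $\mathbb{R}^2\times S^1$ or $\mathbb{R}\times$2d-bowl, neither of which is simultaneously strictly convex and noncompact. Slow convergence via Theorem \ref{thm_bubble_sheet_ovals} yields compact solutions, excluded by noncompactness. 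Only the mixed-convergence regime \eqref{bubble_sheet_mixed1.1} remains.

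\textbf{Candidate translator and spectral decay.} Apply Theorem \ref{thm:strong_differential_neck_intro} to produce $a=a(\mathcal{M})\neq 0$. The slope $ae^{\tau/2}$ records how the $S^1$-fibre radius varies with $y_1$, and by direct comparison with the $\mathbb{Z}_2\times\mathrm{O}_2$-symmetric translators of \cite{HIMW} (for which $a=1/\sqrt{2}$ at unit translation speed) this fixes a candidate velocity $V=V_1 e_1\in\mathbb{R}^4$ with $V_1=1/(a\sqrt{2})$. Set
\begin{equation*}
\varphi := H+\langle V,\nu\rangle,
\end{equation*}
a function that vanishes identically iff $\mathcal{M}$ translates by $V$; it is a Jacobi field, $(\partial_t-\Delta_{M_t}-|A|^2)\varphi=0$. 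Rescaling about any $X\in\mathcal{M}$ and decomposing in eigenmodes of $\mathcal{L}$, the summand $\langle V,\nu\rangle$ reads $V_1\,\partial u^X/\partial y_1$ to leading order, so the choice of $V$ is designed to cancel precisely the leading unstable/neutral content of $\varphi$ picked out by \eqref{diff_neck_exp_intr}. Combined with the mixed expansion \eqref{bubble_sheet_mixed1.1} in $y_2$, this shows that every unstable and neutral Gaussian-$L^2$ projection of $\varphi$ is quantitatively smaller than its natural size, uniformly in $X$, with errors controlled by \eqref{error_est}. A differential Merle--Zaag dichotomy then forces the stable spectrum to dominate, yielding exponential decay in $\tau$ of $\varphi$ at $-\infty$ throughout the bubble-sheet region.

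\textbf{Propagation and main obstacle.} I would then extend this smallness from the bubble-sheet region, through the intermediate neck, and into the cap, using the anisotropic barriers and propagation-of-smallness estimates developed in the body of the paper, concluding $\varphi\equiv 0$ on $\mathcal{M}$ and hence that $\mathcal{M}$ translates with velocity $V$. The main obstacle lies precisely in this propagation step. Under mixed convergence the three scales --- curvature, bubble-sheet, and quadratic --- are a priori decoupled and can diverge relative to each other, so one must rule out a pathological scenario in which an uncontrollably long intermediate region allows the flow to speed up going forward or slow down going backward while leaving $a$ unchanged. Overcoming this is precisely the role of the anisotropic barriers advertised in the introduction.
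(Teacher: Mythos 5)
Your reduction to the mixed-convergence regime and the identification of a candidate speed from $a(\mathcal{M})\neq 0$ are fine and consistent with the paper, but the core of the argument is missing. After setting up the Jacobi field $\varphi=H+\langle V,\nu\rangle$, you assert that a Merle--Zaag dichotomy gives exponential decay of $\varphi$ in the bubble-sheet region ``uniformly in $X$'' and that the anisotropic barriers then propagate this to $\varphi\equiv 0$. Neither step survives scrutiny. First, the error bound in Theorem \ref{thm:strong_differential_neck} only kicks in for $\tau\leq\tau_*-\tfrac{7}{2}\log\hat Z(X)$, so the claimed uniformity fails exactly in the scenario where the bubble-sheet scale $Z(P_h)$ blows up along the caps; this is the scenario the paper spends all of Section \ref{sec_unbdd} ruling out, via Proposition \ref{prop_dichotomy} (rescaled limits are $\mathbb{R}\times$2d-ovals), the translator-ratio estimate of Theorem \ref{thm:speed_level}, the quantitative area/width evolution of Proposition \ref{thm:width_derivative}, the eccentricity integration argument, and the cap-speed lower bound of Proposition \ref{thm:speed_lower_bound}. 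None of this is replaced by ``the anisotropic barriers advertised in the introduction'': in the paper those barriers are used to prove the differential neck theorem itself (elongated cylindrical regions and propagation of smallness for $\nu_1$), not to control the cap region or to exclude an uncontrollably long intermediate neck, and in fact the final step of Theorem \ref{thm:noncompact_conlusion_unbounded} uses a different comparison function (the one-dimensional heat-equation barrier $\psi$ from \cite{BC}) together with the global curvature bound and graphicality.

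Second, even granting decay of $\varphi$ at $\tau\to-\infty$ on the cylindrical region, you have no mechanism to conclude $\varphi\equiv 0$: a Jacobi field that is small in a Gaussian $L^2$ sense backwards in time need not vanish, and promoting smallness to identical vanishing requires a global rigidity input. The paper's actual mechanism is of a different nature: it works with the cap points $P_h$, uses $\partial_t H\geq 0$ to make $h\mapsto H(P_h)$ monotone, shows via compactness and the nondegenerate expansion that blow-down limits along $h\to\pm\infty$ (bounded-scale case, Theorem \ref{thm_noncpt_bdd}) or the eccentricity analysis (unbounded case) force $H(P_h)\equiv 1$ respectively $H\leq|\nu_1|$, and only then invokes the rigidity case of Hamilton's Harnack inequality \eqref{Ham_Harnack} to conclude selfsimilar translation. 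Your proposal does not contain a substitute for either the Harnack-rigidity endgame or the exclusion of the speed-up/slow-down scenario, so as written it records the correct candidate velocity but leaves the theorem unproved.
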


In case $Z(X)$ is bounded along some sequences of cap points with $x_1\to \pm \infty$, this follows rather directly from our differential neck theorem via standard arguments. Ruling out the potential scenario of unbounded bubble-sheet scale is again more involved, and will be discussed in the next subsection.

\bigskip

\subsection{Outline of the proofs}
In this subsection, we give an overview of our approach. Since the actual proof is rather involved, we will sweep many fine points under the rug, and instead focus on the main ideas and morals. Also, to simplify the notation we will assume $\mathrm{O}_2$-symmetry and pretend that $Z(X)\leq 1$.\\

Let us start by discussing the Merle-Zaag analysis for the slope function $u_1=\partial u^X/\partial y_1$. In light of the commutator identity $[\mathcal{L},\partial_{y_1}]=\tfrac12 \partial_{y_1}$, the evolution of $u_1$ is governed by the operator
\begin{equation}
\mathcal{L}'=\mathcal{L}-\tfrac12.
\end{equation}
The only unstable eigenfunction for $\mathcal{L}'$ is the constant function $1$, and the only $\mathrm{O}_2$-invariant neutral eigenfunctions of $\mathcal{L}'$ are the linear functions $y_1$ and $y_2$. We denote the corresponding projections by $p'_\pm$ and $p_0'$. Fixing a suitable graphical radius function $\rho$ and a suitable cutoff function $\eta$, we work with the localized slope function
\begin{equation}
w(y,\vartheta,\tau)=\eta(|y|/|\rho(\tau)|)u_1(y,\vartheta,\tau).
\end{equation}
We then consider the evolution of
\begin{align}
W_\pm(\tau)=\| p_\pm' w(\cdot,\tau)\|_{\mathcal{H}}^2\qquad\mathrm{and}\qquad W_0(\tau)= \| p_0' w(\cdot,\tau)\|_{\mathcal{H}}^2.
\end{align}
The classical Merle-Zaag lemma \cite{MZ} suggests that for $\tau\to -\infty$ either $W_+$ or $W_0$ dominates. However, there are additional terms from the cutoff, since the function $u_1$, in contrast to $u$, does not satisfy an inverse Poincare inequality. If we cut off slightly before the optimal radius $|y|=\sqrt{2|\tau|}$, then in light of the Gaussian weight $e^{-\frac{1}{4}|y|^2}$ these error terms decay somewhat slower than $e^{\frac{1}{2}\tau}$. More precisely, we will establish the Merle-Zaag alternative that either
\begin{equation}\label{MZ_alt_intro1}
W_-+W_0\leq \frac{C}{|\tau|}\left(W_{+}+ e^{\frac{12}{25}\tau}\omega^2\right),
\end{equation}
or
\begin{equation}\label{MZ_alt_intro2}
W_-+W_++ e^{\frac{12}{25}\tau}\omega^2= o(W_0),
\end{equation}
where the error terms coming from the cutoff are captured by the quantity
 \begin{equation}\label{eq_omega_intro}
 \omega(\tau)=\sup_{\tau'\leq \tau}\sup_{\{\eta\neq 0\}}|u_1(\cdot,\tau')|.
 \end{equation}
We can rule out the scenario \eqref{MZ_alt_intro2} using the asymptotics \eqref{bubble_sheet_mixed1.1}. Hence, \eqref{MZ_alt_intro1} must hold, and taking also into account the ODE for $W_+\sim \| w\|_{\mathcal{H}}^2$ we obtain some decay, specifically\footnote{To understand the exponents, note that $12/25$ is slightly smaller than $1/2$, and $1/5$ is slightly smaller than $6/25$.}
 \begin{equation}\label{decay_w}
\| w\|_{\mathcal{H}}\leq C e^{\frac{1}{5}\tau}.
\end{equation}
This is still very far from our goal $w\sim a e^{\frac{1}{2}\tau}$, but at least some nice decay to get the analysis started.\\

Next, let us emphasize that due to the mixed type behaviour from  \eqref{bubble_sheet_mixed1.1}, our problem is of anisotropic type. In particular, we need to work in regions that are much longer in $y_1$-direction than in $y_2$-direction. To get such elongated regions we take the 2d-barriers $\Sigma_a\subset \mathbb{R}^3$ from \cite{ADS1} and rotate them around ellipses to construct elongated 3d-barriers in $\mathbb{R}^4$. Specifically, given $L\gg 1$, for $a_1\geq a_2\gg 1$ we set
\begin{equation}  
\Gamma_{a_1,a_2}=\left\{\left(\frac{a_1}{a_2} r\cos\vartheta,r\sin\vartheta,y_3,y_4\right)\in \mathbb{R}^4:\vartheta \in [0,2\pi), r \geq L ,  (r-1,y_3,y_4)\in \Sigma_{a_2-1}\right\}.
\end{equation}
The hypersurfaces $\Gamma_{a_1,a_2}$ act as inner barriers for the renormalized mean curvature flow in $\mathbb{R}^4$. These barriers will be used multiple times throughout the paper, and depending on the information available at each stage we can apply them with different eccentricities $a_1/a_2$. For example, in early sections where we only have Merle-Zaag type information in spirit of \eqref{bubble_sheet_mixed1.1}, we can get cylindrical regions of the form
\begin{equation}
\bar{E}_\tau=\left\{ \beta^2 y_1^2 + (2-\beta)^{-1} y_2^2 \leq |\tau| \right\}.
\end{equation}
where $\beta>0$ is a small, but fixed, constant. All points in $\bar{E}_\tau$ are $\eps_0$-cylindrical, and we get the bound
\begin{equation}
\sup_{\bar{E}_\tau} |\bar{A}|^2\leq 5\beta^{-1}.
\end{equation}
On the other hand, after some decay such as \eqref{decay_w} is available, we can iteratively apply barriers with increasing eccentricity alternated with our anisotropic propagation of smallness estimate (to be discussed below), yielding even more elongated cylindrical regions, specifically cylindrical regions of the form\footnote{In particular, this gives a (nonsharp) lower bound for cap distance, namely $d(t)\geq |t|^{1/2}(\log |t|)^{(k+1)/2}$.}
\begin{equation}
\bar{E}_\tau^k=\left\{ |\tau|^{-k} y_1^2 + (2-\beta)^{-1} y_2^2 \leq |\tau| \right\},
\end{equation}
for any finite integer, say for $k=20$, again with the same curvature bound, namely
\begin{equation}\label{curv_est_elong_reg}
\sup_{\bar{E}_\tau^k} |\bar{A}|^2\leq 5\beta^{-1}.
\end{equation}

\bigskip

Another crucial idea is what we call propagation of smallness estimates. To motivate this, note that the Gaussian $L^2$-decay for $u_1$ from \eqref{decay_w} via standard interior estimates can be upgraded to $L^\infty$-decay for $u_1$, however only in the region where the Gaussian weight is comparable to $1$, say in the region $\{|y|\leq L\}$. The purpose of our estimates is to propagate this smallness of $u_1$, or equivalently $\nu_1$, to a much larger region. Specifically, given any $\alpha>0$, e.g. $\alpha=\beta^2$, we consider the anisotropic renormalized distance
\begin{equation}\label{eq_ren_an_dist_intro}
r_\alpha=\left(\alpha y_1^2 + (2-\beta)^{-1} y_2^2\right)^{1/2},
\end{equation}
and prove that for any $p\in [0,q]$, where $q=q(\beta)>1/2$, we have
\begin{equation}\label{eq_aniso_prop_intro}
\sup_{\{ r_\alpha\leq |\tau|^{1/2}\}}e^{(\beta-p)\tau}e^{-\frac{1-\beta}{2}r_\alpha^2} |\nu_1|
 \leq C\sup_{\tau'\leq \tau}\sup_{\{r_\alpha\leq L\}}e^{-p\tau'}\left(|\nu_1|^2+e^{2q\tau'}\right)^{1/2}.
\end{equation}
Namely, the estimate says that if $|\nu_1|$ decays like $e^{p\tau}$ in the core region $\{r_\alpha\leq L\}$, then we can propagate this decay to the larger region $\{r_\alpha\leq |\tau|^{1/2}\}$ up to losing a small exponent. In particular, unwinding the definition of $r_\alpha$, note that in the $y_1$-variable we only have the factor $e^{\frac{1-\beta}{2}\alpha y_1^2}$, which is harmless for $\alpha$ small, so our estimate works particularly well to propagate smallness along the $y_1$-direction. Ping-ponging our anisotropic propagation of smallness estimate (with decreasing $\alpha$) and the anisotropic barrier estimate from above, we can upgrade \eqref{decay_w} to $L^\infty$-decay in a long central strip, specifically we can show that
\begin{equation}\label{nu1_decay}
\sup_{\{ |y_1|\leq |\tau|^{10},\; |y_2|\leq 10 \}} |\nu_1|\leq e^{\frac{1}{10}\tau}.
\end{equation}
We also have another propagation of smallness estimate, which allows us to propagate smallness in $y_2$-direction from the central strip to the entire level sets, specifically we show that
\begin{equation}\label{eq_level_set_prop_intro}
\sup_{\{    |y_1| \leq |\tau|^b\}}|\nu_1| \leq C \sup_{\tau'\leq \tau}\sup_{\{  |y_1| \leq \, 2 |\tau'|^b,\; |y_2|\leq 10\}} |\tau'||\nu_1|+e^{10\tau}.
\end{equation}
To discuss the proofs of both propagation estimates, we recall the basic fact that $\nu_1=\langle \nu , e_1\rangle$ solves
\begin{equation}\label{Jac_eq_intr}
\partial_t \nu_1 = (\Delta +|A|^2)\nu_1,
\end{equation}
where $|A|^2\sim |t|^{-1}$ by \eqref{curv_est_elong_reg}. To prove the anisotropic propagation estimate \eqref{eq_aniso_prop_intro}, we consider the function
\begin{equation}
w=\frac{1}{2}\log \left(\nu_1^2+|t|^{-2q} \right)-\frac{\beta}{4} f_\alpha +\Lambda+p\log|t|,
\end{equation}
where $f_\alpha$ denotes the squared anisotropic distance in the $(x,t)$-variables, and apply the maximum principle to the function $\psi=\eta w/\log|t|$,
where $\eta$ is a carefully chosen weight function. To prove the level set propagation estimate \eqref{eq_level_set_prop_intro}, we first introduce a suitable supersolution of the Jacobi equation \eqref{Jac_eq_intr}, specifically
\begin{align}\label{def_super_hor_intro}
\Psi_b=\exp\left( \frac{x_1^2}{4|t|}-\left(\log|t|\right)^{2b}\right),
\end{align}
and then apply the comparison principle for $\pm \nu_1$ versus $\Psi_b+K\nu_2$ in the region where $y_2\geq 10$.

\bigskip

Yet another issue is that the level set propagation of smallness estimate only works provided that we have control for the diameter of the level sets. Namely, it only works provided that we have the spectral coefficient estimate $\langle  2-y_2^2,u \rangle_{\mathcal{H}}\geq c |\tau|^{-1}$
for some $c>0$. To address this, we reconsider the classical Merle-Zaag analysis, and introduce a new variant that we call Merle-Zaag switch dynamics. It turns out that there is some switch time $\tau_s(X)$, where the spectral coefficients $\vec{\alpha}=(\alpha_1,\alpha_2,\alpha_3)$ with respect to the eigenfunctions $y_1^2-2$, $y_2^2-2$ and $2y_1y_2$ reach some definite size $\kappa_0>0$. Specifically, we have
\begin{equation}
 -\alpha_2 \geq \tfrac12\kappa_0 |\tau|^{-1}\quad \mathrm{for}\;\; \tau \leq \tau_s(X)
 \qquad\mathrm{and} \qquad
 |\vec{\alpha}|\leq \kappa_0 |\tau|^{-1}\quad \mathrm{for}\;\; \tau_s(X) < \tau \leq \tau_\ast
\end{equation}
So for $\tau\leq \tau_s(X)$ we have some definite inwards quadratic bending, and thus can apply the level set propagation of smallness estimate. On the other hand, for $\tau_s(X) < \tau \leq \tau_\ast$ we get a graphical radius much larger than $\sqrt{2|\tau|}$, so instead of \eqref{MZ_alt_intro1} we can get an estimate with much smaller error terms, and we are fine as well. 
\bigskip

Let us now sketch how we can establish our differential neck theorem by combining the above ingredients. We can assume that $\tau\leq \tau_s(X)$, since in the easy case $\tau_s(X) < \tau \leq \tau_\ast$ there is no iteration needed. Using \eqref{nu1_decay} and the level set propagation of smallness, we get decay for the error term from \eqref{eq_omega_intro}, specifically we get
$\omega(\tau)\leq C|\tau|e^{\frac{1}{10}\tau}$.
Feeding this back into \eqref{MZ_alt_intro1}, we can improve \eqref{decay_w} to $\| w\|_{\mathcal{H}}\leq C e^{\frac{3}{10}\tau}$, which in turn yields a better decay for $\omega(\tau)$, etc. Bootstrapping this argument for several rounds we eventually obtain enough decay, so that considering the evolution of $a_+(\tau)=\langle w(\tau),1\rangle_{\mathcal{H}}$ we can obtain a weak version of the differential neck theorem. Specifically, we can show that there is some $a=a(\mathcal{M})\in\mathbb{R}$, such that
\begin{equation}\label{eq_weakDNT_intro}
\big\|w-ae^{\frac{1}{2}\tau}\big\|_{\mathcal{H}}\leq\frac{Ce^{\frac{1}{2}\tau}}{|\tau|^{\frac{1}{2}}} \left(|a|+e^{\frac{1}{10}\tau}\right).
\end{equation}
Next, if we had $a=0$, then $w$ would decay strictly faster than $e^{\frac{1}{2}\tau}$. We then show, that we could propagate this super-smallness, and argue that this would allow us to put super-long barriers, yielding a contradiction with the fact that the cap distance is bounded by $C|t|$. Finally, using the weak differential neck theorem as input, we can further improve our propagation of smallness estimates, and after some more rounds of bootstrap we can eventually improve the $e^{\frac{1}{10}\tau}$ term  from \eqref{eq_weakDNT_intro} to the desired  $e^{\frac{2}{3}\tau}$ term.

\bigskip

Finally, let us outline how we can establish the classification theorem using the differential neck theorem. By scaling we can assume $a(\mathcal{M})=1/\sqrt{2}$. There is a unique point $p_t=p_t^{-}\in M_t$, where $x_1$ is minimal. In case $M_t$ is compact, there is also a unique point $p_t^+$, where $x_1$ is maximal. Denote by $\mathcal{T}(h)$ the time when a cap arrives at level $x_1=h$.
This gives rise to a unique space-time point $P_h=(p^\ast_{\mathcal{T}(h)},\mathcal{T}(h))$, which we call the cap at level $h$. If the bubble-sheet scale $Z(P_{h_i})$ is bounded along some sequences $h_i\to \pm \infty$, then it is quite easy to conclude. Roughly speaking, in the compact case around $P_{h_i}$ with $h_i\to +\infty$ the slope would have the wrong sign, and in the noncompact case via Hamilton's Harnack inequality \cite{Hamilton_Harnack} we can show that $H(P_h)\equiv 1$, since for $h_{i}\to \pm \infty$ we have convergence to translators with differential neck constant $a=1/\sqrt{2}$, hence with speed $1$. Ruling out the potential scenario that $Z(P_{h_i})\to \infty$ is more subtle. In that scenario, the ratio between the bubble-sheet scale and the curvature scale would go to infinity, namely $(ZH)(P_{h_i})\to \infty$. Fortunately, there are also some helpful features, specifically rescaling by $H(P_{h_i})$ around $P_{h_i}$ we would have convergence to the 3d-bowl, and rescaling by $Z(P_{h_i})^{-1}$ we would have convergence to $\mathbb{R}\times$2d-oval. The latter in turn yields $Q(P_h)\leq C Z(P_h)$, where $Q$ denotes the quadratic scale, i.e. the scale from which the inwards quadratic bending in $y_2$-direction starts to holds. To get a grasp on the potential scenario of unbounded bubble-sheet scale we consider the eccentricity at level $h$,
\begin{equation}
\varsigma(h,t)=\frac{\pi}{4}\frac{\mathcal{W}^2}{\mathcal{A}}(h,t),
\end{equation}
defined in terms of the width and the maximal section area at level $h$, namely
\begin{equation}
\mathcal{W}(h,t)= \sup_{M_t\cap \{x_1=h\}}x_2-\inf_{M_t\cap \{x_1=h\}}x_2 \quad \mathrm{and}\quad
\mathcal{A}(h,t)=\sup_{x}\mathcal{H}^2\!\left(K_t\cap \{x_1=h,x_2=x\}\right).
\end{equation}
Using the differential neck theorem, and some further propagation of smallness arguments, we show the translator ratio $\Theta=-\nu_1/H$ is close to $1$ on the whole level set whenever the eccentricity is large. Specifically, we prove that there are $Z_\star<\infty$ and $N=N(\eps)<\infty$, such that if $Z(P_h)\geq Z_\star$, then for every height $h$ and time $t\leq \mathcal{T}(h)$ satisfying $\varsigma(h,t) \geq N+\frac{7}{4}\log Z(P_h)$ we have
\begin{equation}
\sup_{\{x_1=h\}} \left|\Theta(x,t)-1\right|\leq \varepsilon .
\end{equation}
Taking also into account the $h$-evolutions of $\mathcal{A}$ and $\log \mathcal{W}$, this allows us to rule out exotic ovals, and also allows us to show that in the noncompact case we have $\lim_{h\to -\infty} H(P_h)\geq 1$, since otherwise there would not be enough room for the eccentricity to decrease to its value comparable to $1$ near the cap. Finally, arguing similarly as in \cite[Section 6]{BC}, we show that $H(P_h)\leq 1$ for all $h$ sufficiently negative, which in light of the rigidity case of Hamilton's Harnack inequality \cite{Hamilton_Harnack}, allows us to conclude.

\bigskip

\noindent\textbf{Structure of the paper.} In Section \ref{sec_not_and_prel}, we collect some notation and preliminaries. In Section \ref{sec_MZ_switch}, we establish our Merle-Zaag type switch dynamics. In Section \ref{sec_cyl}, we introduce elongated barriers and apply them to find long cylindrical regions. In Section \ref{sec_aniso_propag}, we propagate smallness of $|\nu_1|$ over elongated cylindrical regions. In Section \ref{sec_diff_MZ}, we carry out a Merle-Zaag type analysis for the slope function $u_1$. In Section \ref{sec_cap_and_co}, we combine the decay from the differential Merle-Zaag analysis and the elongated barriers to derive a lower bound for the cap distance and some further consequences. In Section \ref{sec_level_set_propagation}, we show that smallness of $|\nu_1|$ propagates over the entire level sets including the tip regions. In Section \ref{sec_DNT}, we combine our results from the previous sections to establish our differential neck theorem. In Section \ref{sec_bdd}, we complete the classification assuming that the bubble-sheet scale is bounded. Finally, in Section \ref{sec_unbdd}, we rule out the potential scenario of unbounded bubble-sheet scale.

\bigskip

\noindent\textbf{Acknowledgments.}
KC has been supported by the KIAS Individual Grant MG078902, an Asian Young Scientist Fellowship, and the National Research Foundation (NRF) grants RS-2023-00219980 and RS-2024-00345403 funded by the Korea government (MSIT). RH has been supported by the NSERC Discovery grants RGPIN-2016-04331 and RGPIN-2023-04419, and a Sloan Research Fellowship. We are very grateful to Wenkui Du and Or Hershkovits for countless helpful discussions on ancient noncollapsed flows in $\mathbb{R}^4$, and for their fundamental contributions in our collaborations on many closely related papers.

\bigskip

\section{Notation and preliminaries}\label{sec_not_and_prel}

Throughout this paper, $M_t$ denotes an ancient noncollapsed mean curvature flow in $\mathbb{R}^4$. By \cite{Brendle_inscribed,HK_inscribed}, the noncollapsing holds with $\alpha=1$. If $M_t$ splits off a line, then by \cite{BC,ADS2} it must be $\mathbb{R}^3$, $\mathbb{R}\times S^2$, $\mathbb{R}^2\times S^1$, $\mathbb{R}\times$2d-bowl or $\mathbb{R}\times$2d-oval. We can thus assume from now on that $M_t$ does not split off a line. Equivalently, by \cite{HaslhoferKleiner_meanconvex} this means that $M_t$ is strictly convex. If the tangent flow at $-\infty$, c.f. \cite{CM_uniqueness,HaslhoferKleiner_meanconvex}, is a neck, i.e. if
$\lim_{\lambda \rightarrow 0} \lambda M_{\lambda^{-2}t}=\mathbb{R}\times S^{2}(\sqrt{-4t})$,
then by \cite{BC2,ADS2} the flow is a 3d rotationally symmetric bowl or a 3d rotationally symmetric oval. We can thus assume from now on that the tangent flow at $-\infty$ is a bubble-sheet, i.e.
\begin{equation}\label{bubble-sheet_tangent_intro}
\lim_{\lambda \rightarrow 0} \lambda M_{\lambda^{-2}t}=\mathbb{R}^{2}\times S^{1}(\sqrt{-2t}).
\end{equation}
Recall that, in the setting of Theorem \ref{thm_norm_form} (normal form), in case the convergence of the bubble-sheet function $u=u(y,\vartheta,\tau)$ is fast or slow, the classification has already been obtained in \cite{CHH_wing,DH_shape} and \cite{CDDHS}, respectively. We can thus assume that we are in the case of mixed convergence, namely
\begin{equation}\label{DH_asymp}
u= \frac{2-y_2^2}{\sqrt{8}|\tau|}+O(|\tau|^{-1-\gamma}).
\end{equation}
Our goal is to show that $M_t$ belongs to the one-parameter family of 3d oval-bowls constructed in \cite{HIMW}. To show this, by \cite{CHH_translator} it suffices to prove that $M_t$ is noncompact and selfsimilarly translating.\\

Recall from \cite[Section 2.1]{DH_shape} that the evolution over $\Gamma=\mathbb{R}^2\times S^1(\sqrt{2})$ is governed by the Ornstein-Uhlenbeck operator
\begin{equation}\label{prel_OU_res}
\mathcal L=\partial_{y_1}^2+\partial_{y_2}^2-\tfrac{y_1}{2} \partial_{y_1}-\tfrac{y_2}{2} \partial_{y_2}+\tfrac{1}{2} \partial_{\vartheta}^2+1,
\end{equation}
and also recall that we have the associated Gaussian $L^2$-space
\begin{equation}
\mathcal{H}=L^2\left(\Gamma,(4\pi)^{-3/2}e^{-|q|^2/4}dq\right)=\mathcal{H}_+\oplus \mathcal{H}_0\oplus \mathcal{H}_{-},
\end{equation}
where 
\begin{equation}
\mathcal{H}_+=\textrm{span}\{ 1,y_1,y_2,\cos \vartheta, \sin \vartheta\},
\end{equation}
and 
\begin{equation}
\mathcal{H}_0=\textrm{span}\{ y_1^2-2,y_2^2-2,y_1y_2,y_1\cos\vartheta,y_1\sin\vartheta,y_2\cos\vartheta,y_2\sin\vartheta\}.
\end{equation}
As usual, we denote by $p_0$ and $p_\pm$ the projection to the neutral and unstable/stable eigenspace.\\

 Given any space-time point $X=(x,t)\in\mathcal{M}$, we consider the (centered) renormalized flow 
\begin{equation}
\bar{M}_\tau^X=e^{\frac{\tau}{2}}(M_{t-e^{-\tau}}-x),
\end{equation}
To capture how our estimates degenerate as the bubble-sheet scale becomes large, it is useful to define
\begin{equation}
\hat Z(X)=\max\{Z(X),1\}.
\end{equation}
This quantity $\hat{Z}(X)$ will frequently appear in the statements of our results, but on the other hand in the proofs by scaling we can usually reduce to the case $Z(X)\leq 1$, namely we can assume $\hat{Z}(X)=1$.
For $\tau\leq -2\log \hat{Z}(X)$ we can locally express $\bar{M}_\tau^X$ as graph of a bubble-sheet function $u=u^X(y,\vartheta,\tau)$, namely
\begin{equation}
\left\{ q+u(q,\tau)\nu_{\Gamma}(q) \, : \, q\in \bar{\Omega}_\tau \right\} \subset \bar{M}_\tau^X\, ,
\end{equation}
where $\nu_{\Gamma}$ is the outwards unit normal, and where by \cite[Proposition 2.5]{DH_shape} the domains $\bar{\Omega}_\tau\subset \Gamma$ satisfy
\begin{equation}
\big\{  |y|/\hat{Z}(X) \leq |\tau+2\log\hat{Z}(X)|^{4\gamma}  \big\}\subset \bar{\Omega}_\tau\qquad \mathrm{for }\,\, \tau\leq \tau_\ast-2\log \hat{Z}(X).
\end{equation}

\begin{convention}[universal constants]
Throughout this paper, we use the convention that $\tau_\ast >-\infty$ and $C<\infty$ denote universal constants that may be adjusted finitely many times as needed or convenient.
\end{convention}

\noindent To conclude this preliminaries section, let us recall two basic facts that will be used frequently:

First, if $w$ is a bounded solution of a uniformly parabolic equation, then by standard parabolic estimates its $C^k$-norm on any compact set is controlled by its $\mathcal{H}$-norm on a somewhat larger set, for example
\begin{equation}
\| w(\cdot,\tau) \|_{C^{k}(\{ |y| \leq 100 \} )}\leq C \sup_{\tau' \in [\tau-1,\tau]}\| w(\cdot,\tau')1_{ \{ |y| \leq 110 \}} \|_{\mathcal{H}},
\end{equation}
see e.g. \cite[Appendix A]{CHH_wing}. In particular, such estimates hold for $w=u$ and $w=u_{y_1}$.

Second, by Hamilton's Harnack inequality \cite{Hamilton_Harnack}, which applies in our setting thanks to \cite{DS,BLL}, we have $\partial_t H\geq 0$, and hence can control the speed at earlier times. In particular, if  $\partial K_t$ is an ancient noncollapsed flow normalized such that $0\in K_0$, then at any $t<0$ at any $p\in \partial K_t$ we have
\begin{equation}\label{Harnack_curv}
H(p,t)\leq \frac{\langle p,\nu(p,t) \rangle }{|t|}.
\end{equation}

\bigskip

 \section{Merle-Zaag type switch dynamics}\label{sec_MZ_switch}
 
In this section, we carry out a quantitative Merle-Zaag type analysis. We fix a cutoff function $\chi:\mathbb{R}_+\to [0,1]$, such that  $\chi(r)=1$ for $r\leq 1$ and $\chi(r)=0$ for $r\geq 2$, and work with the truncated graph function
\begin{equation}\label{eq_trunc_u}
\hat{u}(y,\vartheta,\tau)=u^X(y,\vartheta,\tau)\chi(|y|/|\tau+2\log \hat{Z}(X)|^{\gamma}).
\end{equation}
Consider
\begin{equation}
U_\pm(\tau)=\left\|p_\pm\hat{u}(\cdot,\tau)\right\|_{\mathcal{H}}^2
\quad\mathrm{and}\quad
U_0(\tau)=\left\|p_0\hat{u}(\cdot,\tau)\right\|_{\mathcal{H}}^2.
\end{equation}
Then, similarly as in \cite[Equation (2.24)]{DH_shape}, for $\tau\leq\tau_\ast-2\log \hat{Z}(X)$ we have the Merle-Zaag type ODEs
\begin{align}
&\tfrac{d}{d\tau} U_+ \geq U_+ - C_0|\tau+2\log \hat{Z}(X)|^{-\gamma} (U_+ + U_0 + U_-), \nonumber\\ 
&\left | \tfrac{d}{d\tau} U_0 \right | \leq C_0|\tau+2\log \hat{Z}(X)|^{-\gamma}  (U_+ + U_0 + U_-), \label{eq:U_system}\\ 
&\tfrac{d}{d\tau} U_- \leq -U_- + C_0|\tau+2\log \hat{Z}(X)|^{-\gamma} (U_+ + U_0 + U_-). \nonumber
\end{align}

\begin{lemma}[{quantitative Merle-Zaag lemma, c.f. \cite{DH_no_rotation}}]\label{prop:domanance_criteria}
There exists  $\tau_+(X)\leq \tau_\ast-2\log \hat{Z}(X)$, such that
\begin{equation} \label{eq:sharp_unstable_mode}
U_+ \geq |\tau+2\log \hat{Z}(X)|^{-\gamma/2} U_0 \qquad \textrm{ if } \,\,  \tau_+(X)<\tau\leq \tau_\ast-2\log \hat{Z}(X),
\end{equation}
and
\begin{equation}
U_+ \leq |\tau+2\log \hat{Z}(X)|^{-\gamma/2} U_0 \qquad \textrm{ if } \,\,  \tau\leq \tau_+(X).
\end{equation}
Moreover, for all $\tau\leq \tau_\ast-2\log \hat{Z}(X)$ we also have the estimate $U_{-} \leq  2C_0|\tau+2\log \hat{Z}(X)|^{-\gamma}(U_0+U_{+})$.
\end{lemma}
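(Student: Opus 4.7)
The plan is to extract both conclusions from the Merle--Zaag system \eqref{eq:U_system}. Set $\varepsilon(\tau) := C_0\,|\tau + 2\log\hat Z(X)|^{-\gamma}$, so that \eqref{eq:U_system} is a standard Merle--Zaag ODE system with small, $\tau$-dependent coupling $\varepsilon(\tau)\to 0$. First I would establish the auxiliary estimate $U_-\leq 2\varepsilon(U_++U_0)$ for all $\tau\leq\tau_\ast-2\log\hat Z(X)$. This is the quantitative version of the classical Merle--Zaag lemma, and I would argue by contradiction: at any putative time $\tau_1$ where the strict reverse inequality holds one has $U_++U_0+U_- \leq (1+(2\varepsilon)^{-1})U_-$, so the third line of \eqref{eq:U_system} forces an exponential decay estimate for $U_-$ of the form $\dot U_- \leq -c\,U_-$ for a universal $c>0$ once $\tau_\ast$ is chosen small enough. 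Combined with the crude bounds $\dot U_+ \geq (1-O(\varepsilon))U_+$ and $|\dot U_0|\leq O(\varepsilon)(U_++U_0)$ from the first two lines, a Duhamel-type propagation argument in the spirit of \cite{DH_no_rotation} pushes the good bound $U_-\leq 2\varepsilon(U_++U_0)$ forward from $\tau=-\infty$ (where all three quantities vanish by graphical convergence to the round bubble-sheet) and contradicts the existence of $\tau_1$.

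With the $U_-$-estimate in hand, \eqref{eq:U_system} effectively reduces to a coupled two-mode system in $(U_+,U_0)$. I would then define the candidate switch time by
\begin{equation*}
\tau_+(X) \;:=\; \sup\bigl\{\tau\leq\tau_\ast-2\log\hat Z(X)\; :\; U_+(\tau)\leq |\tau+2\log\hat Z(X)|^{-\gamma/2}\,U_0(\tau)\bigr\},
\end{equation*}
with the convention $\sup\emptyset=-\infty$, and verify that this single time has the desired property. The key point is that once $U_+$ crosses the moving threshold $|\tau+2\log\hat Z|^{-\gamma/2}\,U_0$ it cannot drop below it again. For this I would study the ratio
\begin{equation*}
R(\tau) \;:=\; \frac{U_+(\tau)}{U_0(\tau)}\,|\tau + 2\log\hat Z(X)|^{\gamma/2}
\end{equation*}
and compute, using the reduced two-mode ODEs together with the $U_-$-estimate,
\begin{equation*}
\frac{\dot R}{R} \;\geq\; 1 - C\varepsilon(\tau) - \frac{\gamma/2}{|\tau+2\log\hat Z(X)|}
\end{equation*}
in the relevant regime $R\geq 1$, $U_+\leq U_0$. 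For $\tau\leq\tau_\ast-2\log\hat Z(X)$ with $\tau_\ast$ a suitably small universal constant the right-hand side is bounded below by, say, $1/2$, so $R$ strictly grows whenever $R\geq 1$. Therefore the level set $\{R=1\}$ is crossed at most once, which is precisely the dichotomy stated.

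The main obstacle is the quantitative stable-mode estimate in the first step. In the borderline regime the decay term $-U_-$ and the forcing $\varepsilon(U_++U_0+U_-)$ in \eqref{eq:U_system} are a priori of the same order, so one has to exploit both the $\gamma$-rate decay of $\varepsilon(\tau)$ and the slow variation of $U_++U_0$ on unit time-scales to close the Duhamel iteration and rule out a "last" bad time $\tau_1$. Once $U_-\leq 2\varepsilon(U_++U_0)$ is secured, the switch argument for $\tau_+(X)$ is a clean monotonicity calculation for the ratio $R$, and no further genuine difficulty is expected there.
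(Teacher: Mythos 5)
Your proposal follows essentially the same route as the paper: the stable-mode bound comes from the same crossing computation (the paper packages it as monotonicity of $f=U_--2\varepsilon(U_++U_0)$ on the set $\{f\geq 0\}$, which yields a contradiction using only $U_-\to 0$ as $\tau\to-\infty$ — cleaner than a Duhamel iteration, and it sidesteps the one loose point in your sketch, namely that mere vanishing of all three modes at $-\infty$ does not by itself initialize the ratio inequality for a forward propagation), and your definition of $\tau_+(X)$ together with the once-crossing argument for $R$ is exactly the paper's $g=U_+-|\tau+2\log \hat{Z}(X)|^{-\gamma/2}U_0$ and the sign computation of $g'$ at $g=0$. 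One small correction: in the regime $R\geq 1$ the error in your bound for $\dot R/R$ is of order $\varepsilon/\delta\sim C|\tau+2\log \hat{Z}(X)|^{-\gamma/2}$ rather than $C\varepsilon$, since $(U_++U_0)/U_+$ can be as large as $1+|\tau+2\log \hat{Z}(X)|^{\gamma/2}$; this still gives $\dot R/R\geq 1/2$ after adjusting $\tau_\ast$, so the conclusion is unaffected.
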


\begin{proof} By scaling we can assume $\hat{Z}(X)=1$.
Set $\eps(\tau)=C_0|\tau|^{-\gamma}$. Similarly as in the proof of the Merle-Zaag ODE lemma \cite{MZ}, we consider the quantity $f=U_{-} -2\eps(U_{+}+U_{0})$. By the ODE system \eqref{eq:U_system} at any time where $f\geq 0$ we get
\begin{align}\label{neg_too_large_der}
\tfrac{d}{d\tau}f \leq -U_- +\eps(1+4\eps)(U_{+}+U_0+U_-)-2\eps U_{+}
\leq  -U_-+\eps(1+4\eps)\left(1+\tfrac{1}{2\eps}\right)U_- \leq 0.
\end{align}
Hence, if it was the case that $f(\tau_0)>0$ for some $\tau_0\leq \tau_\ast$, then it would follow that $f\geq f(\tau_0)$ on $(-\infty, \tau_0]$, contradicting the fact that $\lim_{\tau\to -\infty}U_-(\tau)=0$. This shows that for all $\tau\leq \tau_\ast$ we have  
\begin{equation}\label{zero_dom_mz2}
U_{-} \leq  2\eps(U_0+U_{+}).
\end{equation}  
Thus, we can reduce the ODE system \eqref{eq:U_system} to
\begin{align} \label{red_ode_sys}
\tfrac{d}{d\tau} U_+ \geq U_+ - 2\eps (U_+ + U_0 ), \qquad
\left | \tfrac{d}{d\tau} U_0 \right | \leq 2\eps \, (U_+ + U_0).
\end{align}
Now, consider the quantity $g=U_+ -\delta U_0$, where $\delta(\tau)=|\tau|^{-\gamma/2}$, and define
\begin{equation}
\tau_+(X)=\sup \{\tau\leq \tau_\ast : g(\tau)\leq 0\}.
\end{equation}
Here, the supremum is indeed taken over a nonempty set thanks to \eqref{DH_asymp}. Finally,  by the reduced ODE system \eqref{red_ode_sys} at any time when $g=0$ we have
\begin{equation}
\tfrac{d}{d\tau}g \geq U_+-2\eps (1+\delta) (U_++U_0)-\tfrac{\gamma}{2 |\tau|}\delta U_0 \geq \tfrac{1}{2}\delta U_0- 4\eps U_0\geq 0,
\end{equation}
where last two inequalities are justified by adjusting $\tau_\ast$ as needed. This implies the assertion.
\end{proof}

In the following, we consider  the spectral coefficients $\vec{\alpha}=(\alpha_1,\alpha_2,\alpha_3)$ defined by
\begin{equation}\label{def_exp_coeffs}
\alpha_1 = \frac{\langle  y_1^2-2,\hat{u} \rangle_{\mathcal{H}}}{ \| y_1^2-2 \|_{\mathcal{H}}^{2}},\qquad \alpha_2 = \frac{\langle  y_2^2-2,\hat{u} \rangle_{\mathcal{H}}}{ \| y_2^2-2 \|_{\mathcal{H}}^{2}},\qquad
\alpha_3 = \frac{\langle  2y_1y_2,\hat{u} \rangle_{\mathcal{H}}}{ \| 2y_1y_2 \|_{\mathcal{H}}^{2}}.
\end{equation}
Similarly, via the eigenfunctions $y_1\cos\vartheta, y_1\sin\vartheta,y_2\cos\vartheta,y_2\sin\vartheta$ one can define further coefficients $\alpha_4,\ldots,\alpha_7$, but thanks to the almost symmetry from \cite[Proposition 2.7]{DH_shape} they are tiny, specifically
\begin{equation}\label{small_rot_spec_coeff}
\sum_{i=4}^7|\alpha_i|\leq \frac{C}{|\tau+2\log \hat{Z}(X)|^{100}}.
\end{equation}

 \begin{theorem}[spectral coefficients]\label{thm_spec_coeff}
 For all $\kappa>0$ there exists $\tau_\kappa>-\infty$, such that for all $\tau\leq\tau_\kappa -2\log \hat{Z}(X)$ we have
 \begin{equation}\label{spec_coeffs_thm_eq}
 -\frac{1+\kappa}{\sqrt{8}|\tau+2\log \hat{Z}(X)|} \leq \alpha_2\leq \frac{\kappa}{|\tau+2\log \hat{Z}(X)|}\quad\mathrm{and}\quad \max\{ |\alpha_1|,|\alpha_3|\}\leq \frac{\kappa}{|\tau+2\log \hat{Z}(X)|}.
\end{equation}
 \end{theorem}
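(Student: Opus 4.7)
The plan is to extract the coefficient bounds directly from the normal form \eqref{DH_asymp}. First I reduce to $\hat{Z}(X)=1$ by scaling, as is standard in the paper. After this reduction, applying Theorem \ref{thm_norm_form} to the centered flow $\bar{M}_\tau^X$ (the mixed convergence case is intrinsic to $\mathcal{M}$, since it is determined by the tangent flow at $-\infty$, which is translation-invariant, and the fast and slow branches are excluded by our standing hypothesis), we obtain the Gaussian $L^2$ expansion
\begin{equation*}
u^X=\frac{2-y_2^2}{\sqrt{8}\,|\tau|}+O(|\tau|^{-1-\gamma}).
\end{equation*}

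Passing from $u^X$ to the truncated function $\hat{u}$ defined in \eqref{eq_trunc_u} alters the spectral projections only by a super-polynomially small quantity, because the Gaussian measure of $\{|y|\geq|\tau|^\gamma\}$ decays faster than any power of $|\tau|$ while $u^X$ is controlled on that region by the graphicality bound from \cite[Proposition 2.5]{DH_shape}. The projection computation is then immediate: the neutral eigenfunctions $y_1^2-2$, $y_2^2-2$, $2y_1y_2$ are mutually orthogonal in $\mathcal{H}$, and the leading term $(2-y_2^2)/(\sqrt{8}\,|\tau|)$ pairs non-trivially only with $y_2^2-2$. Thus
\begin{equation*}
\alpha_2=-\tfrac{1}{\sqrt{8}\,|\tau|}+O(|\tau|^{-1-\gamma}),\qquad \alpha_1=O(|\tau|^{-1-\gamma}),\qquad \alpha_3=O(|\tau|^{-1-\gamma}),
\end{equation*}
and choosing $\tau_\kappa$ sufficiently negative so that the $O(|\tau|^{-1-\gamma})$ error falls below $\kappa/(\sqrt{8}\,|\tau|)$ yields the three stated inequalities.

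The single point requiring care is that the error terms above are uniform in $X$ after the scaling reduction, equivalently that the implicit constants in \eqref{DH_asymp} depend only on $\mathcal{M}$ and not on the centering point. This uniformity is precisely what the quantitative Merle-Zaag Lemma \ref{prop:domanance_criteria} supplies: combining $U_-\leq C|\tau|^{-\gamma}(U_0+U_+)$ with the switching criterion $U_+\leq|\tau|^{-\gamma/2}U_0$ valid for $\tau\leq\tau_+(X)$ pins down $\hat{u}$ up to its neutral projection with a uniform error rate. The neutral projection is then tracked by a quadratic ODE system for $(\alpha_1,\alpha_2,\alpha_3)$, obtained by projecting the PDE onto $y_i^2-2$ and $2y_1y_2$, whose only trajectory compatible with \eqref{DH_asymp} is the one singled out above. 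No new techniques beyond the Merle-Zaag lemma just proved are needed, but this bookkeeping to ensure uniformity in $X$ is the main obstacle in the formal proof.
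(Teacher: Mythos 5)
There is a genuine gap. Your main derivation treats the normal form \eqref{DH_asymp} as if it held for the recentered functions $u^X$ with error constants uniform in $X$ once one scales to $\hat{Z}(X)=1$, and that uniform statement is essentially what Theorem \ref{thm_spec_coeff} exists to prove. Scaling to $\hat{Z}(X)=1$ only normalizes $\eps_0$-closeness to the cylinder at the single time $\tau_\ast$; it does not control the time from which the $O(|\tau|^{-1-\gamma})$ expansion becomes valid for $\bar{M}^X_\tau$, nor its implicit constant, so ``choosing $\tau_\kappa$ sufficiently negative'' does not yield a threshold of the form $\tau_\kappa-2\log\hat{Z}(X)$ with $\tau_\kappa$ universal. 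Your fallback attribution of the missing uniformity to Lemma \ref{prop:domanance_criteria} is also incorrect: that lemma only bounds the stable mode and decides which of $U_+$, $U_0$ dominates; it gives no information on the size or sign structure of the neutral coefficients $\vec{\alpha}$, which is where the content of the theorem lies. In the paper, \eqref{DH_asymp} enters only qualitatively (to see that the switch time $\tau_{\kappa,s}(X)$ is finite and that the normalized trace/determinant trajectory tends to $(1/2,0)$), while the quantitative, $X$-uniform bounds come from a genuine dynamical argument for the spectral ODEs \eqref{odes0}: the convexity-based a priori estimates \eqref{a_priori_convexity}, the trace estimate of Claim \ref{claim_trace}, the phase-plane analysis showing the trajectory can never approach the attracting fixed point $(1,1)$ and hence satisfies $x\leq\tfrac12+\delta$ and $y\leq\delta$, the variation-of-constants decay for $z=|\tau|\alpha_3$, and the two-case analysis according to whether $\tau_{\kappa,s}(X)<\tau_\kappa$, with the anchor at the universal time $\tau_\ast$ (via the exponential smallness \eqref{first_observation}) ensuring the switch cannot occur too late. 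Your closing remark gestures at this ODE system but dismisses it as bookkeeping and supplies none of it, so the main body of the proof is missing.

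A secondary error: the parenthetical claim that the mixed-convergence case for the recentered flow is ``determined by the tangent flow at $-\infty$, which is translation-invariant'' is false as stated — all three cases of Theorem \ref{thm_norm_form} have the same bubble-sheet tangent flow \eqref{bubble-sheet_tangent_intro1.1}; that the case (and the distinguished $y_2$-direction) is independent of the center point is a separate fact needing its own argument, not a consequence of translation invariance of the tangent flow.
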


\begin{proof}By scaling we can assume $\hat{Z}(X)=1$.
Consider the time $\tau_+(X)$ from Lemma \ref{prop:domanance_criteria} (quantitative Merle-Zaag lemma). Then, taking also into account the evolution of $U_+$ from the ODE system \eqref{eq:U_system}, we have
\begin{equation}\label{first_observation}
U_0\leq  |\tau|^{\gamma/2}U_+\leq |\tau|^{\gamma/2}e^{\frac{1}{2}(\tau-\tau_\ast)}\qquad\textrm{for}\;\; \tau_+(X)<\tau\leq\tau_\ast.
\end{equation}
Assume from now on that $\tau\leq\tau_+(X)$. Then, the stable and unstable mode are controlled by
\begin{equation}
U_+^{1/2}+U_{-}^{1/2}\leq C |\vec{\alpha}|/|\tau|^{\gamma/4}.
\end{equation}
Hence, arguing similarly as in \cite[Proposition 3.1]{DH_shape}, we obtain the spectral ODEs
\begin{align}\label{odes0}
   |\tfrac{d}{d\tau}{\alpha}_{1}&+\sqrt{8}(\alpha^2_{1}+\alpha_{3}^2)|\leq C(|\vec{\alpha}|^2/|\tau|^{\gamma/4}+|\tau|^{-10}),\nonumber\\
   |\tfrac{d}{d\tau}{\alpha}_{2}&+\sqrt{8}(\alpha^2_{2}+\alpha_{3}^2)|\leq C(|\vec{\alpha}|^2/|\tau|^{\gamma/4}+|\tau|^{-10}),\\
   |\tfrac{d}{d\tau}{\alpha}_{3}&+\sqrt{8}(\alpha_{1}+\alpha_2)\alpha_{3}|\leq C(|\vec{\alpha}|^2/|\tau|^{\gamma/4}+|\tau|^{-10}).\nonumber
\end{align}
Moreover, similarly as in \cite[Proposition 3.3]{DH_shape}, thanks to convexity we get the estimates
\begin{equation}\label{a_priori_convexity}
\max\{ \alpha_1,\alpha_2\}\leq C(|\vec{\alpha}|/|\tau|^{\gamma/4}+|\tau|^{-10}),\qquad \alpha_3^2-\alpha_1\alpha_2 \leq C(|\vec{\alpha}|^2/|\tau|^{\gamma/4}+|\tau|^{-10}).
\end{equation}
Now, given any small $\kappa>0$, we consider a switch time, which we define by
\begin{equation}\label{def__tau_kappa_s}
\tau_{\kappa,s}(X)=\inf \left\{ \tau\leq \tau_+(X) \; : \; |\vec{\alpha}(\tau')|\leq \frac{\kappa}{|\tau'|}\;\; \mathrm{whenever}\; \tau\leq\tau'<\tau_+(X) \right\}.
\end{equation}
Note that $\tau_{\kappa,s}(X)$ is a finite number in light of \eqref{DH_asymp}. We can assume from now on that $\tau\leq\tau_{\kappa,s}(X)$, since otherwise there is nothing to prove. 
 To proceed, we need to control the trace $S=\alpha_1+\alpha_2$.
 
\begin{claim}[trace estimate]\label{claim_trace} There exists some $\tau_\kappa>-\infty$, such that for all $\tau\leq  \min\{\tau_{\kappa,s}(X),\tau_{\kappa}\}$ we have
\begin{equation}
-\frac{2}{|\tau|}\leq S(\tau)\leq -\frac{\kappa/3}{|\tau|}.
\end{equation}
\end{claim}

\begin{proof}
First, by definition of $\tau_{\kappa,s}(X)$ we have $|\vec{\alpha}(\tau_{\kappa,s}(X))|\geq \kappa/|\tau_{\kappa,s}(X)|$, and hence \eqref{a_priori_convexity} gives $S(\tau_{\kappa,s}(X))\leq -(\kappa/3)/|\tau_{\kappa,s}(X)|$. Now, thanks to \eqref{odes0} and \eqref{a_priori_convexity}
as long as $S(\tau)\leq -(\kappa/4)/|\tau|$ we have $-\tfrac{d}{d\tau}S \leq 5S^2$. Integrating this yields
\begin{equation}
-\frac{1}{S(\tau)}\leq\frac{|\tau_{\kappa,s}(X)|}{\kappa/3}+5|\tau_{\kappa,s}(X)-\tau| \leq\frac{|\tau|}{\kappa/3}
\end{equation}
for all $\tau\leq\tau_{\kappa,s}(X)$, as desired. Similarly, observing that $-\tfrac{d}{d\tau}S \geq S^2$, for all $\tau\leq\tau_{\kappa,s}(X)$ we get
\begin{equation}
-\frac{1}{S(\tau)}\geq -\frac{1}{S(\tau_{\kappa,s}(X))}+|\tau_{\kappa,s}(X)-\tau|.
\end{equation}
If $\tau_{\kappa,s}(X)<\tau_+(X)$, then we have $|\vec{\alpha}(\tau_{\kappa,s}(X))|= \kappa/|\tau_{\kappa,s}(X)|$, hence $-S(\tau_{\kappa,s}(X))\leq 2\kappa/|\tau_{\kappa,s}(X)|$, which yields $-1/S(\tau)\geq |\tau|/2$ for all $\tau\leq\tau_{\kappa,s}(X)$. On the other hand, if $\tau_{\kappa,s}(X)=\tau_+(X)$, then by \eqref{first_observation} it must be the case that $|\tau_{\kappa,s}(X)|\leq C_\kappa |\tau_\ast|$ for some $C_\kappa<\infty$, and thus we obtain $-1/S(\tau)\geq|\tau|/2$ for $\tau\leq 2C_{\kappa}\tau_\ast$. Choosing $\tau_\kappa=2C_\kappa \tau_\ast$, this concludes the proof of the claim.
\end{proof}

Continuing the proof of the theorem, using \eqref{odes0} and \eqref{a_priori_convexity} and Claim \ref{claim_trace} (trace estimate) we infer that for $\tau\leq  \min\{\tau_{\kappa,s}(X),\tau_{\kappa}\}$ the trace $S=\alpha_1+\alpha_2$ and the determinant $D=\alpha_1\alpha_2-\alpha_3^2$ evolve by
\begin{align}
   |\tfrac{d}{d\tau}S+\sqrt{8}(S^2-2D)|\leq C|\tau|^{-2-\gamma/4},\qquad
   |\tfrac{d}{d\tau}D+\sqrt{8}SD|\leq C|\tau|^{-3-\gamma/4},
\end{align}
and satisfy the a priori estimates
\begin{equation}
\kappa/3\leq -|\tau|S \leq 2,\qquad -C|\tau|^{-2-\gamma/4}\leq D \leq \tfrac{1}{4}S^2.
\end{equation}
In fact, similarly as in \cite{DH_shape,DH_no_rotation}, it is useful to consider the normalized dimensionless variables $x=-\sqrt{2}|\tau|S$ and $y=8|\tau|^2D$.
Viewing them as a function of $\sigma=\log(-\tau)$, they evolve by
\begin{align}\label{odes2}
   \tfrac{d}{d\sigma}x=x+y-2x^2+O(e^{-\frac{\gamma}{4}\sigma}),\qquad
   \tfrac{d}{d\sigma}y=2y-2xy+O(e^{-\frac{\gamma}{4}\sigma}),
\end{align}
and stay confined in the region
\begin{equation}\label{eq_XY_confined}
\sqrt{2}\kappa/3 \leq x \leq 2\sqrt{2},\qquad -Ce^{-\frac{\gamma}{4}\sigma} \leq y \leq x^2.
\end{equation}

The vector field $V(x,y)=(x+y-2x^2,2y-2xy)$ has zeros at $(0,0)$, $(1/2,0)$ and $(1,1)$, but the one at $(0,0)$ does not concern us thanks to \eqref{eq_XY_confined}.
Note that equation \eqref{DH_asymp} tells us that
\begin{equation}\label{eq_limit_XY}
\lim_{\sigma\to \infty}(x(\sigma),y(\sigma))=(1/2,0).
\end{equation}
Hence, observing also that $(1,1)$ is a stable fixed point, it follows that the trajectory can never come too close to $(1,1)$. Namely, possibly after decreasing $\tau_\kappa$, there is an $\eps>0$, such that
\begin{equation}\label{eq_not_too_close}
|x(\sigma)-1|+|y(\sigma)-1|\geq 5\eps \qquad \forall \sigma\geq \log(-\min\{ \tau_{\kappa,s}(X),\tau_{\kappa}\}).
\end{equation}

\bigskip

Let us consider the case $\tau_{\kappa,s}(X) <\tau_\kappa$. Then, by \eqref{first_observation} and the definition of $\tau_{\kappa,s}(X)$ we have $|\vec{\alpha}(\tau_{\kappa,s}(X))|=\kappa/ |\tau_{\kappa,s}(X)|$. In particular, setting $\sigma_{\kappa,s}(X)=\log(-\tau_{\kappa,s}(X))$, we have $x(\sigma_{\kappa,s}(X))\leq 4\kappa$.
We now claim that
\begin{equation}\label{eq_improved_trace}
x(\sigma)\leq 1-\eps\qquad \forall\sigma\geq \sigma_{\kappa,s}(X).
\end{equation}
Indeed, if this failed then there would be $\sigma_0 > \sigma_{\kappa,s}(X)$, such that $x(\sigma_0)=1-\eps$ and $\tfrac{d}{d\sigma}|_{\sigma=\sigma_0}x\geq 0$. Together with \eqref{odes2}, this would tell us that $y(\sigma_0)\geq 1-3\eps$, which, remembering also $y\leq x^2$ from  \eqref{eq_XY_confined}, would contradict \eqref{eq_not_too_close}.
Having established \eqref{eq_improved_trace}, and choosing $\delta=\min\{\kappa/4,\gamma/16\}$ for later use, we claim that
\begin{equation}\label{eq_improved_det}
y(\sigma)\leq \delta \qquad \forall\sigma\geq \sigma_{\kappa,s}(X).
\end{equation}
Indeed, if this failed then $y$ would increase at rate $\tfrac{d}{d\sigma} y\geq \eps\delta$, contradicting \eqref{eq_limit_XY}. 
Having established \eqref{eq_improved_det}, we can now feed it back into the ODE for $x$. This yields the further improved trace estimate
\begin{equation}\label{eq_further_improved_trace}
x(\sigma)\leq \frac{1}{2}+\delta\qquad \forall\sigma\geq \sigma_{\kappa,s}(X),
\end{equation}
which, remembering also \eqref{a_priori_convexity}, establishes the desired bound
\begin{equation}\label{eq_alpha2_des_bd}
-\kappa\leq -|\tau|\alpha_2\leq (1+\kappa)/\sqrt{8}\qquad \forall\tau\leq\tau_{\kappa,s}(X).
\end{equation}
Next, to rule out rotations, similarly as in \cite{DH_no_rotation}, we consider $z=|\tau|\alpha_3$. It satisfies
\begin{equation}\label{ODE_for_Z}
\tfrac{d}{d\sigma}z=z-2xz+O(e^{-\frac{\gamma}{4}\sigma}).
\end{equation}
In particular, $|z|$ is almost monotone as long as $x\leq  1/2$. Considering the first time $\sigma_0$ when $x$ reaches $1/2-\delta/2$, we thus infer that
\begin{equation}
|z|(\sigma)\leq |z|(\sigma_0)+Ce^{-\frac{\gamma}{4}\sigma} \qquad \forall \sigma \in [\sigma_{\kappa,s}(X),\sigma_0].
\end{equation}
Moreover, observing that $|x-1/2|\leq \delta$ for all $\sigma\geq \sigma_0$, we can write \eqref{ODE_for_Z} in the form $\tfrac{d}{d\sigma} z=pz+q$, where $|p|\leq 2\delta$ and $|q|\leq Ce^{-\frac{\gamma}{4}\sigma}$ for all $\sigma\geq \sigma_0$. Taking also into account that $\lim_{\sigma\to \infty}e^{\frac{\gamma}{2}\sigma}z(\sigma)=0$ thanks to  \eqref{DH_asymp}, and remembering that $\delta\leq \gamma/16$, the variation of constants formula yields
\begin{equation}
|z|(\sigma)\leq \limsup_{\sigma'\to \infty}\left| e^{-\int_\sigma^{\sigma'} p} \int_{\sigma}^{\sigma'} q(\hat{\sigma}) e^{\int_{\hat{\sigma}}^{\sigma'} p} \, d\hat{\sigma} \right|
\leq C\limsup_{\sigma'\to \infty}\left| \int_{\sigma}^{\sigma'} e^{-\frac{\gamma}{4}\hat{\sigma}}e^{2\delta(\hat{\sigma}-\sigma)} \, d\hat{\sigma} \right|\leq Ce^{-\frac{\gamma}{8}\sigma}
\end{equation}
for all $\sigma\geq\sigma_0$. We have thus shown that
\begin{equation}\label{alpha3_decay}
|\alpha_3|\leq C|\tau|^{-1-\gamma/8} \qquad \forall\tau\leq\tau_{\kappa,s}(X).
\end{equation}
Finally, feeding this back into the ODE for $\alpha_1$, we infer that
\begin{equation}\label{alpha1_decay}
|\alpha_1|\leq C|\tau|^{-1-\gamma/16} \qquad \forall\tau\leq\tau_{\kappa,s}(X).
\end{equation}
To conclude, equations \eqref{eq_alpha2_des_bd}, \eqref{alpha3_decay} and \eqref{alpha1_decay} show that the estimate \eqref{spec_coeffs_thm_eq} holds for $\tau\leq\tau_{\kappa,s}(X)$. Moreover, we recall that thanks to \eqref{first_observation} and \eqref{def__tau_kappa_s}, the estimate \eqref{spec_coeffs_thm_eq} holds for $\tau_{\kappa,s}(X)\leq \tau \leq \tau_\kappa$ as well. Summarizing, we have thus shown, if $\tau_{\kappa,s}(X)< \tau_\kappa$, then the desired estimate \eqref{spec_coeffs_thm_eq} holds for all $\tau \leq \tau_\kappa$.

Lastly, if $\tau_{\kappa,s}(X)\geq \tau_\kappa$, then after waiting a controlled amount of time the trajectory will come close to $(1/2,0)$, and hence the above argument applies. Adjusting $\tau_\kappa$ once again, this finishes the proof of the theorem.
\end{proof}

\begin{convention}[switch parameter] We now fix a small constant $\kappa_0\in (0,10^{-5})$, and adjust $\tau_\ast$ to arrange that $\tau_\ast<\tau_{\kappa_0}$. From now on we work with the switch time defined using the parameter $\kappa_0$, namely
 \begin{equation}\label{def_switch_time}
\tau_s(X)=\inf \left\{ \tau\leq \tau_\ast-2\log \hat{Z}(X) \; : \; |\vec{\alpha}(\tau')|\leq \frac{\kappa_0}{|\tau'+2\log \hat{Z}(X)|}\;\; \mathrm{if}\; \tau'\in[\tau,\tau_\ast-2\log \hat{Z}(X)) \right\}.
\end{equation}
\end{convention}
Inspecting the above proof we obtain the following corollary:

\begin{corollary}[switch time]\label{cor_switch_time} With the switch time $\tau_s(X)$ defined in \eqref{def_switch_time} we have
\begin{equation}\label{sw_eq_1}
|\vec{\alpha}(\tau)|\leq \frac{\kappa_0}{|\tau+2\log \hat{Z}(X)|}\qquad \mathrm{ for }\;\; \tau_s(X) < \tau\leq \tau_\ast-2\log \hat{Z}(X),
\end{equation}
and
\begin{equation}\label{sw_eq_2}
  -\alpha_2\geq \frac{\kappa_0/2}{|\tau+2\log \hat{Z}(X)|}
  \quad{and}\quad
   |\alpha_1|+|\alpha_3|\leq \frac{C}{|\tau+2\log \hat{Z}(X)|^{1+\gamma/16}}
  \qquad \mathrm{ for }\;\; \tau \leq \tau_s(X).
 \end{equation}
Furthermore, if $\tau_s(X)<\tau_\ast-2\log \hat{Z}(X)$ then we also have the estimate
\begin{equation}
|\vec{\alpha}(\tau)|\leq \frac{100\kappa_0}{|\tau+2\log \hat{Z}(X)|} \qquad \mathrm{ for }\;\; \tau\in \left[\tau_s(X)+98(\tau_s(X)+2\log\hat{Z}(X)),\tau_s(X)\right].
\end{equation}
\end{corollary}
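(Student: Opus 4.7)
The plan is to extract all three assertions from the internal analysis already performed inside the proof of Theorem \ref{thm_spec_coeff} applied with $\kappa=\kappa_0$. By the convention fixed just before the corollary, we have arranged $\tau_\ast<\tau_{\kappa_0}$, so the internally defined $\tau_{\kappa_0,s}(X)$ from \eqref{def__tau_kappa_s} coincides with $\tau_s(X)$, and every intermediate estimate derived in that proof is valid throughout $\tau\leq\tau_\ast-2\log\hat{Z}(X)$. As in Theorem \ref{thm_spec_coeff}, I would first reduce by scaling to the case $\hat{Z}(X)=1$.

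Estimate \eqref{sw_eq_1} is immediate from the very definition of $\tau_s(X)$. For \eqref{sw_eq_2}, the upper bound on $|\alpha_1|+|\alpha_3|$ is precisely the content of \eqref{alpha3_decay}--\eqref{alpha1_decay}. The lower bound on $-\alpha_2$ I would obtain in two steps. First, at $\tau=\tau_s$ itself, by definition $|\vec\alpha(\tau_s)|=\kappa_0/|\tau_s|$, so combining with the already-established decay of $\alpha_1,\alpha_3$ yields $|\alpha_2(\tau_s)|^2=\kappa_0^2/|\tau_s|^2-O(|\tau_s|^{-2-\gamma/8})\geq (1-o(1))\kappa_0^2/|\tau_s|^2$; the convexity ceiling $\alpha_2\leq C|\vec\alpha|/|\tau|^{\gamma/4}\ll\kappa_0/|\tau|$ from \eqref{a_priori_convexity} then fixes the sign as negative. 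Second, to propagate to all $\tau<\tau_s$, I would work with $x(\sigma)=-\sqrt{2}|\tau|S$: starting from $x(\sigma_s)\geq\sqrt{2}(1-o(1))\kappa_0$ and using the confinement \eqref{eq_XY_confined} which gives $|y|\leq x^2$, the ODE \eqref{odes2} reads $\dot x=x(1-2x)+y+O(e^{-\gamma\sigma/4})\geq x(1-3x)+O(e^{-\gamma\sigma/4})$, which is strictly positive for $x=O(\kappa_0)$. Hence $x$ is monotonically nondecreasing as $\sigma$ grows beyond $\sigma_s$, which propagates to $|\tau S(\tau)|\geq |\tau_s S(\tau_s)|$; combining with the $\alpha_1$ upper bound gives $-\alpha_2(\tau)\geq\kappa_0/(2|\tau|)$ after one final adjustment of $\tau_\ast$.

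For the last assertion, observe that in shifted variables $\hat\tau=\tau+2\log\hat{Z}$ the interval $[\tau_s+98(\tau_s+2\log\hat{Z}),\tau_s]$ is exactly $\hat\tau\in[99\hat\tau_s,\hat\tau_s]$, i.e.\ $\sigma=\log(-\hat\tau)\in[\sigma_s,\sigma_s+\log 99]$. Since $x(\sigma_s)\leq\sqrt{2}\kappa_0$ and $\kappa_0<10^{-5}$, on this $\sigma$-interval $x$ stays far below $1/2$, so combining \eqref{odes2} with $|y|\leq x^2$ gives the inequality $\dot x\leq (1+O(\kappa_0))x+O(e^{-\gamma\sigma/4})$, and Gronwall yields $x(\sigma)\leq 99\sqrt{2}\kappa_0(1+o(1))$ throughout. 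Converting back, $|S(\tau)|\leq 99\kappa_0/|\hat\tau|$, and combining once more with \eqref{alpha3_decay}--\eqref{alpha1_decay} yields $|\vec\alpha(\tau)|\leq 100\kappa_0/|\hat\tau|$ after one further adjustment of $\tau_\ast$.

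The main technical subtlety is the monotonicity step for the lower bound on $-\alpha_2$: the scalar argument for $x$ would break down if the full 2D trajectory $(x,y)$ escaped the confinement region, or if the linearization near $\sigma_s$ were perturbed by a non-negligible error. Both issues are resolved by the smallness $\kappa_0<10^{-5}$, which places the entire relevant orbit well below the unstable saddle $(1/2,0)$ of the scalar dynamics; in that regime the $(x,y)$-system is effectively captured by the scalar linearization, and the remaining $O(e^{-\gamma\sigma/4})$ errors can be absorbed by finite further decreases of $\tau_\ast$.
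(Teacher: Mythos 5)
Your proposal is correct and follows essentially the same route as the paper's proof: \eqref{sw_eq_1} by definition, the bound on $|\alpha_1|+|\alpha_3|$ from \eqref{alpha3_decay}--\eqref{alpha1_decay}, the lower bound on $-\alpha_2$ from the saturation $|\vec{\alpha}(\tau_s)|=\kappa_0/|\tau_s|$ together with the monotonicity $\tfrac{d}{d\sigma}x\geq 0$ in the small-$x$ regime (the paper phrases this as ``whenever $x\leq 1/4$''), and the factor-$100$ estimate from the Gronwall bound $\tfrac{d}{d\sigma}x\leq x$ (up to the absorbable error) over the interval of length $\log 99$. You merely spell out a few points the paper leaves implicit (sign determination of $\alpha_2$ at $\tau_s$ via \eqref{a_priori_convexity}, the conversion of the $\tau$-interval to $\sigma\in[\sigma_s,\sigma_s+\log 99]$, and the identification of \eqref{def_switch_time} with the internal switch time after the adjustment $\tau_\ast<\tau_{\kappa_0}$), and your monotonicity statement should be understood as holding only while $x$ stays below the fixed threshold, which is all that is needed.
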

 
 \begin{proof}By scaling we can assume $\hat{Z}(X)=1$. The estimate \eqref{sw_eq_1} follows directly from the definitions. Next, the estimate \eqref{sw_eq_2} follows from \eqref{alpha3_decay} and \eqref{alpha1_decay} and the observation that $\tfrac{d}{d\sigma} x\geq 0$ whenever $x\leq 1/4$. Finally, assuming $\tau_s(X)<\tau_\ast$ we want to show that
$|\tau||\vec{\alpha}(\tau)|$ increases by a factor at most $100$, when $\tau$ decreases from $\tau_s(X)$ to $99\tau_s(X)$. To this end, again thanks to \eqref{alpha3_decay} and \eqref{alpha1_decay}, it suffices to show that $x$ increases at most by a factor $99$, when $\sigma$ increases from $\log(-\tau_s(X))$ to $\log(-\tau_s(X))+\log(99)$, and in fact this follows immediately from $\tfrac{d}{d\sigma} x\leq x$. This concludes the proof of the corollary.
  \end{proof}
 
\bigskip
 
 \section{Elongated cylindrical regions}\label{sec_cyl}

In this section, we introduce elongated barriers and apply them to find long cylindrical regions.\\

Recall from \cite[Lemma 4.2]{ADS1} that there is a 1-parameter family of convex shrinkers in $\mathbb{R}^3$,
\begin{equation}
\Sigma_{a}=\left\{ (x,y,z)\in\mathbb{R}^3 \, : \, v_a(x)=(y^2+z^2)^{1/2},\; 0\leq x\leq a\right\},
\end{equation}
where the concave function $v_a:[0,a]\to \mathbb{R}_+$ is the unique solution of 
\begin{equation}
\frac{v_{yy}}{1+v_y^2}-\frac{y}{2}v_y+\frac{v}{2}-\frac{1}{v}=0 \quad \mathrm{with}\;\; v(a)=0\;\; \mathrm{and}\;\; \lim_{x\nearrow a}v'(x)=-\infty.
\end{equation}
Now, to construct barriers in $\mathbb{R}^4$, we rotate these shrinkers along ellipses. Specifically, given $L\gg 1$, for $a_1\geq a_2\gg 1$ we define\footnote{In the special case $a_1=a_2$ this gives the barriers from our joint work with Hershkovits \cite{CHH_wing}, specifically $\Gamma_{a,a}=\Gamma^{\mathrm{CHH}}_{a-1}$.}
\begin{equation}  
\Gamma_{a_1,a_2}=\left\{\left(\frac{a_1}{a_2} r\cos\vartheta,r\sin\vartheta,y_3,y_4\right)\in \mathbb{R}^4:\vartheta \in [0,2\pi), r \geq L ,  (r-1,y_3,y_4)\in \Sigma_{a_2-1}\right\}.
\end{equation}
To capture the meaning of the parameters $a_1$ and $a_2$ observe that $\max_{\Gamma_{a_1,a_2}}y_1=a_1$ and $\max_{\Gamma_{a_1,a_2}}y_2=a_2$. 

\begin{proposition}[elongated barriers]\label{prop_elong_barr}
The hypersurfaces $\Gamma_{a_1,a_2}$ are inner barriers. Namely, if $\{\partial K_\tau\}_{\tau\in[\tau_1,\tau_2]}$ evolves by renormalized mean curvature flow, and $K_\tau$ contains the region bounded by $\Gamma_{a_1,a_2}\cup \{(a_2/a_1)^2y_1^2+y_2^2=L^2\}$ for all $\tau\in [\tau_1,\tau_2]$, and $\partial K_\tau\cap \Gamma_{a_1,a_2}  =\emptyset$ for all $\tau < \tau_2$, then $\partial K_{\tau_2}\cap \Gamma_{a_1,a_2}   \subset \partial \Gamma_{a_1,a_2}$.
\end{proposition}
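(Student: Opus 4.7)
The plan is to reduce the inner-barrier claim to the pointwise strict inequality
\begin{equation*}
\bar H_{\Gamma_{a_1,a_2}}(Y) - \tfrac12\langle Y, \nu(Y)\rangle < 0 \qquad \text{for every interior } Y\in\Gamma_{a_1,a_2},
\end{equation*}
where $\nu$ is the outward unit normal pointing away from the solid torus-like region bounded by $\Gamma_{a_1,a_2}$. This is the strict supersolution condition for RMCF: if it holds, then $\Gamma_{a_1,a_2}$ would move strictly outward under the renormalized flow, and parabolic comparison rules out an RMCF-evolving $\partial K_\tau$ initially outside from crossing $\Gamma_{a_1,a_2}$ in the interior. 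The stated conclusion then follows by the standard first-touching-time argument, using a small inward perturbation of $\Gamma_{a_1,a_2}$ to upgrade the strict pointwise inequality into strict non-touching away from $\partial\Gamma_{a_1,a_2}$.

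To establish the pointwise inequality I would parameterize $\Gamma_{a_1,a_2}$ by
\begin{equation*}
Y(\xi,\vartheta,\omega)=\bigl(\tfrac{a_1}{a_2}(1+\xi)\cos\vartheta,\,(1+\xi)\sin\vartheta,\,v\cos\omega,\,v\sin\omega\bigr), \qquad v=v_{a_2-1}(\xi),
\end{equation*}
and compute $\nu$ and $\langle Y, \nu\rangle$ via the defining level set $\{G=y_3^2+y_4^2-v(\rho-1)^2=0\}$ with $\rho=\sqrt{(a_2/a_1)^2y_1^2+y_2^2}$. Setting $\mu^2(\vartheta)=(a_2/a_1)^2\cos^2\vartheta+\sin^2\vartheta$, a direct calculation yields $\langle Y,\nu\rangle=(v-(1+\xi)v')/\sqrt{1+(v')^2\mu^2}$. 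The crucial structural observation, which unlocks the mean-curvature computation, is that the second fundamental form $II$ is \emph{diagonal} in the coordinate frame $(\partial_\xi, \partial_\vartheta, \partial_\omega)$: the $\mathrm{O}(2)$-symmetry in the $(y_3,y_4)$-plane decouples $\omega$, while $II_{\xi\vartheta}$ vanishes identically thanks to the exact matching $(a_1/a_2)(a_2/a_1)=1$ between the stretching of $Y$ and of $\nabla G$ in the $y_1$-direction. With the three diagonal entries explicit and the metric's $(\xi,\vartheta)$-block having determinant $(1+\xi)^2\lambda^2(1+(v')^2\mu^2)$ where $\lambda=a_1/a_2$, one assembles $\bar H_\Gamma = g^{ij}II_{ij}$ in closed form.

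Substituting the 2D shrinker ODE
\begin{equation*}
\frac{v''}{1+(v')^2}-\tfrac{\xi v'}{2}+\tfrac{v}{2}-\tfrac{1}{v}=0
\end{equation*}
into the expression for $\bar H_\Gamma - \tfrac12\langle Y,\nu\rangle$ eliminates $v''$ and produces an explicit rational function of $\xi, v, v', \vartheta$. In the unstretched case $\lambda=1$ this simplifies to $v'(\xi-1)/(2(1+\xi)\sqrt{1+(v')^2})$, which is strictly negative for $\xi>1$ since $v'<0$, recovering the CHH barrier in the regime $L>2$. For $\lambda>1$, the expression acquires $(1-\mu^2)$-corrections with $\mu^2 \le 1$; the main obstacle is checking uniform strict negativity in $\vartheta\in[0,2\pi)$ despite these corrections, and this is where the vanishing $II_{\xi\vartheta}=0$ is essential, since otherwise one would have to diagonalize a nontrivial $2\times 2$ shape operator before applying the shrinker ODE. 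Using the assumption $L\gg 1$ (so the barrier lives in the regime $\xi\ge L-1$ where the $v'(\xi-1)$-type leading term dominates) together with the convexity of the ADS profile to control the corrections, the strict negativity persists uniformly in $\vartheta$, completing the pointwise bound and hence the barrier property.
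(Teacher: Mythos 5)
Your setup is sound and several of your computations check out (the normal, the formula $\langle Y,\nu\rangle=(v-(1+\xi)v')/\sqrt{1+(v')^2\mu^2}$, the vanishing of $II_{\xi\vartheta}$, the determinant of the $(\xi,\vartheta)$-block, and the $\lambda=1$ simplification to $v'(\xi-1)/(2(1+\xi)\sqrt{1+(v')^2})$ are all correct). But the decisive step — uniform negativity for $\lambda=a_1/a_2>1$ — is only asserted, and the justification you offer ("$L\gg1$ plus convexity of the ADS profile") does not suffice. Carrying your computation through, after substituting the shrinker ODE one finds (up to the positive factor $W^{-3}$, $W^2=1+(v')^2\mu^2$) the expression $\tfrac{|v'|}{2}\left(r(1-\mu^2)+\mu^2(1+(v')^2)\right)-\tfrac{(1-\mu^2)(2-v^2)}{2v}-\tfrac{|v'|}{r}\left(\hat\mu^2+\lambda^{-2}(v')^2\right)$, with $\hat\mu^2=\cos^2\vartheta+\lambda^{-2}\sin^2\vartheta$ and $r=1+\xi$. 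For $\lambda=1$ the two problematic terms are absent or harmless, but for $\lambda>1$, near $\vartheta=0,\pi$ (where $\mu^2=\lambda^{-2}$ is small) the negative term $(1-\mu^2)\tfrac{2-v^2}{2v}$ must be beaten by $\tfrac{|v'|}{2}r(1-\mu^2)$, i.e.\ one needs $(1+\xi)\,v|v'|\geq 2-v^2$ along the whole profile. This is a genuinely marginal inequality (it is exactly monotonicity of $(2-v^2)/(1+\xi)^2$, and the margin comes from the ``$+1$'' shift in the construction); it fails for general concave profiles with $v'$ small, so concavity and $L\gg1$ alone cannot deliver it — you would need the sharp ADS estimates on both $v_a$ and $v_a'$ (or a separate ODE argument), together with a separate treatment of the tip region, where $v\to0$, $|v'|\to\infty$ and $|v'|^3$-terms appear at small $\mu^2$. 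As written, the heart of the proof is therefore missing.

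For comparison, the paper avoids the entire curvature computation for the stretched surface: it takes the known case $a_1=a_2$ from CHH (Corollary 3.4 there), writes $\Gamma_{a_2,a_2}\cap\{y_1>0\}$ as the graph $y_1=f(y_2,y_3,y_4)$ of a concave function, observes that $\Gamma_{a_1,a_2}$ is the graph of $\lambda f$ with $\lambda=a_1/a_2\geq1$, and checks that the graphical barrier inequality $\Delta f-\tfrac{f_{ij}f_if_j}{1+|Df|^2}+\tfrac{f-y_if_i}{2}\geq0$ is preserved under $f\mapsto\lambda f$, because the linear terms scale by $\lambda$ while the gradient-correction term only improves when $f_{ij}\leq0$ and $\lambda\geq1$. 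If you want to salvage your direct approach, you would need to prove the profile inequality above (and handle the tip); otherwise the graphical scaling reduction is the efficient route.
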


\begin{proof}
By the maximum principle we have to show that $\vec{H}+\frac{y^\perp}{2}$ points outwards everywhere on $\Gamma_{a_1,a_2}$. In the special case $a_1=a_2$ this has already been shown in \cite[Corollary 3.4]{CHH_wing}, and we will now reduce to this special case by scaling by $a_1/a_2$ in the $y_1$-direction.  To this end, note that if we write $\Gamma_{a_2,a_2}\cap \{ y_1 >0\}$ as graph of a positive concave function $f(y_2,y_3,y_4)$, then the outwards unit normal is $(1, -Df)/(1+|Df|^2)^{1/2}$, so the fact that $\vec{H}+\frac{y^\perp}{2}$ points outwards is equivalent to the inequality
\begin{equation}\label{eq_barr_gra}
\Delta f -\frac{f_{ij}f_if_j}{1+|Df|^2}+\frac{f-y_i f_i}{2}\geq 0.
\end{equation}
Now, multiplying this by $\lambda=a_1/a_2\geq 1$ and using $f_{ij}\leq 0$, we infer that $f^\lambda=\lambda f$ also satisfies \eqref{eq_barr_gra}.
This shows that $\vec{H}+\frac{y^\perp}{2}$ points outwards everywhere on $\Gamma_{a_1,a_2}$, and thus proves the proposition.
\end{proof}

\begin{lemma}[ellipsoidal domains]\label{prop_ell_dom}
For all $\alpha>0$ and $\delta>0$ there exists $\tau_{\alpha,\delta}>-\infty$, such that for $\tau\leq \tau_{\alpha,\delta}-2\log \hat{Z}(X)$ the profile function $u(\cdot,\tau)=u^X(\cdot,\tau)$ is well-defined in the ellipsoidal domain
\begin{equation}
\left\{ \alpha^2y_1^2+y_2^2 \leq \frac{2}{1+\delta}|\tau+2\log \hat{Z}(X)| \right\},
\end{equation}
with the estimate
\begin{equation}
u(y,\vartheta,\tau)\geq \sqrt{2-\frac{(1+\delta)\left(\alpha^2 y_1^2+y_2^2 \right)}{|\tau+2\log \hat{Z}(X)|}}-\sqrt{2}.
\end{equation}
\end{lemma}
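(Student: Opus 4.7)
The plan is to invoke Proposition~\ref{prop_elong_barr} with eccentricity $\lambda=a_1/a_2=1/\alpha$ and a $\tau$-dependent parameter $a_2=a_2(\tau,\delta)$ of order $\sqrt{2|\tau|/(1+\delta)}$, so that at any point of the elongated barrier $\Gamma_{a_1(\tau),a_2(\tau)}$ sitting over the ellipse $\alpha^2 y_1^2+y_2^2=r^2$ (so that $r^2=y_1^2/\lambda^2+y_2^2$ matches the parameter $r\in[L,a_2]$ in the construction of $\Gamma_{a_1,a_2}$), the ADS shrinker profile $v_{a_2-1}(r-1)$ matches the desired lower bound $\sqrt{2-(1+\delta)r^2/|\tau|}$ on the $S^1$-radius $\sqrt{2}+u$. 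By scaling, I first reduce to $\hat Z(X)=1$.

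The analytical heart is the shrinker envelope inequality
\begin{equation}
v_{a_2-1}(r-1)\ \geq\ \sqrt{2-(1+\delta)r^2/|\tau|}\qquad\text{for } r\in[L,a_2].
\end{equation}
In the cylindrical range (where $r$ is away from the tip), ADS asymptotics give $v_{a_2-1}\geq \sqrt{2}$, so the inequality is automatic since the right-hand side is at most $\sqrt{2}$. In the tip range $r\in[a_2-O(1),a_2]$, the round-cap approximation $v_{a_2-1}(a_2-1-s)^2\approx 4s-s^2$ (with $s=a_2-r$) dominates $2a_2 s/|\tau|+O(s^2/|\tau|)$ because $|\tau|$ is large and $(1+\delta)a_2^2/|\tau|\leq 2$ by the choice of $a_2$; the right-hand side of the envelope then appears after expanding $(a_2-s)^2=a_2^2-2a_2 s+O(s^2)$.

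Granted the envelope, I apply Proposition~\ref{prop_elong_barr} on an interval $[\tau_0,\tau]$ with $\tau_0=\tau_0(\tau)$ sufficiently negative. The initial containment of $\Gamma_{a_1,a_2}$ in the renormalized domain $\bar K_{\tau_0}$ bounded by $\bar M_{\tau_0}$ follows from the bubble-sheet tangent flow assumption \eqref{bubble-sheet_tangent_intro}: as $\tau_0\to -\infty$, $\bar M_{\tau_0}$ is $C^{\infty}_{\mathrm{loc}}$-close to $\mathbb{R}^2\times S^1(\sqrt{2})$, and the compact barrier lies strictly inside this cylinder since $v_{a_2-1}(L-1)<\sqrt{2}$ with a definite gap for $L$ fixed large. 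The hypothesis that $\partial K_{\tau'}\cap \Gamma_{a_1,a_2}\subset \partial\Gamma_{a_1,a_2}$ throughout $\tau'\in[\tau_0,\tau]$ reduces to controlling $u^X(\cdot,\tau')$ on the fixed compact ellipse $\alpha^2 y_1^2+y_2^2=L^2$, where \eqref{DH_asymp} gives $u=O(|\tau'|^{-1})$ and so keeps the boundary strictly inside.

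Proposition~\ref{prop_elong_barr} then yields $\Gamma_{a_1,a_2}\subset \bar K_\tau$, which translates to $\sqrt{2}+u(y,\vartheta,\tau)\geq v_{a_2-1}(r-1)$ on the annular region $L^2\leq \alpha^2 y_1^2+y_2^2\leq 2|\tau|/(1+\delta)$; combined with the envelope inequality this gives the asserted bound there. The inner core $\alpha^2 y_1^2+y_2^2\leq L^2$ is handled directly by upgrading \eqref{DH_asymp} to $C^k$ via the standard parabolic interior estimate recalled in Section~\ref{sec_not_and_prel}. I expect the shrinker envelope inequality to be the main technical obstacle, since it requires a quantitative matching of the cylindrical and tip regimes of $v_{a_2-1}$ with the calibration of $a_2$ in terms of $|\tau|$; once it is in hand, the barrier propagation and geometric translation are routine.
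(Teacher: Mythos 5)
Your overall route is the paper's: reduce to $\hat Z(X)=1$, apply Proposition \ref{prop_elong_barr} with $a_1=a_2/\alpha$ and $a_2\approx 1+\sqrt{2|\tau_0|/(1+\delta)}$, get initial containment from convergence of $\bar M_\tau$ to $\mathbb{R}^2\times S^1(\sqrt2)$, check the boundary condition on the ellipse $\{\alpha^2y_1^2+y_2^2=L^2\}$, and convert the resulting enclosure into the stated lower bound via the profile of $\Sigma_{a_2-1}$. However, there is a genuine gap at the step you treat as routine: the boundary condition. The barrier's value at $r=L$ is, by \cite[Lemma 4.4]{ADS1}, about $-\frac{(1+\delta)\left((L-1)^2-5\right)}{2\sqrt2\,|\tau_0|}$, i.e.\ of size $L^2/|\tau_0|$, so a bound of the form $u=O(|\tau'|^{-1})$ with an unspecified constant cannot verify $u>v_{a_2-1}(L-1)-\sqrt2$ on the ellipse: both sides are comparable, and the argument only closes if the quadratic coefficient in the lower bound for $u$ is within a factor $1+o(1)$ of the sharp value $1/\sqrt8$, played off against choosing $L=L(\delta)$ large (the paper takes $L=10\delta^{-1}$). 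Moreover, \eqref{DH_asymp} is an asymptotic (in Gaussian norm) for the bubble-sheet function of one fixed centering; the lemma is asserted for every center $X$ (hence the $2\log\hat Z(X)$ shifts), and the scaling reduction to $\hat Z(X)=1$ does not transfer \eqref{DH_asymp} to the recentered flow $\bar M^X_\tau$. This is exactly why the paper's proof derives the pointwise core estimate $u\geq -\frac{y_2^2-2}{\sqrt8|\tau|}-\frac{\delta}{|\tau|}$ on $\{\alpha^2y_1^2+y_2^2\leq L^2\}$ from Theorem \ref{thm_spec_coeff} (spectral coefficients), together with \eqref{small_rot_spec_coeff}, Lemma \ref{prop:domanance_criteria} and standard parabolic estimates, rather than from \eqref{DH_asymp}.

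A secondary point: the ``shrinker envelope inequality'' you single out as the main technical obstacle is not something to re-derive by matching the cylindrical and tip regimes of $v_{a_2-1}$; it is precisely the lower bound for $v_a$ from \cite[Theorem 8.2]{ADS1}, which with $a_2-1=\sqrt{2|\tau_0|/(1+\delta)}$ gives the stated estimate directly. Your heuristic for the cylindrical regime is also incorrect as written: the ADS profiles satisfy $v_a<\sqrt2$ (they are inner barriers inside the cylinder), so the inequality there is not ``automatic because $v_{a_2-1}\geq\sqrt2$''; it holds only because of the quantitative lower bound cited above.
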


\begin{proof}By scaling we can assume $\hat{Z}(X)=1$.
Set $L=10\delta^{-1}$. Thanks to Theorem \ref{thm_spec_coeff} (spectral coefficients), taking also into account \eqref{small_rot_spec_coeff}, Lemma \ref{prop:domanance_criteria} (quantitative Merle-Zaag lemma) and standard parabolic estimates, for
 $\tau$ sufficiently negative (depending on $\alpha$ and $\delta$) we have
\begin{equation}\label{lower_cpt_reg}
u(y,\vartheta,\tau)\geq - \frac{y_2^2-2}{\sqrt{8}|\tau|}-\frac{\delta}{|\tau|} \qquad \mathrm{in}\; \{\alpha^2 y_1^2+y_2^2 \leq L^2 \}.
\end{equation}
On the other hand, by \cite[Lemma 4.4]{ADS1} for $a_2$ sufficiently large the shrinker function $v_{a_2-1}$ satisfies
\begin{equation}
v_{a_2-1}(L-1)-\sqrt{2}\leq - \frac{(L-1)^2-5}{\sqrt{2}(a_2-1)^2}.
\end{equation}
Hence, given $\tau_0$ sufficiently negative, considering the barrier $\Gamma_{a_1,a_2}$ with the parameters
\begin{equation}
a_2=1+ \sqrt{\frac{2|\tau_0|}{1+\delta}} \quad\mathrm{and}\quad a_1=\frac{a_2}{\alpha},
\end{equation}
the boundary condition $u(\cdot,\tau)> v_{a_2-1}(L-1)-\sqrt{2}$ on $\{(a_2/a_1)^2 y_1^2+y_2^2=L^2\}$ holds for all $\tau\leq \tau_0$. Moreover, since $\bar{M}_\tau$ converges to $\mathbb{R}^2\times S^1(\sqrt{2})$ for $\tau\to-\infty$, it lies outside of $\Gamma_{a_1,a_2}$ for very negative $\tau$. Thus, Proposition \ref{prop_elong_barr} (elongated barriers) yields that $\bar{M}_\tau$ lies outside of $\Gamma_{a_1,a_2}$ for all $\tau\leq \tau_0$, in particular
\begin{equation}
u(y,\vartheta,\tau_0)\geq v_{a_2-1}(r-1)-\sqrt{2},\qquad \mathrm{when}\;\; r=\sqrt{\alpha^2y_1^2+y_2^2}\geq L.
\end{equation}
Together with the lower bound for $v_{a_2-1}$ from \cite[Theorem 8.2]{ADS1}, this implies the assertion.
\end{proof}

\begin{convention}[$\beta$ and $L$]
We now fix parameters $\beta\in(0,10^{-3})$ and $L\geq 1/\beta^{2}$ for the rest of the paper.
\end{convention}

\begin{theorem}[elongated cylindrical regions]\label{thm_elong_cyl}
For all $\tau\leq \tau_\ast-2\log\hat{Z}(X)$ every point in the ellipsoidal region $\bar{E}_\tau=\{ \beta^2 y_1^2+(2-\beta)^{-1}y_2^2\leq |\tau+2\log \hat{Z}(X)|\}$ is $\eps_0$-cylindrical with the curvature and symmetry estimates
\begin{equation}\label{curv_bd_57}
\sup_{\bar{E}_\tau}|\bar{A}|^2\leq \frac{5}{\beta} \qquad\mathrm{and} \qquad
\sup_{\bar{E}_\tau} |u_\vartheta| \leq \frac{1}{|\tau+2\log\hat{Z}(X)|^{10}}.
\end{equation}
Furthermore, if $\tau_s(X)<\tau_\ast-2\log\hat{Z}(X)$, then for all $\tau\in [2\tau_s(X)+2\log \hat{Z}(X),\tau_\ast-2\log \hat{Z}(X)]$ every point in the region $\{ y_1^2+y_2^2\leq 10|\tau+2\log \hat{Z}(X)|\}$ is $\eps_0$-cylindrical with the estimates $|\bar{A}|\leq 1$ and $|u_\vartheta|\leq |\tau+2\log \hat{Z}(X)|^{-10}$.
\end{theorem}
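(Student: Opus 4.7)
The plan is to deduce both assertions from Lemma \ref{prop_ell_dom} (ellipsoidal domains), combined with the standard parabolic regularity recalled at the end of Section \ref{sec_not_and_prel} and the Zhu-type approximate rotational symmetry \eqref{small_rot_spec_coeff}. For the first assertion, I would apply Lemma \ref{prop_ell_dom} with $\alpha = \beta\sqrt{2-\beta}$ and $\delta = \beta/(2(2-\beta))$. Using the identity $\alpha^2 y_1^2 + y_2^2 = (2-\beta)(\beta^2 y_1^2 + (2-\beta)^{-1} y_2^2)$ together with $(1+\delta)(2-\beta) = 2-\beta/2 < 2$, the region $\bar{E}_\tau$ sits strictly inside the ellipsoidal domain of the lemma, and evaluating the lemma's pointwise lower bound on $\bar{E}_\tau$ yields
\begin{equation*}
u(y,\vartheta,\tau) \geq \sqrt{\beta/2}-\sqrt{2},
\end{equation*}
i.e.\ the $S^1$-fiber radius $\sqrt{2}+u$ is uniformly bounded below by $\sqrt{\beta/2}$.

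Combining this barrier bound with the matching upper bound on $|u|$ coming from convexity of $M_t$ together with \eqref{DH_asymp}, the profile function is uniformly bounded on a slight enlargement of $\bar{E}_\tau$. The parabolic regularity estimate from the end of Section \ref{sec_not_and_prel}, applied after translating to each point of $\bar{E}_\tau$, then upgrades the $\mathcal{H}$-bounds to pointwise $C^k$-control on $\bar{E}_\tau$, yielding the $\varepsilon_0$-cylindrical property at every point. The curvature estimate $|\bar A|^2\leq 5/\beta$ follows from the noncollapsing inequality $|\bar A|\leq \bar H$ together with the fact that on a near-cylindrical graph the dominant contribution to $\bar H$ is the inverse of the $S^1$-fiber radius, which is at most $\sqrt{2/\beta}$. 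The symmetry estimate $|u_\vartheta|\leq |\tau+2\log\hat Z|^{-10}$ follows from \eqref{small_rot_spec_coeff} via the same $C^k$-from-$\mathcal{H}$ upgrade.

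For the second assertion I would exploit the stronger spectral control available after the switch time. Combining \eqref{sw_eq_1} above the switch time with the last estimate in Corollary \ref{cor_switch_time} below it, and observing that the required interval inclusion reduces to the automatic condition $\tau_s(X)\leq -2\log\hat{Z}(X)$ (a consequence of $\tau_s(X)\leq \tau_\ast - 2\log\hat{Z}$ with $\tau_\ast<0$), one obtains $|\vec\alpha(\tau')|\leq 100\kappa_0/|\tau'+2\log\hat{Z}|$ throughout the specified range. Consequently $|\alpha_2(y_2^2-2)|=O(\kappa_0)$ on the enlarged region $\{y_1^2+y_2^2\leq 10|\tau+2\log\hat{Z}|\}$, far from the collapse value $-\sqrt{2}$. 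I would then apply Lemma \ref{prop_ell_dom} iteratively, recentering the barrier along the $y_1$-direction by exploiting the near-translation-invariance produced by the tiny coefficients $\alpha_1,\alpha_3$, so as to cover the enlarged region by overlapping barriers. This produces $|u|,|Du|,|D^2 u|=O(\kappa_0^{1/2})$ pointwise on $\{y_1^2+y_2^2\leq 10|\tau+2\log\hat{Z}|\}$, which gives $|\bar A|\leq 1$ and $\varepsilon_0$-cylindricality, and the symmetry estimate follows as before. The main obstacle is precisely this recentered iteration: a single application of Lemma \ref{prop_ell_dom} cannot reach $\{y_1^2+y_2^2\leq 10|\tau|\}$ since the factor $2/(1+\delta)$ cannot exceed $2$, so the smallness of $\vec\alpha$ must be genuinely used to verify that each recentered flow still satisfies the hypotheses of the lemma and that the accumulated error from the overlapping barriers remains of order $\kappa_0^{1/2}$.
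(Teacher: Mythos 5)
Your first step (the barrier bound via Lemma \ref{prop_ell_dom}, with your choice of $\alpha$ and $\delta$) matches the paper's opening move, but the passage from the resulting fiber-radius lower bound to $\eps_0$-cylindricality is a genuine gap. Uniform boundedness of $u$ plus interior $C^k$ estimates does not yield $\eps_0$-closeness to the round cylinder: on $\bar{E}_\tau$ the convexity upper bound is only of size $C(1+|y|)^2/|\tau|$, which is of order $\beta^{-2}$ near the ends $|y_1|\sim\sqrt{|\tau|}/\beta$, and the $\mathcal{H}$-to-$C^k$ estimate recalled in Section \ref{sec_not_and_prel} only controls $u$ where the Gaussian weight is comparable to $1$; ``translating to each point of $\bar{E}_\tau$'' presupposes control of the recentered bubble-sheet function, which is exactly what is not yet available. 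The paper instead (i) observes that the radius lower bound together with convexity forces $\sup_{\bar{E}_\tau}(|u_{y_1}|+|u_{y_2}|)\to 0$, and (ii) runs a blow-up compactness argument: if points failed to be $\eps_0$-cylindrical, rescaling by $H$ and passing to an ancient noncollapsed limit via \cite{HaslhoferKleiner_meanconvex} would produce a flow splitting off two lines, hence a round shrinking bubble-sheet, a contradiction. Neither ingredient appears in your write-up. Similarly, the pointwise bound $|u_\vartheta|\leq|\tau+2\log\hat{Z}(X)|^{-10}$ on all of $\bar{E}_\tau$ cannot come from \eqref{small_rot_spec_coeff}, which is a Gaussian $L^2$ bound on a few spectral coefficients of the function truncated at $|y|\sim|\tau|^{\gamma}$ and says nothing pointwise far out; the paper invokes Zhu's bubble-sheet symmetry improvement theorem \cite{Zhu} for this.

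For the second assertion your mechanism is wrong, not merely unverified. Covering $\{y_1^2+y_2^2\leq 10|\tau|\}$ by copies of Lemma \ref{prop_ell_dom} recentered along the $y_1$-axis cannot reach the required region: each recentered application is still capped at $|y_2|<\sqrt{2|\tau|}$ relative to a center lying on the $y_1$-axis, whereas the entire content of the second statement is to extend in the $y_2$-direction beyond $\sqrt{2|\tau|}$, which is possible only because after the switch time the inward bending coefficient $\alpha_2$ has size $\kappa_0/|\tau|$ instead of $1/(\sqrt{8}|\tau|)$. The paper therefore goes back to Proposition \ref{prop_elong_barr} directly and uses a single round barrier $\Gamma_{a,a}$ with $a=1+\sqrt{40|\tau_0|}$; the crucial verification is the boundary condition on $\{y_1^2+y_2^2=L^2\}$ for \emph{all} $\tau\leq\tau_0$, not only for $\tau$ in the stated interval. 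This is done by a case split: for $99\tau_s(X)<\tau\leq\tau_\ast$ one has $|u|\lesssim\kappa_0 L^2/|\tau|$ from Corollary \ref{cor_switch_time}, while for $\tau\leq 99\tau_s(X)$ one only has $|u|\lesssim L^2/(\sqrt{8}|\tau|)$, and this is compatible with the barrier's boundary value precisely because the restriction $\tau_0\geq 2\tau_s(X)+2\log\hat{Z}(X)$ forces $|\tau|\geq 49.5\,|\tau_0|$ in that range. Your proposal never checks boundary data at pre-switch times, relies on a ``near-translation-invariance'' recentering that is not justified (and does not address the $y_2$-direction at all), and concedes the key iteration is unproved; that is where the proof actually lives.
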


\begin{proof}By scaling we can assume $\hat{Z}(X)=1$.
Thanks to Lemma \ref{prop_ell_dom} (ellipsoidal domains) for $\tau$ sufficiently negative we have
\begin{equation}\label{rad_low_bound}
\left(\sqrt{2}+u(y,\vartheta,\tau)\right)^2 \geq 2 -\frac{(1+\tfrac{\beta}{8})(\beta^2y_1^2+y_2^2)}{|\tau|}\geq \frac{\beta}{4} \quad\mathrm{in}\;\; \left\{\beta^2y_1^2+y_2^2\leq (2-\tfrac{\beta}{2})|\tau|\right\}.
\end{equation}
Moreover, in the somewhat smaller ellipsoidal region $\bar{E}_\tau=\left\{ \beta^2 y_1^2+(2-\beta)^{-1}y_2^2\leq |\tau| \right\}$ we have
\begin{equation}\label{eq_ellipse_decay}
\lim_{\tau\to -\infty}\sup_{\bar{E}_\tau}\left(|\partial_{y_1} u| + |\partial_{y_2} u|\right)=0,
\end{equation}
since otherwise by convexity we would obtain a contradiction with \eqref{rad_low_bound}. Now, if some $(p_i,t_i)$ in the corresponding unrescaled ellipsoidal region were not $\eps_0$-cylindrical for $t_i\to -\infty$, then we could consider the sequence of flows $M^i_t$ obtained from $M_t$ by shifting $(p_i,t_i)$ to the origin and parabolically rescaling by $H(p_i,t_i)$. However, by the general theory of noncollapsed flows \cite{HaslhoferKleiner_meanconvex}, the flows $M^i_t$ would converge to an ancient noncollapsed flow that thanks to \eqref{eq_ellipse_decay} splits off 2 lines, namely to a round shrinking bubble-sheet, giving the desired contradiction. Having established $\eps_0$-cylindricality, the curvature bound follows using again \eqref{rad_low_bound}, and after replacing $\beta$ by $\beta/2$, the symmetry estimate follows from Zhu's bubble-sheet improvement theorem \cite{Zhu} (see also \cite[Proof of Proposition 2.7]{DH_shape} for a similar argument with more details).

Suppose now $\tau_s(X)<\tau_\ast$. Then, thanks to Corollary \ref{cor_switch_time} (switch time), taking also into account Lemma \ref{prop:domanance_criteria} (quantitative Merle-Zaag lemma), Theorem \ref{thm_spec_coeff} (spectral coefficients) and standard parabolic estimates, we have
\begin{equation}\label{lower_cpt_reg}
\sup_{\{y_1^2+y_2^2 \leq L^2\}}|u(y,\vartheta,\tau)|\leq \begin{cases}
\frac{1000 \kappa_0 L^2}{|\tau|}& \mathrm{if}\; 99\tau_s(X)<\tau\leq\tau_\ast\\
\frac{(1+2\kappa_0)L^2}{\sqrt{8}|\tau|}& \mathrm{if}\; \tau\leq 99\tau_s(X).
\end{cases}
\end{equation}
Hence, given any $\tau_0\in [2\tau_s(X),\tau_\ast]$, remembering $\kappa_0< 10^{-5}$ we see that the barrier $\Gamma_{a,a}$ with
\begin{equation}
a=1+ \sqrt{40|\tau_0|}
\end{equation}
satisfies the boundary condition $u(\cdot,\tau)> v_{a-1}(L-1)-\sqrt{2}$ on $\{y_1^2+y_2^2=L^2\}$ for all $\tau\leq\tau_0$. Thus, applying Proposition \ref{prop_elong_barr} (elongated barriers) in the special case $a_1=a_2=a$, and remembering \cite[Theorem 8.2]{ADS1}, we infer that $u(\cdot,\tau_0)$ is well-defined in $\{y_1^2+y_2^2\leq 40|\tau_0|\}$ with the estimate
\begin{equation}
u(y,\vartheta,\tau_0)\geq \sqrt{2-\frac{y_1^2+y_2^2}{20|\tau_0|}}-\sqrt{2}.
\end{equation}
Arguing as above, we conclude that for all $\tau\in [2\tau_s(X),\tau_\ast]$ every point in the region $\{ y_1^2+y_2^2\leq 10|\tau|\}$ is $\eps_0$-cylindrical with the estimates $|\bar{A}|\leq 1$ and $|u_\vartheta|\leq |\tau|^{-10}$. This finishes the proof of the theorem.
\end{proof}

Finally, for later use let us record that as a corollary of the proof we obtain:

\begin{corollary}[very elongated cylindrical regions]\label{cor_elongated_barr}
Suppose that $|\nu_1| \leq \alpha^{1/2}  \beta/  |\tau+2\log \hat{Z}(X)|$ holds in the region $\{\alpha (2-\beta)  y_1^2+y_2^2\leq L^2\}$ for all $\tau\leq\bar{\tau}\leq \tau_\ast-2\log \hat{Z}(X)$. Then, for all $\tau\leq\bar{\tau}$ every point in the region $\{\alpha y_1^2+(2-\beta)^{-1}y_2^2\leq |\tau+2\log \hat{Z}(X)|\}$ is $\eps_0$-cylindrical with the estimates $|\bar{A}|^2 \leq 5 \beta^{-1}$ and $|u_\vartheta|\leq|\tau+2\log \hat{Z}(X)|^{-10}$.
\end{corollary}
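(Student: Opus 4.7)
\noindent\textit{Proof plan.} The plan is to mirror the proof of Theorem \ref{thm_elong_cyl}, replacing the role played there by Lemma \ref{prop_ell_dom} --- which only applies once $|\tau|$ is exponentially large in $\alpha^{-1}$ --- with the hypothesis on $|\nu_1|$, in order to install a substantially more elongated barrier whose reach does not suffer from this exponential obstruction. By scaling I may assume $\hat{Z}(X)=1$.

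Given $\tau_0\leq\bar\tau$, I will take the barrier $\Gamma_{a_1,a_2}$ from Proposition \ref{prop_elong_barr} with parameters $a_2=1+\sqrt{(2-\beta/2)|\tau_0|}$ and $a_1=a_2/\sqrt{\alpha(2-\beta)}$, chosen so that $(a_2/a_1)^2=\alpha(2-\beta)$: the inner cylinder $\{(a_2/a_1)^2 y_1^2+y_2^2=L^2\}$ coincides precisely with the boundary of the hypothesis region, while the outer tips at $(\pm a_1,0,0,0)$ and $(0,\pm a_2,0,0)$ reach past the conclusion region $\{\alpha y_1^2+(2-\beta)^{-1}y_2^2\leq|\tau_0|\}$. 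The initial containment as $\tau\to-\infty$ is automatic from $\bar M_\tau\to\mathbb{R}^2\times S^1(\sqrt{2})$, so by Proposition \ref{prop_elong_barr} everything reduces to verifying the boundary condition $u(\cdot,\tau)>v_{a_2-1}(L-1)-\sqrt{2}$ on the inner cylinder for all $\tau\leq\tau_0$.

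To verify this, I will combine two ingredients. At the slice $y_1=0$, Corollary \ref{cor_switch_time} together with standard parabolic interior estimates over the fixed compact set $\{y_2^2\leq L^2\}$ yields the spectral-type bound $u(0,y_2,\vartheta,\tau)\geq -y_2^2/(\sqrt{8}\,|\tau|)-C|\tau|^{-1-\gamma/16}$. Away from the slice, since $\nu_1=-\nu_r u_{y_1}$ on the bubble-sheet graph with $\nu_r$ uniformly close to $1$, I can integrate $|u_{y_1}|\leq C|\nu_1|$ along the $y_1$-direction; the segment from $(0,y_2,\vartheta)$ to the boundary point stays inside the hypothesis region, has length at most $L/\sqrt{\alpha(2-\beta)}$, and therefore contributes variation at most $CL\beta/|\tau|$. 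The decisive feature is that the $\alpha^{1/2}$ in the hypothesis exactly cancels the $\alpha^{-1/2}$ coming from the integration range, leaving a bound independent of how small $\alpha$ is --- this is precisely the point of the specific exponent $\alpha^{1/2}$ in the statement. Adding the two contributions and comparing with the barrier value $v_{a_2-1}(L-1)-\sqrt{2}\leq -((L-1)^2-5)/(\sqrt{2}(2-\beta/2)|\tau_0|)$ from \cite[Lemma 4.4]{ADS1}, one checks that with $L\geq\beta^{-2}$ and $\beta<10^{-3}$ the leading $L^2$ terms cancel and the $L^2\beta/2$ margin on the barrier side dominates the $CL\beta$ and $2L$ error terms, so the boundary condition holds strictly for all $\tau\leq\tau_0$.

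Once the boundary condition is in place, Proposition \ref{prop_elong_barr} gives that $\bar M_\tau$ lies outside $\Gamma_{a_1,a_2}$ for every $\tau\leq\tau_0$, and combining with \cite[Theorem 8.2]{ADS1} provides a pointwise lower bound for $u(\cdot,\tau_0)$ forcing $(\sqrt{2}+u)^2\geq\beta/4$ throughout the conclusion region. From here the remainder of the proof is identical to that of Theorem \ref{thm_elong_cyl}: strict convexity together with the radius lower bound forces $|\partial_{y_1}u|+|\partial_{y_2}u|\to 0$ as $\tau\to-\infty$; a blow-up/noncollapsing contradiction argument then upgrades this to $\eps_0$-cylindricality with $|\bar{A}|^2\leq 5/\beta$; and Zhu's bubble-sheet symmetry improvement \cite{Zhu} yields $|u_\vartheta|\leq|\tau|^{-10}$. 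The main obstacle is the boundary verification step above: the integration range grows like $\alpha^{-1/2}$ and the resulting estimate must be uniform over $\tau\leq\tau_0$, which is exactly why the hypothesis must be formulated with the specific exponent $\alpha^{1/2}$.
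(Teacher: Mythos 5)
Your proposal is correct and follows essentially the same route as the paper: the paper's (terse) proof uses exactly this barrier $\Gamma_{a_1,a_2}$ with $(a_2/a_1)^2=\alpha(2-\beta)$ and $a_2-1\approx\sqrt{(2-\tfrac{\beta}{4})|\tau_0|}$, obtains the lower bound $u\geq -\tfrac{y_2^2-2}{\sqrt{8}|\tau|}-\tfrac{\beta L}{|\tau|}$ on the inner ellipse precisely from the central spectral estimate combined with integrating $|u_{y_1}|\leq C|\nu_1|$ over $|y_1|\lesssim L\alpha^{-1/2}$ (the $\alpha^{1/2}$-cancellation you emphasize), and then concludes by the barrier comparison, \cite[Theorem 8.2]{ADS1}, and the same cylindricality and symmetry endgame as in Theorem \ref{thm_elong_cyl}. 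The only quibble is that your central-slice bound should be attributed to Theorem \ref{thm_spec_coeff} (spectral coefficients) together with Lemma \ref{prop:domanance_criteria} (with $\kappa$ chosen small relative to $\beta$ and $L$, adjusting $\tau_\ast$), rather than to Corollary \ref{cor_switch_time}, which only bounds $-\alpha_2$ from below and so does not by itself give the needed upper bound on $-\alpha_2$.
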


\begin{proof}By scaling we can assume $\hat{Z}(X)=1$.
Given $\tau_0\leq\tau_\ast$, considering the barrier $\Gamma_{a_1,a_2}$ with the parameters
\begin{equation}
a_2=1+ \sqrt{(2-\tfrac{\beta}{4})|\tau_0|} \quad\mathrm{and}\quad a_1=\frac{a_2}{\alpha^{1/2}(2-\beta)^{1/2}},
\end{equation}
for all $\tau\leq \tau_0$ we have
\begin{equation}
u(y,\vartheta,\tau)\geq - \frac{y_2^2-2}{\sqrt{8}|\tau|}-\frac{\beta L}{|\tau|} \qquad \mathrm{in}\; \{(a_2/a_1)^2 y_1^2+y_2^2 \leq L^2 \}.
\end{equation}
Arguing as above, this implies the assertion.
\end{proof}

\bigskip

\section{Anisotropic propagation of smallness}\label{sec_aniso_propag}

In this section, we show that smallness of $|\nu_1|$ propagates quite well along the $x_1$-axis. For ease of notation, after recentering in space-time we can assume that $X=0$. To begin with, given any $\alpha\in (0,\beta^2]$, we consider the anisotropic (squared renormalized) distance
\begin{align}\label{def_aniso_dist}
 f_\alpha=\alpha f_1+(2-\beta)^{-1} f_2, \qquad\mathrm{where}\; f_i=x_i^2/|t|.
\end{align}
Throughout this section, we will often impose the following a priori assumption\footnote{In particular, note that $\mathrm{AP}_{\beta^2}$ always holds thanks to Theorem \ref{thm_elong_cyl} (elongated cylindrical regions).}
\begin{equation}\label{ap_ass}
\mathrm{AP}_{\alpha}\! : \textrm{For all } t\leq -\hat{Z}(X)^2e^{-\tau_\ast} \textrm{ we have } |A|^2\leq 5\beta^{-1} |t|^{-1} \textrm{ in  } \{ f_\alpha(\cdot,t)\leq \log|t| -2\log \hat{Z}(X)\}.
\end{equation}

\begin{lemma}[anisotropic distance]\label{prop:para.evol}
For all $t<0$ we have
\begin{align}
 (\partial_t- \Delta) f_\alpha\geq \frac{f_\alpha-2}{|t|}\qquad\mathrm{and}\qquad  |\nabla f_\alpha|^2\leq \frac{(16-4\beta)f_\alpha}{ (8-7\beta)|t| }.
\end{align}
\end{lemma}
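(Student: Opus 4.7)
My plan is to view $f_\alpha$ as the restriction to $M_t$ of the explicit ambient function $(x,t)\mapsto \alpha x_1^2/|t|+(2-\beta)^{-1}x_2^2/|t|$, and then invoke the standard identity
\begin{equation*}
(\partial_t-\Delta_{M_t})F\big|_{M_t} = \bigl(\partial_t F - \Delta_{\mathbb{R}^4}F + D^2 F(\nu,\nu)\bigr)\big|_{M_t},
\end{equation*}
valid for any smooth ambient $F(x,t)$, where the left-hand time derivative is the material derivative along the flow. This formula is a direct consequence of the normal velocity $v=-H\nu$ and the Gauss–Weingarten decomposition of the ambient Laplacian.

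For the heat-operator bound, I apply this identity to each $f_i(x,t)=x_i^2/|t|$ separately. A short direct computation (using $|t|=-t$) gives $\partial_t f_i = f_i/|t|$, $\Delta_{\mathbb{R}^4}f_i = 2/|t|$, and $D^2 f_i(\nu,\nu) = 2\nu_i^2/|t|$, leading to the closed form $(\partial_t-\Delta)f_i=(f_i-2+2\nu_i^2)/|t|$. Taking the linear combination defining $f_\alpha$ yields
\begin{equation*}
(\partial_t-\Delta)f_\alpha - \frac{f_\alpha-2}{|t|} = \frac{2}{|t|}\Bigl[1 - \alpha(1-\nu_1^2) - (2-\beta)^{-1}(1-\nu_2^2)\Bigr].
\end{equation*}
Since $1-\nu_i^2\in[0,1]$, the bracket is bounded below by $1-\alpha-(2-\beta)^{-1} = (1-\beta-\alpha(2-\beta))/(2-\beta)$, which is positive under the standing hypotheses $\alpha\leq\beta^2$ and $\beta<10^{-3}$, giving the desired lower bound.

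For the gradient bound, I drop the normal projection and work with the ambient gradient: $|\nabla^{M_t} f_\alpha|^2\leq |\nabla^{\mathbb{R}^4} f_\alpha|^2 = (4/|t|)\bigl[\alpha^2 f_1 + (2-\beta)^{-2}f_2\bigr]$. The claim then reduces to a coefficient-wise comparison with $\bigl((4-\beta)/(8-7\beta)\bigr)\bigl[\alpha f_1 + (2-\beta)^{-1} f_2\bigr]$: the $f_1$-coefficient gives $\alpha(8-7\beta)\leq 4-\beta$, which is immediate because $\alpha\leq\beta^2\ll 1/2$; the $f_2$-coefficient, after clearing denominators, becomes $8-7\beta \leq (2-\beta)(4-\beta)=8-6\beta+\beta^2$, i.e.\ the trivial inequality $\beta+\beta^2\geq 0$.

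I do not anticipate any serious obstacle here, as the argument is essentially a bookkeeping computation. The only things requiring care are the sign conventions stemming from $|t|=-t$ and the observation that the eccentric coefficient $(2-\beta)^{-1}$ is chosen precisely so that $\alpha+(2-\beta)^{-1}$ stays strictly below $1$ for admissible $\alpha$, which is what lets the heat-operator estimate close without any curvature input.
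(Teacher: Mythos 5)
Your proof is correct. For the heat-operator bound your route is essentially the paper's computation in different clothing: the paper uses $x_t=\Delta x$ to get $(\partial_t-\Delta)f_i=(f_i-2|\nabla x_i|^2)/|t|$ and bounds $|\nabla x_i|\leq 1$, which is exactly your closed form $(f_i-2+2\nu_i^2)/|t|$ since $|\nabla^{M}x_i|^2=1-\nu_i^2$; you merely make explicit the point (implicit in the paper) that summing costs only the factor $\alpha+(2-\beta)^{-1}<1$. For the gradient bound, however, you take a genuinely different and cleaner route: the paper expands $|\alpha\nabla f_1+(2-\beta)^{-1}\nabla f_2|^2$ with a weighted AM--GM inequality (with the specific weight $(8-7\beta)/((1+\beta)\beta)$) together with the surface bound $|\nabla f_i|^2\leq 4f_i/|t|$, whereas you simply dominate the tangential gradient by the ambient one, exploit that $Df_1$ and $Df_2$ are orthogonal in $\mathbb{R}^4$ so no cross term appears, and finish by a coefficient-wise comparison, where the $f_2$-coefficient reduces to $8-7\beta\leq(2-\beta)(4-\beta)$ and the $f_1$-coefficient only needs $\alpha(8-7\beta)\leq 4-\beta$, both trivially true for $\alpha\leq\beta^2$ and $\beta<10^{-3}$. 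Both arguments yield the exact constant $\frac{16-4\beta}{8-7\beta}$; yours avoids the choice of AM--GM weight, while the paper's version would also tolerate situations where one only controls the surface gradients $|\nabla f_i|^2\leq 4f_i/|t|$ rather than the ambient ones, so nothing is lost either way.
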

 
\begin{proof}
Using the mean curvature flow equation $x_t=\Delta x$, we see that
\begin{equation}
(\partial_t-\Delta)f_i= \frac{f_i-2|\nabla x_i|^2 }{|t|}\geq \frac{f_i-2}{|t|}.
\end{equation}
This implies the first assertion. Next, by the AM-GM inequality we can estimate
\begin{align}
|\nabla f_\alpha|^2 &\leq \left(1+\frac{8-7\beta}{(1+\beta)\beta}\right)\alpha^2|\nabla f_1|^2+\left(1+\frac{(1+\beta)\beta}{8-7\beta}\right)\frac{|\nabla f_2|^2}{(2-\beta)^2}.
\end{align}
Combining this with $|\nabla f_i|^2\leq 4|t|^{-1}f_i$ yields
\begin{equation}
|t||\nabla f_\alpha|^2\leq \frac{16-4\beta}{8-7\beta}\left(\frac{(8-7\beta)(2-\beta)\alpha^2}{(1+\beta)\beta}f_1+f_\alpha-\alpha f_1\right).
\end{equation} 
Since $\alpha/\beta\leq\beta\leq 1/16$, this implies the second assertion, and thus completes the proof.
\end{proof} 

Recall that under the mean curvature flow the components of the unit normal evolve by
\begin{equation}\label{eq_unit_normals_evol}
(\partial_t-\Delta) \nu_i =|A|^2 \nu_i.
\end{equation} 
Instead of working with $|\nu_1|$, we work with its regularization
\begin{equation}
\nu_1^\eps = (|\nu_1|-\eps \zeta)_+,\qquad \mathrm{where}\; \zeta=e^{\frac{1}{4}f_\alpha}.
\end{equation}

\begin{proposition}[regularized slope]\label{prop:reg_normal}
Under the a priori assumption $\mathrm{AP}_{\alpha}$ from \eqref{ap_ass}, for  $t\leq -\hat{Z}(X)^2e^{-\tau_\ast}$ we have
\begin{equation}\label{evol_eq_reg}
(\partial_t-\Delta-|A|^2) \nu_1^\eps \leq 0 \qquad \mathrm{in }\; \{ \nu_1^\eps >0 \}\cap \{ 60\beta^{-1}\leq f_\alpha\leq \log |t|-2\log \hat{Z}(X)\}.
\end{equation}
\end{proposition}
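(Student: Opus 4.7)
The plan is to reduce the proposition to showing that the barrier $\zeta=e^{f_\alpha/4}$ is a classical supersolution of the Jacobi equation on the slab $\{60\beta^{-1}\leq f_\alpha\leq \log|t|-2\log\hat{Z}(X)\}$. On the open set $\{\nu_1^\eps>0\}=\{|\nu_1|>\eps\zeta\}$ the normal component $\nu_1$ is nowhere zero, so $|\nu_1|$ is smooth with locally constant sign, and differentiating \eqref{eq_unit_normals_evol} yields the exact identity $(\partial_t-\Delta-|A|^2)|\nu_1|=0$. Hence on this set one has the pointwise identity
\begin{equation*}
(\partial_t-\Delta-|A|^2)\nu_1^\eps \;=\; -\eps\,(\partial_t-\Delta-|A|^2)\zeta,
\end{equation*}
and the proposition reduces to the supersolution property $(\partial_t-\Delta-|A|^2)\zeta\geq 0$ on the slab.

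To verify this, I would write $\zeta=e^{g}$ with $g=f_\alpha/4$. Since $\Delta e^{g}=e^{g}(\Delta g+|\nabla g|^2)$, a direct computation gives
\begin{equation*}
(\partial_t-\Delta-|A|^2)\zeta \;=\; \zeta\Bigl[\tfrac{1}{4}(\partial_t-\Delta)f_\alpha-\tfrac{1}{16}|\nabla f_\alpha|^2-|A|^2\Bigr].
\end{equation*}
Then I would invoke Lemma \ref{prop:para.evol} to bound the first bracketed term from below by $(f_\alpha-2)/(4|t|)$ and the second from above by $(16-4\beta)f_\alpha/(16(8-7\beta)|t|)$, and invoke the a priori assumption \eqref{ap_ass} to bound $|A|^2$ by $5\beta^{-1}/|t|$. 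Collecting terms and clearing a factor of $|t|^{-1}$, the bracket is at least
\begin{equation*}
\frac{1}{|t|}\left[\frac{16-24\beta}{16(8-7\beta)}\,f_\alpha-\frac{1}{2}-\frac{5}{\beta}\right].
\end{equation*}

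Since $\beta<10^{-3}$, the coefficient of $f_\alpha$ exceeds $1/9$, so the threshold $f_\alpha\geq 60\beta^{-1}$ renders the bracket strictly positive and finishes the proof. The only subtle point, and the main obstacle to arranging the constants, is this arithmetic balance: the gradient-loss term $|\nabla f_\alpha|^2/16$ must be strictly smaller than the elliptic gain $(\partial_t-\Delta)f_\alpha/4$, so that a definite fraction of $f_\alpha/|t|$ survives to dominate the (large) curvature term $|A|^2\lesssim \beta^{-1}|t|^{-1}$. This is precisely why the factor $(2-\beta)^{-1}$ was inserted in the second summand of \eqref{def_aniso_dist} and why one is forced to restrict to $\alpha\leq\beta^2$: both choices are exactly what Lemma \ref{prop:para.evol} needs to keep $|t||\nabla f_\alpha|^2\leq 2 f_\alpha$, leaving the comfortable slack $f_\alpha/(8|t|)$ with which the threshold $60\beta^{-1}$ absorbs $5\beta^{-1}/|t|$.
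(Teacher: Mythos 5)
Your proposal is correct and follows essentially the same route as the paper: on $\{\nu_1^\eps>0\}$ one has $(\partial_t-\Delta-|A|^2)|\nu_1|=0$, so the claim reduces to the supersolution property of $\zeta=e^{f_\alpha/4}$, which is verified exactly as you do, via Lemma \ref{prop:para.evol} and the a priori curvature bound $|A|^2\leq 5\beta^{-1}|t|^{-1}$, with the threshold $f_\alpha\geq 60\beta^{-1}$ absorbing the curvature and constant terms. Your constant bookkeeping matches the paper's (the paper simply records the intermediate bound $\tfrac14\big((\partial_t-\Delta)f_\alpha-\tfrac14|\nabla f_\alpha|^2\big)\geq \tfrac{f_\alpha}{10|t|}-\tfrac{2}{|t|}$), so there is nothing to add.
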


\begin{proof}
Using Lemma \ref{prop:para.evol} (anisotropic distance) and $\beta\leq 1/16$ we see that
\begin{equation}
\frac{(\partial_t-\Delta)\zeta}{\zeta}=\frac{1}{4}\left( (\partial_t-\Delta) f_\alpha-\frac{1}{4}|\nabla f_\alpha|^2 \right)\geq \frac{f_\alpha}{10|t|}-\frac{2}{|t|}.
\end{equation}
Hence, by our a priori assumption $\mathrm{AP}_{\alpha}$, for $t\leq -\hat{Z}(X)^2e^{-\tau_\ast}$ we obtain
\begin{equation}
(\partial_t-\Delta-|A|^2) \zeta \geq 0 \qquad \mathrm{in }\; \{ 60\beta^{-1}\leq f_\alpha\leq \log |t|-2\log \hat{Z}(X)\}.
\end{equation}
Together with \eqref{eq_unit_normals_evol} this implies the assertion.
\end{proof}

Now, with the goal of proving an anisotropic exponential growth estimate for $\nu_1^\eps$, we consider the function
\begin{equation}
w^\eps=\frac{1}{2}\log \left((\nu_1^\eps)^2+\hat{Z}(X)^{4q}|t|^{-2q} \right)-\frac{\beta}{4} f_\alpha +\Lambda+p\left(\log|t|-2\log\hat{Z}(X)\right),
\end{equation}
where $q=(2+\beta)/4$, and where the parameters $\Lambda\in \mathbb{R}$ and $p\in [0,q]$ will be chosen below. We will apply the maximum principle to the function
\begin{equation}
\psi^\eps=\frac{\eta w^\eps}{\log|t|-2\log \hat{Z}(X)},
\end{equation}
where $\eta$ denotes a suitable weight function, specifically
\begin{equation}
\eta(r)=\left(1+\left(\frac{q}{\beta}-1\right)r\right)^{-1}, \qquad  \text{with} \quad r=\frac{f_\alpha}{\log|t|-2\log \hat{Z}(X)}.
\end{equation}
  
\begin{lemma}[maximum principle for anisotropic growth]\label{lemma_max_aniso}
Under the a priori assumption $\mathrm{AP}_{\alpha}$ from \eqref{ap_ass}, if the function $\psi^\eps$ attains a positive local maximum over a backwards parabolic ball at some $(x_0,t_0)\in \{\nu_1^\eps>0 \}$ with $100\beta^{-3}<f_\alpha(x_0,t_0)<\log|t_0|-2\log \hat{Z}(X)$ and $t_0\leq - \hat{Z}(X)^2 e^{-\tau_\ast}$, then  $\psi^\eps(x_0,t_0) > \beta$.
\end{lemma}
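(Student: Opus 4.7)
The plan is to apply the parabolic maximum principle to $\psi^\eps$ and argue by contradiction. Since $\psi^\eps$ attains a positive local maximum over a backwards parabolic ball at $(x_0, t_0)$, one has $\nabla \psi^\eps = 0$, $\Delta \psi^\eps \le 0$, and $\partial_t \psi^\eps \ge 0$, and in particular $(\partial_t - \Delta)\psi^\eps \ge 0$ at the maximum. Under the working hypothesis $\psi^\eps(x_0, t_0) \le \beta$, I would show the opposite strict inequality, producing the contradiction.

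The first step is a sharp evolution inequality for $\phi = \tfrac12 \log v$ with $v = (\nu_1^\eps)^2 + \hat Z(X)^{4q}|t|^{-2q}$. Because the regularizer $g := \hat Z(X)^{4q}|t|^{-2q}$ is spatially constant, $\nabla v = 2\nu_1^\eps \nabla \nu_1^\eps$, and using $(\nu_1^\eps)^2 \le v$ one obtains $|\nabla \phi|^2 = (\nu_1^\eps)^2 |\nabla \nu_1^\eps|^2 / v^2 \le |\nabla \nu_1^\eps|^2 / v$. Combining this with Proposition \ref{prop:reg_normal} and $(\partial_t - \Delta)g = 2q g/|t|$ yields the sharpened estimate
\[
(\partial_t - \Delta)\phi \le |\nabla \phi|^2 + |A|^2 + \tfrac{q}{|t|},
\]
where the coefficient $1$ in front of $|\nabla \phi|^2$, rather than the naive $2$, will be decisive. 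Inserting Lemma \ref{prop:para.evol}, the a priori bound $\mathrm{AP}_\alpha$, and using $f_\alpha \ge 100\beta^{-3}$ to absorb all the $O(|t|^{-1})$ nuisance terms into the good term $-\tfrac{\beta}{4}(\partial_t - \Delta) f_\alpha$ then yields $(\partial_t - \Delta)w^\eps \le |\nabla \phi|^2 - \beta f_\alpha/(5|t|)$.

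Next I would pass to $\psi^\eps = \eta w^\eps/T$ with $T = \log|t| - 2\log \hat Z(X)$. The spatial constancy of $T$ and the gradient condition $\nabla \psi^\eps = 0$ force $\eta \nabla w^\eps = -w^\eps \nabla \eta$, producing both the cross-term $-2\nabla \eta \cdot \nabla w^\eps = 2(w^\eps/\eta)|\nabla \eta|^2$ and, after combining with $\nabla \phi = \nabla w^\eps + \tfrac{\beta}{4} \nabla f_\alpha$, the identity $\nabla \phi = ((q/\beta - 1)\psi^\eps + \beta/4)\nabla f_\alpha$. The specific choice $\eta = h(r) = (1 + (q/\beta - 1)r)^{-1}$ with $r = f_\alpha/T$ satisfies $h''/(h')^2 = 2/h$, which produces the remarkable cancellation
\[
(\partial_t - \Delta)\eta + \tfrac{2|\nabla \eta|^2}{\eta} = h'(r)(\partial_t - \Delta)r,
\]
since the $|\nabla r|^2$ contributions cancel exactly. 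Substituting the lower bound $(\partial_t - \Delta)r \ge r/|t|$ (up to $1/T$ corrections) from Lemma \ref{prop:para.evol}, together with the gradient bound $|\nabla f_\alpha|^2 \le 2 f_\alpha/|t|$ (modulo $O(\beta)$), everything collapses to $(\partial_t - \Delta)\psi^\eps \le (\eta f_\alpha/(T|t|))\, Q(B) + \text{(l.o.t.)}$, where $B = (q/\beta - 1)\psi^\eps$ and $Q(B) = 2(B + \beta/4)^2 - \beta/5 - B$.

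Finally, I would verify that $Q(B) < 0$ on the relevant range. Its discriminant is $1 - 2\beta/5$ and its larger root is $B_+ = 1/2 - 3\beta/10$, while the hypothesis $\psi^\eps \le \beta$ forces $B \le q - \beta = 1/2 - 3\beta/4 < B_+$; hence $Q(B) < 0$ with margin of order $\beta$. After adjusting $\tau_\ast$ to dominate the lower-order $O(1/(T|t|))$ errors, this yields $(\partial_t - \Delta)\psi^\eps < 0$ at $(x_0, t_0)$, contradicting the maximum principle. The main obstacle, and crux of the whole scheme, is obtaining the sharp factor $1$ on $|\nabla \phi|^2$ in the first step: with the naive factor $2$ the critical root of $Q$ drops from $\sim 1/2 - 3\beta/10$ to roughly $1/4$, which sits below $q - \beta \approx 1/2$, and the quadratic test fails. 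The precise choices $q = (2 + \beta)/4$ and $\eta = (1 + (q/\beta - 1) f_\alpha/T)^{-1}$ are engineered exactly so that this delicate balance closes with a definite margin.
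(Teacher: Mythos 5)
Your proposal is correct and follows essentially the same argument as the paper: a contradiction at the maximum point using $\nabla\psi^\eps=0$, $(\partial_t-\Delta)\psi^\eps\ge 0$ and $0<\psi^\eps\le\beta$, with the same crucial gain of the coefficient $1$ (rather than $2$) on the squared logarithmic gradient coming from the spatial constancy of the regularizer (i.e. $|\nabla\varphi|\le|\nabla\nu_1^\eps|$), the identity $\nabla\phi=\big((q/\beta-1)\psi^\eps+\tfrac{\beta}{4}\big)\nabla f_\alpha$ at the maximum, the gradient bound of Lemma \ref{prop:para.evol}, and $f_\alpha\ge 100\beta^{-3}$ to close. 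The only real deviation is in handling the weight: you use the exact identity $\eta''\eta=2(\eta')^2$ for $\eta=(1+(q/\beta-1)r)^{-1}$ to cancel the cross term $-2\nabla\eta\cdot\nabla w^\eps$ against the $\eta''$-term, whereas the paper keeps the cross term, drops the good-signed $\eta''$-term, and absorbs the cross term using $(\log|t|-2\log\hat{Z}(X))^{-1}\le\beta^2/16$; both routes work, and yours removes the need for that particular smallness of $\tau_\ast$. One caveat: since your final margin is only of order $\beta$, the gradient bound must be applied with the exact constant $\tfrac{16-4\beta}{8-7\beta}$ from Lemma \ref{prop:para.evol} rather than the rounded value $2+O(\beta)$ — the resulting extra term $\approx\tfrac{5}{4}\beta\,\Psi^2 f_\alpha\approx 0.31\beta f_\alpha$ consumes most of your $\approx 0.45\beta f_\alpha$ margin at the worst endpoint $B=q-\beta$, leaving roughly $0.13\beta f_\alpha$, so the quadratic test still closes, but only after this bookkeeping is actually carried out.
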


\begin{proof}By scaling we can assume $\hat{Z}(X)=1$. Moreover, throughout this proof we drop the superscript $\eps$, and abbreviate
\begin{equation}
\varphi= \left( (|\nu_1|-\eps \zeta)_+^2+|t|^{-2q} \right)^{1/2},\qquad f=f_\alpha, \qquad h=\log|t|.
\end{equation}
Suppose towards a contradiction there is some $(x,t)\in \{100\beta^{-3}< f< h\}\cap \{\nu_1^\eps>0 \}$ with $t\leq - e^{-\tau_\ast}$, at which
 \begin{equation}\label{max_princ_ass}
 \nabla\psi=0\;\;  \mathrm{and}\;\;  (\partial_t-\Delta )\psi\geq 0,\;\;  \mathrm{but}\;\; 0< \psi\leq \beta.
 \end{equation}

We begin by deriving
\begin{equation}
(\partial_t-\Delta) \psi=\frac{\eta w}{|t|h^2}+\frac{\eta (\partial_t-\Delta) w-2\nabla \eta \nabla w+w(\partial_t-\Delta)\eta}{h}.
\end{equation}
Using Lemma \ref{prop:para.evol} (anisotropic distance), $ f \geq 100\beta^{-3}$, and $\eta''\geq 0$, we compute
\begin{equation}
(\partial_t-\Delta) \eta=\eta' \left(\frac{f}{|t|h^2}+\frac{f_t-\Delta f}{h}\right)-\frac{\eta''|\nabla f|^2}{h^2}\leq -\frac{(q/\beta-1)\eta^2(f-2)}{|t|h}.
\end{equation}
Hence, using \eqref{max_princ_ass} and $\eta^{-1} \leq q/\beta$, at the space-time point under consideration we obtain
\begin{equation}\label{anis_max_1st_ineq}
0\leq  q+|t|(\partial_t-\Delta)w +2(q/\beta-1)\eta |t|h^{-1}  \nabla f\nabla w  -(q/\beta-1)\psi(f-2).
\end{equation}

\bigskip

Next, using Proposition \ref{prop:reg_normal} (regularized slope) we compute
\begin{align}
(\partial_t-\Delta) w &=\frac{(\partial_t-\Delta)\varphi^2}{2\varphi^2}+2|\nabla \log \varphi|^2-\frac{\beta}{4} (\partial_t-\Delta) f-\frac{p}{|t|}\nonumber \\
& \leq  \frac{q|t|^{-2q-1}+|A|^2(\nu_1^\eps)^2-|\nabla \nu_1^\eps|^2}{\varphi^{2}}+2|\nabla \log \varphi|^2-\frac{\beta}{4} \frac{f-2}{|t|}.
\end{align}
Thanks to our a priori assumption $\mathrm{AP}_{\alpha}$ from \eqref{ap_ass} we can estimate
\begin{equation}
q|t|^{-2q-1}+|A|^2(\nu_1^\eps)^2\leq 5\beta^{-1} |t|^{-1}\varphi^2.
\end{equation}
Together with $|\nabla \nu_1^\eps |\geq |\nabla \varphi|$ and $\nabla w=\nabla \log \varphi-(\beta/4)\nabla f$ this yields
\begin{equation}
(\partial_t-\Delta) w \leq |\nabla w+(\beta/4) \nabla f|^2-(\beta/4) |t|^{-1}f+(5\beta^{-1}+1)|t|^{-1}.
\end{equation}
Setting $\Psi=(\frac{q}{\beta}-1)\psi+\frac{1}{4}\beta$, and remembering also that $q=(2+\beta)/4$, we thus infer that
\begin{equation}
0\leq    |t||\nabla w+(\beta/4)\nabla   f|^2 +2(q/\beta-1)\eta|t|h^{-1}\nabla f\nabla  w  -\Psi f +6\beta^{-1}.
\end{equation}

\bigskip

Finally, we employ $\nabla \psi=0$, namely $\nabla w=(q/\beta-1)\psi\nabla f$, which yields
\begin{align}
|\nabla w+(\beta/4)\nabla f|^2= \Psi^2|\nabla f|^2\;\;  \mathrm{and}\;\;  |\nabla f\nabla w|\leq \Psi|\nabla f|^2.
\end{align}
We may assume $\tau_\ast\leq -16\beta^{-2}$, hence $h^{-1}\leq \beta^2/16$. So, using $\beta/4\leq \Psi \leq (1-\beta)/2$ we conclude that
\begin{equation}
0\leq  (\Psi+\tfrac{1}{16} \beta) |t||\nabla f|^2-f+6\beta^{-1}\Psi^{-1}\leq \tfrac{1}{2}(1-\tfrac{7}{8} \beta) |t||\nabla f|^2-f+24\beta^{-2}.
\end{equation}
However, by  the gradient bound from Lemma \ref{prop:para.evol} (anisotropic distance) we have
\begin{equation}
\tfrac{1}{2}(1-\tfrac{7}{8} \beta) |t||\nabla f|^2\leq (1-\tfrac{1}{4}\beta )f.
\end{equation}
Since $f\geq 100\beta^{-3}$, this gives the desired contradiction, and thus finishes the proof of the lemma.
\end{proof} 

\begin{remark}[maximum principle for modified quantity]\label{rem_mod_quant} For later use let us observe that the above proof goes through when we drop the $|t|^{-2q}$-term, namely when we replace $w^\eps$ by
\begin{equation}
\tilde{w}^\eps=\frac{1}{2}\log \left((\nu_1^\eps)^2 \right)-\frac{\beta}{4} f_\alpha +\Lambda+p\left(\log|t|-2\log\hat{Z}(X)\right).
\end{equation}
\end{remark}

\medskip

Now, to state the main theorem of this section, let us abbreviate
\begin{equation}\label{eq_ren_an_dist}
r_\alpha=\left(\alpha y_1^2 + (2-\beta)^{-1} y_2^2\right)^{1/2}.
\end{equation}

\begin{theorem}[anisotropic propagation of smallness]\label{thm:propagation}
Under the a priori assumption $\mathrm{AP}_{\alpha}$ from \eqref{ap_ass}, for all $p\in [0,q]$ and all $\tau \leq \tau_\ast-2\log \hat{Z}(X)$ we have
\begin{equation}
\sup_{\{ r_\alpha\leq |\tau+2\log\hat{Z}(X)|^{1/2}\}}\hat{Z}(X)^{2\beta}e^{(\beta-p)\tau}e^{-\frac{1-\beta}{2}r_\alpha^2} |\nu_1|
 \leq \mu(\tau)\sup_{\tau'\leq \tau}\sup_{\{r_\alpha\leq L\}}e^{-p\tau'}\left(|\nu_1|^2+\hat{Z}(X)^{4q}e^{2q\tau'}\right)^{1/2} ,
\end{equation}
where
\begin{align}
\mu(\tau)=\max\Big\{\hat{Z}(X)^{2\beta}e^{\beta \tau}, \sup_{\tau'\leq \tau}\sup_{\{r_\alpha \leq |\tau'+2\log\hat{Z}(X)|^{1/2}\}} \big(|\nu_1|^2+\hat{Z}(X)^{4q}e^{2q\tau'} \big)^{1/2}\Big\}.
\end{align}
\end{theorem}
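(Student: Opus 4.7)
The strategy is to apply the previous maximum-principle lemma to force $\sup\psi^\eps\leq\beta$ on a suitable space-time region, and then convert this scalar bound to the advertised pointwise estimate via an algebraic identity that fixes the precise coefficient $(1-\beta)/2$. By scaling I first reduce to the case $\hat Z(X)=1$. Fix $\tau\leq\tau_\ast$ and $p\in[0,q]$, denote by $\tilde B$ the inner supremum on the right-hand side of the theorem, abbreviate $\mu=\mu(\tau)$, and calibrate the free constant as $\Lambda=-\log(\mu\tilde B)+O_\beta(1)$, with the correction absorbing the contribution of $-\tfrac{\beta}{4}f_\alpha$ evaluated at the inner threshold $f_\alpha=100\beta^{-3}$. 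The work takes place on the parabolic shell
\begin{equation*}
\Omega=\bigl\{(y,\tau'):\tau'\leq\tau,\;100\beta^{-3}<f_\alpha(y,\tau')<-\tau'\bigr\}.
\end{equation*}

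The central claim is $\psi^\eps\leq\beta$ on $\bar\Omega$, which I would prove by a first-time-exceeding argument. Since $|\nu_1|\to 0$ and $e^{2q\tau'}\to 0$ as $\tau'\to-\infty$, the logarithm in $w^\eps$ forces $\psi^\eps\to-\infty$; hence $\tau_1:=\inf\{\tau'\leq\tau:\sup_y\psi^\eps(y,\tau')>\beta\}$ is finite, and by continuity the value $\beta$ is attained at some $(y_1,\tau_1)\in\bar\Omega$ which is a local maximum of $\psi^\eps$ over the parabolic past. If $(y_1,\tau_1)$ lies in the interior of $\Omega$ the previous lemma forces $\psi^\eps(y_1,\tau_1)>\beta$, a direct contradiction. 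If $(y_1,\tau_1)$ lies on the inner boundary $\{f_\alpha=100\beta^{-3}\}\subset\{r_\alpha\leq L\}$, the definition of $\tilde B$ gives $[(\nu_1^\eps)^2+e^{2q\tau_1}]^{1/2}\leq \tilde B e^{p\tau_1}$; substituting into $w^\eps$ together with the $O_\beta(1)$ correction in $\Lambda$ produces $\psi^\eps(y_1,\tau_1)<\beta$. If $(y_1,\tau_1)$ lies on the outer boundary $\{f_\alpha=-\tau_1\}$, then $\eta=\beta/q$ and $[(\nu_1^\eps)^2+e^{2q\tau_1}]^{1/2}\leq\mu$ by the very definition of $\mu$, whence a short computation using $p\leq q$ yields $\psi^\eps(y_1,\tau_1)\leq \beta(1-\beta/(4q))+O(1/h)<\beta$.

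Once $\psi^\eps\leq\beta$ is established, translating to $|\nu_1|$ is purely algebraic. The inequality $\eta w^\eps/h\leq\beta$ with $\eta^{-1}=1+(q/\beta-1)f_\alpha/h$ rearranges as $w^\eps\leq \beta h+(q-\beta)f_\alpha$; plugging in the definition of $w^\eps$ and using the identity $\tfrac{\beta}{4}+q-\beta=\tfrac{1-\beta}{2}$, which holds precisely because $q=(2+\beta)/4$, produces
\begin{equation*}
\tfrac12\log\bigl((\nu_1^\eps)^2+\hat Z(X)^{4q}e^{2q\tau'}\bigr)\leq-\Lambda+(\beta-p)h+\tfrac{1-\beta}{2}f_\alpha.
\end{equation*}
Inserting the value of $\Lambda$, reverting to $(y,\tau)$ so that $h=-\tau-2\log\hat Z(X)$ and $f_\alpha=r_\alpha^2$, and letting $\eps\to 0$ yields the stated inequality; the $\hat Z(X)^{2\beta}$ factor on the left is recovered by combining the $\hat Z(X)^{2p}$ prefactor from the reverse scaling with the additional $\hat Z(X)^{2(\beta-p)}$ produced by $(\beta-p)h$.

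The main obstacle is calibrating the single constant $\Lambda$ so that both boundary contradictions go through simultaneously. The inner boundary roughly demands $\Lambda+\log\tilde B\lesssim\beta h$, while the outer demands $\Lambda+\log\mu\lesssim\beta h/4$; the discrepancy in the coefficient of $h$ is compensated by the baseline bound $\mu\geq \hat Z(X)^{2\beta}e^{\beta\tau}$ and by the inward-push $-\tfrac{\beta}{4}f_\alpha$, but one must verify this balance uniformly across the full range $p\in[0,q]$, especially in the marginal regime where $\tilde B$ is much smaller than $\mu$. A secondary technical point is confirming that $\psi^\eps\to-\infty$ uniformly along $\tau'\to-\infty$ (invoking the smooth convergence of $\bar M_\tau$ to $\mathbb{R}^2\times S^1(\sqrt 2)$) so that the infimum $\tau_1$ is genuinely attained and the first-time-exceeding argument is rigorous.
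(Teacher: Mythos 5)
Your proposal follows essentially the same route as the paper: the same calibration $\Lambda=-\log(\mu K)$ (no $O_\beta(1)$ correction is needed), the same first-violation application of Lemma \ref{lemma_max_aniso} on the shell $\{100\beta^{-3}<f_\alpha<\log|t|\}$ with the inner boundary controlled by the core supremum $K$ and the outer boundary by $\mu$ together with $\eta=\beta/q$, and the same final conversion via $\tfrac{\beta}{4}+q-\beta=\tfrac{1-\beta}{2}$. The only slip is the claim that $\psi^\eps\to-\infty$ as $\tau'\to-\infty$: because of the $+p\log|t|$ term and the division by $\log|t|$ the quantity is merely bounded, but since $|\nu_1|\leq\eps\zeta$ for $t\ll 0$ (so $\nu_1^\eps\equiv 0$ there) and $-\log K\leq(q-p)\log|\bar t|$, one still gets $\psi^\eps<\beta$ at very negative times, which is all the first-crossing argument requires and is exactly what the paper verifies.
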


\begin{proof}We will work in the unrenormalized variables. By scaling we can assume $\hat{Z}(X)=1$. Moreover, we can assume that
\begin{equation}
\limsup_{t\to -\infty} \sup_{\{f_\alpha(\cdot,t)\leq L^2\}} |t|^p |\nu_1| < \infty,
\end{equation}
since otherwise there is nothing to prove. As before, given any $\eps>0$, we consider the functions
\begin{equation}
w^\eps=\frac{1}{2}\log \left((|\nu_1|-\eps\zeta)_+^2 + |t|^{-2q} \right)-\frac{\beta}{4} f_\alpha +\Lambda+p\log|t|,\qquad \psi^\eps=\frac{\eta w^\eps}{\log|t|}.
\end{equation}
Given $\bar{t}\leq - e^{-\tau_\ast}$, we now choose $\Lambda=-\log(\bar{\mu}K)$, where
\begin{align}
K=\sup_{t\leq \bar t}\sup_{\{f_\alpha(\cdot,t)\leq L^2\}}|t|^{p}\left(|\nu_1|^2+|t|^{-2q}\right)^{1/2}.
\end{align}
and
\begin{align}
\bar \mu =\max\left\{|\bar t\,|^{-\beta},\sup_{t\leq \bar t}\sup_{\{f_\alpha(\cdot,t) \leq \log |t|\}} (|\nu_1|^2+|t|^{-2q})^{1/2}\right\}.\end{align}
Note that as a consequence of the definitions we have
\begin{equation}\label{ineq_in_part}
-\log K\leq  (q-p)\log|\bar{t}|,
\end{equation}
and
\begin{equation}\label{ineq_cons_def}
\sup_{t\leq \bar{t}}\sup_{\{f_\alpha(\cdot,t)\leq L^2\}} w^\eps \leq - \log \bar \mu \leq \beta \log |\bar t \,|.
\end{equation}
This in turn implies the global bound $w^\eps\leq  q \log|\bar{t}|$, and consequently, remembering that $\eta=\beta/q$ on $\{f_\alpha=\log|t|\}$, we infer that $\psi^\eps \leq \beta$ on $\{f_\alpha=\log|t|\}$ for all $t\leq \bar{t}$. Moreover, using Lemma \ref{prop_ell_dom} (ellipsoidal domains) we see that $|\nu_1|\leq\eps\leq \eps\zeta$ for $t$ very negative (depending on $\eps$), so using again \eqref{ineq_in_part} we get that  $\psi^\eps <\beta$ in the whole region $\{L^2\leq f_\alpha \leq \log|t|\}$ for $t$ very negative. Furthermore, thanks to our choice of $\Lambda$, specifically by \eqref{ineq_in_part} and the latter inequality in \eqref{ineq_cons_def}, we have the inclusion $\{\psi^\eps\geq \beta\}\subset\{\nu_1^\eps>0\}$. We can thus apply Lemma \ref{lemma_max_aniso} (maximum principle for anisotropic growth) to conclude that $\psi^\eps \leq \beta$, namely
\begin{equation}
\frac{1}{2}\log \left((|\nu_1|-\eps\zeta)_+^2+|t|^{-2q} \right) \leq \frac{1-\beta}{2}f_\alpha+(\beta-p)\log|t|+\log( \bar \mu K).
\end{equation}
Since $\eps>0$ was arbitrary, this implies the assertion.
\end{proof}

\bigskip

\section{Differential Merle-Zaag analysis}\label{sec_diff_MZ}

In this section, we carry out a Merle-Zaag type analysis for the slope $u_1=\partial_{y_1}u^X$.
To begin with, recall from \cite[Appendix A]{DH_shape} that the bubble-sheet function $u=u^X$ evolves by
\begin{equation}\label{evol_u}
\partial_\tau u= \mathcal{L} u -\tfrac{1}{2}\varrho u^2 -\left(\varrho u  -\tfrac{1}{ 2}\varrho^2u^2\right)u_{\vartheta\vartheta}-\frac{P }{ Q}, 
\end{equation}
where $\mathcal{L}$ is the Ornstein-Uhlenbeck operator from \eqref{prel_OU}, and where
\begin{align}
P=u_iu_ju_{ij}+\varrho^4 u_\vartheta^2u_{\vartheta\vartheta} +2\varrho^2   u_iu_\vartheta u_{i\vartheta}  +\varrho^3 u_\vartheta^2\qquad\mathrm{and}\qquad Q=1+|\nabla u|^2 -u_\vartheta^2(\varrho u -\tfrac{1}{2}\varrho^2u^2),
\end{align}
with
\begin{align}
 \varrho=(\sqrt{2}+u)^{-1}\qquad\mathrm{and}\qquad |\nabla u|^2=u_1^2+u_2^2+\tfrac{1}{2}u_\vartheta^2.
\end{align}
Differentiating \eqref{evol_u} with respect to $y_1$, and taking also into account the identity $2\varrho^2=1-2\varrho u +\varrho^2u^2$, we get
\begin{align}
\partial_\tau u_1=\mathcal{L}'u_1+\mathcal{E}u_1+\mathcal{F}u_1,
\end{align} 
where
\begin{align}
\mathcal{L}'=\mathcal{L}-\tfrac12, \quad\qquad \mathcal{E}=-\mathcal{E}^{ij}\partial_{i}\partial_j-\mathcal{E}^{i\vartheta}\partial_i\partial_{\vartheta}-\mathcal{E}^{\vartheta\vartheta}\partial_{\vartheta}\partial_{\vartheta} -\mathcal{E}^{i}\partial_i-\mathcal{E}^{\vartheta}\partial_\vartheta,
\end{align}
and
\begin{equation}\label{eq:error.F}
\mathcal{F}=-\varrho u+\frac{1}{2}\varrho^2u^2-2\varrho^3 u_{\vartheta\vartheta}+\frac{4\varrho^5u_\vartheta^2u_{\vartheta\vartheta}+4\varrho^3u_iu_\vartheta u_{i\vartheta}+3\varrho^4u_\vartheta^2}{Q}-\frac{2\varrho^3 P u_\vartheta^2}{Q^2}.
\end{equation}
Here, $\mathcal{F}$ is just a multiplication operator, while $\mathcal{E}$ is a differential operator with coefficients given by
\begin{align}
&\mathcal{E}^{ij}=\frac{u_iu_j}{Q}, && \mathcal{E}^{i\vartheta}=\frac{2\varrho^2u_iu_\vartheta}{Q}, && \mathcal{E}^i=\frac{2u_ju_{ij}+2\varrho^2 u_\vartheta u_{i\vartheta}}{Q}-\frac{2Pu_i}{Q^2},
\end{align}
and
\begin{align}
& \mathcal{E}^{\vartheta\vartheta}=\varrho u-\frac{1}{2}\varrho^2u^2+\frac{\varrho^4u_\vartheta^2}{Q}, &&\mathcal{E}^\vartheta=\frac{2\varrho^4u_\vartheta u_{\vartheta\vartheta}+2\varrho^2u_iu_{i\vartheta}+2\varrho^3u_\vartheta}{Q}-\frac{\varrho^2  Pu_\vartheta }{Q^2}.
\end{align}
Since $\mathcal{L}'=\mathcal{L}-\tfrac12$, the only unstable eigenfunction is now the constant function $1$, and the neutral eigenfunctions are now $y_1$, $y_2$, $\cos\vartheta$, and $\sin\vartheta$. We denote  the corresponding projections by $p_\pm'$ and $p_0'$.\\

We fix a monotone function $\lambda$, such that $\lambda(\zeta)=\sqrt{8}$ for $\zeta\leq 3/2$ and $\lambda(\zeta)=1$ for $\zeta\geq 2$, and set
\begin{equation}\label{eq_rho_switch}
\rho(\tau)=
\left\{\begin{array}{ll}
|\tau+2\log\hat{Z}(X)|^{1/2} & \mathrm{if}\; \tau_s(X)=\tau_\ast-2\log\hat{Z}(X)\\
\lambda\left(\frac{\tau+2\log \hat Z(X)}{\tau_s(X)+2\log \hat Z(X)}\right)|\tau+2\log\hat{Z}(X)|^{1/2} & \mathrm{if}\; \tau_s(X)<\tau_\ast-2\log\hat{Z}(X).
\end{array}\right.
\end{equation}
Fixing another monotone function $\eta$ that cuts off slightly before $z=\sqrt{2}$, specifically such that $\eta(z)=1$ for $ z\leq \frac{7}{5}$ and $\eta(z)=0$ for $z\geq \frac{141}{100}$, we then work with the function
\begin{equation}
w(y,\vartheta,\tau)=\eta\left(\frac{|y|}{\rho(\tau)}\right)u_1(y,\vartheta,\tau).
\end{equation}
This localized slope function evolves by
\begin{equation}
\partial_\tau w=\mathcal{L}'w+ \mathcal{E}w+ \mathcal{F}w+\mathcal{G} ,
\end{equation}
where
\begin{align}
\mathcal{G} = \eta\mathcal{L}'u_1-\mathcal{L}'w+\eta \mathcal{E}u_1-\mathcal{E}w+u_1\eta_\tau.
\end{align}

\bigskip

Throughout this section we will frequently use that by standard cylindrical estimates (c.f. \cite[Lemma 4.16]{CDDHS}) we have
\begin{align}\label{std_cyl_est}
\sup_{\{\eta>0\}}\left(|\nabla u|+ |\nabla^2u|+|\nabla^3u|\right) \leq C|\tau+2\log\hat{Z}(X)|^{-\frac{1}{2}},
\end{align}
and that by Theorem \ref{thm_elong_cyl} (elongated cylindrical regions) and Corollary \ref{cor_elongated_barr} (very elongated cylindrical regions) and standard interior estimates we have
\begin{align}\label{rot_est}
\sup_{\{\eta>0\}}\left(|u_\vartheta|+ |\nabla u_\vartheta|+|\nabla^2u_\vartheta|\right) \leq C|\tau+2\log\hat{Z}(X)|^{-10}.
\end{align}
 
\bigskip 

\subsection{Differential Merle-Zaag alternative}
In this subsection, we consider the evolution of the functions 
\begin{align}
W_\pm(\tau)=\| p_\pm' w(\cdot,\tau)\|_{\mathcal{H}}^2,\qquad W_0(\tau)= \| p_0' w(\cdot,\tau)\|_{\mathcal{H}}^2.
\end{align}
For convenience, let us also abbreviate
\begin{equation}
W=W_++W_0+W_-.
\end{equation}

\begin{proposition}[unstable slope mode]\label{lem:evol.W_+} For $\tau \leq \tau_\ast-2\log\hat{Z}(X)$ we have
\begin{equation}
|\tfrac{d}{d\tau} W_+-W_+|\leq C|\tau+2\log\hat{Z}(X)|^{-1} W+2|\langle \mathcal{G},p_+'w\rangle_{\mathcal{H}}|.
\end{equation}
\end{proposition}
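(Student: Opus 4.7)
The plan is to differentiate $W_+(\tau) = \|p_+'w(\cdot,\tau)\|_{\mathcal{H}}^2$ and substitute the evolution equation $\partial_\tau w = \mathcal{L}'w + \mathcal{E}w + \mathcal{F}w + \mathcal{G}$, yielding
\begin{equation*}
\tfrac{d}{d\tau}W_+ = 2\langle p_+'w, \mathcal{L}'w\rangle_{\mathcal{H}} + 2\langle p_+'w, (\mathcal{E}+\mathcal{F})w\rangle_{\mathcal{H}} + 2\langle p_+'w, \mathcal{G}\rangle_{\mathcal{H}}.
\end{equation*}
The unstable eigenspace of $\mathcal{L}'=\mathcal{L}-\tfrac{1}{2}$ is the one-dimensional span of the constant function $1$ with eigenvalue $\tfrac{1}{2}$, so by self-adjointness of $\mathcal{L}'$ on the Gaussian $L^2$ space we get $2\langle p_+'w, \mathcal{L}'w\rangle_{\mathcal{H}} = 2\langle \mathcal{L}'p_+'w, w\rangle_{\mathcal{H}} = \langle p_+'w, w\rangle_{\mathcal{H}} = W_+$, which is the dominant term on the right-hand side of the proposition.

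The bulk of the work is to show that $|\langle p_+'w, (\mathcal{E}+\mathcal{F})w\rangle_{\mathcal{H}}| \leq C|\tau+2\log\hat{Z}(X)|^{-1}W$. The key structural observation is that every coefficient of $\mathcal{E}$ and every summand of $\mathcal{F}$ carries at least one factor drawn from $\{u,\nabla u, u_\vartheta\}$ or a higher derivative thereof. By the cylindrical estimates \eqref{std_cyl_est} we have $|\nabla^k u| \leq C|\tau+2\log\hat{Z}(X)|^{-1/2}$ for $k\leq 3$ on $\{\eta>0\}$; by \eqref{rot_est} all $\vartheta$-derivatives contribute only negligible $O(|\tau+2\log\hat{Z}(X)|^{-10})$ terms; and by Theorem \ref{thm_spec_coeff} together with \eqref{DH_asymp} the Gaussian $L^2$ norm of $u$ on $\{\eta>0\}$ is bounded by $C|\tau+2\log\hat{Z}(X)|^{-1}$ (noting that outside the truncation radius the Gaussian weight contributes only exponentially small mass). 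Combined with the pointwise bound $\varrho=(\sqrt{2}+u)^{-1}\leq C$ implied by strict convexity on the cutoff region, this gives $\|\mathcal{F}\|_{\mathcal{H}(\{\eta>0\})} \leq C|\tau+2\log\hat{Z}(X)|^{-1}$, so that the constancy of $p_+'w$ together with Cauchy-Schwarz produces
\begin{equation*}
|\langle p_+'w, \mathcal{F}w\rangle_{\mathcal{H}}| \leq \tfrac{\sqrt{W_+}}{\|1\|_{\mathcal{H}}}\,\|\mathcal{F}\|_{\mathcal{H}}\,\|w\|_{\mathcal{H}} \leq C|\tau+2\log\hat{Z}(X)|^{-1}W.
\end{equation*}
For $\mathcal{E}w$, I integrate by parts against the Gaussian measure in each second-order piece $\mathcal{E}^{ij}\partial_i\partial_j w$, $\mathcal{E}^{i\vartheta}\partial_i\partial_\vartheta w$, $\mathcal{E}^{\vartheta\vartheta}\partial_\vartheta^2 w$; since $p_+'w$ is constant its derivatives vanish, leaving pairings of $\nabla w$ with $\nabla\mathcal{E}$ or with $y\,\mathcal{E}$, which are products of at least two gradient-type factors and hence $O(|\tau+2\log\hat{Z}(X)|^{-1})$ pointwise on $\{\eta>0\}$. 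Cauchy-Schwarz then gives the same $C|\tau+2\log\hat{Z}(X)|^{-1}W$ bound, while the remaining cutoff term $2\langle p_+'w, \mathcal{G}\rangle_{\mathcal{H}}$ is left explicit as in the statement.

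The main obstacle is bookkeeping to secure the correct exponent $|\tau+2\log\hat{Z}(X)|^{-1}$ rather than the weaker $|\tau+2\log\hat{Z}(X)|^{-1/2}$ one would get from pointwise gradient bounds alone. For $\mathcal{F}$ this requires using the finer Gaussian $L^2$ smallness of $u$, which is $|\tau+2\log\hat{Z}(X)|^{-1}$ and sharp by \eqref{DH_asymp}, rather than only a pointwise bound on $|u|$ (which is merely $O(1)$ on the whole cutoff region); for $\mathcal{E}$ it uses that each coefficient is a product of at least two gradient-type factors so that two half-powers combine into a full $|\tau+2\log\hat{Z}(X)|^{-1}$.
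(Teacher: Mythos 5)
Your overall skeleton (differentiate $W_+$, use self-adjointness of $\mathcal{L}'$ with eigenvalue $\tfrac12$ on the constants, reduce to bounding $\int(\mathcal{E}w+\mathcal{F}w)e^{-|y|^2/4}$ by $C|\tau+2\log\hat Z(X)|^{-1}W^{1/2}$) matches the paper, and your treatment of the $\mathcal{F}$-term is acceptable: where the paper invokes the inverse Poincar\'e inequality \eqref{inv_poinc} together with $\|\hat u\|_{\mathcal{H}}\leq C|\tau|^{-1}$, you use instead the Gaussian-tail argument (pointwise boundedness of $u$ on $\{\eta>0\}$ plus exponentially small weight beyond the truncation radius), which works here because the test function is constant.

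The genuine gap is in the $\mathcal{E}$-term. After a single integration by parts in the second-order pieces you are left with pairings of $\nabla w$ (and, for the first-order pieces $\mathcal{E}^i\partial_i w$, $\mathcal{E}^\vartheta\partial_\vartheta w$, which you do not address at all, with $\nabla w$ directly) against coefficients of size $C|\tau|^{-1}(1+|y|)$, and you assert that Cauchy--Schwarz then yields $C|\tau|^{-1}W$. That step implicitly requires $\|\nabla w\|_{\mathcal{H}}\leq C\|w\|_{\mathcal{H}}=CW^{1/2}$, which is not available: the localized slope $w=\eta u_1$ satisfies no inverse Poincar\'e-type estimate (the paper stresses exactly this point, and in the stable-mode proposition the $\|\nabla w\|_{\mathcal{H}}^2$ terms can only be absorbed because of the dissipative term $-2\|\nabla p_-'w\|_{\mathcal{H}}^2$, which is absent for $W_+$). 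Substituting instead the absolute pointwise bound $|\nabla w|\leq C|\tau|^{-1/2}$ would produce an additive error of order $|\tau|^{-3/2}$, which cannot be absorbed into $C|\tau|^{-1}W$ and hence does not prove the stated inequality. The paper's fix is to integrate by parts a second time, i.e.\ to move all derivatives off $w$ via the full adjoint, $\int \mathcal{E}(w)e^{-|y|^2/4}=\int w\,\mathcal{E}^\ast\big(e^{-|y|^2/4}\big)$, and then use that \eqref{std_cyl_est} controls three derivatives of $u$ while \eqref{rot_est} renders all $\vartheta$-derivative contributions negligible, so that $\big|\mathcal{E}^\ast\big(e^{-|y|^2/4}\big)\big|\leq C|\tau|^{-1}(1+|y|^2)e^{-|y|^2/4}$ and Cauchy--Schwarz against $w$ alone gives $C|\tau|^{-1}W^{1/2}$. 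Relatedly, your structural claim that every coefficient of $\mathcal{E}$ is a product of at least two gradient-type factors and hence pointwise $O(|\tau|^{-1})$ is false for $\mathcal{E}^{\vartheta\vartheta}$, which contains the single factor $\varrho u$ (and $|u|$ is of order one near the edge of the cutoff region); this term is harmless only because in the adjoint it is always hit by $\vartheta$-derivatives, a point your argument needs to make explicit.
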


\begin{proof}
By scaling we can assume $\hat{Z}(X)=1$.
To begin with, observe that
\begin{equation}
\tfrac{d}{d\tau} W_+=2\langle w_\tau,p_+'w\rangle_{\mathcal{H}}=W_++2\langle \mathcal{E}w+ \mathcal{F}w,p_+'w\rangle_{\mathcal{H}}+2\langle \mathcal{G},p_+'w\rangle_{\mathcal{H}}.
\end{equation}
Since $p_+'$ projects to a finite-dimensional space, it is thus enough to show that
\begin{equation}
\left| \int (\mathcal{E}w +  \mathcal{F} w)  e^{-\frac{|y|^2}{4}} \right| \leq C|\tau|^{-1}W^{\frac{1}{2}}.
\end{equation}
To this end, note that for any smooth function $h$ we have the integration by parts formula
\begin{equation}\label{eq:cal_F.int.part}
\int \mathcal{E}(w) h e^{-\frac{|y|^2}{4}}=\int w\, \mathcal{E}^\ast \big(he^{-\frac{|y|^2}{4}}\big),
\end{equation}
where
\begin{align}
\mathcal{E}^\ast (v)=& -\partial_i\partial_j(\mathcal{E}^{ij}v)-\partial_i\partial_\vartheta(\mathcal{E}^{i\vartheta}v)-\partial_\vartheta\partial_\vartheta(\mathcal{E}^{\vartheta\vartheta}v)
+\partial_i(\mathcal{E}^{i}v)+\partial_\vartheta(\mathcal{E}^{\vartheta}v).
\end{align}
Remembering the derivative estimates \eqref{std_cyl_est} and \eqref{rot_est}, observe that
\begin{equation}\label{eq_F1_estimate_pointwise}
\Big| \, {\mathcal{E}}^{\ast}\!\big(e^{-\frac{|y|^2}{4}}\big)\Big| \leq C|\tau|^{-1}(1+|y|^2)e^{-\frac{|y|^2}{4}}.
\end{equation}
Together with the Cauchy-Schwarz inequality this yields
\begin{equation}\label{eq_F1_estimate_L2} 
\left|\int w\, \mathcal{E}^\ast \! \big(e^{-\frac{|y|^2}{4}}\big)\right| \leq C|\tau|^{-1}W^{\frac{1}{2}}.
\end{equation}
Regarding the remaining term, recall first that thanks to Lemma \ref{prop_ell_dom} (ellipsoidal domains) on the support of $\eta$ we have $\sqrt{2}+u\geq \beta/10$.
Hence, remembering \eqref{std_cyl_est} and \eqref{rot_est}, we see that
\begin{equation}
|\langle w,\mathcal{F}\rangle_{\mathcal{H}}| \leq C W^{\frac{1}{2}}\big(\|1_{\{\eta> 0\}} |u|\|_{\mathcal{H}}{+|\tau|^{-10}}\big).
\end{equation}
Also recall that by the inverse Poincare inequality from \cite[Proposition 4.4]{CHH_wing} we have
\begin{equation}\label{inv_poinc}
\|1_{\{\eta> 0\}} (1+|y|) |u|\|_{\mathcal{H}}+\|1_{\{\eta> 0\}} |\nabla u|\|_{\mathcal{H}}\leq C\|\hat u\|_\mathcal{H},
\end{equation}
where $\hat{u}$ is the truncated graph function from \eqref{eq_trunc_u}. Since $\|\hat u\|_\mathcal{H}\leq C|\tau|^{-1}$ this shows that
\begin{equation}\label{eq_F2_estimate_L2} 
|\langle w,\mathcal{F}\rangle_{\mathcal{H}}| \leq C|\tau|^{-1}W^{\frac{1}{2}},
\end{equation}
and thus completes the proof.
\end{proof}

\begin{proposition}[neutral slope mode]\label{lem:evol.W_0}
For $\tau \leq \tau_\ast-2\log\hat{Z}(X) $ we have
\begin{align}
| \tfrac{d}{d\tau} W_0|\leq  C|\tau+2\log\hat{Z}(X)|^{-1}W+2|\langle \mathcal{G},p_0'w\rangle_{\mathcal{H}}|.
\end{align}
\end{proposition}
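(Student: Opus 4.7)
The plan is to mirror the proof of Proposition \ref{lem:evol.W_+} (unstable slope mode), the key structural difference being that the neutral eigenspace is four-dimensional and, more importantly, that each of its basis vectors lies in the kernel of $\mathcal{L}'$. Differentiating gives $\tfrac{d}{d\tau} W_0 = 2\langle w_\tau, p_0'w\rangle_{\mathcal{H}}$, and substituting the evolution equation $w_\tau = \mathcal{L}'w + \mathcal{E}w + \mathcal{F}w + \mathcal{G}$, I would first observe that for every $\phi \in \{y_1, y_2, \cos\vartheta, \sin\vartheta\}$ a direct computation with \eqref{prel_OU_res} yields $\mathcal{L}\phi = \tfrac{1}{2}\phi$, hence $\mathcal{L}'\phi = 0$. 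Since $p_0'$ is the $\mathcal{H}$-orthogonal projection onto this kernel, it is self-adjoint and commutes with $\mathcal{L}'$, so $\langle \mathcal{L}'w, p_0'w\rangle_{\mathcal{H}} = \langle w, \mathcal{L}' p_0'w\rangle_{\mathcal{H}} = 0$. The linear leading term therefore drops out entirely and we are reduced to
$$\tfrac{d}{d\tau} W_0 = 2\langle \mathcal{E}w + \mathcal{F}w, p_0'w\rangle_{\mathcal{H}} + 2\langle \mathcal{G}, p_0'w\rangle_{\mathcal{H}},$$
which matches the conclusion of the proposition provided one can show $|\langle \mathcal{E}w + \mathcal{F}w, p_0'w\rangle_{\mathcal{H}}| \leq C|\tau+2\log\hat{Z}(X)|^{-1} W$.

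To establish this bound, I would expand $p_0'w$ in the $\mathcal{H}$-orthonormal basis of the four-dimensional neutral subspace; by the Cauchy--Schwarz inequality together with $\|p_0'w\|_{\mathcal{H}} \leq W^{1/2}$, it is enough to verify $|\langle \mathcal{E}w + \mathcal{F}w, \phi\rangle_{\mathcal{H}}| \leq C|\tau+2\log\hat{Z}(X)|^{-1} W^{1/2}$ for each $\phi \in \{y_1, y_2, \cos\vartheta, \sin\vartheta\}$ separately. For the $\mathcal{E}$-contribution I would invoke the integration-by-parts identity \eqref{eq:cal_F.int.part} with $h=\phi$, reducing matters to a pointwise estimate on $\mathcal{E}^{\ast}(\phi\, e^{-|y|^2/4})$. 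The bound \eqref{eq_F1_estimate_pointwise} generalises in a straightforward way, since multiplying by $\phi$ and differentiating at most twice only produces additional factors bounded by a low-degree polynomial in $|y|$, which are absorbed by the Gaussian weight; combining with \eqref{std_cyl_est} and \eqref{rot_est} yields $|\mathcal{E}^{\ast}(\phi\, e^{-|y|^2/4})| \leq C|\tau+2\log\hat{Z}(X)|^{-1}(1+|y|^3)e^{-|y|^2/4}$, and then Cauchy--Schwarz in $\mathcal{H}$ delivers the claim exactly as in \eqref{eq_F1_estimate_L2}.

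For the $\mathcal{F}$-contribution the argument is essentially a verbatim copy of the one in Proposition \ref{lem:evol.W_+}. Lemma \ref{prop_ell_dom} (ellipsoidal domains) ensures $\sqrt{2}+u \geq \beta/10$ on $\{\eta > 0\}$, so the derivative bounds \eqref{std_cyl_est} and \eqref{rot_est} give $|\mathcal{F}| \leq C(|u| + |\tau+2\log\hat{Z}(X)|^{-10})$ there. Using $|\phi| \leq 1+|y|$ this leads to
$$|\langle \mathcal{F}w, \phi\rangle_{\mathcal{H}}| \leq CW^{1/2}\bigl(\|1_{\{\eta> 0\}}(1+|y|)|u|\|_{\mathcal{H}} + |\tau+2\log\hat{Z}(X)|^{-10}\bigr),$$
and the inverse Poincar\'e inequality \eqref{inv_poinc} combined with $\|\hat u\|_{\mathcal{H}} \leq C|\tau+2\log\hat{Z}(X)|^{-1}$ closes the estimate as in \eqref{eq_F2_estimate_L2}. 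I do not expect a genuine obstacle here: the only point requiring care is that the polynomial test functions $\phi$ (as opposed to the constant test function of Proposition \ref{lem:evol.W_+}) should not spoil the $|\tau+2\log\hat{Z}(X)|^{-1}$ decay, and indeed both the Gaussian weight and the inverse Poincar\'e estimate accommodate one extra factor of $(1+|y|)$ without loss.
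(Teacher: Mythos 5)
Your proposal is correct and follows essentially the same route as the paper's proof: the $\mathcal{L}'$-term drops out because the neutral eigenfunctions $y_1,y_2,\cos\vartheta,\sin\vartheta$ lie in the kernel of $\mathcal{L}'$, and the remaining terms are estimated against each eigenfunction $\varphi_0$ via the integration-by-parts identity for $\mathcal{E}^\ast$ (with the $(1+|y|^3)$ pointwise bound) and the $\mathcal{F}$-bound combined with the inverse Poincar\'e inequality \eqref{inv_poinc}, exactly as in the paper. The only difference is that you spell out the vanishing of $\langle \mathcal{L}'w,p_0'w\rangle_{\mathcal{H}}$ and the reduction to individual eigenfunctions, which the paper leaves implicit.
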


\begin{proof}
By scaling we can assume $\hat{Z}(X)=1$.
To begin with, observe that
\begin{equation}
\tfrac{d}{d\tau} W_0=2\langle  \mathcal{E}w+\mathcal{F}w,p_0'w\rangle_{\mathcal{H}}+2\langle \mathcal{G},p_0'w\rangle_{\mathcal{H}}.
\end{equation}
Hence, it is enough to show that for $\varphi_0=y_1,y_2,\cos\vartheta$ or $\sin\vartheta$ we have
\begin{equation}
\left| \int (\mathcal{E}w+\mathcal{F}w) \varphi_0  e^{-\frac{|y|^2}{4}} \right| \leq C|\tau|^{-1}W^{\frac{1}{2}}.
\end{equation}
Arguing as in the proof of Lemma \ref{lem:evol.W_+} (unstable slope mode), we can estimate
\begin{equation}
\left|\int w\,  \mathcal{E}^\ast\! \big( \varphi_0 e^{-\frac{|y|^2}{4}}\big)\right| \leq C|\tau|^{-1} \int |w|(1+|y|^3)e^{-\frac{|y|^2}{4}} \leq C|\tau|^{-1}W^{\frac{1}{2}},
\end{equation}
and
 \begin{equation}
|\langle w,\mathcal{F} {\varphi_0}\rangle_{\mathcal{H}}| \leq CW^{\frac{1}{2}}\big(\|1_{\{\eta> 0\}}{(1+|y|)} |u|\|_{\mathcal{H}}+|\tau|^{-10}\big)\leq C|\tau|^{-1}W^{\frac{1}{2}}.
\end{equation}
This implies the assertion.
\end{proof}

\begin{proposition}[stable slope mode]\label{lem:evol.W_-}
For $\tau \leq \tau_\ast -2\log\hat{Z}(X)$ we have
\begin{align}
\tfrac{d}{d\tau} W_-+\tfrac{3}{4} W_-\leq  C|\tau+2\log\hat{Z}(X)|^{-1}W+2|\langle \mathcal{G},p_-'w\rangle_{\mathcal{H}}|.
\end{align}
\end{proposition}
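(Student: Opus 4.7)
The plan is to carry out a computation analogous to the proofs of Propositions~\ref{lem:evol.W_+} and~\ref{lem:evol.W_0}, with one additional ingredient: the coercive contribution of the stable spectrum of $\mathcal{L}'$. Starting from
\begin{equation}
\tfrac{d}{d\tau} W_- = 2\langle \mathcal{L}'w, p_-'w\rangle_\mathcal{H} + 2\langle (\mathcal{E}+\mathcal{F})w, p_-'w\rangle_\mathcal{H} + 2\langle \mathcal{G}, p_-'w\rangle_\mathcal{H},
\end{equation}
I would first use self-adjointness of $\mathcal{L}'$ and orthogonality of the eigenspace decomposition to rewrite $\langle \mathcal{L}'w, p_-'w\rangle_\mathcal{H} = \langle \mathcal{L}' p_-'w, p_-'w\rangle_\mathcal{H}$. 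The top eigenvalue of $\mathcal{L}'$ on $\mathcal{H}_-'$ is $-\tfrac12$ (coming from the $\mathcal{L}$-eigenvalue $0$ eigenfunctions $y_i^2-2$, $y_1y_2$, $y_i\cos\vartheta$, $y_i\sin\vartheta$), so this term contributes at most $-W_-$.

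For the $(\mathcal{E}+\mathcal{F})$ contribution, the obstacle is that $p_-'w$, unlike $p_+'w$ and $p_0'w$, does not lie in a finite-dimensional subspace, so we cannot directly reduce to pairing against a single eigenfunction. My plan is to decompose $p_-'w = w - p_+'w - p_0'w$. The pairings against $p_+'w$ and $p_0'w$ split into sums of $\langle (\mathcal{E}+\mathcal{F})w, \varphi\rangle_\mathcal{H}$ with $\varphi \in \{1, y_1, y_2, \cos\vartheta, \sin\vartheta\}$, multiplied by spectral coefficients of size $O(W^{1/2})$; these are bounded by $C|\tau+2\log\hat{Z}(X)|^{-1}W$ using precisely the computations already carried out in Propositions~\ref{lem:evol.W_+} and~\ref{lem:evol.W_0}, namely integrating by parts via the adjoint identity \eqref{eq:cal_F.int.part}, applying the pointwise bound \eqref{eq_F1_estimate_pointwise}, and using the inverse Poincar\'e inequality \eqref{inv_poinc} to handle $\mathcal{F}$.

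For the remaining diagonal term $\langle (\mathcal{E}+\mathcal{F})w, w\rangle_\mathcal{H}$ I would proceed in the same spirit. Integrating by parts through \eqref{eq:cal_F.int.part} with $h = w$, together with the $O(|\tau|^{-1})$ bounds on the coefficients of $\mathcal{E}$ coming from the cylindrical derivative estimates \eqref{std_cyl_est} and \eqref{rot_est}, and standard parabolic interior estimates bounding $\|\nabla w\|_\mathcal{H}$ by $\|w\|_\mathcal{H}$ on a slightly larger slab, produces $|\langle \mathcal{E}w, w\rangle_\mathcal{H}| \leq C|\tau+2\log\hat{Z}(X)|^{-1}W$. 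For $\mathcal{F}$, a Cauchy-Schwarz argument using $|\mathcal{F}| \leq C|u| + C|\tau|^{-10}$ on $\{\eta > 0\}$, the inverse Poincar\'e bound $\|u\,1_{\{\eta>0\}}\|_\mathcal{H} \leq C|\tau+2\log\hat{Z}(X)|^{-1}$, and the pointwise control $\sup_{\{\eta>0\}} |w| \leq C|\tau+2\log\hat{Z}(X)|^{-1/2}$ from \eqref{std_cyl_est}, yields the analogous bound $|\langle \mathcal{F}w, w\rangle_\mathcal{H}| \leq C|\tau+2\log\hat{Z}(X)|^{-1}W$. Combining everything gives $\tfrac{d}{d\tau}W_- + W_- \leq C|\tau+2\log\hat{Z}(X)|^{-1}W + 2|\langle \mathcal{G}, p_-'w\rangle_\mathcal{H}|$, from which the stated inequality with $\tfrac{3}{4}W_-$ follows by dropping the excess $\tfrac{1}{4}W_-$ on the left. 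I expect the main technical obstacle to be the control of the diagonal $\mathcal{F}$-term, since the finite-dimensional projection trick that worked cleanly for $W_+$ and $W_0$ is not available, and one has to squeeze out the $|\tau|^{-1}$-smallness by combining the $L^\infty$ bound on $w$ from the cylindrical estimates with the $\mathcal{H}$-smallness of $u$ provided by the inverse Poincar\'e inequality.
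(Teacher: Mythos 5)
Your skeleton (differentiate $W_-$, use the spectral structure of $\mathcal{L}'$ on the stable space, write $p_-'w=w-p_+'w-p_0'w$ and recycle the pairings from Propositions \ref{lem:evol.W_+} and \ref{lem:evol.W_0}) coincides with the paper's reduction, but your treatment of the diagonal term $\langle(\mathcal{E}+\mathcal{F})w,w\rangle_{\mathcal{H}}$ --- which is the actual content of this proposition --- has genuine gaps. By keeping only the spectral bound $2\langle\mathcal{L}'p_-'w,p_-'w\rangle_{\mathcal{H}}\leq -W_-$ you discard the Dirichlet term: the paper writes $\tfrac{d}{d\tau}W_-=W_--2\|\nabla p_-'w\|_{\mathcal{H}}^2+\dots$ and keeps $-2\|\nabla p_-'w\|_{\mathcal{H}}^2$ precisely because the diagonal errors are \emph{not} bounded by $C|\tau|^{-1}W$; they are only bounded by terms of the form $C|\tau|^{-1}\|\nabla w\|_{\mathcal{H}}^2$ and $C|\tau|^{-1/2}\|\nabla p_-'w\|_{\mathcal{H}}^2$, which can be much larger than $W_-$ (there is no reverse Poincar\'e on the stable space) and must be absorbed into the gradient term --- this is exactly where the factor $\tfrac34$ instead of $1$ comes from. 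Your substitute, ``standard parabolic interior estimates bounding $\|\nabla w\|_{\mathcal{H}}$ by $\|w\|_{\mathcal{H}}$'', is not available: interior estimates control sup-norms on compact sets by $\mathcal{H}$-norms on slightly larger sets at earlier times, not the global Gaussian norm of $\nabla w$ over the support of $\eta$ (which reaches $|y|\sim\sqrt{2|\tau|}$), and the paper stresses that $u_1$, unlike $u$, satisfies no inverse Poincar\'e inequality --- that is the very reason the cutoff errors $\omega$ exist.

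Second, the coefficient bounds you invoke fail for the worst terms. The coefficient $\mathcal{E}^{\vartheta\vartheta}=\varrho u-\tfrac12\varrho^2u^2+\varrho^4u_\vartheta^2/Q$ and the multiplier $\mathcal{F}$ (leading term $-\varrho u$) involve $u$ itself, and on $\{\eta>0\}$ one only has $|u|\leq C(1+|y|)^2/|\tau|$, which is of order $1$ near the cutoff radius; similarly, integrating $\mathcal{E}^{ij}w_{ij}w$ by parts against the Gaussian produces a term carrying $\mathcal{E}^{ij}y_iy_j/4$, again quadratically growing, so the blanket $O(|\tau|^{-1})$ bound from \eqref{std_cyl_est}--\eqref{rot_est} does not apply. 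The paper handles these by the sign observation $\mathcal{E}^{ij}y_iy_j\geq0$, by the pointwise bound $|u|\leq C(1+|y|)^2/|\tau|$ combined with the weighted Poincar\'e inequality $\|(1+|y|)w\|_{\mathcal{H}}\leq C(\|w\|_{\mathcal{H}}+\|\nabla w\|_{\mathcal{H}})$, and, for the key term $\int\upsilon(p_-'w_\vartheta)^2e^{-|y|^2/4}$ with $\upsilon=1_{\{\eta>0\}}(\varrho u-\tfrac12\varrho^2u^2)$, by splitting $w_\vartheta=p_0'w_\vartheta+p_-'w_\vartheta$ and using the one-sided bound $\sup_{\{\eta>0\}}u\leq C|\tau+2\log\hat Z(X)|^{-1/2}$ of Lemma \ref{claim_upper_p}, proved via Brunn--Minkowski exactly for this purpose; none of these ingredients appears in your sketch. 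Finally, your Cauchy--Schwarz argument for $\langle\mathcal{F}w,w\rangle_{\mathcal{H}}$, using $\sup_{\{\eta>0\}}|w|\leq C|\tau|^{-1/2}$ and $\|1_{\{\eta>0\}}u\|_{\mathcal{H}}\leq C|\tau|^{-1}$ via \eqref{inv_poinc}, yields an \emph{absolute} bound of order $|\tau|^{-3/2}W^{1/2}$ rather than the required \emph{relative} bound $C|\tau|^{-1}W$; since $W$ decays exponentially, $|\tau|^{-3/2}W^{1/2}\gg|\tau|^{-1}W$ in the relevant regime, so this step does not deliver the stated inequality.
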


\begin{proof}
By scaling we can assume $\hat{Z}(X)=1$.
To begin with, observe that
\begin{align}
\tfrac{d}{d\tau} W_-=W_--2\|\nabla p_-'w\|_{\mathcal{H}}^2+2\langle \mathcal{E}w+ \mathcal{F}w,p_{-}'w\rangle_{\mathcal{H}} +2\langle \mathcal{G},p_-'w\rangle_{\mathcal{H}}\quad\mathrm{and}\quad \| \nabla p_{-}' w\|_{\mathcal{H}}^2\geq W_{-}.
\end{align}
It thus suffices to show that
\begin{equation}\label{stab_grad_to_show}
\langle  \mathcal{E}w+\mathcal{F}w,p_{-}'w\rangle_{\mathcal{H}}\leq {\tfrac{1}{8}\|\nabla p_-'w\|_{\mathcal{H}}^2}+ C|\tau|^{-1}W. 
\end{equation}
In fact, since the unstable and neutral mode have already been controlled in the above proofs, it actually suffices to show that
\begin{equation}\label{stab_grad_to_show_in_fact}
\langle  \mathcal{E}w+ \mathcal{F}w,w\rangle_{\mathcal{H}}\leq {\tfrac{1}{10} \|\nabla p_-'w\|_{\mathcal{H}}^2}+C|\tau|^{-1}(\|\nabla w\|^2_{\mathcal{H}}+ W). 
\end{equation}
To this ends, observe that using $|\mathcal{E}^i|+|\mathcal{E}^\vartheta|\leq C|\tau|^{-1}$ we can estimate
\begin{align}
 \left|\int  \mathcal{E}^iw_i w e^{-\frac{|y|^2}{4}}\right|+ \left|\int  \mathcal{E}^\vartheta w_\vartheta w e^{-\frac{|y|^2}{4}}\right|\leq C|\tau|^{-1}(\|\nabla w\|_{\mathcal{H}}^2+W). 
\end{align}
Moreover, by using $\mathcal{E}^{ij}y_iy_j\geq 0$ and integration by parts, we can observe that
\begin{equation}
\int w^2\left[\mathcal{E}^{ij}e^{-\frac{|y|^2}{4}}\right]_{ij}\geq \int \left[-\tfrac{1}{2} w^2\mathcal{E}^{ij}\delta_{ij}-(2w^2(\mathcal{E}^{ij})_i)_j  +w^2(\mathcal{E}^{ij})_{ij}\right]e^{-\frac{|y|^2}{4}}.
\end{equation}
Thus, we can estimate
\begin{align}
-\int \mathcal{E}^{ij}w_{ij}we^{-\frac{|y|^2}{4}}=\int \mathcal{E}^{ij}w_iw_je^{-\frac{|y|^2}{4}}-\frac{1}{2}\int w^2\left[\mathcal{E}^{ij}e^{-\frac{|y|^2}{4}}\right]_{ij}\leq C|\tau|^{-1}(\|\nabla w\|_{\mathcal{H}}^2+W).
\end{align}
Furthermore, using integration by parts again we have
\begin{equation}
-\int (\mathcal{E}^{i\vartheta}w_{i\vartheta}+\mathcal{E}^{\vartheta\vartheta}w_{\vartheta\vartheta}) we^{-\frac{|y|^2}{4}} \leq C|\tau|^{-1}(\|\nabla w\|_{\mathcal{H}}^2+W)+ \int (\varrho u -\tfrac{1}{2}\varrho^2u^2)w_\vartheta^2e^{-\frac{|y|^2}{4}}.
\end{equation}
To analyze the latter summand let us abbreviate
\begin{equation}
\upsilon=1_{\{\eta>0\}}(\varrho u-\tfrac{1}{2}\varrho^2u^2).
\end{equation}
Recalling that the kernel of  $\mathcal{L}'$ is spanned by $y_1$, $y_2$, $\cos\vartheta$ and $\sin\vartheta$, and observing that $\partial_\vartheta$ commutes with $p_\ast'$, we see that
\begin{align}
&w_\vartheta-p'_-w_\vartheta=p'_0w_\vartheta=b_1(\tau)\cos\vartheta+b_2(\tau)\sin\vartheta,  &&|b_1|^2+|b_2|^2\leq W_0.
\end{align}
Thus, using the inverse Poincare inequality from \eqref{inv_poinc} and remembering that  $\|\hat u\|_{\mathcal{H}}\leq C|\tau|^{-1}$ we get
\begin{equation}
\int \upsilon(p'_0w_\vartheta)^2 e^{-\frac{|y|^2}{4}}\leq CW_0\int 1_{\{\eta>0\}}|u| e^{-\frac{|y|^2}{4}}\leq C|\tau|^{-1}W.
\end{equation}
Similarly, using also integration by part and the almost symmetry estimate from \eqref{rot_est} we infer that
\begin{equation}
\int  \upsilon(p'_-w_\vartheta)(p'_0w_\vartheta)  e^{-\frac{|y|^2}{4}}\leq CW_0^{\frac{1}{2}} \int 1_{\{\eta>0\}}  (|u | +|u_\vartheta|) |p'_-w|     e^{-\frac{|y|^2}{4}}\leq C|\tau|^{-1}W.
\end{equation}
Moreover, thanks to Lemma \ref{claim_upper_p} (upper bound for profile function) below, we can estimate
\begin{equation}
\int  \upsilon(p'_-w_\vartheta)^2  e^{-\frac{|y|^2}{4}}\leq C|\tau|^{-1/2}\|\nabla p_{-}' w \|_{\mathcal{H}}^2.
\end{equation}
Finally, to control $\langle \mathcal{F}w,w\rangle_{\mathcal{H}}$, note that as a consequence of Lemma \ref{prop_ell_dom} (ellipsoidal domains) and convexity we have the estimate
\begin{equation}
|u|\leq \frac{C(1+|y|)^2}{|\tau|},
\end{equation}
and also recall that for the Gaussian norm we have the weighted Poincare inequality
\begin{equation}
\| (1+|y|)f \|_{\mathcal{H}}\leq C(\| f \|_{\mathcal{H}}+\| \nabla f \|_{\mathcal{H}}).
\end{equation}
This yields
\begin{equation}
|\langle  \mathcal{F} w,w\rangle_{\mathcal{H}}| \leq C |\tau|^{-1} (W+ \| \nabla  w\|_{\mathcal{H}}^2),
\end{equation}
and thus finishes the proof of the proposition.
\end{proof}
 
In the above proof we used the following lemma: 
 
\begin{lemma}[upper bound for profile function]\label{claim_upper_p}
We have $\sup_{\{\eta >0 \}} u \leq C|\tau+2\log\hat{Z}(X)|^{-1/2}$.
\end{lemma}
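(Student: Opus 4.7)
The plan is to exploit convexity of the enclosed body $\bar{K}_\tau^X$ to deduce \emph{concavity of $u$ in the $(y_1,y_2)$-variables}, and to combine this with a size estimate for $u$ at the origin (coming from the Merle--Zaag spectral analysis of Section \ref{sec_MZ_switch}) to produce a tangent-plane upper bound on the support of $\eta$.

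For the concavity step, I would fix $\vartheta$ and observe that a point on $\bar{M}_\tau^X$ with parameters $(y_1,y_2,\vartheta)$ has coordinates $(y_1,y_2,r\cos\vartheta,r\sin\vartheta)$ with $r=\sqrt{2}+u$. By Lemma \ref{prop_ell_dom} (ellipsoidal domains) applied with $\alpha$ sufficiently small, the axis $\{(y,0,0)\}$ sits inside $\bar{K}_\tau^X$ on a convex region containing both the origin and the support of $\eta$; averaging two boundary points of the form $(y^{(i)},r_i\cos\vartheta,r_i\sin\vartheta)$ and invoking convexity of $\bar{K}_\tau^X$ then shows that $r$, and hence $u$, is midpoint-concave and therefore concave in $(y_1,y_2)$ on this region.

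For the origin bounds, combining Corollary \ref{cor_switch_time} with Lemma \ref{prop:domanance_criteria} and the almost-rotational-symmetry \eqref{small_rot_spec_coeff} yields the Gaussian $L^2$ bound $\|\hat{u}(\cdot,\tau)\|_{\mathcal{H}} \leq C|\tau+2\log\hat{Z}(X)|^{-1}$. Restricting to a fixed ball $\{|y|\leq L\}$, where the Gaussian weight is bounded below, and applying standard interior parabolic estimates for the quasilinear equation \eqref{evol_u} (which is uniformly parabolic here by the curvature control from Theorem \ref{thm_elong_cyl}), produces the pointwise bound $|u(0,\vartheta,\tau)| + |\nabla_y u(0,\vartheta,\tau)| \leq C|\tau+2\log\hat{Z}(X)|^{-1}$ uniformly in $\vartheta$.

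Putting things together, the concavity inequality $u(y,\vartheta,\tau)\leq u(0,\vartheta,\tau) + \nabla_y u(0,\vartheta,\tau)\cdot y$ combined with $|y|\leq (141/100)\rho(\tau)\leq C|\tau+2\log\hat{Z}(X)|^{1/2}$ on $\{\eta>0\}$ yields $u\leq C|\tau+2\log\hat{Z}(X)|^{-1/2}$, as claimed. The main subtlety is verifying positivity of $r$ on the whole support of $\eta$, which is what makes the concavity argument legitimate: in the regime $\rho^2 = |\tau+2\log\hat{Z}(X)|$ this is immediate from Lemma \ref{prop_ell_dom} with suitable $\alpha$ and $\delta$, while in the transition regime where $\rho^2$ may reach $8|\tau+2\log\hat{Z}(X)|$ one combines Lemma \ref{prop_ell_dom} with the enlarged cylindrical region of Theorem \ref{thm_elong_cyl}, observing that such $\tau$ remain in a bounded interval depending only on $\tau_\ast$ and reducing that part to a routine consequence of $\eps_0$-cylindricality.
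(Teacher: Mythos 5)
Your argument is correct in substance, but it takes a genuinely different route from the paper. The paper's proof applies the Brunn--Minkowski inequality to the slices $\bar K^X_\tau\cap\{y=y'\}$: the square root $\mathcal R(y,\tau)$ of the cross-sectional area is concave in $y$, so the gradient inequality at the origin together with $\|u\|_{L^\infty(B_{10}(0))}\leq C|\tau+2\log\hat Z(X)|^{-1}$ and $|y|\leq C|\tau+2\log\hat Z(X)|^{1/2}$ on $\{\eta>0\}$ gives $\mathcal R\leq(2\pi)^{1/2}+C|\tau+2\log\hat Z(X)|^{-1/2}$, and the almost symmetry estimate \eqref{rot_est} converts this into the claimed bound on $u$. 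You instead prove concavity in $y$ of the radial profile $\sqrt2+u(\cdot,\cdot,\vartheta,\tau)$ at each fixed $\vartheta$ directly from convexity of $\bar K^X_\tau$; this is legitimate once the axis points $(y,0,0)$ lie in $\bar K^X_\tau$ over the support of $\eta$, and that in turn does follow from the positive lower bound on $\sqrt2+u$ there (for each $(y,\vartheta)$ the segment joining the graph points over $(y,\vartheta)$ and $(y,\vartheta+\pi)$ contains the axis point), after which the tangent-plane inequality plus the $C|\tau+2\log\hat Z(X)|^{-1}$ bounds on $u$ and $\nabla_y u$ at the origin (spectral estimates plus interior estimates, exactly as the paper also uses) give the result. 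Your route avoids Brunn--Minkowski and the area-to-radius conversion via \eqref{rot_est}, at the cost of needing the radial graph with positive radius on all of $\{\eta>0\}$, rather than only area information about the slices.

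One side remark in your last paragraph is wrong, though easily repaired: in the regime where $\rho^2$ can reach $8|\tau+2\log\hat Z(X)|$, the relevant times satisfy $\tau+2\log\hat Z(X)\in\big[2\big(\tau_s(X)+2\log\hat Z(X)\big),\tau_\ast\big]$, which is \emph{not} an interval of length controlled by $\tau_\ast$ alone; it depends on the switch time $\tau_s(X)$, which can be arbitrarily negative. The correct way to cover this regime is the second part of Theorem \ref{thm_elong_cyl}: precisely for $\tau\in[2\tau_s(X)+2\log\hat Z(X),\tau_\ast-2\log\hat Z(X)]$ the barrier argument in its proof yields the graph on $\{y_1^2+y_2^2\leq 40|\tau+2\log\hat Z(X)|\}$ with the lower bound $u\geq\sqrt{2-(y_1^2+y_2^2)/(20|\tau+2\log\hat Z(X)|)}-\sqrt2$, which comfortably contains $\{\eta>0\}$ there (where $|y|^2\leq(141/100)^2\cdot 8\,|\tau+2\log\hat Z(X)|<16\,|\tau+2\log\hat Z(X)|$) and keeps the radius bounded below, so your concavity argument goes through in that regime as well; no appeal to a "bounded interval of times" is needed or available.
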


\begin{proof}
Denoting by $\bar{K}_\tau$ the convex domain enclosed by $\bar{M}_\tau=\bar{M}_\tau^X$, consider the radius function
\begin{equation}
\mathcal{R}(y',\tau)=\left(\mathcal{H}^2(\bar K_\tau\cap \{y=y'\})\right)^{1/2}.
\end{equation}
By the Brunn-Minkowski inequality the function $y\mapsto \mathcal{R}(y,\tau)$ is concave, hence
\begin{equation}
\mathcal{R}(y,\tau)\leq \mathcal{R}(0)+ |y|\left(\mathcal{R}(y/|y|,\tau)-\mathcal{R}(0,\tau)\right).
\end{equation}
Recalling that $\|u\|_{L^{\infty}(B_{10}(0))}\leq C|\tau+2\log\hat{Z}(X)|^{-1}$, we thus obtain
\begin{equation}
\sup_{\{ |y|\leq 10|\tau+2\log\hat{Z}(X)|^{1/2} \}}\mathcal{R}(y,\tau)\leq (2\pi)^{1/2}+C|\tau+2\log\hat{Z}(X)|^{-1/2}.
\end{equation}
Together with the almost symmetry estimate from \eqref{rot_est} this establishes the lemma.
\end{proof}

Combining the above propositions we can now establish the main result of this subsection. To keep track of the error terms coming from the cutoff we consider the monotone quantity
 \begin{equation}\label{eq_omega}
 \omega(\tau)=\sup_{\tau'\leq \min\{\tau,\tau_s(X)\}}\sup_{\{|y|\leq {\frac{141}{100}} |\tau'+2\log\hat{Z}(X)|\}}|u_1(y,\tau')|.
 \end{equation}

 \begin{theorem}[differential Merle-Zaag alternative]\label{thm_diff_MZ}
 Either for all $\tau \leq \tau_\ast -2\log\hat{Z}(X)$ we have
\begin{equation}
W_-+W_0\leq \frac{C}{|\tau+2\log\hat{Z}(X)|}\left(W_{+}+ \hat{Z}(X)^{\frac{24}{25}}{e^{\frac{12}{25}\tau}\omega^2+\hat{Z}(X)^4e^{2\tau}}\right),
\end{equation}
or for $\tau\to -\infty$ we have 
\begin{equation}\label{MZ_grad_neutral}
  W_-+W_++ {e^{\frac{12}{25}\tau}\omega^2+ e^{2\tau}}= o(W_0).
\end{equation}
 \end{theorem}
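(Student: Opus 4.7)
The plan is to first sharply estimate the cutoff error $\mathcal{G}$, and then run a Merle-Zaag ODE dichotomy on $(W_+, W_0, W_-)$ in the presence of this error. For the first step, note that $\mathcal{G}$ is supported in the thin annulus $A_\tau = \{7\rho(\tau)/5 \leq |y| \leq 141\rho(\tau)/100\}$ where $\eta(|y|/\rho(\tau))$ transitions. Using the cylindrical derivative estimates \eqref{std_cyl_est}, the almost-symmetry estimate \eqref{rot_est}, the pointwise bound $|u_1|\leq \omega(\tau)$ on $A_\tau$, and the bound $|\rho'(\tau)|\leq C\rho(\tau)/|\tau+2\log\hat Z(X)|$, I would obtain a pointwise estimate of the form
$$|\mathcal{G}|\leq C\omega(\tau)(1+|y|^N)\mathbf{1}_{A_\tau}+C|\tau+2\log\hat Z(X)|^{-10}\mathbf{1}_{\{\eta>0\}}.$$
Integrating against $e^{-|y|^2/4}$ and exploiting $(7/5)^2/4=49/100$ together with $\rho(\tau)^2\geq |\tau+2\log\hat Z(X)|$, the polynomial prefactors get absorbed and one concludes
$$\|\mathcal{G}\|_\mathcal{H}^2 \leq C\hat Z(X)^{24/25}e^{12\tau/25}\omega(\tau)^2+C\hat Z(X)^4 e^{2\tau},$$
so by Cauchy-Schwarz $|\langle \mathcal{G}, p_\ast'w\rangle_\mathcal{H}|\leq W_\ast^{1/2}\|\mathcal{G}\|_\mathcal{H}$.

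For the second step, abbreviating $\varepsilon(\tau) = C|\tau+2\log\hat Z(X)|^{-1}$ and $\mathcal{R}(\tau)=\hat Z(X)^{24/25}e^{12\tau/25}\omega^2+\hat Z(X)^4 e^{2\tau}$, Propositions \ref{lem:evol.W_+}--\ref{lem:evol.W_-} give the Merle-Zaag-type system
\begin{align*}
\dot W_+ &\geq W_+ - C\varepsilon W - CW^{1/2}\mathcal{R}^{1/2},\\
|\dot W_0| &\leq C\varepsilon W + CW^{1/2}\mathcal{R}^{1/2},\\
\dot W_- &\leq -\tfrac{3}{4}W_- + C\varepsilon W + CW^{1/2}\mathcal{R}^{1/2}.
\end{align*}
Following the proof of Lemma \ref{prop:domanance_criteria}, I would first dispatch the stable mode by considering $h = W_- - K_1\varepsilon(W_++W_0)-K_2\mathcal{R}$. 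Since $\mathcal{R}$ decays exponentially going forward in $\tau$, at any $\tau$ with $h(\tau)\geq 0$ one computes $\dot h \leq -\tfrac{1}{2}W_- \leq 0$, which together with $W_- \to 0$ as $\tau\to -\infty$ forces $h\leq 0$ throughout, yielding $W_- \leq K_1\varepsilon(W_++W_0)+K_2\mathcal{R}$. Substituting back reduces matters to a two-mode system for $(W_+, W_0)$. Setting $\tau_\star = \sup\{\tau\leq \tau_\ast-2\log\hat Z(X): W_+(\tau)\leq |\tau+2\log\hat Z(X)|^{-1/2}(W_0(\tau)+\mathcal{R}(\tau))\}$, if $\tau_\star = -\infty$ then iterating the reduced system gives $W_-+W_++\mathcal{R} = o(W_0)$, the second alternative. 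Otherwise a first-crossing sign-of-derivative argument applied to $g = W_+ - |\tau+2\log\hat Z(X)|^{-1/2}(W_0+\mathcal{R})$ propagates $g\geq 0$ from $\tau_\star$ backward to $-\infty$, yielding the first alternative after unwinding.

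The main obstacle is pinning down the precise exponent $12/25$ in the error estimate, which hinges on placing the cutoff at $|y|=7\rho(\tau)/5$ strictly inside the natural parabolic scale $\sqrt{2}\rho(\tau)$. The gap $1/2-12/25 = 1/50$ in the Gaussian exponent is exactly what ensures $\mathcal{R}$ decays strictly faster than the target leading order $e^{\tau/2}$ of the differential neck theorem, which is the reason this dichotomy is usable downstream. Tracking the constants through the commutator expansion of $\mathcal{G}$, in particular through the nonlinear $\mathcal{E}$-terms whose coefficients involve products of $u$ and its derivatives, and handling the switched definition of $\rho(\tau)$ across $\tau = \tau_s(X)$ where $\rho$ grows from $|\tau+2\log\hat Z(X)|^{1/2}$ to $\sqrt{8}|\tau+2\log\hat Z(X)|^{1/2}$ (only strengthening the Gaussian gain), requires careful bookkeeping but does not alter the structure of the dichotomy.
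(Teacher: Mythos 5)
Your overall shape (exploit the Gaussian weight at the cutoff radius, then run a Merle--Zaag dichotomy with the cutoff error as forcing) is the right one, but two steps do not work as written. First, the pointwise bound $|\mathcal{G}|\leq C\omega(1+|y|^N)1_{A_\tau}+C|\tau+2\log\hat Z(X)|^{-10}1_{\{\eta>0\}}$ does not follow from \eqref{std_cyl_est}, \eqref{rot_est} and $|u_1|\leq\omega$: the commutators $\eta\mathcal{L}'u_1-\mathcal{L}'(\eta u_1)$ and $\eta\mathcal{E}u_1-\mathcal{E}(\eta u_1)$ contain terms of the form $\nabla\eta\cdot\nabla u_1$, and on the transition annulus \eqref{std_cyl_est} only gives $|\nabla u_1|\leq C|\tau+2\log\hat Z(X)|^{-1/2}$, which is not controlled by $\omega$. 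The paper avoids exactly this by estimating the pairing $\langle\mathcal{G},p_\ast'w\rangle_{\mathcal{H}}$ directly and integrating by parts, turning $a_{ij}\eta_i(u_1)_jw$ into $(u_1)^2$ against derivatives of $\eta$ and the Gaussian, which yields the quadratic bound $|\langle\mathcal{G},p_\ast'w\rangle_{\mathcal{H}}|\leq e^{\frac{485}{1000}\tau}\omega^2$ for $\tau\leq\tau_s(X)$ (and $e^{3\tau}$ afterwards). Your Cauchy--Schwarz route also loses the exponent bookkeeping: to keep the $C|\tau+2\log\hat Z(X)|^{-1}$ smallness on $W$ you must split $W^{1/2}\mathcal{R}^{1/2}\leq\tfrac{1}{2}|\tau|^{-1}W+\tfrac{1}{2}|\tau|\mathcal{R}$, and since you already spent the slack between $\tfrac{49}{100}$ and $\tfrac{12}{25}$ in declaring $\|\mathcal{G}\|_{\mathcal{H}}^2\leq Ce^{\frac{12}{25}\tau}\omega^2+Ce^{2\tau}$, the resulting forcing $|\tau|e^{\frac{12}{25}\tau}\omega^2$ no longer fits under the stated $e^{\frac{12}{25}\tau}\omega^2$.

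Second, the dichotomy itself is not established. The paper's decisive device, absent from your sketch, is to absorb the error into the unstable mode: set $\tilde W_+=W_++e^{\frac{12}{25}\tau}\omega^2+e^{2\tau}$; since $\omega$ is nondecreasing, $\tfrac{d}{d\tau}\tilde W_+\geq\tfrac{12}{25}\tilde W_+-C|\tau+2\log\hat Z(X)|^{-1}(\tilde W_++W_0+W_-)$, the system for $(\tilde W_+,W_0,W_-)$ has no forcing at all, and the Merle--Zaag ODE lemma applies verbatim, producing both alternatives exactly as stated (in particular $e^{\frac{12}{25}\tau}\omega^2+e^{2\tau}\leq\tilde W_+=o(W_0)$ in the neutral branch). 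Your from-scratch version does not deliver this: (i) the branches are swapped, since $\tau_\star=-\infty$ means $W_+>|\tau+2\log\hat Z(X)|^{-1/2}(W_0+\mathcal{R})$ for all $\tau$, i.e.\ the unstable-dominated regime, yet you assign it to the $o(W_0)$ alternative, and conversely; (ii) "propagating $g\geq0$ backward from $\tau_\star$" is inconsistent with $g(\tau_\star)\leq0$; (iii) even repaired, a threshold at $|\tau|^{-1/2}$ yields neither the $C|\tau+2\log\hat Z(X)|^{-1}$ coefficient of the first alternative nor $\mathcal{R}=o(W_0)$ in the second (nothing in your neutral branch prevents $\mathcal{R}\gtrsim W_0$); and (iv) your stable-mode comparison $h=W_--K_1\eps(W_++W_0)-K_2\mathcal{R}$ fails to absorb the cross term, since at points with $h\geq0$ one only gets $CW^{1/2}\mathcal{R}^{1/2}\leq CW_-\bigl((1+(K_1\eps)^{-1})/K_2\bigr)^{1/2}$, which blows up as $\eps\sim|\tau|^{-1}\to0$ (also, $\mathcal{R}$ grows, not decays, going forward in $\tau$; the sign you actually need is $\tfrac{d}{d\tau}\mathcal{R}\geq0$, which does hold). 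So as written the proposal does not prove the theorem; the missing ideas are the integration-by-parts estimate of the pairing and the augmentation of $W_+$ by the error terms before invoking the Merle--Zaag lemma.
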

 
\begin{proof}
By scaling we can assume $\hat{Z}(X)=1$.
Note that on the support of $\eta'$ the Gaussian weight satisfies
\begin{equation}
e^{-\frac{1}{4}|y|^2}\leq e^{-\frac{49}{100}\rho(\tau)^2}.
\end{equation}
Remembering \eqref{eq_rho_switch}, and taking also into account \eqref{std_cyl_est}, this yields
\begin{equation}
|\langle \mathcal{G},p_\ast' w\rangle_{\mathcal{H}}|\leq \begin{cases}
e^{\frac{485}{1000}\tau}\omega^2& \mathrm{if}\; \tau\leq\tau_s(X)\\
e^{3\tau}& \mathrm{if}\; \tau_s(X)< \tau\leq \tau_\ast.
\end{cases}
\end{equation}
Here, to illustrate how we estimated a typical term of $\langle \mathcal{G},w\rangle_{\mathcal{H}}$ for $\tau\leq \tau_s(X)$, note that abbreviating $a_{ij}=\delta_{ij}-\mathcal{E}^{ij}$ and integrating by parts we have
\begin{equation}
\int a_{ij} \eta_i(u_1)_jw e^{-\frac{|y|^2}{4}} = -\frac{1}{2}\int (u_1)^2 \left(a_{ij} \eta\eta_ie^{-\frac{|y|^2}{4}}\right)_j\leq C|\tau|^{1/2}e^{\frac{49}{100}\tau}\omega^2 \sup_{\{\eta'\neq 0\}}(|a_{ij}|+|\nabla a_{ij}|).
\end{equation}
In the same manner, we estimated the terms involving $\eta_i(u_1)_\vartheta$, and the remaining terms can be bounded directly without integration by parts.

Now, we set
\begin{equation}
\tilde{W}_{+}=W_{+}+ e^{\frac{12}{25}\tau}\omega^2+e^{2\tau}.
\end{equation}
Then, thanks to Proposition \ref{lem:evol.W_+} (unstable gradient mode), Proposition \ref{lem:evol.W_0} (neutral gradient mode) and Proposition \ref{lem:evol.W_-} (stable gradient mode) we have
\begin{align}
&\tfrac{d}{d\tau} \tilde{W}_+ \geq \tfrac{12}{25} \tilde{W}_+ - C|\tau|^{-1} \, (\tilde{W}_+ + W_0 + W_-), \nonumber\\ 
&\left | \tfrac{d}{d\tau} W_0 \right | \leq C|\tau|^{-1} \, (\tilde{W}_+ + W_0 + W_-), \label{eq:W_system}\\ 
&\tfrac{d}{d\tau} W_- \leq - {\tfrac{3}{4}}W_- + C|\tau|^{-1} \, (\tilde{W}_+ + W_0 + W_-). \nonumber
\end{align}
Together with the ODE lemma from Merle-Zaag \cite{MZ}, this implies the assertion.
\end{proof}

 \bigskip 
 
 \subsection{Ruling out the neutral slope mode}
In this subsection, we show that $W_0$ cannot dominate. To this end, given any center point $X\in\mathcal{M}$, we consider the expansion coefficients
\begin{equation}
a_k(\tau)=\langle  w(\cdot,\tau),y_k\rangle_{\mathcal{H}},\qquad (k=1,2).
\end{equation}
Note that 
\begin{equation}\label{eq:coeff.a.evol}
\tfrac{d}{d\tau} a_k=\langle \mathcal{E}w+ \mathcal{F}w , y_k\rangle_{\mathcal{H}}+O(e^{\frac{1}{4}\tau}).
\end{equation}

\begin{proposition}[leading term]\label{prop_leading}
If \eqref{MZ_grad_neutral} holds, then for $\tau\to -\infty$ we have
\begin{equation}
\langle \mathcal{F}w,y_k\rangle_{\mathcal{H}}= -(16\pi e)^{-\frac{1}{2}}a_i \langle y_i y_k, \hat{u}\rangle_{\mathcal{H}}+\frac{o(|a|+{|\tau|^{-10}})}{|\tau|}.
\end{equation}
\end{proposition}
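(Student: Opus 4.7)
The plan is to split $\mathcal{F}=-\tfrac{1}{\sqrt{2}}u+\mathcal{F}_{\mathrm{rem}}$ using $\varrho=\tfrac{1}{\sqrt{2}}+O(u)$, and $w=p_0'w+(w-p_0'w)$, identifying the product of the two principal pieces as the leading term and absorbing the three cross-terms into the error. The hypothesis \eqref{MZ_grad_neutral} first gives $\|w-p_0'w\|_{\mathcal{H}}^2=W_++W_-=o(W_0)$. Decomposing $p_0'w=\tfrac{a_1}{\|y_1\|^2}y_1+\tfrac{a_2}{\|y_2\|^2}y_2+c_3\cos\vartheta+c_4\sin\vartheta$, the trig coefficients $c_3,c_4$ are $O(|\tau|^{-10})$ by the symmetry estimate \eqref{rot_est}, so $W_0^{1/2}\leq C(|a|+|\tau|^{-10})$ and hence $\|w-p_0'w\|_{\mathcal{H}}=o(|a|+|\tau|^{-10})$. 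Pointwise one also has $|p_0'w|\leq C(|a|+|\tau|^{-10})(1+|y|)$.

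\textbf{Bounding the cross-terms.} By Lemma~\ref{prop_ell_dom} and convexity, $|u|\leq C(1+|y|)^2/|\tau|$ on the graph domain (as in the proof of Proposition~\ref{lem:evol.W_-}); combined with the weighted Poincar\'e inequality \eqref{inv_poinc}, this gives $\|uy_k\|_{\mathcal{H}}\leq C|\tau|^{-1}$. For the $(-u/\sqrt{2})(w-p_0'w)$-cross-term, Cauchy--Schwarz yields $|\langle\tfrac{1}{\sqrt{2}}u(w-p_0'w),y_k\rangle_{\mathcal{H}}|\leq C|\tau|^{-1}\|w-p_0'w\|_{\mathcal{H}}=o((|a|+|\tau|^{-10})/|\tau|)$. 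For the $\mathcal{F}_{\mathrm{rem}}$-cross-terms, the derivative bounds \eqref{std_cyl_est}, the symmetry bounds \eqref{rot_est}, and the quadratic bound on $u$ give $|\mathcal{F}_{\mathrm{rem}}|\leq C(1+|y|)^4/|\tau|^2+C|\tau|^{-10}$; pairing against $|p_0'w|$ pointwise and against $\|w-p_0'w\|_{\mathcal{H}}$ via Cauchy--Schwarz yields a total of $O((|a|+|\tau|^{-10})/|\tau|^2)=o((|a|+|\tau|^{-10})/|\tau|)$.

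\textbf{Principal term.} It remains to compute $-\tfrac{1}{\sqrt{2}}\langle u\,p_0'w,y_k\rangle_{\mathcal{H}}$. The trig modes in $p_0'w$ contribute at most $C|\tau|^{-10}\|uy_k\|_{\mathcal{H}}=O(|\tau|^{-11})$ and are absorbed in the error. The $y_i$ modes give $-\tfrac{a_i}{\sqrt{2}\|y_i\|^2}\langle u\,y_iy_k\rangle_{\mathcal{H}}$, and replacing $u$ by $\hat u$ is super-polynomially cheap since $u-\hat u$ is supported on $\{|y|\geq|\tau|^{\gamma}\}$ where the Gaussian weight is exponentially small. A direct computation with the measure $(4\pi)^{-3/2}e^{-|q|^2/4}dq$ on $\Gamma$ evaluates the prefactor $(\sqrt{2}\|y_i\|^2)^{-1}$ to $(16\pi e)^{-1/2}$, yielding the claim.

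\textbf{Main obstacle.} The calculation is otherwise routine; the subtle point is that the required error bound is \emph{little-o} rather than big-O of $(|a|+|\tau|^{-10})/|\tau|$. This forces one to use (i) the sharper quadratic bound $|u|\leq C(1+|y|)^2/|\tau|$ in place of the pointwise Lemma~\ref{claim_upper_p}, so that $\mathcal{F}_{\mathrm{rem}}$ contributes at the level $|\tau|^{-2}$ rather than $|\tau|^{-1}$, and (ii) the $|\tau|^{-10}$ trig control from Zhu's bubble-sheet symmetry improvement encoded in \eqref{rot_est}, so that $\|w-p_0'w\|_{\mathcal{H}}=o(|a|+|\tau|^{-10})$ rather than merely $o(|a|)$.
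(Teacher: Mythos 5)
Your proof is correct and follows essentially the same route as the paper's: split $w$ into $p_0'w$ plus the remainder (controlled via \eqref{MZ_grad_neutral} together with the $O(|\tau+2\log\hat Z(X)|^{-10})$ trigonometric coefficients coming from \eqref{rot_est}), and replace $\mathcal{F}$ by $-2^{-1/2}u$ up to errors of size $C(u^2+|u_\vartheta|+|u_{\vartheta\vartheta}|)$, which contribute only at order $|\tau|^{-2}$. The only cosmetic differences are that you lean on the pointwise quadratic bound $|u|\leq C(1+|y|)^2/|\tau|$ where the paper mostly invokes the inverse Poincar\'e inequality \eqref{inv_poinc} with $\|\hat u\|_{\mathcal{H}}\leq C|\tau|^{-1}$, and that you make explicit the super-polynomially small tail in passing from $1_{\{\eta>0\}}u$ to $\hat u$, which the paper leaves implicit.
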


\begin{proof}We will repeatedly use the almost symmetry estimate from \eqref{rot_est}, which in particular implies
\begin{equation}
W_0^{1/2}\leq C(|a|+|\tau|^{-10}).
\end{equation}
Observing that $|\mathcal{F}|\leq C(|u|+|u_\vartheta|+|u_{\vartheta\vartheta}|)$ on the support of $\eta$, and using \eqref{inv_poinc} and \eqref{MZ_grad_neutral}, we can estimate
\begin{equation}
\left| \langle 1_{\{\eta>0\}}\mathcal{F} (w-p_0'w) , y_k \rangle_{\mathcal{H}}\right|  \leq C {\|w-p_0'w\|_{\mathcal{H}}}(\|\hat{u}\|_{\mathcal{H}}+|\tau|^{-9}) =\frac{o(|a|+|\tau|^{-10})}{|\tau|}.
\end{equation}
Since $\| y_k \|_{\mathcal{H}}=(8\pi e)^{1/4}$ in our normalization, this shows that
\begin{equation}
\langle \mathcal{F}w,y_k\rangle_{\mathcal{H}}= (8\pi e)^{-1/2} a_i \langle y_iy_k, 1_{\{\eta>0\}}\mathcal{F}\rangle_{\mathcal{H}}+\frac{o(|a|+|\tau|^{-10})}{|\tau|}.
\end{equation}
Moreover, observing that $|\mathcal{F}+2^{-1/2}u|\leq C(u^2+|u_\vartheta|+|u_{\vartheta\vartheta}|)$ on the support of $\eta$, using again \eqref{inv_poinc} we can estimate
\begin{equation}
\left| \langle y_iy_k, 1_{\{\eta>0\}}(\mathcal{F}+2^{-1/2}u)\rangle_{\mathcal{H}}\right|\leq C|\tau|^{-2}.
\end{equation}
Combining the above estimates yields the assertion.
\end{proof}

\begin{proposition}[error term]\label{prop_error}
If \eqref{MZ_grad_neutral} holds, then for $\tau\to -\infty$ we have
\begin{equation}
\langle \mathcal{E}w,y_k\rangle_{\mathcal{H}}= \frac{o(|a|+|\tau|^{-10})}{|\tau|}.
\end{equation}
\end{proposition}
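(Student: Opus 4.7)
Following the strategy of Proposition~\ref{prop_leading}, I would decompose $w$ according to its $\mathcal{L}'$-neutral components plus a small remainder. Writing
\[
w=\frac{a_1}{\|y_1\|_{\mathcal{H}}^{2}}\,y_1+\frac{a_2}{\|y_2\|_{\mathcal{H}}^{2}}\,y_2+b_1\cos\vartheta+b_2\sin\vartheta+R,
\]
where $R=w-p_0'w$ and $b_j$ are the rotational neutral-mode coefficients, the hypothesis \eqref{MZ_grad_neutral} gives $\|R\|_{\mathcal{H}}^{2}=W_{+}+W_{-}=o(W_0)$, hence $\|R\|_{\mathcal{H}}=o(|a|+|\tau+2\log\hat Z(X)|^{-10})$. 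Meanwhile, integration by parts in $\vartheta$ combined with the almost-symmetry estimate \eqref{rot_est} yields $|b_j|\leq C|\tau+2\log\hat Z(X)|^{-10}$.

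For the $R$-piece, the adjoint identity from Proposition~\ref{lem:evol.W_+} together with the pointwise bound $|\mathcal{E}^{\ast}(y_{k}e^{-|y|^{2}/4})|\leq C|\tau|^{-1}(1+|y|^{3})e^{-|y|^{2}/4}$ gives, via Cauchy--Schwarz, $|\langle\mathcal{E}R,y_{k}\rangle_{\mathcal{H}}|\leq C|\tau|^{-1}\|R\|_{\mathcal{H}}=o(|a|+|\tau|^{-10})/|\tau|$. For the rotational piece I would compute $\mathcal{E}(\cos\vartheta)=\mathcal{E}^{\vartheta\vartheta}\cos\vartheta+\mathcal{E}^{\vartheta}\sin\vartheta$ (similarly for $\sin\vartheta$). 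The factor $\mathcal{E}^{\vartheta}$ carries at least one $u_{\vartheta}$ or $u_{i\vartheta}$, so by \eqref{rot_est} it is $O(|\tau|^{-10})$; while splitting $\mathcal{E}^{\vartheta\vartheta}$ into its $\vartheta$-average (orthogonal to $\cos\vartheta\cdot y_k$ after integrating in $\vartheta$) and its $O(|\tau|^{-10})$ $\vartheta$-oscillating remainder, one obtains $|\langle\mathcal{E}(\cos\vartheta),y_{k}\rangle_{\mathcal{H}}|+|\langle\mathcal{E}(\sin\vartheta),y_{k}\rangle_{\mathcal{H}}|\leq C|\tau|^{-10}$. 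Combined with $|b_{j}|\leq C|\tau|^{-10}$, the rotational contribution is $O(|\tau|^{-20})$, which is $o((|a|+|\tau|^{-10})/|\tau|)$.

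The main contribution $-(a_{i}/\|y_{i}\|_{\mathcal{H}}^{2})\langle\mathcal{E}^{i},y_{k}\rangle_{\mathcal{H}}$ requires sharpening the naive pointwise bound $|\mathcal{E}^{i}|\leq C|\tau|^{-1}$ into the averaged statement $|\langle\mathcal{E}^{i},y_{k}\rangle_{\mathcal{H}}|=o(|\tau|^{-1})$. Using \eqref{DH_asymp}, decompose $u=u_{\ast}+v$ with $u_{\ast}=(2-y_{2}^{2})/(\sqrt{8}|\tau|)$ and $\|v\|_{\mathcal{H}}\leq C|\tau|^{-1-\gamma}$. Since $u_{\ast}$ depends only on $y_{2}$, substituting into the defining formula for $\mathcal{E}^{i}$ forces $\mathcal{E}^{1}_{\ast}\equiv 0$ (every surviving term has a factor $u_{\ast,1}$ or $u_{\ast,\vartheta}$) and $\mathcal{E}^{2}_{\ast}\sim y_{2}/|\tau|^{2}$, whence $|\langle\mathcal{E}^{i}_{\ast},y_{k}\rangle_{\mathcal{H}}|\leq C|\tau|^{-2}$ (in fact vanishing unless $i=k=2$, by parity). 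For the correction $\mathcal{E}^{i}-\mathcal{E}^{i}_{\ast}$, standard interior parabolic estimates applied to the evolution of $v$ upgrade the Gaussian $L^{2}$ bound to pointwise $C^{2}$-smallness of $v$ over the support of $\eta$, with a controlled loss in the decay rate, giving $|\langle\mathcal{E}^{i}-\mathcal{E}^{i}_{\ast},y_{k}\rangle_{\mathcal{H}}|\leq C|\tau|^{-1-\gamma/2}$. Hence $|\langle\mathcal{E}^{i},y_{k}\rangle_{\mathcal{H}}|=o(|\tau|^{-1})$, so the main contribution is $o(|a|/|\tau|)$, and combining with the two preceding pieces yields the claim.

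The principal obstacle is exactly this last step: upgrading the naive pointwise $O(1/|\tau|)$ bound on $\mathcal{E}^{i}$ to the refined averaged $o(1/|\tau|)$ estimate. This hinges simultaneously on (i) the $y_{1}$- and $\vartheta$-independence of the ansatz $u_{\ast}$, which forces the leading $\mathcal{E}^{i}_{\ast}$-contribution to be either zero or of the subcritical order $|\tau|^{-2}$, and (ii) the promotion of the Gaussian $L^{2}$-smallness of $v$ to pointwise $C^{2}$-control across the full support of $\eta$, whose radius grows like $|\tau|^{1/2}$, at the cost of only a small exponent loss.
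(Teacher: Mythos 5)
Your treatment of the stable remainder $R$ and of the rotational pieces is essentially the paper's argument (Cauchy--Schwarz against $\mathcal{E}^\ast(y_ke^{-|y|^2/4})$ with the weight $(1+|y|^3)$, plus the almost-symmetry \eqref{rot_est}), and those parts are fine modulo inserting a cutoff so that the split $\mathcal{E}w=\mathcal{E}(p_0'w)+\mathcal{E}R$ only involves the region where the coefficients of $\mathcal{E}$ are defined and controlled. The genuine gap is in the step you yourself flag as the principal one: the claim that the Gaussian $L^2$ bound $\|v\|_{\mathcal{H}}\leq C|\tau|^{-1-\gamma}$ from \eqref{DH_asymp} can be upgraded, by standard interior parabolic estimates, to pointwise $C^2$-smallness of $v$ \emph{over the whole support of $\eta$} ``with a controlled loss in the decay rate''. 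This fails: the cutoff is supported out to $|y|\sim\tfrac{141}{100}\rho(\tau)\sim|\tau|^{1/2}$, where the Gaussian weight is $e^{-|y|^2/4}\approx e^{-c|\tau|}$, so the $\mathcal{H}$-bound on $v$ gives no pointwise information there (the local $L^2$ norm on a unit ball at radius $R$ is only controlled by $|\tau|^{-1-\gamma}e^{R^2/8}$, which is exponentially large at $R\sim|\tau|^{1/2}$). Interior estimates convert Gaussian $L^2$ smallness into $L^\infty$ (or $C^2$) smallness only where the weight is comparable to a constant, i.e.\ on balls of radius $O(1)$ or, with polynomial loss, $O(\sqrt{\log|\tau|})$ --- this is exactly the obstruction the paper emphasizes when motivating its propagation-of-smallness machinery. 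Consequently your bound $|\langle\mathcal{E}^i-\mathcal{E}^i_\ast,y_k\rangle_{\mathcal{H}}|\leq C|\tau|^{-1-\gamma/2}$ is unjustified as stated.

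The paper avoids this entirely: it never compares $u$ pointwise with the quadratic profile. After isolating the coefficient of $a_j$, it bounds $\int 2\hat u_i\hat u_{ij}\,y_k e^{-|y|^2/4}$ by Cauchy--Schwarz, using the pointwise cylindrical estimate $|\nabla^2u|\leq C|\tau|^{-1/2}$ from \eqref{std_cyl_est} (which \emph{does} hold on all of $\{\eta>0\}$) together with the inverse Poincar\'e inequality \eqref{inv_poinc}, which gives $\|\nabla\hat u\|_{\mathcal{H}}\leq C\|\hat u\|_{\mathcal{H}}\leq C|\tau|^{-1}$; this yields $\int\mathcal{E}^jy_ke^{-|y|^2/4}=O(|\tau|^{-3/2})=o(|\tau|^{-1})$ directly, with no appeal to \eqref{DH_asymp}. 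If you want to keep your profile-substitution route, you must at least split the integral: on $\{|y|\leq c\sqrt{\gamma\log|\tau|}\}$ interior estimates do give pointwise closeness to $u_\ast$ with a polynomial loss, while on the complement you can only use the crude bound $|\mathcal{E}^i|\leq C|\tau|^{-1}$ (from \eqref{std_cyl_est}) together with the Gaussian tail $e^{-|y|^2/8}$, which is then small enough; without such a split, or without the paper's inverse-Poincar\'e argument, the main term is not controlled.
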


\begin{proof} We will frequently use the derivative estimates \eqref{std_cyl_est} and \eqref{rot_est}.
First, since $w_{ij}= (p_{-}'w)_{ij}$, we can estimate
\begin{equation}
 \left|\int  \mathcal{E}^{ij} w_{ij}y_k   e^{-\frac{|y|^2}{4}}\right| \leq   \int |p_-'w| \left| \left[\eta \mathcal{E}^{ij}y_k e^{-\frac{|y|^2}{4}}\right]_{ij} \right| +Ce^{\frac14 \tau}= \frac{o(|a|+|\tau|^{-10})}{|\tau|}.
\end{equation}
Next, using $w_j=(8\pi e)^{-1/4}a_j+(p_-'w)_j$, we see that
\begin{equation}
  \int  \mathcal{E}^jw_jy_k e^{-\frac{|y|^2}{4}}= (8\pi e)^{-1/4}a_j\int  \mathcal{E}^jy_ke^{-\frac{|y|^2}{4}}dy+\frac{o(|a|+|\tau|^{-10})}{|\tau|}.
\end{equation}
Regarding the first summand, note that 
\begin{equation}
\int  \mathcal{E}^jy_ke^{-\frac{|y|^2}{4}}dy=\int  2\hat{u}_i\hat{u}_{ij} y_k e^{-\frac{|y|^2}{4}}dy+\frac{o(1)}{|\tau|}\leq \frac{C\|\nabla \hat u\|_{\mathcal{H}}}{|\tau|^{1/2}}+\frac{o(1)}{|\tau|}=\frac{o(1)}{|\tau|}.
\end{equation}
Finally, observe that
\begin{equation}
 \int  \mathcal{E}^{j\vartheta} w_{j\vartheta} y_k  e^{-\frac{|y|^2}{4}}=\int  w \left[ (\mathcal{E}^{i\vartheta})_{\vartheta} y_k   e^{-\frac{|y|^2}{4}} \right]_j=\frac{O(|a|+|\tau|^{-10})}{|\tau|^{9}},
\end{equation}
and by using integration by parts again that
\begin{equation}
 \int  (\mathcal{E}^{\vartheta\vartheta} w_{\vartheta\vartheta}+\mathcal{E}^{\vartheta} w_{\vartheta}) y_k  e^{-\frac{|y|^2}{4}}=\frac{O(|a|+|\tau|^{-10})}{|\tau|^{9}}.
\end{equation}
Combining the above estimates yields the assertion.
\end{proof}

\begin{theorem}[neutral slope mode cannot dominate]\label{thm_unst_dom}
For all $\tau \leq \tau_\ast -2\log\hat{Z}(X)$ we have
\begin{equation}
W_-+W_0\leq \frac{C}{|\tau+2\log\hat{Z}(X)|}\left(W_{+}+ \hat{Z}(X)^{\frac{24}{25}}e^{\frac{12}{25}\tau}\omega^2+\hat{Z}(X)^4e^{2\tau}\right),
\end{equation}
\end{theorem}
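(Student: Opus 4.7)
The plan is to apply Theorem \ref{thm_diff_MZ} (differential Merle-Zaag alternative) and rule out its second alternative. Assume for contradiction that as $\tau\to-\infty$,
\[
W_-+W_++e^{\frac{12}{25}\tau}\omega^2+e^{2\tau}=o(W_0).
\]
By the almost symmetry estimate \eqref{rot_est}, the projections of $w$ onto $\cos\vartheta$ and $\sin\vartheta$ are $O(|\tau|^{-10})$, so $W_0$ is comparable to $a_1^2+a_2^2$ where $a_k=\langle w,y_k\rangle_{\mathcal{H}}$. A direct integration by parts, writing $w=\eta\partial_{y_1}u$ and expanding $\partial_{y_1}(\eta\,y_k\,e^{-|y|^2/4})$, combined with the spectral expansion $\hat u=\beta_0+\beta_i y_i+\alpha_1(y_1^2-2)+\alpha_2(y_2^2-2)+2\alpha_3 y_1y_2+\cdots$ and the negligibility of the cut-off contribution (the $\partial_{y_1}\eta$ term being of Gaussian size $e^{-c\rho^2}$), yields the key identities
\[
a_1=C_0\,\alpha_1+O(|\tau|^{-100}),\qquad a_2=C_0\,\alpha_3+O(|\tau|^{-100})
\]
for a universal positive constant $C_0$ (the $\beta_0$ contributions from the two integration-by-parts terms cancel exactly). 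Combined with Corollary \ref{cor_switch_time} (switch time), this gives the a priori bound $|a_1|+|a_2|\leq C|\tau+2\log\hat Z(X)|^{-1-\gamma/16}$.

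Next, I combine \eqref{eq:coeff.a.evol} with Propositions \ref{prop_leading} and \ref{prop_error}, and expand $\langle y_iy_k,\hat u\rangle$ spectrally. Using $\langle y_2^2,\hat u\rangle = 8\alpha_2+O(\beta_0)$ with $\alpha_2\approx -1/(\sqrt 8|\tau|)$ from Theorem \ref{thm_spec_coeff}, and that $|\alpha_1|,|\alpha_3|$ are much smaller by Corollary \ref{cor_switch_time}, the dominant ODE is
\[
\dot a_2=\frac{\nu}{|\tau+2\log\hat Z(X)|}\,a_2+O\bigl(|\alpha_3|\,|a_1|\bigr)+o\!\left(\frac{|a|}{|\tau|}\right),
\]
where $\nu=\sqrt{8}\,(16\pi e)^{-1/2}=1/\sqrt{2\pi e}$, and crucially $\nu<1$. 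The analogous equation for $a_1$ has leading coefficient $\propto\alpha_1$ which is much smaller.

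Now the argument splits. If $a_2\not\equiv 0$, pick $\tau_0$ with $a_2(\tau_0)\neq 0$ and integrate backward. The cross-coupling satisfies $|\alpha_3|\,|a_1|\leq C|\tau|^{-2-\gamma/8}$, and together with the a priori bound the $o(|a|/|\tau|)$ remainder is controlled so that $\dot a_2/a_2 = \nu/|\tau|+o(1/|\tau|)$ on the relevant interval, forcing $|a_2(\tau)|\gtrsim|\tau|^{-\nu}$ as $\tau\to-\infty$. Since $\nu<1<1+\gamma/16$ this contradicts $|a_2|\leq C|\tau|^{-1-\gamma/16}$. Otherwise $a_2\equiv 0$ near $-\infty$; then $W_0$-dominance forces $a_1\not\equiv 0$, and using $\alpha_1=a_1/C_0+$ negl.\ the ODE for $a_1$ reduces to the Riccati equation $\dot a_1\approx -(8c_1/C_0)\,a_1^2$, whose solution with $a_1(\tau_0)<0$ (convexity, via \eqref{a_priori_convexity}) satisfies $|a_1(\tau)|\sim C/|\tau|$ as $\tau\to-\infty$, again contradicting the bound on $|a_1|$. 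Both cases yield contradictions, so the second alternative is impossible and the first alternative (the desired estimate) must hold.

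The main obstacle lies in the error analysis. In Case A, one must verify that the cross-coupling $|\alpha_3||a_1|$ and the remainder $o(|a|/|\tau|)$ (which may involve $a_1$ rather than $a_2$) remain strictly subleading compared to $\nu a_2/|\tau|$ throughout backward integration; the key point is that $\nu<1+\gamma/8$ renders the inhomogeneous integrals convergent at the correct rate. In Case B, the Riccati term $a_1^2\sim 1/|\tau|^2$ is only marginally larger than the $o(1/|\tau|^2)$ remainder, so one must carefully justify that the Riccati asymptotic $|a_1|\sim C/|\tau|$ is genuinely derived; this is clean because the remainder is $o(1/|\tau|^2)$ rather than $O(1/|\tau|^2)$.
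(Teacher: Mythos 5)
Your overall skeleton matches the paper's: assume the second alternative \eqref{MZ_grad_neutral} of Theorem \ref{thm_diff_MZ}, feed Propositions \ref{prop_leading} and \ref{prop_error} into \eqref{eq:coeff.a.evol} to get ODEs for $a_1,a_2$, and contradict the a priori decay of the coefficients coming from \eqref{DH_asymp}. However, two points need fixing. First, your constant is wrong: in the paper's normalization $\|y_k\|_{\mathcal H}^2=(8\pi e)^{1/2}$, so $\|1\|_{\mathcal H}^2=(2\pi e)^{1/2}$ and $\langle y_2^2,\hat u\rangle=\|y_2^2-2\|_{\mathcal H}^2\,\alpha_2+2\langle 1,\hat u\rangle$ with $\|y_2^2-2\|_{\mathcal H}^2=8(2\pi e)^{1/2}$; you dropped the factor $(2\pi e)^{1/2}$ when writing $\langle y_2^2,\hat u\rangle=8\alpha_2+O(\beta_0)$. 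The factor $(16\pi e)^{-1/2}$ in Proposition \ref{prop_leading} cancels it exactly, so the correct ODE is $\tfrac{d}{d\tau}a_2=\tfrac{a_2}{|\tau|}+\tfrac{o(|a|)}{|\tau|}+o(|\tau|^{-10})$, i.e.\ the coefficient is $1$, not $\nu=(2\pi e)^{-1/2}$. Your remark that ``crucially $\nu<1$'' is therefore misplaced; what saves the argument is only that the growth exponent ($=1$) is strictly below the a priori decay exponent $1+\gamma/16$ (or $1+\gamma$ from \eqref{DH_asymp}), which still holds.

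Second, and more seriously, there is a genuine gap in the degenerate regimes of your case analysis. The remainders in the $a_k$-ODEs are $\tfrac{o(|a|)}{|\tau|}+o(|\tau|^{-10})$, i.e.\ they involve the full vector $|a|$ and an \emph{absolute} inhomogeneity. In Case A, the relative equation $\dot a_2/a_2=\nu/|\tau|+o(1/|\tau|)$ fails as soon as $|a_2|$ drops below $|a_1|$ or below the $o(|\tau|^{-10})$ inhomogeneity along the backward integration, and nothing you wrote rules this out; in Case B, the Riccati asymptotic $|a_1|\sim C/|\tau|$ is only derived if $a_1$ stays at a definite scale relative to those same errors, whereas $a_1$ of size, say, $|\tau|^{-3}$ (or $e^{\tau}$) is perfectly consistent with your differential inequalities and with \eqref{MZ_grad_neutral}, since $W_0$ also contains the $\cos\vartheta,\sin\vartheta$ projections of size up to $O(|\tau|^{-20})$ — so ``$W_0$-dominance forces $a_1\not\equiv 0$'' is not justified, let alone at a usable scale. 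In these scenarios neither case produces a contradiction. The paper avoids exactly this degeneracy by not splitting into cases and by estimating $\tfrac{d}{d\tau}\log\!\left(|a|^2+|\tau|^{-5}\right)$ for the full vector $a=(a_1,a_2)$, which yields $\liminf_{\tau\to-\infty}|\tau|^{1+\delta}|a(\tau)|=\infty$ for every $\delta>0$ and hence a contradiction with \eqref{DH_asymp}; some device of this regularized type (or an explicit treatment of the small-$|a|$ regime) is needed to complete your argument.
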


\begin{proof}
If the assertion fails, then by Theorem \ref{thm_diff_MZ} (differential Merle-Zaag alternative) it must be the case that \eqref{MZ_grad_neutral} holds. Hence, Proposition \ref{prop_leading} (leading term) and Proposition \ref{prop_error} (error term) imply
\begin{equation}
\tfrac{d}{d\tau} a_k= -(16\pi e)^{-\frac{1}{2}}a_i \langle y_i y_k, \hat{u}\rangle_{\mathcal{H}}+\frac{o(|a|+|\tau|^{-10})}{|\tau|}.
\end{equation}
Together with the profile function asymptotics from \eqref{DH_asymp}, it follows that
\begin{align}
\tfrac{d}{d\tau} a_1=\frac{o(|a|)}{|\tau|}+{o(|\tau|^{-10})},\qquad  \tfrac{d}{d\tau}a_2=\frac{a_2}{|\tau|}+\frac{o(|a|)}{|\tau|}+{o(|\tau|^{-10})}.
\end{align}
Hence, considering $\frac{d}{d\tau} \log (|a|^2+|\tau|^{-5})$, for all $\delta>0$ we get 
\begin{equation}
\liminf_{\tau\to -\infty} |\tau|^{1+\delta} |a(\tau)| = \infty.
\end{equation}
This contradicts \eqref{DH_asymp}, and thus proves the theorem.
\end{proof}

 \begin{corollary}[exponential decay]\label{cor:fast.decay}
For all $\tau \leq \tau_\ast -2\log\hat{Z}(X)$ we have
 \begin{equation}
\| w\|_{\mathcal{H}}\leq C \hat{Z}(X)^{\frac{2}{5}}e^{\frac{1}{5}\tau}.
\end{equation}
\end{corollary}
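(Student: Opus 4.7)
The plan is to read Corollary \ref{cor:fast.decay} as the integrated version of the differential inequality for $\tilde W_+ = W_+ + \hat Z(X)^{24/25}e^{\frac{12}{25}\tau}\omega^2 + \hat Z(X)^4 e^{2\tau}$ that is implicit in the proof of Theorem \ref{thm_diff_MZ}, now sharpened by the output of Theorem \ref{thm_unst_dom}. By the scale invariance built into the statements, I would first reduce to $\hat Z(X)=1$ and aim to prove $\|w\|_{\mathcal H}\leq Ce^{\tau/5}$ for all $\tau\leq \tau_\ast$; the general case then follows by replacing $\tau$ with $\tau+2\log\hat Z(X)$ and tracking the resulting $\hat Z(X)^{2/5}$ prefactor through the exponentials.

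Next, I would revisit the ODE system derived in the proof of Theorem \ref{thm_diff_MZ}, namely
\begin{equation*}
\tfrac{d}{d\tau}\tilde W_+\geq \tfrac{12}{25}\tilde W_+ - C|\tau|^{-1}\bigl(\tilde W_+ + W_0 + W_-\bigr),
\end{equation*}
and feed in the estimate $W_0+W_-\leq C|\tau|^{-1}\tilde W_+$ supplied by Theorem \ref{thm_unst_dom}. After adjusting $\tau_\ast$ once more so that $C|\tau_\ast|^{-1}(1+C|\tau_\ast|^{-1})\leq \tfrac{2}{25}$, this becomes the clean differential inequality
\begin{equation*}
\tfrac{d}{d\tau}\tilde W_+\geq \tfrac{2}{5}\tilde W_+\quad \text{for all } \tau\leq \tau_\ast.
\end{equation*}

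The third step is to check that the initial value $\tilde W_+(\tau_\ast)$ is bounded by a universal constant. This follows from the standard cylindrical estimate \eqref{std_cyl_est} $|\nabla u|\leq C|\tau|^{-1/2}$, which simultaneously bounds $W_+(\tau_\ast)\leq C\|w(\tau_\ast)\|_{\mathcal H}^2\leq C$ and $\omega(\tau_\ast)\leq C$. Integrating $\tfrac{d}{d\tau}\tilde W_+\geq \tfrac{2}{5}\tilde W_+$ backwards from $\tau_\ast$ then gives $\tilde W_+(\tau)\leq \tilde W_+(\tau_\ast) e^{\frac{2}{5}(\tau-\tau_\ast)}\leq C e^{\frac{2}{5}\tau}$. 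Applying Theorem \ref{thm_unst_dom} one last time,
\begin{equation*}
\|w\|_{\mathcal H}^2 = W_+ + W_0 + W_- \leq \bigl(1+C|\tau|^{-1}\bigr)\tilde W_+ \leq C e^{\frac{2}{5}\tau},
\end{equation*}
which yields the desired bound after taking square roots; for general $\hat Z(X)\geq 1$ the same argument applied on $\tau\leq \tau_\ast-2\log\hat Z(X)$ produces the prefactor $\hat Z(X)^{2/5}$, since $\tilde W_+$ at the initial time $\tau_\ast-2\log\hat Z(X)$ is still $O(1)$ (the $\hat Z$-weights on the $e^{\frac{12}{25}\tau}$ and $e^{2\tau}$ terms were designed precisely so that this works).

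The only real obstacle, already handled in Theorem \ref{thm_unst_dom}, was ruling out the neutral-dominant branch of the Merle-Zaag alternative; without that branch closed off the control $W_0+W_-\leq C|\tau|^{-1}\tilde W_+$ would be unavailable and the integration in the third step could not be carried through. Given that theorem, the present corollary is a direct ODE consequence, and no further analytic input is needed.
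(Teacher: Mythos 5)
Your proposal is correct and follows essentially the same route as the paper: the paper's proof likewise invokes Theorem \ref{thm_unst_dom} to conclude that $\tilde W_+$ dominates, feeds this into the first ODE of \eqref{eq:W_system} to get $\tfrac{d}{d\tau}\log\tilde W_+\geq \tfrac{2}{5}$ (after adjusting $\tau_\ast$), and integrates. The only difference is that you spell out the implicit details — the bound $\tilde W_+\leq C$ at the initial time and the bookkeeping of the $\hat Z(X)$-weights that produces the prefactor $\hat Z(X)^{2/5}$ — which the paper leaves tacit.
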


\begin{proof}Assume $\hat{Z}(X)=1$ as usual.
By Theorem \ref{thm_unst_dom} (neutral slope mode cannot dominate) the function $\tilde{W}_+$ dominates, and the first ODE from \eqref{eq:W_system} yields
$\tfrac{d}{d\tau}\log \tilde{W}_+ \geq   \tfrac{2}{5}$.
This implies the assertion.
\end{proof}

\bigskip 

\section{Cap distance and consequences}\label{sec_cap_and_co}

In this section, we establish a lower bound for the renormalized cap distance
\begin{equation}
\bar{d}^X(\tau)=\min \left\{ \sup_{\bar{M}^X_\tau}y_1, -\inf_{\bar{M}^X_\tau}y_1\right\},
\end{equation}
and also derive some estimates for related geometric quantities.

\begin{theorem}[cap distance]\label{thm:cap_distance}
For $\tau\leq\tau_\ast-2\log\hat{Z}(X)$ we have
\begin{equation}
\bar{d}^X(\tau)\geq |\tau+2\log\hat{Z}(X)|^{10}.
\end{equation}
Moreover, for $\tau\leq\tau_\ast-2\log\hat{Z}(X)$ we also have
\begin{equation}
\sup_{\{ |\tau+2\log\hat{Z}(X)|^{-19}y_1^2+(2-\beta)^{-1}y_2^2 \leq |\tau+2\log\hat{Z}(X)|\}} |\bar{A}|^2 \leq 5\beta^{-1},
\end{equation}
and
\begin{equation}
\sup_{\{  |y_1|\leq |\tau+2\log\hat{Z}(X)|^{10},\; |y_2|\leq 10 \} }   |\nu_1|  \leq \hat{Z}(X)^{\frac{1}{5}}e^{\frac{1}{10}\tau}.
\end{equation}
\end{theorem}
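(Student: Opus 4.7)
The approach is a ping-pong iterative bootstrap, alternating the anisotropic propagation of smallness (Theorem \ref{thm:propagation}) with the very elongated barrier estimate (Corollary \ref{cor_elongated_barr}), starting from the Gaussian $L^2$-decay of Corollary \ref{cor:fast.decay}. The standard interior parabolic estimates recalled at the end of Section \ref{sec_not_and_prel}, applied to the equation for $u_1$, upgrade $\|w\|_{\mathcal{H}}\leq C\hat Z(X)^{2/5}e^{\tau/5}$ to the pointwise estimate $|\nu_1|\leq C\hat Z(X)^{2/5}e^{\tau/5}$ on the core $\{|y|\leq L\}$ for every $\tau\leq\tau_\ast-2\log\hat Z(X)$. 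Combined with $\mathrm{AP}_{\beta^2}$ from Theorem \ref{thm_elong_cyl}, this is the base case of a finite induction.

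For each target $\bar\tau\leq\tau_\ast-2\log\hat Z(X)$, the plan is to run an induction over levels $k=0,\ldots,K=20$, with $\alpha_0=\beta^2$, $(c_0,p_0)=(2/5,1/5)$, updates $c_{k+1}=c_k-2\beta$ and $p_{k+1}=p_k-\beta$, and shrinkage factor $\alpha_k/\alpha_{k+1}$ calibrated to be slightly smaller than $|\bar\tau+2\log\hat Z(X)|$ (by a universal factor of order $L^2/(p_k-\beta)$). At each level, for all $\tau\leq\bar\tau$, I would maintain the curvature bound $|\bar A|^2\leq 5/\beta$ on the ellipsoid $\{\alpha_k y_1^2+(2-\beta)^{-1}y_2^2\leq|\tau+2\log\hat Z(X)|\}$ together with the slope bound $|\nu_1|\leq C_k\hat Z(X)^{c_k}e^{p_k\tau}$ on the core $\{r_{\alpha_k}\leq L\}$. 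The inductive step from $k$ to $k+1$ is a two-move ping-pong: Theorem \ref{thm:propagation} with $\alpha=\alpha_k$, $p=p_k$ extends the slope bound across the larger ellipsoid $\{r_{\alpha_k}\leq|\tau|^{1/2}\}$ paying the quadratic weight $e^{(1-\beta)r_{\alpha_k}^2/2}$; restricted to the smaller target $\{\alpha_{k+1}(2-\beta)y_1^2+y_2^2\leq L^2\}$ one has $r_{\alpha_k}^2\lesssim L^2\alpha_k/\alpha_{k+1}$, and by the calibration of $\alpha_{k+1}$ this weight is dominated by the gain $e^{(p_k-\beta)|\bar\tau|}$; the resulting smallness $|\nu_1|\leq\alpha_{k+1}^{1/2}\beta/|\tau+2\log\hat Z(X)|$ verifies the hypothesis of Corollary \ref{cor_elongated_barr}, upgrading the curvature bound to level $k+1$, and a further round of parabolic interior estimates produces the level-$(k{+}1)$ slope bound.

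Taking $\tau_\ast$ sufficiently negative, after $K=20$ iterations one has $\alpha_K\leq|\bar\tau+2\log\hat Z(X)|^{-19}$, so the curvature bound at level $k=19$ contains the region in the theorem and yields the first displayed inequality. The slope bound on $\{|y_1|\leq|\bar\tau+2\log\hat Z(X)|^{10},|y_2|\leq 10\}$ follows from the slope bound at level $K$: on this strip $r_{\alpha_K}^2=O(1)$, and since by construction the identity $c_k=2p_k$ is preserved throughout the induction, the admissible restriction $\tau\leq-2\log\hat Z(X)$ converts $C_K\hat Z(X)^{c_K}e^{p_K\tau}$ into the desired $\hat Z(X)^{1/5}e^{\tau/10}$ once $\tau_\ast$ is chosen negative enough to absorb $C_K$. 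Finally, the cap distance bound is immediate: convexity forces $|\nu_1|=1$ at any cap of $\bar M^X_{\bar\tau}$, contradicting the slope bound inside the strip, while the cylindricality from the curvature bound rules out caps outside the strip within the elongated ellipsoidal region.

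The main obstacle is the calibration of the shrinkage $\alpha_k/\alpha_{k+1}$: the quadratic weight $e^{(1-\beta)r_{\alpha_k}^2/2}$ from Theorem \ref{thm:propagation} is dominated by the linear gain $e^{(p_k-\beta)|\bar\tau|}$ only when $\alpha_k/\alpha_{k+1}$ is slightly smaller than $|\bar\tau|$, which forces $K$ to be slightly larger than the naive count $19$. The parameter choice $\beta<10^{-3}$ adopted earlier is just enough both to sustain $K\beta\leq 1/10$ (so that $p_K\geq 1/10$) and to preserve the identity $c_k=2p_k$ that is essential for the final exponent conversion; finitely many adjustments of $\tau_\ast$ absorb all accumulated constants.
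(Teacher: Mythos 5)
Your overall strategy is the paper's: start from the Gaussian $L^2$ decay of Corollary \ref{cor:fast.decay}, upgrade to a pointwise core bound by interior estimates, and then ping-pong Theorem \ref{thm:propagation} against Corollary \ref{cor_elongated_barr} about twenty times. However, the way you close the induction for the slope bound has a genuine gap. You maintain $|\nu_1|\leq C_k\hat Z(X)^{c_k}e^{p_k\tau}$ only on the core $\{r_{\alpha_k}\leq L\}$ and calibrate $\alpha_k/\alpha_{k+1}\sim (p_k-\beta)|\bar\tau|/L^2$ so that on the next core $r_{\alpha_k}^2\lesssim (p_k-\beta)|\bar\tau|$, accepting that the weight $e^{\frac{1-\beta}{2}r_{\alpha_k}^2}$ is merely ``dominated by the gain $e^{(p_k-\beta)|\bar\tau|}$''. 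But that weight is not a constant: it grows like $e^{\frac{1-\beta}{2}(p_k-\beta)|\bar\tau|}$, so the propagated bound on the new core at times $\tau\leq\bar\tau$ is only of order $e^{(p_k-\beta)\tau}e^{\frac{1-\beta}{2}(p_k-\beta)|\bar\tau|}\leq Ce^{\frac{1+\beta}{2}(p_k-\beta)\tau}$, i.e.\ each step costs a fixed \emph{fraction} of the decay rate (roughly halving it), not the additive $\beta$ you claim. The asserted update $p_{k+1}=p_k-\beta$ is not produced by anything: ``a further round of parabolic interior estimates'' cannot create a rate $e^{p_{k+1}\tau}$ on the elongated core $\{r_{\alpha_{k+1}}\leq L\}$ (which reaches $|y_1|\sim\alpha_{k+1}^{-1/2}L\gg L$), since the only $L^2$ input, Corollary \ref{cor:fast.decay}, lives where the Gaussian weight is of order one. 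After $20$ iterations your effective rate collapses to roughly $\beta$ (or the constants $C_k$ become $\bar\tau$-dependent, which is equally fatal), so the third display $|\nu_1|\leq\hat Z(X)^{1/5}e^{\tau/10}$ on the strip $\{|y_1|\leq|\tau+2\log\hat Z(X)|^{10},\,|y_2|\leq 10\}$ is not reached; this bound (not just some exponential smallness) is what the theorem asserts and what is needed downstream.

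The paper's proof avoids this by maintaining the slope bound on the $\tau$-dependent \emph{intermediate} ellipsoid rather than on the core: with $\bar f_k=\beta^{2-2k}L^{2k}|\tau|^{-k}y_1^2+(2-\beta)^{-1}y_2^2$ (shrinkage factor exactly $\beta^2|\tau|/L^2$), the induction hypothesis is $\sup_{\{\bar f_k\leq\beta^2|\tau|\}}|\nu_1|\leq Ce^{(\frac15-2(k+1)\beta)\tau}$. On that region $r_{\alpha_k}^2\leq\beta^2|\tau|$, so the weight costs only $e^{\frac{1-\beta}{2}\beta^2|\tau|}$, i.e.\ at most $\beta$ in the rate, and the total per-step loss is $2\beta$; moreover $\{\bar f_{k+1}\leq L^2\}\subset\{\bar f_k\leq\beta^2|\tau|\}$ supplies exactly the input needed for Corollary \ref{cor_elongated_barr} and for the next application of Theorem \ref{thm:propagation}, with constants independent of $\bar\tau$. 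After $k=20$ steps the rate is $\tfrac15-42\beta>\tfrac1{10}$ and the final region (whose $y_1$-extent carries the extra factor $\beta|\tau|^{1/2}$ from the intermediate region) contains $\{|y_1|\leq|\tau|^{10},|y_2|\leq10\}$ after adjusting $\tau_\ast$. Your region/curvature part and the cap-distance conclusion are essentially fine, but the exponent bookkeeping must be redone along these lines for the proof to go through.
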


\begin{proof}
By scaling we can assume $\hat{Z}(X)=1$.
Moreover, for ease of notation we abbreviate
\begin{equation}
\bar{f}_k=\beta^{2-2k}L^{2k}|\tau|^{-k} y_1^2 + (2-\beta)^{-1} y_2^2.
\end{equation}
To begin with, recall that by Theorem \ref{thm_elong_cyl} (elongated cylindrical regions) for $\tau\leq\tau_\ast$ the renormalized flow is defined in the ellipsoidal region $\{\bar{f}_0 \leq |\tau| \}$ with the estimate
\begin{equation}\label{curv_bound0}
\sup_{\{\bar{f}_0 \leq |\tau| \} }|\bar{A}|^2\leq 5\beta^{-1}.
\end{equation}
Moreover, by Corollary \ref{cor:fast.decay} (exponential decay) and standard interior estimates we have
\begin{equation}
\sup_{\{ \bar{f}_0 \leq L^2\} }|\nu_1|\leq C e^{\frac{1}{5}\tau}.
\end{equation}
Thus, applying Theorem \ref{thm:propagation} (anisotropic propagation of smallness) with $p=\frac{1}{5}$ we get
\begin{equation}
\sup_{\{\bar{f}_0\leq  \beta^2|\tau|\} }|\nu_1| \leq Ce^{(\frac{1}{5}-2\beta)\tau}.
\end{equation}
Now, given any integer $k\in \{0,\ldots,19\}$, assume recursively that for $\tau\leq\tau_\ast$ the renormalized flow is defined in $\{\bar{f}_k \leq |\tau| \}$ with the estimates
\begin{equation}
\sup_{\{\bar{f}_k \leq |\tau| \} }|\bar{A}|^2\leq 5\beta^{-1} \qquad\mathrm{and}\qquad \sup_{\{\bar{f}_k \leq \beta^2 |\tau| \} }|\nu_1|\leq Ce^{(\frac{1}{5}-2(k+1)\beta)\tau}.
\end{equation}
Also, note that $\{\bar{f}_{k+1}\leq L^2\}\subset \{\bar{f}_{k}\leq \beta^2|\tau|\}$. Hence, applying Corollary \ref{cor_elongated_barr} (very elongated cylindrical regions) we infer that for $\tau\leq\tau_\ast$ the renormalized flow is defined in $\{\bar{f}_{k+1} \leq |\tau| \}$ with the estimate
\begin{equation}
\sup_{\{\bar{f}_{k+1} \leq |\tau| \} }|\bar{A}|^2\leq 5\beta^{-1},
\end{equation}
and applying Theorem \ref{thm:propagation} (anisotropic propagation of smallness) with $p=\frac{1}{5}-2(k+1)\beta$ we get
\begin{equation}
\sup_{\{\bar{f}_{k+1} \leq \beta^2 |\tau| \} }|\nu_1|\leq Ce^{(\frac{1}{5}-2(k+2)\beta)\tau}.
\end{equation}
Therefore, the estimates hold for $k=20$, and this implies the assertion.
\end{proof}

\begin{corollary}[geometric bounds]\label{cor_geom_cons}
For $\tau\leq \tau_s(X)$ in the region $\{|y_1|\leq 2|\tau+2\log\hat{Z}(X)|^7\}\cap \{y_2 \geq 10\}$ we have
\begin{align}
&|\nu_1|\leq \frac{1}{|\tau+2\log\hat{Z}(X)|^{3/2}}, && \nu_2\geq \frac{\kappa_0}{|\tau+2\log\hat{Z}(X)|}, && \bar{H}\leq C|\tau+2\log\hat{Z}(X)|.
\end{align}
\end{corollary}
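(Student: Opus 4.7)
The plan is to establish the three bounds in the order $\bar H$, $\nu_2$, $|\nu_1|$, with increasing technical difficulty. The first is essentially immediate from the curvature estimate already in hand, the second comes from converting the $L^2$ spectral switch information into a pointwise statement, and the third uses a Jacobi-equation comparison argument with $\nu_2$ playing the role of a positive barrier.

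For $\bar H \leq C|\tau+2\log\hat{Z}(X)|$: by Theorem \ref{thm:cap_distance}, we have $|\bar A|^2 \leq 5\beta^{-1}$ throughout the elongated region $\{|\tau+2\log\hat{Z}(X)|^{-19}y_1^2 + (2-\beta)^{-1}y_2^2 \leq |\tau+2\log\hat{Z}(X)|\}$, and for our region of interest the constraint $|y_1| \leq 2|\tau|^7 \ll |\tau|^{10}$ is automatic, while Lemma \ref{prop_ell_dom} (ellipsoidal domains) confines the surface to $y_2 \leq \sqrt{(2-\beta)|\tau|}$. Hence $\bar H \leq 3|\bar A| \leq C$, which is certainly $\leq C|\tau+2\log\hat{Z}(X)|$ for $|\tau|$ large.

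For $\nu_2 \geq \kappa_0/|\tau+2\log\hat{Z}(X)|$: the switch information from Corollary \ref{cor_switch_time} provides $-\alpha_2 \geq \kappa_0/(2|\tau+2\log\hat{Z}(X)|)$ together with $|\alpha_1|+|\alpha_3|$ of strictly smaller order (namely $O(|\tau|^{-1-\gamma/16})$) and rotational coefficients negligible by \eqref{small_rot_spec_coeff}. Combined with the derivative bounds \eqref{std_cyl_est}, \eqref{rot_est} valid on the elongated cylindrical region and standard parabolic interior estimates, the Gaussian $L^2$ spectral identity upgrades to a pointwise expansion $u_2(y,\vartheta,\tau) = 2\alpha_2 y_2 + \text{(lower order in } y_2/|\tau|\text{)}$. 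For $y_2 \geq 10$ this yields $-u_2 \geq 10\kappa_0/|\tau+2\log\hat{Z}(X)|$ minus a $o(|\tau|^{-1})$ correction, and since the graphical formula gives $\nu_2 = -u_2/\sqrt{1+|\nabla u|^2 + O(u_\vartheta^2)}$ with $|\nabla u|^2 = O(|\tau|^{-1})$, the desired bound follows for $|\tau|$ large enough.

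For $|\nu_1| \leq |\tau+2\log\hat{Z}(X)|^{-3/2}$: this bound is far weaker than the exponential smallness $|\nu_1| \leq e^{\tau/10}$ that Theorem \ref{thm:cap_distance} already yields on the central strip $\{|y_1|\leq |\tau|^{10},\, |y_2|\leq 10\}$, so only a modest extension into $y_2 \geq 10$ is required. The key observation is that both $\nu_1$ and $\nu_2$ satisfy the Jacobi equation $(\partial_t - \Delta - |A|^2)\nu_i = 0$, so since $\nu_2 > 0$ by the second step, the ratio $f = \nu_1/\nu_2$ solves the drift-diffusion equation $(\partial_t - \Delta)f = 2\nabla f \cdot \nabla \log \nu_2$ with no zero-order term. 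Applying the maximum principle in the space-time domain $\{|y_1| \leq 2|\tau+2\log\hat{Z}(X)|^7,\, y_2 \geq 10\}$, the supremum of $|f|$ is controlled by boundary data: on the lower face $\{y_2 = 10\}$ we have $|f| \leq e^{\tau/10}\cdot|\tau|/\kappa_0$ by combining Theorem \ref{thm:cap_distance} with the second step; at the top of the surface in the $y_2$-direction $\nu_1$ vanishes by convexity; and on the lateral faces $\{|y_1| = 2|\tau|^7\}$ one first upgrades the central strip bound into the range $|y_2|$ large by applying Theorem \ref{thm:propagation} (anisotropic propagation of smallness) with a sufficiently small anisotropic parameter $\alpha$, combined with the $\nu_2$ lower bound. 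The main obstacle I expect is not any single step but rather cleanly coordinating the time-dependence of the domain with the maximum principle on the lateral boundary; once this is handled, $|f| \leq C|\tau|^{-3/2}$ follows, and the conclusion $|\nu_1| = |f|\nu_2 \leq |\tau+2\log\hat{Z}(X)|^{-3/2}$ is immediate from $\nu_2 \leq 1$.
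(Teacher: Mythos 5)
There is a genuine gap, and it starts with your first step. The region in the corollary, $\{|y_1|\leq 2|\tau+2\log\hat{Z}(X)|^7\}\cap\{y_2\geq 10\}$, has \emph{no upper bound on $y_2$}: it runs all the way out to the $y_2$-extremities of the level sets, which lie well beyond $y_2=\sqrt{(2-\beta)|\tau|}$ (already the expected asymptotics \eqref{DH_asymp} put the closing-up of the cross-section near $y_2\approx 2\sqrt{|\tau|}$, and a priori one only knows much less). Lemma \ref{prop_ell_dom} is an \emph{inner} barrier statement — a lower bound on the profile $u$ inside an ellipsoidal domain — and does not confine the hypersurface to $y_2\leq\sqrt{(2-\beta)|\tau|}$, so your claim that the whole region sits inside the elongated cylindrical region with $|\bar A|^2\leq 5\beta^{-1}$ is false; near the $y_2$-tips of the level sets the renormalized curvature is of order $|\tau|^{1/2}$ or worse, which is exactly why the statement asserts only $\bar H\leq C|\tau|$ rather than $\bar H\leq C$. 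The missing mechanism is the one the paper uses: from Corollary \ref{cor_switch_time} plus parabolic estimates one gets the pointwise drop $u(0,0,\vartheta,\tau)-u(0,10,\vartheta,\tau)\geq 15\kappa_0|\tau|^{-1}$ on a \emph{compact} region, and then convexity together with the cap-distance bound $\bar d^X(\tau)\geq|\tau|^{10}$ of Theorem \ref{thm:cap_distance} transfers this to $u_2(y_1,10,\vartheta,\tau)\leq-\tfrac32\kappa_0|\tau|^{-1}$ for all $|y_1|\leq|\tau|^{10}$, whence $\nu_2\geq\kappa_0|\tau|^{-1}$ on the whole region \emph{and} the width bound $\sup_{\{|y_1|\leq|\tau|^{10}\}}|y_2|\leq\kappa_0^{-1}|\tau|$; the curvature bound then follows from this width bound via the Harnack estimate \eqref{Harnack_curv}, not from the cylindrical curvature estimate. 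Your $\nu_2$ step has the same defect in miniature: the spectral-coefficient information upgrades to a pointwise expansion of $u_2$ only where the Gaussian weight is of order one (a bounded set), not for $|y_1|$ up to $2|\tau|^7$ and all $y_2\geq 10$, and you never invoke the convexity/cap-distance argument that bridges this.

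Your third step is also not salvageable as written, and is in any case far heavier than needed. The lateral-boundary control you propose via Theorem \ref{thm:propagation} only reaches $y_2\lesssim|\tau|^{1/2}$ (the propagation region is $\{r_\alpha\leq|\tau|^{1/2}\}$, which caps $y_2$), so it cannot furnish data on the faces $\{|y_1|=2|\tau|^7\}$ up to the level-set tips; and the assertion that ``$\nu_1$ vanishes by convexity at the top in the $y_2$-direction'' is wrong — at the tips $q^\pm_{h,t}$ of a level set $\nu_1$ is small but nonzero (the paper later computes $\partial_h\mathcal W$ precisely from these values), and in fact the region has no top boundary at all. What you are attempting here is essentially Theorem \ref{thm:horizontal_propagation} (level set propagation of smallness), which in the paper comes \emph{after} this corollary and uses both its $\nu_2$ lower bound and its curvature bound (through Proposition \ref{prop_Jac_super_long}). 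The paper's own proof of the $|\nu_1|$ bound is a one-line soft argument: if $|\nu_1|>|\tau|^{-3/2}$ at a point with $|y_1|\leq 2|\tau|^7$, then the tangent plane there, together with the width bound $|y_2|\leq\kappa_0^{-1}|\tau|$ and convexity, forces the body to close up in the $x_1$-direction within distance $O(|\tau|^{5/2})$, contradicting $\bar d^X(\tau)\geq|\tau|^{10}$. I would encourage you to rebuild the proof around this convexity-plus-cap-distance skeleton rather than around parabolic comparison arguments.
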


\begin{proof}
By scaling we can assume $\hat{Z}(X)=1$.
By Corollary \ref{cor_switch_time} (switch time), taking also into account Lemma \ref{prop:domanance_criteria} (quantitative Merle-Zaag lemma) and standard parabolic estimates, for $\tau\leq\tau_s(X)$ we have
\begin{equation}
u(0,0,\vartheta,\tau)-u(0,10,\vartheta,\tau) \geq 15\kappa_0|\tau|^{-1}.
\end{equation}
Together with Theorem \ref{thm:cap_distance} (cap distance) this yields
\begin{equation}
\sup_{\{|y_1|\leq|\tau|^{10}\}}u_2(y_1,10,\vartheta,\tau) \leq - \tfrac32\kappa_0|\tau|^{-1}.
\end{equation}
We thus get the lower bound $\nu_2\geq \kappa_0|\tau|^{-1}$ in the claimed region, and also get the width bound
\begin{align}\label{eq_level_diam}
\sup_{\{|y_1|\leq|\tau|^{10}\}} |y_2|\leq \kappa_0^{-1}|\tau|.
\end{align}
In particular, by the Harnack estimate \eqref{Harnack_curv} this implies $\bar{H}\leq 2\kappa_0^{-1}|\tau|$ in $\{|y_1|\leq 2|\tau|^7\}$. Finally, if at some point in $\{|y_1|\leq 2|\tau|^7\}$ we had $|\nu_1|>|\tau|^{-3/2}$, then considering the tangent plane at that point we would obtain a contradiction with the bound $\bar{d}^X(\tau)\geq |\tau|^{10}$ from Theorem \ref{thm:cap_distance} (cap distance).
\end{proof}

As a corollary of the proof we also obtain the following result, where the error term has the exponent
\begin{equation}
\hat{q}=q-7\beta^2 > 1/2.
\end{equation}

\begin{corollary}[improved anisotropic propagation of smallness]\label{thm:smallness_center}
Suppose that 
\begin{equation}\label{eq_coarse_fast_decay}
\sup_{\{|y_1|\leq |\tau+2\log\hat{Z}(X)|^7\}}|\nu_1|\leq \tfrac{1}{2}\hat{Z}(X)^{2\beta}e^{\beta\tau}\qquad \textrm{for all}\;\; \tau\leq\tau_*-2\log\hat{Z}(X).
\end{equation}
Then, for every $\tau \leq \tau_\ast-2\log\hat{Z}(X)$ and $p\in [0,\hat{q}]$ we get
\begin{equation}
\sup_{\{|y_1|\leq 2|\tau+2\log\hat{Z}(X)|^3,\; |y_2|\leq 10\}}  |\nu_1|  \leq  e^{p\tau}\sup_{\tau'\leq \tau}\sup_{\{ |y| \leq L/\beta\}} \frac{e^{-(p+7\beta^2)\tau'}}{\hat{Z}(X)^{14\beta^2}}|\nu_1|+7\hat{Z}(X)^{2\hat{q}}e^{\hat{q}\tau}.
\end{equation}
\end{corollary}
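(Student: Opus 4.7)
The strategy is to adapt the iterative ping-pong between Theorem~\ref{thm:propagation} and the elongated barriers from the proof of Theorem~\ref{thm:cap_distance}, but with a more careful tracking of exponents and only seven rounds. By scaling I may assume $\hat Z(X)=1$. Set $\alpha_k=4\beta^{2-2k}L^{2k}|\tau|^{-k}$ for $k=0,\ldots,7$, matching the parameters of the proof of Theorem~\ref{thm:cap_distance} up to the factor $4$, which ensures $\{r_{\alpha_0}\leq L\}\subseteq\{|y|\leq L/\beta\}$. The ratio $\alpha_k/\alpha_{k+1}=\beta^2|\tau|/L^2$ gives the key inclusion $\{r_{\alpha_{k+1}}\leq L\}\subseteq \{r_{\alpha_k}\leq\beta|\tau|^{1/2}\}$, and for $|\tau|$ large enough (absorbed into $\tau_\ast$) the target region $\{|y_1|\leq 2|\tau|^3,\,|y_2|\leq 10\}$ is contained in $\{r_{\alpha_7}\leq O(1)\}$.

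Next, I would verify the hypotheses of Theorem~\ref{thm:propagation}. The a priori assumption $\mathrm{AP}_{\alpha_k}$ follows from Theorem~\ref{thm:cap_distance}: since $\alpha_k\geq|\tau|^{-7}\gg|\tau|^{-19}$, each set $\{r_{\alpha_k}\leq|\tau|^{1/2}\}$ sits inside the region where the curvature bound $|\bar A|^2\leq 5\beta^{-1}$ has been established. Each such set also has $y_1$-extent at most $|\tau|^4$, so it lies inside $\{|y_1|\leq|\tau|^7\}$ where the standing hypothesis $|\nu_1|\leq\tfrac12 e^{\beta\tau}$ applies; combined with $q>\beta$, this gives $\mu(\tau)\leq Ce^{\beta\tau}$ in Theorem~\ref{thm:propagation}.

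Now I would iterate. Inductively suppose $|\nu_1|\leq B_k e^{p_k\tau}$ on $\{r_{\alpha_k}\leq L\}$ with $p_k=p+7\beta^2-k(1-\beta)\beta^2/2$, starting from the hypothesis with $B_0=A$, $p_0=p+7\beta^2\leq q$. Apply Theorem~\ref{thm:propagation} with parameter $\alpha_k$ and exponent $p_k$, and restrict to $\{r_{\alpha_{k+1}}\leq L\}\subseteq\{r_{\alpha_k}\leq\beta|\tau|^{1/2}\}$, where $r_{\alpha_k}^2\leq\beta^2|\tau|$. The Gaussian weight $e^{-(1-\beta)r_{\alpha_k}^2/2}\geq e^{(1-\beta)\beta^2\tau/2}$ combined with $\mu(\tau)\leq Ce^{\beta\tau}$ yields the next-step bound with exponent $p_{k+1}=p_k-(1-\beta)\beta^2/2$. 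After seven iterations $p_7=p+7\beta^2(1+\beta)/2>p$, and since the target region fits inside $\{r_{\alpha_7}\leq O(1)\}$ with no further Gaussian loss, this gives $|\nu_1|\leq CAe^{p\tau}$ on the target. The $e^{2q\tau'}$ term inside Theorem~\ref{thm:propagation} contributes an error bounded by $Ce^{q\tau}\leq 7e^{\hat q\tau}$, using $\hat q<q$ and absorbing the constant into $\tau_\ast$.

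The main obstacle is the precise bookkeeping: the hypothesized decay exponent $p+7\beta^2$ must be just enough to absorb seven losses of approximately $\beta^2$ per iteration, and the number of iterations (seven) is forced by the requirement that the $y_1$-extent of the controlled region expand from $O(L/\beta)$ up to $O(|\tau|^3)$, which corresponds to $\alpha_7\sim|\tau|^{-7}$. The matching exponents $7\beta^2$ in the hypothesis and $\hat q=q-7\beta^2$ in the error reflect this balance, and the time-dependence of $\alpha_k(\tau)$ is handled exactly as in the proof of Theorem~\ref{thm:cap_distance}, by viewing each inductive step as a bound holding for all $\tau\leq\tau_\ast$ on a $\tau$-dependent set.
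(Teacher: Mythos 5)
Your proposal is correct and follows essentially the same route as the paper's proof: seven iterations of Theorem \ref{thm:propagation} along the same elongated family of anisotropic distances $\beta^{2-2k}L^{2k}|\tau|^{-k}y_1^2+(2-\beta)^{-1}y_2^2$, with the a priori assumption supplied by the curvature bounds established in the proof of Theorem \ref{thm:cap_distance}, the hypothesis \eqref{eq_coarse_fast_decay} used to bound $\mu(\tau)$, and a per-step exponent loss of order $\beta^2$ so that $p+7\beta^2\le q$ suffices, yielding the error $7e^{\hat q\tau}$. Two cosmetic points: your choice $\alpha_0=4\beta^2$ falls outside the range $\alpha\in(0,\beta^2]$ for which Theorem \ref{thm:propagation} is stated (take $\alpha_0=\beta^2$ as the paper does), and your final bound carries an unspecified constant $C$ on the main term, which should be absorbed using your surplus exponent $p_7-p>0$ after adjusting $\tau_\ast$ so as to recover the coefficient-one estimate in the statement.
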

 
\begin{proof}
By scaling we can assume $\hat{Z}(X)=1$.
Thanks to \eqref{curv_bound0} and \eqref{eq_coarse_fast_decay}, applying Theorem \ref{thm:propagation} (anisotropic propagation of smallness) we get
\begin{align}
\sup_{\{\bar{f}_0\leq \beta^2 |\tau|\}}|\nu_1| \leq e^{ p \tau}\sup_{\tau'\leq \tau}\sup_{\{\bar{f}_0\leq L^2\}} e^{-(p+\beta^2)\tau'}|\nu_1|+e^{(q-\beta^2)\tau}.
\end{align}
Iterating this argument six more times we obtain
\begin{align}
\sup_{\{\bar{f}_6\leq \beta^2 |\tau|\}}|\nu_1| \leq e^{ p \tau}\sup_{\tau'\leq \tau}\sup_{\{\bar{f}_0\leq L^2\}} e^{-(p+7\beta^2)\tau'}|\nu_1|+7e^{(q-7\beta^2)\tau}.
\end{align}
This implies the assertion.
\end{proof} 
 
\bigskip 

\section{Level set propagation of smallness}\label{sec_level_set_propagation}

In this section, we show that smallness of $|\nu_1|$ spreads out well in $x_2$-direction over the entire level sets. To begin with, centering at $X=0$ for ease of notation, given any $b\in [2,7]$, we consider the function
\begin{align}\label{def_super_hor}
\Psi_b=\exp\left( \frac{x_1^2}{4|t|}-\left(\log|t|-2\log\hat{Z}(X)\right)^{2b}\right).
\end{align}

\begin{proposition}[Jacobi supersolution]\label{prop_Jac_super_long}
For $t\leq -e^{-\tau_s(X)}$ we have
\begin{equation}
(\partial_t - \Delta -|A|^2)\Psi_b>0 \qquad\textrm{in}\;\; \big\{ |x_1|/|t|^{1/2}\leq 2(\log|t|-2\log\hat{Z}(X))^b\big\}.
\end{equation}
\end{proposition}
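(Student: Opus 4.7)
The plan is to write $\Psi_b=e^\phi$, where $\phi=\frac{x_1^2}{4|t|}-L^{2b}$ and $L:=\log|t|-2\log\hat Z(X)$, and use the identity
\begin{equation*}
(\partial_t-\Delta-|A|^2)e^\phi = e^\phi\bigl(\partial_t\phi-\Delta_{M_t}\phi-|\nabla_{M_t}\phi|^2-|A|^2\bigr)
\end{equation*}
to reduce the claim to the pointwise inequality $\partial_t\phi-\Delta_{M_t}\phi-|\nabla_{M_t}\phi|^2>|A|^2$ on the relevant region of $M_t$. A direct computation using $\partial_t\log|t|=-1/|t|$, the ambient-to-intrinsic formula $\Delta_{M_t}\phi=\Delta\phi-D^2\phi(\nu,\nu)-H\langle D\phi,\nu\rangle$ (verified against the shrinking sphere), and $|\nabla_{M_t}\phi|^2=|D\phi|^2-\langle D\phi,\nu\rangle^2$ will yield
\begin{equation*}
\partial_t\phi-\Delta_{M_t}\phi-|\nabla_{M_t}\phi|^2 = \frac{x_1^2\nu_1^2}{4|t|^2}+\frac{2bL^{2b-1}}{|t|}-\frac{1-\nu_1^2}{2|t|}+\frac{Hx_1\nu_1}{2|t|}.
\end{equation*}
The leading positive term is $\frac{2bL^{2b-1}}{|t|}$, coming from $\partial_t(-L^{2b})$.

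The cross term $\frac{Hx_1\nu_1}{2|t|}$ has indefinite sign, and I will handle it by the weighted AM-GM bound $\bigl|\frac{Hx_1\nu_1}{2|t|}\bigr|\leq \frac{x_1^2\nu_1^2}{8|t|^2}+\frac{H^2}{2}$, the first summand being absorbed by the quadratic positive term. Invoking $H^2\le 3|A|^2$, the problem then reduces to establishing
\begin{equation*}
\frac{2bL^{2b-1}}{|t|}>\tfrac{5}{2}|A|^2+\tfrac{1}{2|t|}
\end{equation*}
throughout the slab $\mathcal{S}_b:=\{|x_1|/|t|^{1/2}\leq 2L^b\}\cap M_t$ for $\tau\leq\tau_s(X)$.

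The key remaining step is to bound $|A|^2$ with at most polynomial growth in $L$ on the whole slab $\mathcal{S}_b$ (without restriction on $y_2$). On the central ellipsoidal region provided by Theorem \ref{thm:cap_distance} (cap distance), since $b\le 7$ gives $2b-19\le -5$, the portion $\mathcal{S}_b\cap\{(2-\beta)^{-1}y_2^2 \leq L-1\}$ lies inside the ellipsoid of that theorem, so $|A|^2\le 5\beta^{-1}/|t|$ holds there. On the complementary caps $\mathcal{S}_b\cap\{y_2\geq 10\}$, and, by the $y_2$-symmetry of the leading asymptotic $u\sim(2-y_2^2)/(\sqrt{8}|\tau|)$, also on $\{y_2\leq -10\}$, Corollary \ref{cor_geom_cons} (geometric bounds) yields $\bar H\le CL$, hence $H^2\le CL^2/|t|$; the standard noncollapsed pointwise curvature estimate $|A|\le CH$ from \cite{HaslhoferKleiner_meanconvex} then gives $|A|^2\le CL^2/|t|$. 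Combining, $|A|^2\le CL^2/|t|$ uniformly on $\mathcal{S}_b$.

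Plugging this in, the target inequality becomes $2bL^{2b-1}>\tfrac{5C}{2}L^2+\tfrac{1}{2}$, which is valid as soon as $L$ is sufficiently large depending on $\beta$ and $b$, since for $b\in[2,7]$ the exponent $2b-1\geq 3$ beats the $L^2$ term. As $t\leq -e^{-\tau_s(X)}$ together with $\tau_s(X)\leq\tau_\ast-2\log\hat Z(X)$ implies $L\geq|\tau_\ast|$, this is ensured by further adjusting the universal constant $\tau_\ast$, as permitted by the universal-constants convention. The main obstacle I foresee is precisely the curvature estimate off of the Theorem \ref{thm_elong_cyl} ellipsoids (i.e.\ near the $y_2$-caps); here the hypothesis $t\leq -e^{-\tau_s(X)}$ is essential, since it triggers the leading quadratic concavity in $y_2$ that drives Corollary \ref{cor_geom_cons}.
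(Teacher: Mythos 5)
Your proposal is correct and takes essentially the same route as the paper: the paper obtains $(\partial_t-\Delta)\Psi_b/\Psi_b\geq \frac{2b(\log|t|-2\log\hat{Z}(X))^{2b-1}}{|t|}-\frac{1}{2|t|}$ from the evolution inequality for $f_1=x_1^2/|t|$ in Lemma \ref{prop:para.evol} and then concludes from the curvature bound $\bar H\leq C|\tau+2\log\hat Z(X)|$ of Corollary \ref{cor_geom_cons}, exactly as you do, with your extra care on the $y_2$-caps (Theorem \ref{thm:cap_distance} for the central part plus the symmetric version of Corollary \ref{cor_geom_cons}) merely filling in a detail the paper leaves implicit. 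One minor correction that does not affect validity: in your displayed identity the cross term $\tfrac{Hx_1\nu_1}{2|t|}$ in fact cancels, since the material time derivative contributes $-H\langle D\phi,\nu\rangle$ which offsets the $-H\langle D\phi,\nu\rangle$ appearing in $\Delta_{M_t}\phi$; because you only estimate this (actually absent) term one-sidedly and absorb it, your lower bound, and hence your conclusion, still holds.
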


\begin{proof}
First, note that $f_1=x_1^2/|t|$ satisfies
\begin{equation}\label{partial_f1}
(\partial_t-\Delta)f_1 \geq \frac{f_1-2}{|t|}\geq \frac{1}{4}|\nabla f_1|^2-\frac{2}{|t|}.
\end{equation}
This yields
\begin{equation}
\frac{(\partial_t-\Delta)\Psi_b}{\Psi_b} \geq  \frac{2b(\log|t|-2\log\hat{Z}(X))^{2b-1}}{|t|}-\frac{1}{2|t|}.
\end{equation}
Hence, Corollary \ref{cor_geom_cons} (geometric bounds) implies the assertion.
\end{proof} 
 
\begin{theorem}[level set propagation of smallness]\label{thm:horizontal_propagation}
For $\tau\leq \tau_s(X)$ we have
\begin{equation}
\sup_{\{    |y_1| \leq |\tau+2\log\hat{Z}(X)|^b\}}|\nu_1| \leq C \sup_{\tau'\leq \tau}\sup_{\{  |y_1| \leq \, 2 |\tau'+2\log\hat{Z}(X)|^b,\; |y_2|\leq 10\}} |\tau'+2\log\hat{Z}(X)||\nu_1|+\hat{Z}(X)^{20}e^{10\tau}.
\end{equation}
\end{theorem}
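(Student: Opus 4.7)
The plan is to apply the comparison principle for the Jacobi operator $\partial_t-\Delta-|A|^2$, comparing $\pm\nu_1$ with a barrier of the form $\Psi_b+K\nu_2$ on the region $\{|y_1|\leq 2|\tau+2\log\hat Z(X)|^b,\,y_2\geq 10\}$. The symmetry $y_2\mapsto -y_2$ gives the analogous bound on $\{y_2\leq -10\}$, and combined with the hypothesis on the strip $\{|y_2|\leq 10\}$ these three regions cover $\{|y_1|\leq|\tau+2\log\hat Z(X)|^b\}$ and yield the stated estimate. For readability I abbreviate $|\tau|_X:=|\tau+2\log\hat Z(X)|$.

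Fix a final time $\bar\tau\leq\tau_s(X)$ and set
\[
K:=\kappa_0^{-1}\sup_{\tau'\leq\bar\tau}\sup_{\{|y_1|\leq 2|\tau'|_X^b,\,|y_2|\leq 10\}}|\tau'|_X|\nu_1|.
\]
With this choice, on the lateral face $\{y_2=10\}$ Corollary \ref{cor_geom_cons} yields $\nu_2\geq\kappa_0/|\tau|_X$, so $K\nu_2\geq|\nu_1|$ by construction, while on the outer face $\{|y_1|=2|\tau|_X^b\}$ plugging into \eqref{def_super_hor} gives $\Psi_b=1\geq|\nu_1|$. Since both $\nu_1$ and $\nu_2$ satisfy the Jacobi equation and $\Psi_b$ is a strict Jacobi supersolution on this region by Proposition \ref{prop_Jac_super_long}, the function $\Phi^{\pm}:=\pm\nu_1-\Psi_b-K\nu_2$ satisfies $(\partial_t-\Delta-|A|^2)\Phi^\pm<0$ with $\Phi^\pm\leq 0$ on the spatial parabolic boundary. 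Evaluating the conclusion of the weak maximum principle at the final time $\bar\tau$ on the smaller set $\{|y_1|\leq|\bar\tau|_X^b\}$, the estimate $\Psi_b\leq\exp(-\tfrac{3}{4}|\bar\tau|_X^{2b})$ is negligible for $b\geq 2$, and the crude bound $K\nu_2\leq K$ contributes the leading term $CN$, where $N$ denotes the supremum appearing in the statement.

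The main obstacle is handling the initial condition as $t\to-\infty$: on the unbounded spatial slice $\{|y_1|\leq 2|\tau|_X^b,\,y_2\geq 10\}$, whose $y_2$-extent grows like $|\tau|_X$ by \eqref{eq_level_diam}, the crude pointwise bound $|\nu_1|\leq 1$ is not immediately compatible with $K\nu_2+\Psi_b\leq CN/|\tau|_X+1$. The remedy is to reinforce the barrier by the Jacobi supersolution $c\hat Z(X)^{20}e^{10\tau}$; in unrenormalized variables $|t|^{-q}$ satisfies $(\partial_t-\Delta-|A|^2)|t|^{-q}\geq 0$ whenever $q$ dominates $|t||A|^2$, and the curvature bound $|\bar A|^2\leq 5\beta^{-1}$ from Theorem \ref{thm:cap_distance} together with a cap-region argument using $\nu_2\to 1$ as $y_2$ approaches the surface maximum makes $q=10$ admissible after rescaling to $\hat Z(X)=1$. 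This additional barrier is uniformly positive at very negative times and, by \eqref{eq_ren_an_dist} together with the gradient estimate \eqref{std_cyl_est}, dominates $|\nu_1|$ initially. The weak maximum principle then applies on the full time interval $(-\infty,\bar\tau]$ and this extra term contributes exactly the error $\hat Z(X)^{20}e^{10\tau}$ claimed in the statement.
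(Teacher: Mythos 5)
Your overall setup coincides with the paper's proof: the comparison of $\pm\nu_1$ with $K\nu_2+\Psi_b$ on $\Omega_t=\{|x_1|\leq 2|t|^{1/2}(\log|t|-2\log\hat Z(X))^b,\ x_2\geq 10|t|^{1/2}\}$, the choice of $K$ via Corollary \ref{cor_geom_cons}, the boundary checks on the two faces, and the observation that on the smaller set $\Psi_b$ is exponentially negligible are all exactly as in the paper. The problem is your treatment of the condition at $t\to-\infty$. The reinforcing barrier $c\hat Z(X)^{20}e^{10\tau}$, i.e.\ $c|t|^{-10}$ in unrenormalized variables, satisfies $(\partial_t-\Delta-|A|^2)|t|^{-10}=\big(10-|t||A|^2\big)|t|^{-11}$, so it is a Jacobi supersolution only where $|t||A|^2\leq 10$. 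But the curvature bound you invoke is $|\bar A|^2\leq 5\beta^{-1}$ with $\beta<10^{-3}$, i.e.\ $|t||A|^2\leq 5\beta^{-1}\geq 5000$, and near the tips of the level sets (the points of $\Omega_t$ where $x_2$ is maximal, which are \emph{interior} to $\Omega_t$, since $\partial\Omega_t$ consists only of the faces $\{x_2=10|t|^{1/2}\}$ and $\{|x_1|=2|t|^{1/2}(\log|t|-2\log\hat Z(X))^b\}$) the only available bound is $\bar H\leq C|\tau+2\log\hat Z(X)|$ from Corollary \ref{cor_geom_cons}, so $|t||A|^2$ can be of order $(\log|t|)^2$. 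Hence $q=10$ is not admissible anywhere in the relevant region, and no ``cap-region argument using $\nu_2\to 1$'' can repair this: since the tip region is interior, the \emph{sum} $K\nu_2+\Psi_b+c|t|^{-10}$ must itself be a supersolution there, and the $|t|^{-10}$ term contributes with the wrong sign; positivity of $\nu_2$ at the tips does not change that. So as written the comparison principle cannot be applied with your barrier.

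The obstacle you set out to fix is, moreover, not actually there: Corollary \ref{cor_geom_cons} gives not only $\nu_2\geq\kappa_0|\tau+2\log\hat Z(X)|^{-1}$ but also $|\nu_1|\leq|\tau+2\log\hat Z(X)|^{-3/2}$ throughout $\{|y_1|\leq 2|\tau+2\log\hat Z(X)|^{7},\ y_2\geq 10\}\supset\Omega_t$ (recall $b\leq 7$), so $\sup_{\Omega_t}|\nu_1|/\nu_2\leq \kappa_0^{-1}|\tau+2\log\hat Z(X)|^{-1/2}\to 0$ as $t\to-\infty$. This uniform decay of the ratio $|\nu_1|/\nu_2$ is exactly the Cauchy condition the paper uses to start the comparison with $K\nu_2+\Psi_b$ alone; no extra supersolution is needed, and the error term $\hat Z(X)^{20}e^{10\tau}$ in the statement arises simply because on $\{|x_1|\leq|t|^{1/2}(\log|t|-2\log\hat Z(X))^{b}\}$ one has $\Psi_b\leq\exp\big(-\tfrac34(\log|t|-2\log\hat Z(X))^{2b}\big)\leq\hat Z(X)^{20}e^{10\tau}$, not from an additional barrier. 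Replacing your last paragraph by this Cauchy-condition argument (or equivalently comparing with $\Psi_b+(K+\eps)\nu_2$ and letting $\eps\to0$) yields the paper's proof.
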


\begin{proof}
Suppose $\hat{Z}(X)=1$ and $X=0$ for ease of notation. Working in the unrescaled variables, for $t\leq -e^{-\tau_s(X)}$ we consider the domains
\begin{equation}
\Omega_t =\left\{    |x_1| \leq 2 |t|^{1/2} (\log|t|)^b,\;\; x_2\geq 10|t|^{1/2} \right\}.
\end{equation}
By Corollary \ref{cor_geom_cons} (geometric bounds) the function $\nu_2$ is positive in $\Omega_t$ with the lower bound
\begin{equation}\label{nu_2_lower}
\nu_2\geq \frac{\kappa_0}{\log|t|},
\end{equation}
Now, given any $\bar{t}\leq -e^{-\tau_s(X)}$, set
\begin{equation}
K=\tfrac{1}{\kappa_0}\sup_{t\leq\bar{t}}\sup_{\{  |x_1| \leq 2 |t|^{1/2} (\log|t|)^b,\; x_2=10|t|^{1/2} \}} |\nu_1|\log|t|.
\end{equation}
We would like to compare $\pm \nu_1$, which solves the Jacobi equation, with $K \nu_2+\Psi_b$, which is a supersolution thanks to Proposition \ref{prop_Jac_super_long} (Jacobi supersolution).
To this end, note first that on the horizontal boundary $\Omega_t\cap \{ x_2=10|t|^{1/2}\}$ by the definition of $K$ and  \eqref{nu_2_lower} we have $|\nu_1|\leq K\nu_2$. Next, on the vertical boundary $\Omega_t\cap \{  |x_1| = 2 |t|^{1/2}(\log|t|)^b\}$ we have $\Psi_b=1$, and thus in particular $|\nu_1|\leq \Psi_b$. Furthermore, thanks to Corollary \ref{cor_geom_cons} (geometric bounds) the ratio $\nu_1/\nu_2$ satisfies the Cauchy condition
\begin{equation}
\lim_{t\to -\infty}\sup_{\Omega_t}\frac{|\nu_1|}{\nu_2}=0.
\end{equation}
Therefore, by the comparison principle we infer that $|\nu_1|\leq K\nu_2+\Psi_b$ in $\Omega_t$ for $t\leq\bar t$. Finally, restricting to the smaller region where $|x_1| \leq  | t|^{1/2}(\log| t|)^b$ we have $\Psi_b\leq |t|^{-100}$. This implies the assertion.
\end{proof}
  
Combining the established propagation of smallness results we obtain:

\begin{corollary}[combined propagation of smallness]\label{cor:gradient_estimate_bootstrap}
For $\tau\leq \tau_s(X)$ and $p\in [0,\hat{q}]$ we have
\begin{equation}
\sup_{\{    |y_1| \leq |\tau+2\log\hat{Z}(X)|^3\}}|\nu_1| \leq  C|\tau+2\log\hat{Z}(X)|e^{p\tau} \sup_{\tau'\leq \tau}\sup_{\{ |y| \leq L/\beta\}} \frac{e^{-(p+7\beta^2)\tau'}}{\hat{Z}(X)^{14\beta^2}} |\nu_1|+C\hat{Z}(X)^{2\hat{q}}e^{\hat{q}\tau}.
\end{equation}
\end{corollary}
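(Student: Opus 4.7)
The plan is to chain together the two propagation results available: Theorem~\ref{thm:horizontal_propagation} (level set propagation), applied with $b=3$, which bounds $\sup_{\{|y_1|\leq|\tau+2\log\hat Z(X)|^3\}}|\nu_1|$ in terms of the horizontal strip $\{|y_1|\leq 2|\tau'+2\log\hat Z(X)|^3,\;|y_2|\leq 10\}$ at earlier times, and Corollary~\ref{thm:smallness_center} (improved anisotropic propagation), which bounds the horizontal strip in terms of the core region $\{|y|\leq L/\beta\}$ with the desired exponents. Composing these two estimates immediately produces the form of the conclusion; the only real work is verifying the hypothesis of Corollary~\ref{thm:smallness_center} and performing the bookkeeping when the two supremums are concatenated.

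First I would establish the coarse decay \eqref{eq_coarse_fast_decay} needed as input for Corollary~\ref{thm:smallness_center}. By scaling, assume $\hat Z(X)=1$. Theorem~\ref{thm:cap_distance} (cap distance) already gives $|\nu_1|\leq e^{\tau/10}$ on the strip $\{|y_1|\leq|\tau|^{10},\,|y_2|\leq 10\}$. Feeding this into Theorem~\ref{thm:horizontal_propagation} applied with $b=7$ (and using that $2|\tau'|^7\leq|\tau'|^{10}$ for $|\tau'|$ large) yields
\begin{equation*}
\sup_{\{|y_1|\leq|\tau|^7\}}|\nu_1|\leq C\sup_{\tau'\leq\tau}|\tau'|e^{\tau'/10}+e^{10\tau}.
\end{equation*}
Since $\beta<10^{-3}\ll 1/10$, the right-hand side is bounded by $\tfrac12 e^{\beta\tau}$ provided $\tau_\ast$ is adjusted, so \eqref{eq_coarse_fast_decay} holds.

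Having verified \eqref{eq_coarse_fast_decay}, I apply Corollary~\ref{thm:smallness_center} at each time $\tau'\leq\tau$ to obtain, for every $p\in[0,\hat q]$,
\begin{equation*}
\sup_{\{|y_1|\leq 2|\tau'|^3,\,|y_2|\leq 10\}}|\nu_1|(\tau')\leq e^{p\tau'}T(\tau')+7e^{\hat q\tau'},\qquad T(\tau')=\sup_{\tau''\leq\tau'}\sup_{\{|y|\leq L/\beta\}}e^{-(p+7\beta^2)\tau''}|\nu_1|(\tau'').
\end{equation*}
Multiplying by $|\tau'|$ and substituting into Theorem~\ref{thm:horizontal_propagation} with $b=3$ gives
\begin{equation*}
\sup_{\{|y_1|\leq|\tau|^3\}}|\nu_1|\leq C\sup_{\tau'\leq\tau}\bigl(|\tau'|e^{p\tau'}T(\tau')+7|\tau'|e^{\hat q\tau'}\bigr)+e^{10\tau}.
\end{equation*}
Since $T$ is monotone non-decreasing in $\tau'$ by definition, $T(\tau')\leq T(\tau)$ for $\tau'\leq\tau$. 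Moreover, after further adjusting $\tau_\ast$ the functions $\tau'\mapsto|\tau'|e^{p\tau'}$ (for $p>0$) and $\tau'\mapsto|\tau'|e^{\hat q\tau'}$ are increasing on $(-\infty,\tau]$, so each supremum is attained at $\tau'=\tau$. Reinstating the $\hat Z(X)$-factors by reversing the scaling, this yields the claimed inequality.

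The only mild technical point is handling the endpoint $p=0$, where $|\tau'|e^{p\tau'}=|\tau'|$ is not monotone; however, in that case the factor $T(\tau')$ already decays strictly faster than any positive exponential (thanks to the $e^{-7\beta^2\tau''}|\nu_1|(\tau'')\lesssim e^{(1/10-7\beta^2)\tau''}$ bound from Theorem~\ref{thm:cap_distance}), so $|\tau'|T(\tau')$ is still maximized at $\tau'=\tau$ up to a uniform constant. Everything else reduces to straightforward bookkeeping of the $\hat Z(X)$-factors through the scaling reduction, which contributes exactly the $\hat Z(X)^{14\beta^2}$ denominator and the $\hat Z(X)^{2\hat q}$ prefactor in the stated form.
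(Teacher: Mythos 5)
Your argument follows the paper's proof essentially verbatim: verify the hypothesis \eqref{eq_coarse_fast_decay} of Corollary \ref{thm:smallness_center} by combining Theorem \ref{thm:cap_distance} with Theorem \ref{thm:horizontal_propagation} at $b=7$, then apply Corollary \ref{thm:smallness_center}, and finish with Theorem \ref{thm:horizontal_propagation} at $b=3$. The extra bookkeeping you spell out for the concatenated suprema (monotonicity of $T$, absorption of the $|\tau'|$-factor, and the small-$p$ caveat) is exactly the step the paper leaves implicit, so the proposal is correct and takes the same route.
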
 
 
\begin{proof}
By scaling we can assume $\hat{Z}(X)=1$. Applying Theorem \ref{thm:cap_distance} (cap distance) and Theorem \ref{thm:horizontal_propagation} (level set propagation of smallness) with $b=7$ we see that the inequality \eqref{eq_coarse_fast_decay} is satisfied. Hence, Corollary \ref{thm:smallness_center} (improved anisotropic propagation of smallness) yields
\begin{equation}
\sup_{\{|y_1|\leq 2|\tau|^3,\; |y_2|\leq 10\}}  |\nu_1| \leq  e^{ p\tau} \sup_{\tau'\leq \tau}\sup_{\{ |y| \leq L/\beta\}} e^{-(p+7\beta^2)\tau'}|\nu_1|+7e^{\hat{q}\tau}.
\end{equation}
Applying Theorem \ref{thm:horizontal_propagation} (level set propagation of smallness) again, now with $b=3$, finishes the proof.
\end{proof} 

\bigskip
 
\section{Differential neck theorem}\label{sec_DNT}

In this section, we combine the results from the previous section to establish our differential neck theorem.
 
\subsection{Weak differential neck theorem}

In this subsection, we consider the expansion coefficient
\begin{equation}
a_+(\tau)=\langle w(\cdot,\tau),1\rangle_{\mathcal{H}},\qquad \mathrm{where }\;\; w(y,\tau)=\eta(|y|/\rho(\tau)) u_1^X(y,\tau).
\end{equation}
To see that the error terms in the evolution of $a_+(\tau)$ are integrable, we start with the following lemma.

\begin{lemma}[quadratic decay]\label{lem:profile_integration}
For $\tau\leq \tau_*-2\log \hat{Z}(X)$ we have
\begin{equation}
\left|\int_{\{\eta>0\}} u e^{-\frac{|y|^2}{4}} \right| \leq \frac{C}{|\tau+2\log \hat{Z}(X)|^{2}}.
\end{equation}
\end{lemma}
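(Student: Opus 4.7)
The plan is to derive and solve an ODE for the Gaussian projection $a(\tau):=\langle \hat{u}(\cdot,\tau),1\rangle_{\mathcal{H}}$, and to observe that the integral in the statement agrees with a fixed multiple of $a(\tau)$ up to Gaussian-super-small tails. Indeed, on the annulus $\{\eta>0,\chi<1\}$ the Gaussian weight is bounded by $e^{-|\tau+2\log\hat Z(X)|^{2\gamma}/4}$ while $|u|$ is at most of size $|\tau+2\log\hat Z(X)|^{-1/2}$ by Lemma \ref{claim_upper_p}, so the annular piece contributes only a super-polynomially small error to the integral. A naive attempt to bound $a(\tau)$ by $|a(\tau)|\leq U_+^{1/2}/\|1\|_{\mathcal{H}}$ via Lemma \ref{prop:domanance_criteria} yields only $C|\tau+2\log\hat Z(X)|^{-1-\gamma/4}$, which falls short of the claimed $|\tau|^{-2}$ decay, so a refined ODE argument isolating the $+$-mode dynamics is needed.

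For the ODE, applying the product rule to $\hat u=\chi u$ and inserting the profile equation \eqref{evol_u} gives
\[ \partial_\tau\hat u=\mathcal{L}\hat u+\chi\mathcal{N}(u)+\mathcal{B}(\chi,u), \]
where $\mathcal{N}(u)$ collects the nonlinear terms on the right of \eqref{evol_u} and $\mathcal{B}(\chi,u)$ gathers first- and zeroth-order commutator terms supported on $\{\chi'\neq 0\}$. Pairing against the $\tau$-independent eigenfunction $1$ of $\mathcal{L}$ (with eigenvalue $1$) and using self-adjointness of $\mathcal{L}$ on $\mathcal{H}$ yields $\tfrac{d}{d\tau}a(\tau)=a(\tau)+R(\tau)$. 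The commutator term $\mathcal{B}(\chi,u)$ lives on $\{|y|\sim|\tau+2\log\hat Z(X)|^\gamma\}$, where the Gaussian weight is super-exponentially small; combined with the cylindrical derivative bounds \eqref{std_cyl_est} its contribution to $R$ is super-polynomially small. The main nonlinear contribution is estimated using $\sqrt{2}+u\geq\beta/10$ on $\{\eta>0\}$ from Lemma \ref{prop_ell_dom} (so $\varrho$ is bounded), the almost-symmetry estimate \eqref{rot_est} (killing the $u_\vartheta$-terms), and the norm bound $\|\hat u\|_{\mathcal H}^2=U_++U_0+U_-\leq C|\tau+2\log\hat Z(X)|^{-2}$ from Theorem \ref{thm_spec_coeff} together with Lemma \ref{prop:domanance_criteria}. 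Combining these, $|\langle\chi\varrho u^2,1\rangle_{\mathcal H}|\leq C\|\hat u\|_{\mathcal H}^2\leq C|\tau+2\log\hat Z(X)|^{-2}$, and hence $|R(\tau)|\leq C|\tau+2\log\hat Z(X)|^{-2}$.

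Finally, writing $\tilde\tau=\tau_\ast-2\log\hat Z(X)$, variation of constants gives
\[ a(\tau)=e^{\tau-\tilde\tau}a(\tilde\tau)-\int_\tau^{\tilde\tau}e^{\tau-s}R(s)\,ds. \]
The boundary term is $O(e^\tau)$ and hence negligible. For the integral, split at $\tau/2$: on $[\tau,\tau/2]$ one has $|s+2\log\hat Z(X)|^{-2}\leq 4|\tau+2\log\hat Z(X)|^{-2}$ while $\int_\tau^{\tau/2}e^{\tau-s}\,ds\leq 1$, contributing $\leq C|\tau+2\log\hat Z(X)|^{-2}$; on $[\tau/2,\tilde\tau]$ the factor $e^{\tau-s}$ is at most $e^{\tau/2}$, giving an exponentially small contribution. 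The conceptual core of the proof, and the essential obstacle the lemma overcomes, is that the orthogonality of all neutral eigenfunctions to the constant $1$ kills the linear-level driver of $a$, forcing the quadratic nonlinearity, of magnitude $\|\hat u\|_{\mathcal H}^2\sim|\tau|^{-2}$, to be what genuinely sets the decay rate and thereby sharpens the crude Merle-Zaag bound $|\tau|^{-1-\gamma/4}$ to the desired $|\tau|^{-2}$.
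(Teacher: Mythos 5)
Your strategy is essentially the paper's: project the (cut off) profile function onto the unstable constant eigenfunction, use self-adjointness and $\mathcal{L}1=1$ to get $\tfrac{d}{d\tau}a=a+R$, bound the forcing $R$ by $C|\tau+2\log\hat Z(X)|^{-2}$, and integrate by variation of constants so that the unstable eigenvalue converts this into quadratic decay of $a$ itself. The only structural difference is cosmetic: the paper runs the ODE directly for $U(\tau)=\int \eta(|y|/\rho(\tau))\,u\,e^{-|y|^2/4}$ (so the quantity in the statement appears immediately), whereas you run it for $\langle\hat u,1\rangle_{\mathcal H}$ with the small $\chi$-cutoff and transfer at the end through the super-small Gaussian tail; both transfers are harmless, and your handling of the cutoff commutators, of the boundary term, and of the integral splitting is fine.

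There is, however, a concrete gap in your bound $|R(\tau)|\leq C|\tau+2\log\hat Z(X)|^{-2}$. The nonlinearity in \eqref{evol_u} is not exhausted by $-\tfrac12\varrho u^2$ plus $\vartheta$-derivative terms: the quotient $P/Q$ contains $u_iu_ju_{ij}/Q$, which involves no $\vartheta$-derivatives and is not of the form $\varrho u^2$, so it is neither killed by the almost-symmetry estimate \eqref{rot_est} nor covered by your estimate $|\langle\chi\varrho u^2,1\rangle_{\mathcal H}|\leq C\|\hat u\|_{\mathcal H}^2$. With only the pointwise cylindrical bounds \eqref{std_cyl_est} this term is of size $|\nabla u|^2|\nabla^2u|\leq C|\tau+2\log\hat Z(X)|^{-3/2}$ on the support of the cutoff, so your argument as written gives only $|R(\tau)|\leq C|\tau+2\log\hat Z(X)|^{-3/2}$ and hence $|a(\tau)|\leq C|\tau+2\log\hat Z(X)|^{-3/2}$, short of the claimed decay. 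The missing ingredient is the inverse Poincar\'e inequality \eqref{inv_poinc}: writing $|u_iu_ju_{ij}/Q|\leq C|\tau+2\log\hat Z(X)|^{-1}|\nabla u|$ pointwise and using $\|1_{\{\eta>0\}}|\nabla u|\|_{\mathcal H}\leq C\|\hat u\|_{\mathcal H}\leq C|\tau+2\log\hat Z(X)|^{-1}$ yields the required $C|\tau+2\log\hat Z(X)|^{-2}$; this is exactly how the paper proceeds, keeping precisely the two quadratic terms $u_iu_j\tilde u_{ij}/(1+|\nabla u|^2)$ and $u\tilde u/(2(\sqrt2+u))$ and estimating both via \eqref{inv_poinc}. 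The same inequality (or the super-small Gaussian weight on $\{\chi<1\}$) also tidies the minor mismatch between $\int\chi u^2e^{-|y|^2/4}$ and $\|\chi u\|_{\mathcal H}^2$ in your quadratic estimate, and note that Lemma \ref{claim_upper_p} only bounds $u$ from above on $\{\eta>0\}$ — for the annulus transfer you should simply use $|u|\leq\sqrt2$ together with Lemma \ref{prop_ell_dom}, which is all that is needed there.
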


\begin{proof}By scaling we can assume $\hat{Z}(X)=1$.
To begin with, note that by the evolution equation \eqref{evol_u}, taking also into account the almost symmetry estimate \eqref{rot_est}, the function $\tilde{u}(y,\tau)=\eta(|y|/\rho(\tau))u(y,\tau)$ satisfies
\begin{equation}
\left|\int \left(\tilde{u}_\tau-\mathcal{L}\tilde{u} +\frac{u_iu_j\tilde{u}_{ij}}{1+|\nabla u|^2}+\frac{u\tilde{u}}{2(\sqrt{2}+u)}\right)e^{-\frac{|y|^2}{4}}\right|\leq C|\tau|^{-5/2}.
\end{equation}
Using the derivative estimates from \eqref{std_cyl_est} and the inverse Poincare inequality from \eqref{inv_poinc}, and remembering also that $\|\hat u\|_{\mathcal{H}}\leq C|\tau|^{-1}$, we can estimate the remaining nonlinear terms by
\begin{equation}
\left|\int \frac{u_iu_j\tilde{u}_{ij}}{1+|\nabla u|^2}e^{-\frac{|y|^2}{4}}\right|+\left|\int \frac{u\tilde{u}}{2(\sqrt{2}+u)}e^{-\frac{|y|^2}{4}}\right| \leq C|\tau|^{-2}.
\end{equation}
It follows that the function $U=\int \tilde{u}e^{-\frac{|y|^2}{4}}$ satisfies the differential inequality
\begin{equation}
|\tfrac{d}{d\tau}U-U|\leq C|\tau|^{-2},
\end{equation}
and so the variations of constants formula yields $|U|\leq C|\tau|^{-2}$. This implies the assertion.
\end{proof}

\begin{proposition}[evolution of expansion coefficient]\label{prop:W_+_evol_improved}
 For $\tau\leq \tau_*-2\log \hat{Z}(X)$ we have
\begin{equation}
\left|\tfrac{d}{d\tau} a_+-\tfrac{1}{2} a_+ \right| \leq \frac{C|a_+|}{|\tau+2\log \hat{Z}(X)|^{2}} + \frac{C(W_0+W_{-})^{\frac{1}{2}}}{|\tau+2\log \hat{Z}(X)|}+C\left( \hat{Z}(X)^{\frac{24}{25}}e^{\frac{12}{25}\tau}\omega+\hat{Z}(X)^{4}e^{2\tau}\right).
\end{equation}
\end{proposition}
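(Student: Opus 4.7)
The plan is to differentiate $a_+(\tau)=\langle w,1\rangle_{\mathcal{H}}$ in time, use the evolution equation $\partial_\tau w=\mathcal{L}'w+\mathcal{E}w+\mathcal{F}w+\mathcal{G}$, and then isolate the principal linear term. Since $\mathcal{L}'$ is self-adjoint on $\mathcal{H}$ and $\mathcal{L}'1=\tfrac12$, I get $\tfrac{d}{d\tau}a_+-\tfrac12 a_+=\langle \mathcal{E}w,1\rangle_{\mathcal{H}}+\langle \mathcal{F}w,1\rangle_{\mathcal{H}}+\langle \mathcal{G},1\rangle_{\mathcal{H}}$, so the entire proof reduces to bounding these three pairings by the three terms on the right-hand side.

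For $\langle \mathcal{E}w,1\rangle_{\mathcal{H}}$, the key observation is that every summand of $\mathcal{E}$ is a differential operator with no zero-order part, so $\mathcal{E}(p_+'w)=0$ and only $\mathcal{E}(p_0'w+p_-'w)$ contributes. I would then transfer derivatives onto $e^{-|y|^2/4}$ exactly as in Proposition \ref{lem:evol.W_+}, using the pointwise estimate $|\mathcal{E}^\ast(e^{-|y|^2/4})|\leq C|\tau+2\log\hat Z(X)|^{-1}(1+|y|^2)e^{-|y|^2/4}$, and close with Cauchy--Schwarz to land on $C(W_0+W_-)^{1/2}/|\tau+2\log\hat Z(X)|$.

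For $\langle \mathcal{F}w,1\rangle_{\mathcal{H}}$, I would decompose $w=(p_+'w)+(p_0'w+p_-'w)$. The non-constant piece contributes $C(W_0+W_-)^{1/2}/|\tau+2\log\hat Z(X)|$ via Cauchy--Schwarz and the bound $\|\mathcal{F}\,1_{\{\eta>0\}}\|_{\mathcal{H}}\leq C|\tau+2\log\hat Z(X)|^{-1}$ coming from the inverse Poincar\'e inequality \eqref{inv_poinc} together with $\|\hat u\|_{\mathcal{H}}\leq C|\tau+2\log\hat Z(X)|^{-1}$. The constant piece reduces to a scalar multiple of $a_+\cdot\int\mathcal{F}\,e^{-|y|^2/4}$, and here the \emph{decisive} input is the pointwise expansion $|\mathcal{F}+2^{-1/2}u|\leq C(u^2+|u_\vartheta|+|u_{\vartheta\vartheta}|)$ on $\{\eta>0\}$. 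This upgrades the naive $|\tau+2\log\hat Z(X)|^{-1}$ bound on $\int\mathcal{F}e^{-|y|^2/4}$ to the quadratic decay $|\tau+2\log\hat Z(X)|^{-2}$ by invoking Lemma \ref{lem:profile_integration} to estimate the leading $\int u\,e^{-|y|^2/4}$ term, which is precisely what produces the $|a_+|/|\tau+2\log\hat Z(X)|^2$ on the right-hand side. This Gaussian-moment cancellation is, I expect, the most delicate step.

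For $\langle\mathcal{G},1\rangle_{\mathcal{H}}$, I would use that $\mathcal{G}$ is supported on $\{\eta'\neq 0\}$ where $|y|\geq \tfrac{7}{5}\rho(\tau)$, so the Gaussian weight is at most $e^{-\frac{49}{100}\rho(\tau)^2}$. After moving the $\nabla\eta$ and $\Delta\eta$ derivatives off $u_1$ by one integration by parts (just as in the computation after Theorem \ref{thm_diff_MZ}), the integrand is bounded pointwise by $C\omega$ on the annular support, and the annular $L^1$-volume against the Gaussian contributes only a polynomial factor in $|\tau+2\log\hat Z(X)|$. Plugging in the two cases of \eqref{eq_rho_switch} and absorbing the polynomial into a marginally smaller exponent (using $\tfrac{12}{25}<\tfrac{49}{100}$) produces $C\hat Z(X)^{24/25}e^{\frac{12}{25}\tau}\omega$ in the pre-switch regime $\tau\leq\tau_s(X)$ and $C\hat Z(X)^4e^{2\tau}$ in the post-switch regime. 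The bookkeeping challenge is to choose how much exponential decay to trade for the polynomial prefactor so that the stated clean exponents come out; otherwise each step is a direct parallel of the estimates already carried out in Propositions \ref{lem:evol.W_+}--\ref{lem:evol.W_-} and in Theorem \ref{thm_diff_MZ}.
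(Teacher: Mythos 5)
Your proposal is correct and follows essentially the same route as the paper: the identity $\tfrac{d}{d\tau}a_+-\tfrac12 a_+=\langle\mathcal{E}w,1\rangle_{\mathcal{H}}+\langle\mathcal{F}w,1\rangle_{\mathcal{H}}+\langle\mathcal{G},1\rangle_{\mathcal{H}}$, the fact that $\mathcal{E}$ annihilates the constant mode so only $p_0'w+p_-'w$ enters the $\mathcal{E}$-pairing, the split of the $\mathcal{F}$-pairing into the constant part (handled via $|\mathcal{F}+2^{-1/2}u|\leq C(u^2+|u_\vartheta|+|u_{\vartheta\vartheta}|)$ and Lemma \ref{lem:profile_integration}, producing the $|a_+|/|\tau+2\log\hat Z(X)|^2$ term) and the non-constant part (Cauchy--Schwarz with the inverse Poincar\'e bound), and the Gaussian-smallness estimate for $\langle\mathcal{G},1\rangle_{\mathcal{H}}$ on the cutoff annulus using $\tfrac{12}{25}<\tfrac{49}{100}$ in the two regimes of \eqref{eq_rho_switch}. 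The only cosmetic point is that in the post-switch regime $\tau_s(X)<\tau\leq\tau_\ast$ one bounds $u_1$ on the annulus by \eqref{std_cyl_est} rather than by $\omega$ (which is frozen at $\tau_s(X)$), exactly as your assignment of the bare $\hat Z(X)^4e^{2\tau}$ term to that regime already reflects.
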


\begin{proof}By scaling we can assume $\hat{Z}(X)=1$.
We will improve the estimates from Proposition \ref{lem:evol.W_+} (unstable slope mode). We begin by computing
\begin{equation}
\tfrac{d}{d\tau}a_+=\tfrac{1}{2} a_++\langle \mathcal{E}w+ \mathcal{F}w,1\rangle_{\mathcal{H}}+\langle \mathcal{G},1\rangle_{\mathcal{H}}.
\end{equation}
Remembering definitions of $\rho$ and $\omega$ from \eqref{eq_rho_switch} and \eqref{eq_omega}, and using \eqref{std_cyl_est}, we can estimate
\begin{equation}
\left|\langle  \mathcal{G},1\rangle_{\mathcal{H}}\right| \leq C\left(e^{\frac{12}{25}\tau}\omega +e^{2\tau}\right).
\end{equation}
Since $\mathcal{E}$ annihilates constants, by subtracting its average from $w$, we can improve \eqref{eq_F1_estimate_L2} to
\begin{equation}
\left|\int \mathcal{E}(w) e^{-\frac{|y|^2}{4}}\right|\leq C|\tau|^{-1}(W_0+W_-)^{\frac{1}{2}}.
\end{equation}
Likewise, using Lemma \ref{lem:profile_integration} (quadratic decay) we can improve  \eqref{eq_F2_estimate_L2} to
\begin{equation}
\left|\int  \mathcal{F} w e^{-\frac{|y|^2}{4}}\right|\leq C|\tau|^{-2}|a_{+}|+C|\tau|^{-1}(W_0+W_-)^{\frac{1}{2}}.
\end{equation}
Combining the above inequalities implies the assertion.
\end{proof}

\begin{theorem}[weak differential neck theorem]\label{thm:fine_neck_L2}
There exists a constant $a=a(\mathcal{M})\in\mathbb{R}$, such that for all $X\in\mathcal{M}$ the function $w(y,\tau)=\eta(|y|/\rho(\tau)) u_1^X(y,\tau)$ for all $\tau\leq \tau_*-2\log \hat{Z}(X)$ satisfies
\begin{equation}
\big\|w-ae^{\frac{1}{2}\tau}\big\|_{\mathcal{H}}\leq \eps(\tau)e^{\frac{1}{2}\tau},\qquad\mathrm{where}\;\; \eps(\tau)\leq\frac{C|a|}{|\tau+2\log \hat{Z}(X)|^{\frac{1}{2}}}+C\hat{Z}(X)^{\frac{6}{5}}e^{\frac{1}{10}\tau}.
\end{equation}
\end{theorem}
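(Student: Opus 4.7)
The plan is to combine the base exponential decay from Corollary~\ref{cor:fast.decay} with the ODE for $a_+(\tau)=\langle w(\cdot,\tau),1\rangle_{\mathcal{H}}$ provided by Proposition~\ref{prop:W_+_evol_improved}, in order to identify a leading term $a\,e^{\tau/2}$ and control the residual. By scaling we may assume $\hat{Z}(X)=1$. Since the unstable eigenspace of $\mathcal{L}'$ is the one-dimensional space $\mathbb{R}\cdot 1$, one has $W_{+}^{1/2}=c'|a_+|$ for a universal constant $c'$, and Theorem~\ref{thm_unst_dom} then reduces to
\[
(W_0+W_-)^{1/2}\leq C|\tau|^{-1/2}\bigl(|a_+|+e^{6\tau/25}\omega(\tau)+e^{\tau}\bigr).
\]

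First, I would bootstrap the decay of $\omega$. Starting from $\|w\|_{\mathcal{H}}\leq Ce^{\tau/5}$, standard interior parabolic estimates give the pointwise bound $\sup_{\{|y|\leq L/\beta\}}|u_1|\leq Ce^{\tau/5}$. Corollary~\ref{cor:gradient_estimate_bootstrap} (combined propagation of smallness) then transfers this decay to the elongated strip $\{|y_1|\leq|\tau|^3,\,|y_2|\leq 10\}$, and Theorem~\ref{thm:horizontal_propagation} (level-set propagation of smallness), combined with Theorem~\ref{thm:cap_distance}, further propagates it to the entire support of $\eta$. Feeding the new $\omega$-bound back into the reduced Merle-Zaag inequality above and into the unstable ODE of \eqref{eq:W_system} strictly improves the decay exponent of $\|w\|_{\mathcal{H}}$, and iterating this loop finitely many times yields $\omega(\tau)\leq C|\tau|^{N}e^{p\tau}$ with $p$ close to $\hat{q}>1/2$, which is sufficiently fast to make all error terms appearing in Proposition~\ref{prop:W_+_evol_improved} integrable in $\tau\in(-\infty,\tau_\ast]$.

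Next, I would integrate the ODE for $g(\tau):=e^{-\tau/2}a_+(\tau)$. Dividing Proposition~\ref{prop:W_+_evol_improved} by $e^{\tau/2}$ and substituting the reduced Merle-Zaag bound produces
\[
|g'(\tau)|\leq \frac{C|g(\tau)|}{|\tau|^{3/2}}+E(\tau),
\]
where $E$ is integrable in $\tau$ with tail $\int_{-\infty}^\tau E(s)\,ds\leq Ce^{\tau/10}$. Gronwall then shows that $|g|$ is bounded and Cauchy at $-\infty$, so $a(X):=\lim_{\tau\to-\infty}g(\tau)$ exists, and a standard variation-of-constants estimate yields $|a_+(\tau)-a(X)e^{\tau/2}|\leq e^{\tau/2}\bigl(C|a(X)||\tau|^{-1/2}+Ce^{\tau/10}\bigr)$. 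Combining this with the reduced Merle-Zaag inequality to bound $\sqrt{W_0+W_-}$ converts it into the stated $\mathcal{H}$-bound for $w$.

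Finally, to see that $a(X)$ depends only on $\mathcal{M}$, I would compare two centered renormalized flows at points $X,X'\in\mathcal{M}$: their defining shift in $y$-coordinates is $e^{\tau/2}(x-x')$ together with a small time-scale adjustment, both of which vanish as $\tau\to-\infty$, and the $e^{\tau/2}$-leading coefficient of $\partial_{y_1}u^X$ is therefore invariant under such vanishing shifts, giving $a(X)=a(X')$. The main technical difficulty lies in the bootstrap step: each round must strictly improve the decay exponent of $\omega$ while keeping the polynomial and $\hat{Z}(X)$ prefactors under control, so that after finitely many iterations the decay crosses the threshold required for the ODE analysis in the second step. This is exactly the type of ping-pong between propagation of smallness and barrier-based cylindricality estimates developed in the preceding sections.
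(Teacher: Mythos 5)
Your proposal is correct and follows essentially the same route as the paper's proof: base decay from Corollary \ref{cor:fast.decay}, a bootstrap of $\omega$ via Corollary \ref{cor:gradient_estimate_bootstrap} (with the level-set propagation already built in) fed back into Theorem \ref{thm_unst_dom} and Proposition \ref{prop:W_+_evol_improved}, integration of the resulting ODE for $e^{-\tau/2}a_+$ to define $a(X)$, and a comparison of center points to see that $a$ is independent of $X$, with the $\hat{Z}(X)$ factors restored by undoing the scaling. The only caveat is that the bootstrap cannot push the decay of $\omega$ past $e^{\tau/2}$ (it is capped by $\|w\|_{\mathcal{H}}\gtrsim |a|e^{\tau/2}$ when $a\neq 0$), so your stated endpoint ``$p$ close to $\hat{q}>1/2$'' is not attainable at this stage; this is immaterial, however, since---exactly as in the paper, which stops after one improvement round at $\omega\lesssim e^{\frac{2}{5}\tau}$---that level of decay already makes the error tail of order $e^{\frac{1}{10}\tau}$ after dividing by $e^{\frac{1}{2}\tau}$.
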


\begin{proof}For ease of notation, let us focus first on points with $Z(X)\leq 1$ (for general points one has to replace $\tau$ by $\tau+2\log \hat{Z}(X)$). To begin with, note that Proposition \ref{prop:W_+_evol_improved} (evolution of expansion coefficient), taking also into account 
Theorem \ref{thm_unst_dom} (neutral slope mode cannot dominate), implies the evolution inequality
\begin{equation}\label{evol_ineq_w}
\left|\tfrac{d}{d\tau} W_+- W_+ \right| \leq C|\tau|^{-3/2}W_+ + C|\tau|^{-3/2}\big( e^{\frac{12}{25}\tau}\omega^2+e^{2\tau}\big).
\end{equation}
Now, by Corollary \ref{cor:fast.decay} (exponential decay) and standard interior estimates we have
\begin{equation}
\sup_{\{|y|\leq L/\beta\}}|u_1|\leq Ce^{\frac{1}{5}\tau}.
\end{equation}
Thus, applying Corollary \ref{cor:gradient_estimate_bootstrap} (combined propagation of smallness) with $p=\frac{1}{5}-7\beta^2$ we get
\begin{equation}\label{eq_omega_b}
\omega\leq Ce^{(\frac{1}{5}-8\beta^2)\tau}.
\end{equation}
Here, while Corollary \ref{cor:gradient_estimate_bootstrap} (combined propagation of smallness) only applies for $\tau\leq \tau_s(X)$, since $\omega$ is constant for $\tau\geq \tau_s(X)$, the estimate \eqref{eq_omega_b} holds  for $\tau_s(X)<\tau\leq\tau_\ast$ as well. 
It follows that
\begin{equation}
\frac{d}{d\tau}\log \left(W_++e^{\frac{21}{25}\tau}\right) \geq \frac{4}{5}.
\end{equation}
This yields 
\begin{equation}
\sup_{\{|y|\leq L/\beta\}}|u_1|\leq Ce^{\frac{2}{5}\tau}.
\end{equation}
Thus, using Corollary \ref{cor:gradient_estimate_bootstrap} (combined propagation of smallness) with $p=\frac{2}{5}-7\beta^2$, Proposition \ref{prop:W_+_evol_improved} (evolution of expansion coefficient) and Theorem \ref{thm_unst_dom} (neutral slope mode cannot dominate) we get
\begin{equation}
\left|\tfrac{d}{d\tau} a_+-\tfrac{1}{2} a_+ \right| \leq C|\tau|^{-\frac{3}{2}}|a_+| +Ce^{\frac{3}{5}\tau}.
\end{equation}
Integrating this differential inequality, and taking also into account again Theorem \ref{thm_unst_dom} (neutral slope mode cannot dominate), we infer that there is some $a_0(X)\in\mathbb{R}$, such that for all $\tau\leq \tau_*$ we have
\begin{equation}
\big\|w-a_0(X)e^{\frac{1}{2}\tau}\big\|_{\mathcal{H}}\leq C|a_0(X)| |\tau|^{-\frac{1}{2}}e^{\frac{1}{2}\tau}+Ce^{\frac{3}{5}\tau}.
\end{equation}
For a general center-point X, i.e. dropping the extra assumption that $\hat{Z}(X)$ equals $1$, we can replace $\tau$ by $\tau+2\log \hat{Z}(X)$ and set $a(X)=a_0(X)\hat{Z}(X)$ to infer that for all $\tau\leq \tau_*-2\log\hat{Z}(X)$ we have
\begin{equation}\label{dn_gen_cent}
\big\|w-a(X)e^{\frac{1}{2}\tau}\big\|_{\mathcal{H}}\leq C|a(X)| |\tau+2\log \hat{Z}(X)|^{-\frac{1}{2}}e^{\frac{1}{2}\tau}+C\hat{Z}(X)^{\frac{6}{5}}e^{\frac{3}{5}\tau}.
\end{equation}
Finally, given any two center-points $X$ and $X'$, by considering the established estimate \eqref{dn_gen_cent} at very negative times we see that in fact $a(X)=a(X')$. This concludes the proof of the theorem.
\end{proof} 
 
\bigskip   

\subsection{Ruling out degenerate differential necks} 
 
In this subsection, we show that $a(\mathcal{M})$ does not vanish. Normalizing such that $X=(0,0)\in\mathcal{M}$, we start with the following basic curvature bound.

\begin{lemma}[curvature bound]\label{prop_global_curvature}
For all $\tau$ sufficiently negative we have
\begin{equation}
\sup_{\{|y_1|\leq \beta \bar{d}(\tau)\}}\bar{H} \leq 10|\tau|^{1/2}\qquad\mathrm{and }\qquad \inf_{\{|y_1|\leq \beta \bar{d}(\tau)\}}\bar{H} \geq \tfrac{1}{10}.
\end{equation}
\end{lemma}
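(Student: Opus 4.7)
The proof splits into upper and lower bounds. For the upper bound, the plan is to use Hamilton's Harnack inequality \eqref{Harnack_curv}, which, given the normalization $X=(0,0)\in\mathcal{M}$ so that $0\in K_t$ for all $t<0$, rescales in the renormalized frame to $\bar{H}(\bar{p},\tau)\leq \langle \bar{p},\nu\rangle = y_1\nu_1 + \tilde{p}\cdot\tilde{\nu}$, where $\tilde{p}=(y_2,y_3,y_4)$ and $\tilde{\nu}$ is the transverse part of the outward unit normal. The contribution of $\tilde{p}\cdot\tilde{\nu}$ will be bounded by $|\tilde{p}|\leq C|\tau|^{1/2}$: the quadratic bending \eqref{DH_asymp} together with convexity (the tangent plane at the point of maximum $y_2$ sandwiches the body between two parallel planes) forces $|y_2|\leq 2|\tau|^{1/2}(1+o(1))$ uniformly on $\bar{M}_\tau$, while $|(y_3,y_4)|\leq \sqrt{2}+o(1)$ follows from the bubble-sheet tangent \eqref{bubble-sheet_tangent_intro}.

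To control $y_1\nu_1$, I will apply convexity to the surface together with the cap $\bar{p}^{-}=(-\bar{d}(\tau),0,0,0)\in \bar{M}_\tau$: the inequality $\langle \bar{p}^- - \bar{p},\nu\rangle\leq 0$ yields $\nu_1\geq -|\tilde{p}|/(\bar{d}(\tau)+y_1)$, and either the analogous comparison with the opposite cap (compact case) or the fact that $\nu_1\leq 0$ everywhere on a strictly convex body extending to $+\infty$ in the $+y_1$ direction (noncompact case) provides the matching upper bound. Together these give $|\nu_1|\leq |\tilde{p}|/((1-\beta)\bar{d}(\tau))$ throughout $\{|y_1|\leq \beta \bar{d}(\tau)\}$, hence $|y_1\nu_1|\leq \beta|\tilde{p}|/(1-\beta)\leq 2|\tilde{p}|$, and combining, $\bar{H}\leq |\tilde{p}|/(1-\beta)+|\tilde{p}|\leq 3|\tilde{p}|\leq 10|\tau|^{1/2}$ for $|\tau|$ sufficiently large.

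For the lower bound, the plan is to apply the noncollapsing estimate \cite{Brendle_inscribed,HK_inscribed}, which in our normalization gives $H\geq 1/R_{\mathrm{in}}$ with $R_{\mathrm{in}}$ the inscribed radius, together with the $\varepsilon_0$-cylindrical estimates of Theorem \ref{thm_elong_cyl} and Corollary \ref{cor_elongated_barr}: once the renormalized flow is shown to be $\varepsilon_0$-close to $\mathbb{R}^2\times S^1(\sqrt{2})$ throughout $\{|y_1|\leq \beta \bar{d}(\tau)\}$, the renormalized inscribed radius cannot exceed $\sqrt{2}+o(1)$, so $\bar{H}\geq 1/\sqrt{2}-o(1)\geq 1/10$. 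The main obstacle I anticipate will be establishing both the uniform width bound $|y_2|\leq C|\tau|^{1/2}$ and the $\varepsilon_0$-cylindricality on all of $\{|y_1|\leq \beta \bar{d}(\tau)\}$, not just on the inner elongated region $\{|y_1|\leq |\tau|^{10}\}$ of Theorem \ref{thm:cap_distance}; I plan to extend them by combining the differential neck theorem \eqref{diff_neck_exp_intr}, iterated applications of Corollary \ref{cor_elongated_barr}, and the Harnack-based upper bound $\bar{d}(\tau)\leq Ce^{-\tau/2}$ coming from the bounded-bubble-sheet-scale standing assumption of Section \ref{sec_bdd}.
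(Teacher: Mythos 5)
There is a genuine gap on both halves. For the upper bound, your key input — that $|y_2|\leq 2|\tau|^{1/2}(1+o(1))$ and $|(y_3,y_4)|\leq \sqrt{2}+o(1)$ hold \emph{uniformly on} $\bar{M}_\tau$ — is false for the very solutions under consideration: for the oval-bowl translators the transverse extent of the level sets grows like $\sqrt{2(h-t)\log(h-t)}$ along the axis, so already at $|y_1|\sim \beta\bar{d}(\tau)\sim e^{|\tau|/2}$ one has $y_3^2+y_4^2\approx 2(1+\beta)>2$ and $|y_2|$ close to $\sqrt{2(1+\beta)}\,|\tau|^{1/2}$, and beyond the slab both quantities are unbounded. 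What is true, and what the paper uses, is a \emph{diameter} bound for the level sets $\bar{K}_\tau\cap\{y_1=h\}$ with $|h|\leq\beta\bar d(\tau)$, obtained from the central-slice bound (which itself requires the barrier/maximum-principle argument behind Lemma \ref{thm:width_zero_level}, not a pointwise reading of the Gaussian-$L^2$ expansion \eqref{DH_asymp}) together with convex interpolation through the cap point. Your route needs strictly more, namely a bound on the transverse \emph{position} $|\tilde p|$ of points throughout the slab and at the cap, since you apply \eqref{Harnack_curv} centered at the space-time origin and must control $\langle\bar p,\nu\rangle=y_1\nu_1+\tilde p\cdot\tilde\nu$ with $|y_1|$ up to $\beta\bar d(\tau)$; convexity plus the central asymptotics do not exclude a transverse drift (``shear'') of the cross-sections between the center and the far-away cap, and the ``tangent plane at the maximum of $y_2$'' remark does not address this. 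The paper avoids the issue entirely by applying the Harnack inequality recentered at a point of the \emph{same} level set at a slightly later time, so only the level-set diameter enters. Also, ``$\nu_1\leq 0$ everywhere on a strictly convex body extending to $+\infty$ in the $+y_1$ direction'' is not a fact (consider the epigraph of a parabola); it requires $e_1$ to lie in the recession cone, i.e.\ sublinear transverse growth along the axis, which is not available at this stage.

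For the lower bound, the paper only needs $y_3^2+y_4^2\leq 5$ on the slab (a soft statement about the enclosed convex body) plus noncollapsing, whereas you require $\eps_0$-cylindricality of the flow throughout $\{|y_1|\leq\beta\bar d(\tau)\}$, an exponentially long region: Theorem \ref{thm_elong_cyl} reaches only $|y_1|\lesssim\beta^{-1}|\tau|^{1/2}$ and the iterated barriers of Theorem \ref{thm:cap_distance} only $|y_1|\lesssim|\tau|^{10}$, far short of $e^{|\tau|/2}$. Your proposed fix is moreover circular within the paper's architecture: this lemma is an ingredient of Proposition \ref{thm:super_small} and hence of Theorem \ref{thm:non_degenerate_neck} and Theorem \ref{thm:strong_differential_neck}, so the differential neck theorem \eqref{diff_neck_exp_intr} (and any super-smallness propagation built on it) cannot be invoked here; and there is no ``bounded bubble-sheet scale standing assumption'' in force — the lemma is stated and used with only the normalization $X=(0,0)$. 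As written, both halves of the proposal rest on inputs that are either false as stated, unavailable at this point of the argument, or logically downstream of the statement being proved.
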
 
 
\begin{proof}
Denote by $\bar{K}_\tau$ the region enclosed by $\bar{M}_\tau$. As a consequence of \eqref{DH_asymp} for $\tau$ sufficiently negative the level set $\bar{K}_\tau\cap \{y_1=0\}$ has diameter at most $5|\tau|^{1/2}$. Thus, by convexity the level set $\bar{K}_\tau\cap \{y_1=h\}$ with $|h| \leq \beta \bar d(\tau)$ has diameter at most $6|\tau|^{1/2}$, and hence the Harnack estimate \eqref{Harnack_curv} yields the upper bound $\bar{H}\leq 10|\tau|^{1/2}$. Similarly, again by \eqref{DH_asymp} and convexity, for $\tau\ll 0$ we have $y_3^2+y_4^2 \leq 5 $ in $\{|y_1|\leq \beta \bar d(\tau)\}$. By the noncollapsing property, this implies the lower bound $\bar{H}\geq \tfrac{1}{10}$, and thus concludes the proof.
\end{proof}
We now suppose towards a contradiction that
\begin{equation}\label{eq_assum_supersmall}
\lim_{\tau\to -\infty}  \sup_{\{|y|\leq L/\beta\}} e^{-q\tau} |u_1(y,\vartheta,\tau)| =0.
\end{equation}

\begin{proposition}[propagation of super-smallness]\label{thm:super_small}
Suppose that \eqref{eq_assum_supersmall} holds. If for some $\delta \in (0,1/2)$ and $\bar{\tau}\ll 0$ we have $\bar{d}(\tau)\geq 10\delta^{-1}\beta^{-1}|\tau|$ for all $\tau\leq\bar{\tau}$, then $|\nu_1|\leq e^{100\tau}$ holds in $\{\delta |y_1|\leq 5|\tau|\}$ for $\tau\leq\bar{\tau}$.
\end{proposition}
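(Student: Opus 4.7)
The strategy is a bootstrap that iteratively upgrades the decay of $|\nu_1|$ from the hypothesis $|u_1|=o(e^{q\tau})$ in the small ball $\{|y|\leq L/\beta\}$ to $|\nu_1|\leq e^{100\tau}$ on the long strip $\{\delta|y_1|\leq 5|\tau|\}$, leveraging the enhanced cap distance $\bar{d}(\tau)\geq 10\delta^{-1}\beta^{-1}|\tau|$ to set up very elongated barriers and a tail-free version of the anisotropic propagation estimate.

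First, I would combine Lemma \ref{prop_global_curvature} with the cap-distance hypothesis to secure the mean curvature bound $\bar{H}\leq 10|\tau|^{1/2}$ throughout the long strip $\{|y_1|\leq 10\delta^{-1}|\tau|\}$. In concert with Corollary \ref{cor_elongated_barr} this lets me verify the a priori assumption $\mathrm{AP}_\alpha$ of Theorem \ref{thm:propagation} for parameters $\alpha$ as small as $\delta^2|\tau|^{-2}$, i.e.\ in ellipsoidal regions essentially filling the long strip. The core of the inner bootstrap loop is then: (i) feed the current central bound $|\nu_1|\leq e^{m\tau}$ on $\{|y|\leq L/\beta\}$ into the tail-free variant of the propagation estimate provided by Remark \ref{rem_mod_quant}, where the $|t|^{-2q}$ regularization is dropped and the exponent $p$ is therefore no longer restricted to $[0,q]$ but can be taken equal to the currently available $m$ --- this is legitimate under \eqref{eq_assum_supersmall} because the boundary conditions in the maximum principle argument of Lemma \ref{lemma_max_aniso} are now controlled by the super-smallness rather than by a universal tail; (ii) apply Corollary \ref{cor_elongated_barr} with the resulting sharper $|\nu_1|$ bound to enlarge the cylindrical region and enable step (i) again with a smaller $\alpha$. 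Each round loses only $O(\beta)$ in the exponent, so a finite number of rounds pushes an $o(e^{q\tau})$ decay in the core to a decay of exponent $q-O(\beta)$ on the central strip $\{|y_1|\leq |\tau|/\delta,\,|y_2|\leq 10\}$.

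I would then run a second outer loop combining this inner propagation with Theorem \ref{thm:horizontal_propagation}: at each pass, the central-strip bound is spread to the full level sets of the long strip via Theorem \ref{thm:horizontal_propagation} with $b$ chosen so that $|\tau|^b\sim|\tau|/\delta$ (at the cost of one extra factor $|\tau|$), and the resulting global bound is then recycled back into the inner loop, which raises the achievable exponent by a definite amount. Because the tail-free propagation is no longer capped at $q$, this outer iteration can push the exponent past any preassigned threshold in finitely many rounds, yielding in particular $|\nu_1|\leq e^{101\tau}$ on the central strip; a final application of Theorem \ref{thm:horizontal_propagation} extends this to the full region $\{\delta|y_1|\leq 5|\tau|\}$ and absorbs the remaining $|\tau|$-factor into the improvement from $101$ down to the desired $100$.

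The main technical obstacle will be rigorously justifying the tail-free propagation with $p\gg q$: the factor $|t|^{-2q}$ in $w^\eps$ served in Theorem \ref{thm:propagation} to guarantee a usable Cauchy condition at $-\infty$, and once it is removed one must supply the necessary control on the modified quantity $\tilde{w}^\eps$ of Remark \ref{rem_mod_quant} directly from the hypothesis \eqref{eq_assum_supersmall}. Together with careful bookkeeping of the small losses --- one $O(\beta)$ per propagation, one $|\tau|$-factor per level set spreading --- one has to verify that the number of iterations can be taken large enough to drive the effective exponent beyond $100$ while staying within the range where the elongated barriers of Proposition \ref{prop_elong_barr} are licensed by the cap distance hypothesis.
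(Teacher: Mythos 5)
There is a genuine gap, and it is structural: your bootstrap has no mechanism that can push the decay exponent beyond roughly $q\approx\tfrac12$, let alone to $100$. Every propagation estimate you invoke (Theorem \ref{thm:propagation}, Remark \ref{rem_mod_quant}, Corollary \ref{cor_elongated_barr}, Theorem \ref{thm:horizontal_propagation}, Corollary \ref{cor:gradient_estimate_bootstrap}) has the form ``bound on a larger region $\leq$ loss factor $\times$ bound on the core $+$ tail''; none of them ever improves the bound on the core $\{|y|\leq L/\beta\}$, and the hypothesis \eqref{eq_assum_supersmall} gives only $|u_1|=o(e^{q\tau})$ there. Enlarging the cylindrical region (your step (ii)) or spreading over level sets (your outer loop) therefore cannot ``raise the achievable exponent by a definite amount'': the exponent $m$ you feed into step (i) is always capped by the fixed core rate, so the iteration stalls near $q$ and never reaches $100$. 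Moreover, the claim that the tail-free variant of Lemma \ref{lemma_max_aniso} becomes legitimate for $p\gg q$ under \eqref{eq_assum_supersmall} is false: the restriction on $p$ does not come from the core but from verifying $\tilde{\psi}^\eps\leq\beta$ on the lateral boundary $\{f_\alpha=\log|t|\}$. Indeed, in Proposition \ref{thm:quad.propagation} the constraint $p\leq q-8\beta^2$ is exactly what lets the available boundary bound $|\nu_1|\leq|t|^{-1/2+8\beta^2}$ absorb the $p\log|t|$ term; super-smallness on $\{|y|\leq L/\beta\}$ says nothing about $|\nu_1|$ on that far-away boundary, and a decay bound there strong enough to license $p\sim100$ is essentially the conclusion you are trying to prove.

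The paper's proof is of a different nature and involves no iteration: it is a single comparison-principle argument in the unrescaled variables. The cap-distance hypothesis, via Lemma \ref{prop_global_curvature}, gives $\tfrac1{10}\leq \bar H\leq 10|\tau|^{1/2}$ on the long strip $\{\delta|x_1|\lesssim |t|^{1/2}\log|t|\}$, so the explicit function $\Phi_\delta=\exp\big(\delta^2 x_1^2/|t|-100(\log|t|)^2\big)$ is a supersolution of the Jacobi equation there (compare Proposition \ref{prop_Jac_super_long}). One compares $\pm\nu_1$ with $\Phi_\delta+\varepsilon H$: on the lateral boundary $\Phi_\delta= 1\geq|\nu_1|$, and the hypothesis \eqref{eq_assum_supersmall}, combined with Corollary \ref{cor:gradient_estimate_bootstrap} and the lower curvature bound, is used only to verify the Cauchy condition $\sup|\nu_1|/H\to0$ as $t\to-\infty$ — this is precisely where beating the scale-invariant rate $H\sim e^{\tau/2}$ is needed. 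Letting $\varepsilon\to0$ gives $|\nu_1|\leq\Phi_\delta\leq e^{-75\tau^2}\leq e^{100\tau}$ on $\{\delta|y_1|\leq5|\tau|\}$: the super-exponential smallness comes from the Gaussian-in-$x_1$ shape of the barrier across a strip of length $\sim\delta^{-1}|t|^{1/2}\log|t|$, i.e.\ from the slow lateral diffusion of boundary data, not from iterated propagation-of-smallness estimates, which by themselves can never outrun the assumed core rate.
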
 

\begin{proof}Working in the unrescaled variables, we consider the function
\begin{align}\label{def_super_hor}
\Phi_\delta=\exp\left({ \frac{\delta^2 x_1^2}{|t|}-100(\log|t|)^2}\right).
\end{align}
Remembering \eqref{partial_f1}, we see that
\begin{equation}
\frac{(\partial_t-\Delta)\Phi_\delta}{\Phi_\delta}=\delta^2 (\partial_t-\Delta)f_1-\delta^4 |\nabla f_1|^2+200\frac{\log|t|}{|t|}\geq \frac{100\log|t|}{|t|}.
\end{equation}
Together with Lemma \ref{prop_global_curvature} (curvature bound) this shows that $\Phi_\delta$ is a supersolution to the Jacobi equation in
$\Omega_t=\{\delta|x_1|\leq 10|t|^{1/2}(\log |t|)^{1/2}\}$ for $t$ sufficiently negative. Hence, we can compare $\pm \nu_1$ with $\Phi_\delta +\varepsilon H$, where $\eps>0$. Since $\Phi_\delta=1$ on $\partial \Omega_t$, we have $| \nu_1|\leq \Phi_\delta$ on $\partial \Omega_t$. Moreover, by assumption \eqref{eq_assum_supersmall} and Corollary \ref{cor:gradient_estimate_bootstrap} (combined propagation of smallness), taking also into account Lemma \ref{prop_global_curvature} (curvature bound) again, we have
\begin{equation}
\lim_{t\to -\infty}\sup_{\{\delta|x_1|\leq 10|t|^{1/2}(\log |t|)^{1/2}\}}\frac{|\nu_1|}{H}=0.
\end{equation}
Hence, the maximum principle yields $|\nu_1|\leq \Phi_\delta+\varepsilon H$. Since $\varepsilon$ was arbitrary, this implies the assertion.
\end{proof}

\begin{theorem}[nondegenerate neck theorem]\label{thm:non_degenerate_neck}
We have $a\neq 0$.
\end{theorem}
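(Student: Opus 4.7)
The plan is to argue by contradiction: suppose $a(\mathcal{M})=0$ and normalize so that $X=0\in\mathcal{M}$ with $\hat Z(X)=1$. Theorem \ref{thm:fine_neck_L2} (weak differential neck theorem) with $a=0$ forces $\|w(\cdot,\tau)\|_{\mathcal{H}}\leq Ce^{3\tau/5}$ for $\tau\leq\tau_\ast$. Since $w=u_1$ on $\{|y|\leq L/\beta\}$ once $\tau\ll 0$, standard interior parabolic estimates promote this to $\sup_{\{|y|\leq L/\beta\}}|u_1|\leq Ce^{3\tau/5}$. As $3/5>q=(2+\beta)/4$, the super-smallness hypothesis \eqref{eq_assum_supersmall} is verified.

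Next I propagate this super-smallness to ever more elongated regions by alternating Proposition \ref{thm:super_small} (propagation of super-smallness) with Corollary \ref{cor_elongated_barr} (very elongated cylindrical regions), in the spirit of the twenty-step iteration used to prove Theorem \ref{thm:cap_distance} but now at arbitrary depth. Starting from the bound $\bar d(\tau)\geq |\tau|^{10}$ of Theorem \ref{thm:cap_distance}, Proposition \ref{thm:super_small} with $\delta\sim|\tau|^{-9}$ yields $|\nu_1|\leq e^{100\tau}$ on the wide strip $\{|y_1|\leq C|\tau|^{10}\}$; feeding this into Corollary \ref{cor_elongated_barr} with eccentricity $\alpha\sim|\tau|^{-20}$ (the smallest allowed by the requirement that the input region $\{\alpha(2-\beta)y_1^2+y_2^2\leq L^2\}$ still fit inside the smallness strip) yields $\eps_0$-cylindricality with $|\bar A|^2\leq 5/\beta$ on the enlarged ellipsoid $\{|\tau|^{-20}y_1^2+(2-\beta)^{-1}y_2^2\leq|\tau|\}$, and in particular improves the cap-distance bound to $\bar d(\tau)\geq C|\tau|^{21/2}$. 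Because the exponential smallness $|\nu_1|\leq e^{100\tau}$ comfortably satisfies the pointwise hypothesis $|\nu_1|\leq\alpha^{1/2}\beta/|\tau|$ of Corollary \ref{cor_elongated_barr} for any polynomially small $\alpha$, no round of this bootstrap is ever obstructed, and after $N$ iterations one obtains
\[
\bar d(\bar\tau)\geq C|\bar\tau|^{10+N/2}
\]
at every sufficiently negative $\bar\tau$.

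Taking $N\gtrsim|\bar\tau|/\log|\bar\tau|$ therefore produces $\bar d(\bar\tau)\geq e^{|\bar\tau|}$, which in unrescaled coordinates reads $d(t)\geq|t|^{3/2}$ at $t=-e^{-\bar\tau}$. This violates the linear upper bound $d(t)\leq C|t|$, itself a direct consequence of the Harnack estimate \eqref{Harnack_curv}: at the minimum cap point $p_t^-$ one has $\nu=-e_1$, so \eqref{Harnack_curv} gives $H(p_t^-,t)\leq d(t)/|t|$, and since the cap moves with normal speed $H$ one has $-d'(t)=H(p_t^-,t)\leq d(t)/|t|$, which integrates to $d(t)/|t|\leq d(t_0)/|t_0|$ for all $t\leq t_0<0$. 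This contradiction forces $a\neq 0$.

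The principal obstacle is the uniform bookkeeping inside the iterated bootstrap. At each round one must verify that the quantitative lower bound on $\bar d(\tau)$ required by Proposition \ref{thm:super_small} holds uniformly on all earlier times $\tau\leq\bar\tau$ (not merely at the endpoint), and that the smallness strip produced at round $k$ is still wide enough to accommodate the input region of Corollary \ref{cor_elongated_barr} at the eccentricity chosen for round $k+1$. Crucially, because the super-smallness $|\nu_1|\leq e^{100\tau}$ decays faster than any polynomial in $|\tau|$, the pointwise smallness condition of Corollary \ref{cor_elongated_barr} is never the binding constraint, and so the iteration can legitimately be carried out arbitrarily many times at a fixed, sufficiently negative $\bar\tau$ --- this is ultimately what drives the rescaled cap distance past the $e^{|\bar\tau|/2}$ threshold.
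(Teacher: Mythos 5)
Your overall strategy is genuinely different from the paper's, but as written it has a gap exactly at the step you yourself flag as ``the principal obstacle'' and then wave away. At each round, Proposition \ref{thm:super_small} needs $\bar d(\tau)\geq 10\delta^{-1}\beta^{-1}|\tau|$ \emph{for all} $\tau\leq\bar\tau$, so the admissible $\delta$ is governed by $\inf_{\tau\leq\bar\tau}\bar d(\tau)/|\tau|$, not by $\bar d(\bar\tau)$. But what Corollary \ref{cor_elongated_barr} returns at round $k$ is a bound of the shape $\bar d(\tau)\geq A_k|\tau|^{1/2}$ for $\tau\leq\bar\tau$, whose ratio to $|\tau|$ \emph{decays} as $\tau\to-\infty$; going backwards it is eventually beaten by the base bound $|\tau|^{10}$, so the infimum of $\bar d(\tau)/|\tau|$ is only improved up to the crossing value $A_k^{18/19}$. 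Running the recursion $A_{k+1}\sim\tfrac{\beta}{L}|\bar\tau|\,\big(\inf_{\tau\le\bar\tau}\bar d(\tau)/|\tau|\big)$ with this input shows the iteration \emph{saturates} at a fixed polynomial scale (roughly $\bar d(\bar\tau)\gtrsim(\beta/L)^{19}|\bar\tau|^{19.5}$) rather than compounding to $|\bar\tau|^{10+N/2}$; your claim that ``no round is ever obstructed'' addresses only the pointwise smallness $|\nu_1|\leq e^{100\tau}$ versus $\alpha^{1/2}\beta/|\tau|$, which is indeed never binding, but that is not the binding constraint. The scheme can be rescued, but only by an observation absent from your write-up: since each round's conclusion is valid at \emph{every} endpoint, it upgrades the bound to a functional one, $\bar d(\tau)\geq c^{k}|\tau|^{10+k/2}$ for all $\tau\leq\bar\tau$ with $c\sim\beta/L<1$, whose ratio to $|\tau|$ is \emph{increasing}, so the infimum sits at the endpoint and the gains do compound; one must then also track the geometric loss $c^{N}$ (your constant $C$ is silently $N$-dependent), which is harmless only because $c|\bar\tau|^{1/2}\gg1$, and check that the pointwise constraint allows $N\sim|\bar\tau|/\log|\bar\tau|$ rounds, which it does. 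Without this bookkeeping the exponential lower bound, and hence the contradiction with $d(t)\leq C|t|$, does not follow.

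For comparison, the paper avoids any iteration: it chooses $\tau_i\to-\infty$ at which $\bar d(\tau)/|\tau|$ is minimized over the past (possible since $\bar d(\tau)/|\tau|\geq|\tau|^{9}\to\infty$), applies Proposition \ref{thm:super_small} with $\delta_i=10|\tau_i|/(\beta\bar d(\tau_i))$ to get super-smallness on $\{|y_1|\leq\tfrac12\beta\bar d(\tau_i)\}$, and then applies Corollary \ref{cor_elongated_barr} with eccentricity $\alpha_i=4L^2/(\beta^2\bar d(\tau_i)^2)$ tied to the cap distance itself, which yields $\bar d(\tau_i)\geq\tfrac{\beta}{2L}|\tau_i|^{1/2}\bar d(\tau_i)$ --- an immediate contradiction for $|\tau_i|$ large. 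In that argument Hamilton's Harnack bound $\bar d(\tau)\leq Ce^{-\tau/2}$ is used only to verify the pointwise smallness hypothesis of the barrier corollary, not as the final contradiction, so the delicate uniform-in-time and constant-tracking issues your route must confront simply never arise.
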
 
  
\begin{proof}
Suppose towards a contradiction that $a=0$. Then, using Theorem \ref{thm:fine_neck_L2} (weak differential neck theorem) and standard interior estimates we see that the condition \eqref{eq_assum_supersmall} holds. Moreover, 
by Theorem \ref{thm:cap_distance} (cap distance) we can choose a sequence $\tau_i\to -\infty$, such that $\bar{d}(\tau)/|\tau|\geq \bar{d}(\tau_i)/|\tau_i| > 20  \beta^{-1}$ for $\tau\leq \tau_i$. Then, setting $\delta_i=\frac{10|\tau_i|}{\beta \bar{d}(\tau_i)}$, by Proposition \ref{thm:super_small} (propagation of super-smallness) for $\tau\leq\tau_i$ we get
\begin{equation}
\sup_{ \{ |y_1|\leq\frac12 \beta \bar{d}(\tau_i) \} } |\nu_1|\leq e^{100\tau}.
\end{equation}
Note also that by Hamilton's Harnack inequality  \cite{Hamilton_Harnack}  we have $\bar{d}(\tau)\leq Ce^{-\frac12 \tau}$.
Hence, applying Corollary \ref{cor_elongated_barr} (very elongated cylindrical regions) with $\alpha_i=\frac{4L^2}{\beta^2 \bar{d}(\tau_i)^2}$ we infer that
\begin{equation}
\bar{d}(\tau_i)\geq \frac{\beta \bar{d}(\tau_i)}{2L}|\tau_i|^{1/2}.
\end{equation}
This contradicts the fact that $\tau_i\to -\infty$, and thus proves the theorem.
\end{proof}

\bigskip

\subsection{Strong differential neck theorem}

Recall that in Theorem \ref{thm:fine_neck_L2} (weak differential neck theorem) we proved the error estimate
\begin{equation}
 \eps(\tau)\leq\frac{C|a|}{|\tau+2\log \hat{Z}(X)|^{\frac{1}{2}}}+C\hat{Z}(X)^{\frac{6}{5}}e^{\frac{1}{10}\tau}.
\end{equation}
While this is perfectly satisfying for solutions with bounded bubble-sheet scale, a weakness in case $\hat{Z}(X)\gg 1$ is that the second summand only becomes small for $\tau\leq \tau_\ast-12\log\hat{Z}(X)$. In this subsection, we derive a technically improved error estimate that already kicks in for $\tau \leq \tau_\ast - \frac{7}{2}\log \hat{Z}(X)$.\footnote{For later applications it will be enough that the quantity $\log\!\left( \hat{Z}(X)^2\right)$ appears with a prefactor strictly less than $2$.}\\

To begin with, using Theorem \ref{thm:fine_neck_L2} (weak differential neck theorem) as an input, we can improve the exponent $q$ in the propagation estimate from Theorem \ref{thm:propagation} (anisotropic propagation of smallness).
   
\begin{proposition}[anisotropic propagation of smallness II]\label{thm:quad.propagation}
For $\tau\leq \tau_s(X)$, $p\in [0,q-8\beta^2]$ and  $\alpha \in [ |\tau+2\log \hat Z(X)|^{-5} , \beta^2]$ we have
\begin{equation}
\sup_{\{ r_\alpha\leq |\tau+2\log \hat Z(X)|^{1/2}\}}e^{-p \tau}e^{-\frac{1-\beta}{2}r_\alpha^2} |\nu_1|
 \leq  \sup_{\tau'\leq \tau}\sup_{\{r_\alpha\leq L\}}  e^{-p \tau'}\left(|\nu_1|^2+\hat{Z}(X)^{4(1-\frac{3}{4}\beta)}e^{2(1-\frac{3}{4}\beta)\tau'}\right)^{1/2}.
\end{equation}
\end{proposition}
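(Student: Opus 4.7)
\medskip

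\noindent\textbf{Proof proposal.} The plan is to rerun the maximum-principle argument behind Theorem \ref{thm:propagation} (anisotropic propagation of smallness), but using the pointwise bound on $u_1$ provided by Theorem \ref{thm:fine_neck_L2} (weak differential neck theorem) as a strengthened ``inner boundary'' input. By scaling we may as well assume $\hat{Z}(X)=1$ and $X=0$. The weak differential neck theorem combined with standard parabolic interior estimates gives, in the core $\{|y|\leq L/\beta\}$,
\[
|u_1(y,\vartheta,\tau)|\leq C\bigl(|a|+e^{\tau/10}\bigr)e^{\tau/2},
\]
and hence, via the formula relating $\nu_1$ to $u_1$ and $\nabla u$ (together with the smallness of $|\nabla u|$ from \eqref{std_cyl_est}), the same bound holds for $|\nu_1|$ up to a constant. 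This is much sharper than the bound $|\nu_1|\lesssim |t|^{-q}$ with $q=(2+\beta)/4$ that was effectively available in the proof of Theorem \ref{thm:propagation}, and it motivates the replacement of the regularization scale $|t|^{-q}$ in the auxiliary function $\varphi^\eps$ by the finer scale $|t|^{-(1-\frac{3}{4}\beta)}$.

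Concretely, I would define
\[
\tilde\varphi^\eps=\Bigl((\nu_1^\eps)^2+|t|^{-2(1-\frac{3}{4}\beta)}\Bigr)^{1/2},\qquad \tilde w^\eps=\log\tilde\varphi^\eps-\tfrac{\beta}{4}f_\alpha+\Lambda+p\log|t|,\qquad \tilde\psi^\eps=\tfrac{\eta \tilde w^\eps}{\log|t|},
\]
with the same cutoff weight $\eta$ as in Lemma \ref{lemma_max_aniso}. Since the only structural constraint on the exponent used in the regularization was $q\leq 5\beta^{-1}$ (arising from the inequality $q|t|^{-2q-1}+|A|^2(\nu_1^\eps)^2\leq 5\beta^{-1}|t|^{-1}\tilde\varphi^2$), which is trivially satisfied by $1-\frac{3}{4}\beta$ for $\beta$ small, the evolution estimate of Proposition \ref{prop:reg_normal} and the max-principle identity of Lemma \ref{lemma_max_aniso} go through verbatim with this finer scale. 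The constraint $p\leq q-8\beta^2$ with $q=(2+\beta)/4$ enters when balancing the gradient estimate $|t||\nabla f_\alpha|^2\leq (16-4\beta)/(8-7\beta)\cdot f_\alpha$ from Lemma \ref{prop:para.evol} against the $(\tfrac{1}{4}\beta+\Psi)$ factor in the maximum-principle contradiction; the margin $8\beta^2$ is exactly what is needed to absorb the small $\eta$-weight error.

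Next, to remove the prefactors $\hat Z(X)^{2\beta}e^{\beta\tau}$ from the LHS and $\mu(\tau)$ from the RHS, I would exploit that the much sharper global bound on $|\nu_1|$ now available — via Theorem \ref{thm:cap_distance} (cap distance), Corollary \ref{cor_geom_cons} (geometric bounds), and Corollary \ref{cor:gradient_estimate_bootstrap} (combined propagation of smallness), all ultimately feeding off the weak differential neck theorem — automatically dominates the boundary value of $\tilde w^\eps$ on $\{f_\alpha=\log|t|\}$ in terms of the core quantity $K=\sup_{\tau'\leq \tau}\sup_{\{r_\alpha\leq L\}}e^{-p\tau'}(|\nu_1|^2+e^{2(1-\frac{3}{4}\beta)\tau'})^{1/2}$ alone, without the additional $\bar\mu$-factor that was needed before. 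The lower bound $\alpha\geq |\tau+2\log\hat Z(X)|^{-5}$ appears naturally: it is the threshold below which the weight $\zeta=e^{\frac{1}{4}f_\alpha}$ and the region $\{f_\alpha\leq \log|t|\}$ degenerate, breaking the gradient estimate of Lemma \ref{prop:para.evol} in the outer part of the ellipsoidal strip.

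The main obstacle will be the last step: carefully verifying that, with the finer regularization and the improved pointwise input, the boundary matching on $\{f_\alpha=L^2\}$ and on $\{f_\alpha=\log|t|\}$ produces exactly the clean RHS of the statement, with no residual $\mu(\tau)$ or $e^{\beta\tau}$ factors. In particular, one must be careful about the two regimes: when $|a|$ is so small that the $\hat Z(X)^{4(1-\frac{3}{4}\beta)}e^{2(1-\frac{3}{4}\beta)\tau'}$ term dominates in the core (where the finer regularization is genuinely used), versus when $|\nu_1|$ itself dominates (where the argument essentially only needs the old bound). Handling the crossover of these regimes cleanly across all admissible $(\alpha,p,\tau)$ is the routine-but-delicate bookkeeping at the heart of the proof.
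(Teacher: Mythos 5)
Your overall strategy is the paper's: rerun the maximum-principle argument of Theorem \ref{thm:propagation}/Lemma \ref{lemma_max_aniso} with the improved input coming from Theorem \ref{thm:fine_neck_L2}, so as to control the outer boundary $\{f_\alpha=\log|t|\}$ by the core quantity alone and thereby drop the $\bar\mu$-factor. Your variant of keeping a regularization term at the finer scale $|t|^{-(1-\frac34\beta)}$ is harmless (the paper instead drops the regularization entirely, via Remark \ref{rem_mod_quant}, and puts $|t|^{-2(1-\frac34\beta)}$ only into the definition of $K$); the interior computation indeed goes through since the exponent only enters through the bound $q'|t|^{-2q'-1}\leq 5\beta^{-1}|t|^{-1}|t|^{-2q'}$ and you keep the same weight $\eta$.

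However, the one genuinely new step — the boundary check on $\{f_\alpha=\log|t|\}$ — is only asserted, and your explanations of where the two hypotheses enter are wrong, which is exactly where a careful execution would get stuck. First, the lower bound $\alpha\geq|\tau+2\log\hat Z(X)|^{-5}$ has nothing to do with the gradient estimate of Lemma \ref{prop:para.evol} degenerating (that lemma holds for all $\alpha\in(0,\beta^2]$ and all $t<0$); its role is to guarantee the containment $\{f_\alpha\leq\log|t|\}\subset\{|x_1|\leq|t|^{1/2}(\log|t|)^{3}\}$, so that applying Theorem \ref{thm:fine_neck_L2} together with Corollary \ref{cor:gradient_estimate_bootstrap} with $p=\tfrac12-7\beta^2$ yields the quantitative global bound $\sup_{\{f_\alpha\leq\log|t|\}}|\nu_1|\leq|t|^{-\frac12+8\beta^2}$ on the \emph{entire} weighted region; without this containment the combined propagation estimate (which only reaches $|y_1|\lesssim|\tau|^3$) does not cover the outer boundary, and your claim that the sharper bounds ``automatically dominate'' the boundary value is unjustified. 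Second, the hypothesis $p\leq q-8\beta^2$ is not about balancing the gradient estimate against the $(\tfrac14\beta+\Psi)$ factor or absorbing an $\eta$-weight error in the interior (where $p$ only contributes the harmless term $-p/|t|\leq 0$); it is used precisely in the boundary estimate: choosing $\Lambda=-\log K+\beta\log|\bar t|$ one needs the coefficient $p-\tfrac12+8\beta^2-\tfrac\beta4$ multiplying $\log|t|$ to be nonpositive so that $\tilde\psi^\eps\leq\beta$ on $\{f_\alpha=\log|t|\}$, and the $8\beta^2$ margin is exactly the loss in the global decay bound above. Since both hypotheses are misattributed, the decisive boundary verification — the only content beyond Theorem \ref{thm:propagation} — is missing from your argument as written.
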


\begin{proof} By scaling we can assume $\hat{Z}(X)=1$. Given $\bar{t}\leq e^{-\tau_s(X)}$ and  $\alpha\geq (\log |\bar{t}|)^{-5}$, for any $t\leq \bar{t}$
 the set $\{ f_\alpha(\cdot,t)\leq \log|t|\}$ is contained in $\{ |x_1|\leq |t|^{1/2} (\log|t|)^3\}$.
Hence, applying Theorem \ref{thm:fine_neck_L2} (weak differential neck theorem)  and Corollary \ref{cor:gradient_estimate_bootstrap} (combined propagation of smallness) with $p=\frac{1}{2}-7\beta^2$ we get 
\begin{equation}\label{eq:global.rough.slope.bound}
\sup_{\{f_\alpha(\cdot,t)\leq \,\log |t|\}}|\nu_1|\leq  |t|^{-\frac{1}{2}+8\beta^2}.
\end{equation}
Also observe that the a priori assumption $\mathrm{AP}_{\alpha}$ from \eqref{ap_ass} is satisfied thanks to Theorem \ref{thm:cap_distance} (cap distance).

Now, given any $\eps>0$, we consider the functions from the proof of Theorem \ref{thm:propagation} (anisotropic propagation of smallness) without the $|t|^{-2q}$-term, namely we consider
\begin{equation}
\tilde{w}^\eps= \log  (|\nu_1|-\eps\zeta)_+ -\frac{\beta}{4} f_\alpha +\Lambda+p\log |t|,\qquad \tilde{\psi}^\eps=\frac{\eta \tilde{w}^\eps}{\log|t|}.
\end{equation}
We now choose $\Lambda=- \log K+\beta \log |\bar t|$, where
\begin{align}
K=\sup_{t\leq \bar t}\sup_{\{f_\alpha(\cdot,t)\leq L^2\}}|t|^{p}(|\nu_1|^2+|t|^{-1+2\beta-2q})^{1/2}.
\end{align}
Note that as a consequence of the definitions we have $\Lambda\leq(\tfrac12 +q-p)\log|\bar{t}|$. Hence,
using the estimate \eqref{eq:global.rough.slope.bound} we infer that $\tilde{w}^\eps \leq (q+8\beta^2-\frac{1}{4}\beta)\log |\bar t|$ on $\{f_\alpha=\log|t|\}$ for all $t\leq \bar{t}$. Remembering $\eta=\beta/q$ on $\{f_\alpha=\log|t|\}$, this shows that $\tilde{\psi}^\eps \leq \beta$ on $\{f_\alpha=\log|t|\}$ for all $t\leq \bar{t}$. Therefore, after checking the Cauchy condition as in the proof of Theorem \ref{thm:propagation} (anisotropic propagation of smallness), Remark \ref{rem_mod_quant} (maximum principle for modified quantity) yields $\tilde{\psi}^\eps \leq \beta$. Since $\eps >0$ was arbitrary, this implies the assertion.
\end{proof}

As a consequence, we can improve Corollary \ref{cor:gradient_estimate_bootstrap} (combined propagation of smallness) as well.

\begin{corollary}[combined propagation of smallness II]\label{cor:quad.gradient.bootstrap}
For $\tau\leq \tau_s(X)$ we have
\begin{equation}
\sup_{\{    |y_1| \leq  |\tau+2\log Z(X)|^2\}}|\nu_1| \leq    \hat{Z}(X)^{-12\beta^2} e^{(\frac{1}{2}-6\beta^2)\tau} \sup_{\tau'\leq \tau}\sup_{\{ |y| \leq L/\beta\}} e^{-\frac{1}{2} \tau'} |\nu_1|+ \hat{Z}(X)^{2(1-\beta)}e^{(1-\beta)\tau}.
\end{equation}
\end{corollary}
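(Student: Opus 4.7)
The proof follows the same three-stage scheme as Corollary \ref{cor:gradient_estimate_bootstrap}, but substitutes Proposition \ref{thm:quad.propagation} (anisotropic propagation of smallness II) for Theorem \ref{thm:propagation} at every anisotropic step. This substitution is precisely what upgrades the admissible rate from $\hat q$ to essentially $\tfrac12$ on the left, and upgrades the error exponent from $\hat q$ to $1-\beta$ on the right. By scaling I would reduce to $\hat Z(X)=1$, and restore the $\hat Z(X)$-factors at the end.

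First, I would verify the global smallness hypothesis \eqref{eq_coarse_fast_decay} that is needed to activate Proposition \ref{thm:quad.propagation}. Theorem \ref{thm:cap_distance} (cap distance) gives both the a priori curvature assumption $\mathrm{AP}_\alpha$ from \eqref{ap_ass} (for all $\alpha \geq |\tau|^{-5}$) and an exponential bound on $|\nu_1|$ in the central strip $\{|y_1|\leq|\tau|^{10},\,|y_2|\leq 10\}$. Applying Theorem \ref{thm:horizontal_propagation} (level set propagation of smallness) with $b=7$ then spreads the smallness to the full $y_2$-range and produces \eqref{eq_coarse_fast_decay}.

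Second, I would mimic the iteration from Corollary \ref{thm:smallness_center}, but now apply Proposition \ref{thm:quad.propagation} with $p = \tfrac12 - 7\beta^2$ at each round; this is admissible since $q - 8\beta^2 > \tfrac12$ for the fixed $\beta < 10^{-3}$. Using the recursive sequence of ellipsoidal domains with shrinking anisotropy $\alpha_k = \beta^{-2(k-1)}L^{2k}|\tau|^{-k}$ (so that the reach in $y_1$ roughly doubles in $|\tau|^{1/2}$ per round), and renewing the curvature assumption $\mathrm{AP}_{\alpha_{k+1}}$ between rounds via Corollary \ref{cor_elongated_barr} (very elongated cylindrical regions), exactly three rounds suffice to reach $\{|y_1|\leq 2|\tau|^2,\,|y_2|\leq 10\}$. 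The outcome is the central-strip bound
\[
\sup_{\{|y_1|\leq 2|\tau|^2,\,|y_2|\leq 10\}}|\nu_1|\;\leq\; e^{(\frac12-6\beta^2)\tau}\sup_{\tau'\leq \tau}\sup_{\{|y|\leq L/\beta\}}e^{-\tau'/2}|\nu_1| \;+\; C\, e^{(1-\beta)\tau},
\]
where the degradation of the error exponent from $1-\tfrac34\beta$ (the exponent provided by Proposition \ref{thm:quad.propagation}) to $1-\beta$ accounts for the cumulative loss over the three iterations together with a small slack when reinstating $\hat Z(X)^{2(1-\beta)}$ through the scaling.

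Finally, I would apply Theorem \ref{thm:horizontal_propagation} (level set propagation of smallness) with $b=2$ to spread the central-strip bound over the entire level sets in $\{|y_1|\leq|\tau|^2\}$. The extra factor $C|\tau|$ from horizontal propagation is absorbed in the gap between $\tfrac12-7\beta^2$ and $\tfrac12-6\beta^2$ for $\tau$ sufficiently negative, delivering the claimed exponent $\tfrac12 - 6\beta^2$ and the prefactor $\hat Z(X)^{-12\beta^2}$. The main technical obstacle is the bookkeeping of exponents through the three-fold iteration, and in particular checking at each step that the previously established smallness is strong enough to re-activate the hypothesis of Proposition \ref{thm:quad.propagation} at the next round; this is automatic since each round improves rather than weakens the input smallness.
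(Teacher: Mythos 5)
Your proposal follows essentially the same route as the paper: iterate Proposition \ref{thm:quad.propagation} over the elongating ellipsoidal regions exactly as in Corollary \ref{thm:smallness_center}, then finish with Theorem \ref{thm:horizontal_propagation} applied with $b=2$, absorbing the factor $C|\tau|$ and the scaling into the gap between the exponents. Two remarks, one substantive. First, the substantive one: your claim that ``exactly three rounds suffice to reach $\{|y_1|\leq 2|\tau|^2,\,|y_2|\leq 10\}$'' does not check out. With the anisotropy $\alpha_k=\beta^{2-2k}L^{2k}|\tau|^{-k}$, the $y_1$-reach of the region $\{\bar f_k\leq \beta^2|\tau|\}$ produced after the round at level $k$ is $|y_1|\leq (\beta/L)^k|\tau|^{(k+1)/2}$; since $(\beta/L)^k<1$, the case $k=3$ only gives $(\beta/L)^3|\tau|^2<2|\tau|^2$, so three (or even four) rounds do not cover the strip $\{|y_1|\leq 2|\tau|^2\}$ that the $b=2$ level set propagation requires as input. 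One needs $(k+1)/2>2$, i.e.\ five applications in total (reaching $\bar f_4$), which is what the paper does; this stays within the admissible range $\alpha\geq|\tau+2\log\hat Z(X)|^{-5}$ of Proposition \ref{thm:quad.propagation}, and your exponent bookkeeping still closes with five rounds (e.g.\ $p=\tfrac12-5\beta^2$ or your $p=\tfrac12-7\beta^2$, since the cumulative loss per round is $\beta^2$). So the slip is a miscount, not a wrong idea, but as written that step fails.

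Second, a minor redundancy: Proposition \ref{thm:quad.propagation} does not require the hypothesis \eqref{eq_coarse_fast_decay}, nor do you need to re-establish $\mathrm{AP}_{\alpha_{k+1}}$ via Corollary \ref{cor_elongated_barr} between rounds; the a priori curvature assumption is already supplied for all $\alpha\geq|\tau+2\log\hat Z(X)|^{-5}$ by Theorem \ref{thm:cap_distance} (this is noted inside the proof of Proposition \ref{thm:quad.propagation}), so your first stage can be omitted. With the round count corrected and these simplifications, your argument coincides with the paper's proof.
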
 
 
\begin{proof}
By scaling we can assume $\hat{Z}(X)=1$.  Applying Proposition \ref{thm:quad.propagation} (anisotropic propagation of smallness II), similarly as in the proof of Corollary \ref{thm:smallness_center} (improved anisotropic propagation of smallness),  we get
\begin{equation}
\sup_{\{\bar f_0 \leq \beta^2|\tau|\}}  |\nu_1| \leq  e^{p\tau} \sup_{\tau'\leq \tau}\sup_{\{ |y| \leq L/\beta\}} e^{-(p+\beta^2)\tau'}|\nu_1|+e^{(1-\frac{3}{4}\beta -\beta^2)\tau}.
\end{equation}
Iterating this argument four more times, we obtain
\begin{equation}
\sup_{\{\bar f_4 \leq \beta^2|\tau|\}}  |\nu_1| \leq   e^{p\tau} \sup_{\tau'\leq \tau}\sup_{\{ |y| \leq L/\beta\}} e^{-(p+5\beta^2)\tau'}|\nu_1|+5 e^{(1-\frac{3}{4}\beta -5\beta^2)\tau}.
\end{equation}
Choosing $p=\frac{1}{2}-5\beta^2$ and applying Theorem \ref{thm:horizontal_propagation} (level set propagation of smallness) with $b=2$, this implies the assertion.
\end{proof} 
 
 Combining the above results we can now prove the main result of this section.
 
\begin{theorem}[strong differential neck theorem]\label{thm:strong_differential_neck}
There exists a constant $a=a(\mathcal{M})\neq 0$, such that for all $X\in\mathcal{M}$ the function $w(y,\tau)=\eta(|y|/\rho(\tau)) u_1^X(y,\tau)$ for all $\tau\leq \tau_*-2\log \hat{Z}(X)$ satisfies
\begin{equation}
\big\|w-ae^{\frac{1}{2}\tau}\big\|_{\mathcal{H}}\leq \eps(\tau)e^{\frac{1}{2}\tau},\qquad\mathrm{where}\;\; \eps(\tau)\leq\frac{C}{|\tau+2\log \hat{Z}(X)|^{\frac{1}{2}}} \left(|a|+\hat{Z}(X)^{\frac{7}{3}}e^{\frac{2}{3}\tau}\right).
\end{equation}
In particular, for all $\tau \leq \tau_*-\frac{7}{2}\log \hat Z(X)$ we have
\begin{equation}
\|w-a e^{\frac{1}{2}\tau}\|_{\mathcal{H}} \leq \frac{C(|a|+1)}{|\tau+2\log \hat Z(X)|^{\frac{1}{2}}}e^{\frac{1}{2}\tau}.
\end{equation}
\end{theorem}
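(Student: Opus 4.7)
The plan is to run a finite bootstrap, in which each round uses Corollary \ref{cor:quad.gradient.bootstrap} and Proposition \ref{prop:W_+_evol_improved} to sharpen the previous round's estimate, with the gain per round coming from the $e^{12\tau/25}$ factor multiplying $\omega$ in Proposition \ref{prop:W_+_evol_improved}. After finitely many rounds the exponent in the error saturates at the target $e^{7\tau/6}$ and the accumulated $\hat Z(X)$-power reaches $7/3$.

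Concretely, I would begin from Theorem \ref{thm:fine_neck_L2} and standard interior estimates to obtain the seed pointwise bound $\sup_{\{|y|\leq L/\beta\}}|u_1(\cdot,\tau)|\leq C|a|e^{\tau/2}+C\hat Z(X)^{6/5}e^{3\tau/5}$ in the central region. Corollary \ref{cor:quad.gradient.bootstrap} then upgrades $\omega$ to a bound of the form $\omega(\tau)\leq C|a|e^{(1/2-6\beta^2)\tau}+C\hat Z(X)^{2(1-\beta)}e^{(1-\beta)\tau}$, plus lower-order terms. Substituting this into Proposition \ref{prop:W_+_evol_improved}, and using Theorem \ref{thm_unst_dom} to dominate $W_0+W_-$ by $C|\tau+2\log\hat Z(X)|^{-1}(W_++\hat Z(X)^{24/25}e^{12\tau/25}\omega^2+\hat Z(X)^4 e^{2\tau})$, gives a differential inequality
\begin{equation*}
\big|\tfrac{d}{d\tau}a_+-\tfrac{1}{2} a_+\big|\leq \tfrac{C|a_+|}{|\tau+2\log\hat Z(X)|^{3/2}}+E(\tau),
\end{equation*}
where $e^{-\tau/2}E(\tau)$ is integrable on $(-\infty,\tau_*-2\log\hat Z(X)]$. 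Variation-of-constants integration then yields an improved $\mathcal H$-norm estimate for $w-ae^{\tau/2}$; via interior estimates this improved estimate translates into a sharper central pointwise bound on $u_1$, and one re-enters the loop.

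After finitely many rounds the $|a|$-dependent error reaches the target form $C|a||\tau+2\log\hat Z(X)|^{-1/2}e^{\tau/2}$, dictated by the self-term $C|a_+|/|\tau+2\log\hat Z(X)|^{3/2}$ in the ODE, while the $|a|$-independent error stabilizes because at each round the dominant new contribution to $E(\tau)$ is the product of the $\hat Z(X)^{2(1-\beta)}e^{(1-\beta)\tau}$ tail of Corollary \ref{cor:quad.gradient.bootstrap} with the $\hat Z(X)^{24/25}e^{12\tau/25}$ enhancement from Proposition \ref{prop:W_+_evol_improved}. A careful accounting of the accumulated exponents and $\hat Z$-powers then yields a final dominant $|a|$-independent error of the form $C\hat Z(X)^{7/3}|\tau+2\log\hat Z(X)|^{-1/2}e^{7\tau/6}$, with the polynomial $|\tau+2\log\hat Z(X)|^{-1/2}$ factor surviving through the integration thanks to the $|\tau+2\log\hat Z(X)|^{-1/2}$ gain in the Merle-Zaag control of $W_0+W_-$ from Theorem \ref{thm_unst_dom}. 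The ``in particular'' clause then follows immediately by noting that $\hat Z(X)^{7/3}e^{2\tau/3}\leq 1$ whenever $\tau\leq\tau_*-\tfrac{7}{2}\log\hat Z(X)$, so the second summand of $\varepsilon(\tau)$ is absorbed into the first.

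The main obstacle will be the careful bookkeeping of the $\hat Z(X)$-dependence so that the accumulated power is exactly $7/3$ rather than something larger: one must iterate exactly the right number of bootstrap rounds so that the $|a|$-independent error has saturated before the $\hat Z$-weights blow up, and one must ensure that the combination of the $\hat Z^{2(1-\beta)}e^{(1-\beta)\tau}$ irreducible error from Corollary \ref{cor:quad.gradient.bootstrap} with the $\hat Z^{24/25}e^{12\tau/25}$ factor in Proposition \ref{prop:W_+_evol_improved} is the terminal dominant contribution, producing exactly the pair $(\hat Z^{7/3}, e^{7\tau/6})$ (modulo small losses in $\beta$ already absorbed into constants) rather than some off-target pair.
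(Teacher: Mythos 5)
Your strategy coincides with the paper's proof: seed with Theorem \ref{thm:fine_neck_L2}, alternate Corollary \ref{cor:quad.gradient.bootstrap} (to upgrade $\omega$) with the ODE for $a_+$ from Proposition \ref{prop:W_+_evol_improved} combined with Theorem \ref{thm_unst_dom}, integrate by variation of constants, and feed the improved $\mathcal{H}$-bound back in through interior estimates. Two remarks. First, a small omission: the bootstrap itself never produces $a\neq 0$; you must invoke Theorem \ref{thm:non_degenerate_neck} together with Theorem \ref{thm:fine_neck_L2} at the outset.

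Second, and more substantively, the closing mechanism you describe -- iterating ``exactly the right number of rounds'' so that the error \emph{saturates} at the pair $(\hat{Z}(X)^{7/3},e^{7\tau/6})$, with the terminal dominant contribution being the product of the $\hat{Z}(X)^{2(1-\beta)}e^{(1-\beta)\tau}$ tail of Corollary \ref{cor:quad.gradient.bootstrap} with the $\hat{Z}(X)^{24/25}e^{\frac{12}{25}\tau}$ enhancement -- is not how the estimate closes, and the arithmetic behind it is off: writing $\hat{\tau}=\tau+2\log\hat{Z}(X)$, that product equals $e^{(1-\beta+\frac{12}{25})\hat{\tau}}$, whose intrinsic rate is about $\tfrac{37}{25}$, well above $\tfrac{7}{6}$ rather than equal to it. The point you are missing is that the target error is itself intrinsic in $\hat\tau$: $\hat{Z}(X)^{7/3}e^{\frac{7}{6}\tau}=e^{\frac{7}{6}\hat{\tau}}$, so the $\hat{Z}$-power and the $\tau$-exponent are locked together and never need to be hit exactly; any bootstrap output of the form $Ce^{\lambda\hat{\tau}}$ with $\lambda$ strictly larger than $\tfrac{7}{6}$ suffices, and the exponential slack $e^{(\lambda-\frac{7}{6})\hat{\tau}}\leq C|\hat{\tau}|^{-1/2}$ is what supplies the polynomial factor on the $|a|$-independent term (it does not come from the $|\hat{\tau}|^{-1/2}$ gain in Theorem \ref{thm_unst_dom}, as you suggest). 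Concretely, the paper runs exactly two rounds with $\hat{a}=a\hat{Z}(X)^{-1}$: round one gives $\|w-\hat{a}e^{\frac{1}{2}\hat{\tau}}\|_{\mathcal{H}}\leq C|\hat{a}||\hat{\tau}|^{-\frac{1}{2}}e^{\frac{1}{2}\hat{\tau}}+Ce^{\frac{4}{5}\hat{\tau}}$ (the first-round $\omega$-tail is $e^{(\frac{3}{5}-6\beta^2)\hat{\tau}}$, not yet $e^{(1-\beta)\hat{\tau}}$ as you wrote), round two upgrades $\omega\leq C|\hat{a}|e^{(\frac{1}{2}-6\beta^2)\hat{\tau}}+e^{(1-\beta)\hat{\tau}}$ and hence the error to $Ce^{\frac{6}{5}\hat{\tau}}$, and one concludes simply because $\tfrac{6}{5}>\tfrac{1}{2}+\tfrac{2}{3}$. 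Your ``in particular'' step is correct. So the plan would go through once the final accounting is done this way, but as written the step you single out as the main obstacle is aimed at a target that does not exist.
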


\begin{proof}
By Theorem \ref{thm:fine_neck_L2} (weak differential neck theorem) and Theorem \ref{thm:non_degenerate_neck} (nondegenerate neck theorem)  there exists a constant $a=a(\mathcal{M})\neq 0$, such that setting $\hat{\tau}=\tau+2\log \hat{Z}(X)$ and $\hat{a}(X)=a\hat{Z}(X)^{-1}$ we have
\begin{equation}\label{weak_DNT_reformulated}
\|w-\hat{a} e^{\frac{1}{2}\hat{\tau}}\|_{\mathcal{H}} \leq C|\hat{a}||\hat{\tau}|^{-\frac{1}{2}}e^{\frac{1}{2}\hat{\tau}}+Ce^{\frac{3}{5}\hat{\tau}}.
\end{equation}
Also recall that by Proposition \ref{prop:W_+_evol_improved} (evolution of expansion coefficient) and Theorem \ref{thm_unst_dom} (neutral slope mode cannot dominate) the expansion coefficient $a_+=\langle w, 1\rangle_{\mathcal{H}}$ satisfies
 \begin{equation}
\left|\tfrac{d}{d\tau} a_+-\tfrac{1}{2} a_+ \right| \leq C|\hat{\tau}|^{-\frac{3}{2} }|a_+|+Ce^{\frac{6}{25}\hat{\tau}}\omega +Ce^{\hat{\tau}}.
\end{equation}
Now, using \eqref{weak_DNT_reformulated} and Corollary \ref{cor:quad.gradient.bootstrap} (combined propagation of smallness II) we see that
\begin{equation}\label{eq:quad.propagation.error}
\omega \leq  C |\hat{a}|e^{(\frac{1}{2}-6\beta^2)\hat{\tau}}+e^{(\frac{3}{5}-6\beta^2)\hat{\tau}},
\end{equation}
hence
\begin{equation}
\left|\tfrac{d}{d\tau} a_+-\tfrac{1}{2} a_+ \right| \leq C|\hat{\tau}|^{-\frac{3}{2} }|a_+|+C|\hat{a}|e^{\frac{3}{5}\hat{\tau}}+C e^{\frac{4}{5}\hat{\tau}}.
\end{equation}
We integrate this, and then apply Theorem \ref{thm_unst_dom} (neutral slope mode cannot dominate) again to get
\begin{equation}
\|w-\hat{a}e^{\frac{1}{2}\hat{\tau}}\|_{\mathcal{H}}  \leq  C |\hat{a}||\hat{\tau}|^{-\frac{1}{2}}e^{\frac{1}{2}\hat{\tau}}+Ce^{\frac{4}{5}\hat{\tau}}.
\end{equation}
Iterating the above argument, we can improve \eqref{eq:quad.propagation.error} to
\begin{equation}
\omega \leq C  |\hat{a}|e^{(\frac{1}{2}-6\beta^2)\hat{\tau}}+e^{(1-\beta)\hat{\tau}},
\end{equation} 
and thus we finally get
\begin{equation}
\|w-\hat{a}e^{\frac{1}{2}\hat{\tau}}\|_{\mathcal{H}} \leq C |\hat{a}||\hat{\tau}|^{-\frac{1}{2}}e^{\frac{1}{2}\hat{\tau}}+Ce^{\frac{6}{5}\hat{\tau}}.
\end{equation}
Since $\frac{6}{5}>\frac{1}{2}+\frac{2}{3}$, this implies the assertion.
\end{proof}

\bigskip

\section{Classification of solutions with bounded bubble-sheet scale}\label{sec_bdd}

In this section, we first show that if the bubble-sheet scale blows up, then after rescaling we have convergence to $\mathbb{R}\times$2d-oval. Using this, we then classify all solutions with bounded bubble-sheet scale.\\

As usual, we denote by $\mathcal{M}=\{M_t\}$ a strictly convex ancient noncollapsed mean curvature flow in $\mathbb{R}^{4}$, whose tangent flow at $-\infty$ is given by \eqref{bubble-sheet_tangent_intro} and whose bubble-sheet function satisfies \eqref{DH_asymp}. By Theorem \ref{thm:fine_neck_L2} (weak differential neck theorem) and Theorem \ref{thm:non_degenerate_neck} (nondegenerate neck theorem) there is a nonvanishing constant $a=a(\mathcal{M})$ associated to our flow, such that for any space-time point $X$ the profile function $u^X$ of the renormalized flow $\bar{M}_\tau^X$ centered at $X$ for all  $\tau$ sufficiently negative depending on $Z(X)$ satsifies
\begin{equation}\label{eq_diff_bubb_exp}
u_1^X = ae^{\tau/2} + o(e^{\tau/2}).
\end{equation}
Here, by standard interior estimates the expansion holds in $C^k$-norm on compact subsets.  Moreover, possibly after replacing $x_1$ by $-x_1$, we can assume without loss of generality that $a>0$.\\

The expansion \eqref{eq_diff_bubb_exp} and convexity imply that $\inf_{M_t}x_1(p)>-\infty$. By strict convexity this infimum is attained at a unique point $p_t^-$. In case $M_t$ is compact there also is a unique point $p_t^+$ where  $x_1$ is maximal. Now, for $h\in \mathcal{\mathbb{R}}$ we denote by $\mathcal{T}(h)$ the time when a cap arrives at level $x_1=h$. More precisely, in the noncomapct case this is the unique $t$ such that $x_1(p_t^-)=h$, and in the compact case this is the unique $t$ such that either $x_1(p_t^-)=h$ or $x_1(p_t^+)=h$. Hence, for each $h\in \mathbb{R}$ there is a unique space-time point
\begin{equation}
P_h = (p_{\mathcal{T}(h)}^\ast,\mathcal{T}(h)),
\end{equation}
which we call the cap at level $h$. Note that this is well-defined in both the compact and noncompact case.

\begin{proposition}[bubble-sheet scale]\label{prop_dichotomy}
If $Z(P_{h_i}) \to \infty$, then the flows that are obtained from $\mathcal{M}$ by shifting $P_{h_i}$
 to the origin and parabolically rescaling by $Z(P_{h_i})^{-1}$ converge to $\mathbb{R}\times$2d-oval with extinction time $0$. In particular, the ratio between $Z$ and the curvature scale $H^{-1}$ diverges, namely $(ZH)(P_{h_i})\to \infty$.
\end{proposition}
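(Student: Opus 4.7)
The plan is to consider the rescaled flows
\begin{equation*}
\tilde{\mathcal{M}}^i_t := Z(P_{h_i})^{-1}\bigl(M_{Z(P_{h_i})^2\,t+\mathcal{T}(h_i)}-p^{\ast}_{\mathcal{T}(h_i)}\bigr),
\end{equation*}
which by construction are ancient noncollapsed flows in $\mathbb{R}^4$ with bubble-sheet scale equal to $1$ at the origin and with the origin realizing a cap at time $0$. By the compactness theory for noncollapsed ancient flows \cite{HaslhoferKleiner_meanconvex}, after passing to a subsequence the $\tilde{\mathcal{M}}^i$ converge smoothly on the regular part to an ancient noncollapsed limit $\mathcal{M}^\infty$ with $Z(0)=1$.

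The heart of the argument is to show that $\mathcal{M}^\infty$ splits off a line. Since the renormalized flows centered at the origin satisfy $\tilde{\bar{M}}^i_{\tilde\tau}=\bar{M}^{P_{h_i}}_{\tilde\tau-2\log Z(P_{h_i})}$, the bubble-sheet slopes are related by the time shift $\tilde{u}^0_1(\cdot,\tilde\tau)=u_1^{P_{h_i}}(\cdot,\tilde\tau-2\log Z(P_{h_i}))$. Substituting $\tau=\tilde\tau-2\log Z(P_{h_i})$ into Theorem~\ref{thm:strong_differential_neck} and unwinding the exponents gives, for any $\tilde\tau\leq\tau_\ast$,
\begin{equation*}
\Bigl\|\tilde{u}^0_1(\cdot,\tilde\tau)-\tfrac{a}{Z(P_{h_i})}e^{\tilde\tau/2}\Bigr\|_{\mathcal{H}}\leq \frac{C|a|e^{\tilde\tau/2}}{Z(P_{h_i})|\tilde\tau|^{1/2}}+\frac{Ce^{7\tilde\tau/6}}{|\tilde\tau|^{1/2}}.
\end{equation*}
As $Z(P_{h_i})\to\infty$ the leading term and the first error contribution both vanish at each fixed $\tilde\tau$, leaving $\|\tilde{u}^\infty_1(\cdot,\tilde\tau)\|_{\mathcal{H}}\leq Ce^{7\tilde\tau/6}/|\tilde\tau|^{1/2}$, which decays strictly faster than any nonzero $|a^\infty|e^{\tilde\tau/2}$ as $\tilde\tau\to -\infty$. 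Were $\mathcal{M}^\infty$ strictly convex, Theorem~\ref{thm:non_degenerate_neck} applied to $\mathcal{M}^\infty$ would force such a nonzero leading term in its slope, a contradiction. Hence $\mathcal{M}^\infty=\mathbb{R}\times N$ for some ancient noncollapsed flow $N$ in $\mathbb{R}^3$.

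To identify $N$ I invoke the classification Theorem~\ref{thm_ADS_BC}. The static plane and the round $S^2$ are excluded because their tangent flow at $-\infty$ is not the round $\mathbb{R}\times S^1$ inherited from the bubble-sheet tangent of $\mathcal{M}^\infty$; the round cylinder is excluded because otherwise $\mathcal{M}^\infty$ would be $\mathbb{R}^2\times S^1$, forcing $Z(0)=0$ rather than $1$. To exclude the 2d-bowl I would combine the mixed asymptotics \eqref{DH_asymp} with the spectral lower bound $-\alpha_2\geq \kappa_0/(2|\tau|)$ from Corollary~\ref{cor_switch_time}: after the time shift $\tau=\tilde\tau-2\log Z(P_{h_i})$ this persists as a quantitative inwards quadratic bending of $\tilde{u}^0$ in the $y_2$-direction, which is the signature of the slow Angenent oval and is incompatible with the purely exponential, fast-convergence profile of the 2d-bowl translator. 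Thus $N$ is the 2d-oval, and because the origin is the unique cap of $\mathcal{M}^\infty$ at time $0$ and the 2d-oval has a unique extinction point, the extinction time of $N$ must be $0$.

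Finally, the ``in particular'' statement follows immediately: $\mathbb{R}\times$2d-oval has infinite mean curvature at its extinction point, so the rescaled curvature at the origin $\tilde{H}^i(0,0)=Z(P_{h_i})H(P_{h_i})$ must diverge. The main obstacle I anticipate is the careful exclusion of the 2d-bowl. The differential neck theorem controls the $y_1$-derivative very well, but the inwards quadratic bending information lives at scale $|\tilde\tau|^{-1}$ and must be transported through the potentially very long time shift $2\log Z(P_{h_i})$ and shown to survive the passage to the smooth limit, rather than only witnessing its Gaussian $L^2$ norm shrink; this likely requires combining the spectral switch dynamics with the anticipated quadratic scale bound $Q(P_h)\leq CZ(P_h)$ mentioned in the introduction.
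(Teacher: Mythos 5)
Your overall strategy (rescale by $Z(P_{h_i})^{-1}$, pass to a noncollapsed limit, and exploit that the differential-neck constant scales like $a(\mathcal{M}^i)=Z(P_{h_i})^{-1}a(\mathcal{M})\to 0$) is the same as the paper's, and your computation transporting Theorem \ref{thm:strong_differential_neck} through the time shift $\tau=\tilde\tau-2\log Z(P_{h_i})$ is correct. However, the key step ``were $\mathcal{M}^\infty$ strictly convex, Theorem \ref{thm:non_degenerate_neck} would force a nonzero leading term'' has a genuine gap: the nondegenerate neck theorem (and the entire differential-neck machinery) is established only under the standing mixed-convergence hypothesis \eqref{DH_asymp}, and nothing guarantees the limit satisfies it. A strictly convex limit could a priori have a rank-two fine bubble-sheet matrix (the slow-convergence case, i.e.\ a compact oval as in \cite{CDDHS,DH_ovals}), for which no statement of the form $u_1\sim a^\infty e^{\tau/2}$ with $a^\infty\neq 0$ is available, so your contradiction does not close. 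The paper handles exactly this by invoking the trichotomy of \cite[Theorems 1.2--1.4]{DH_shape} for the limit and ruling out rank two separately (your exponential slope decay could in fact be played against the $\sim|\tau|^{-1}$ slope that rank two forces, but you never make that argument — you only cite a theorem whose hypotheses you have not verified for $\mathcal{M}^\infty$).

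Second, your route for excluding $\mathbb{R}\times$2d-bowl is not viable as stated. The spectral bound $-\alpha_2\geq \kappa_0/(2|\tau+2\log\hat Z|)$ from Corollary \ref{cor_switch_time} holds only for $\tau\leq\tau_s(P_{h_i})$, and at this stage there is no control relating the switch time (equivalently the quadratic scale $Q(P_{h_i})$) to $Z(P_{h_i})$; the bound $Q(P_h)\leq C Z(P_h)$ that you propose to use is Proposition \ref{prop_quad_scale}, which the paper \emph{deduces from} the very proposition you are proving, so invoking it is circular. The paper instead kills the bowl with the same $a$-scaling contradiction: the halfspace containment $M^\infty_0\subset\{x_1\geq 0\}$ with $0\in M^\infty_0$ pins the bowl's translation axis to $+e_1$, so the explicit form gives $a^\infty\neq 0$, contradicting $a(\mathcal{M}^i)\to 0$. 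Finally, two smaller points: your exclusion of the round $\mathbb{R}^2\times S^1$ (``forcing $Z(0)=0$'') only covers extinction at time $0$ — a cylinder extinct at a later time is ruled out by the halfspace argument, not by the bubble-sheet scale — and the same halfspace argument, rather than your ``unique cap / unique extinction point'' remark, is what forces the surviving $\mathbb{R}\times$2d-oval limit to become extinct exactly at time $0$.
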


\begin{proof}
Suppose $h_i$ is a sequence converging to infinity or minus infinity, such that $Z(P_{h_i})\to \infty$. Let $\mathcal{M}^i$ be the flows that are obtained from $\mathcal{M}$ by shifting $P_{h_i}$ to the origin, and parabolically rescaling by $Z(P_{h_i})^{-1}$. Then, by \cite[Theorem 1.14]{HaslhoferKleiner_meanconvex} along a subsequence we can pass to an ancient noncollapsed limit flow $\mathcal{M}^\infty$, whose tangent flow at $-\infty$ is a bubble-sheet. 

By \cite{DH_shape,DH_no_rotation} associated to $\mathcal{M}^\infty$ there is a fine-bubble sheet matrix $Q^\infty$.
If $Q^\infty$ had rank 2, then applying \cite[Theorem 1.4]{DH_shape} we would obtain a contradiction with \eqref{eq_diff_bubb_exp} for large $i$. If $\mathcal{M}^\infty$ was a round shrinking $\mathbb{R}^2\times S^1$, then if it became extincting at time $0$ we would obtain a contradiction with the definition of the bubble-sheet scale, and if it became extinct at some later time we would obtain a contradiction with the fact that $M^\infty_t\cap (\mathbb{R}^2\times \{0\})$ is contained in a halfspace for $t>0$. If $\mathcal{M}^\infty$ is $\mathbb{R}\times$2d-oval, then by the same halfspace argument it must become extinct at time $0$, hence $\lim_{i\to \infty} (ZH)(P_{h_i})=\infty$.

To conclude, by \cite[Theorem 1.2 and Theorem 1.3]{DH_shape} the only remaining potential options are that our limit $\mathcal{M}^\infty$ is either $\mathbb{R}\times$2d-bowl or is strictly convex with $\textrm{rk}(Q^\infty)=1$. Therefore, either by the explicit form or by Theorem \ref{thm:fine_neck_L2} (weak differential neck theorem) and Theorem \ref{thm:non_degenerate_neck} (nondegenerate neck theorem) there is a constant $a^\infty\neq 0$ associated to $\mathcal{M}^\infty$, such that \eqref{eq_diff_bubb_exp} holds.
However, this contradicts the fact that $a(\mathcal{M}^i)=Z(P_{h_i})^{-1}a(\mathcal{M})\to 0$, and thus finishes the proof of the proposition.
\end{proof}

Let us first address the potential scenario of exotic ovals in $\mathbb{R}^4$, i.e. compact ancient noncollapsed flows in $\mathbb{R}^{4}$, whose tangent flow at $-\infty$ is a bubble-sheet and whose bubble-sheet matrix has rank 1. 

\begin{theorem}[no bounded exotic ovals]\label{class_comp_bdd}
There are no exotic oval in $\mathbb{R}^4$ with $\liminf_{h \to \infty}Z(P_h) < \infty$.
\end{theorem}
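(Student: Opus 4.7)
The plan is to pass to a subsequential limit flow obtained by centering at top caps with bounded bubble-sheet scale, transfer the differential neck expansion to the limit (with the same constant $a$), and then derive a sign contradiction from the geometry of the limit. Pick $h_i \to \infty$ with $Z(P_{h_i}) \le C$. Since $M_t$ is compact and $\partial_t x_1(p_t^+) = -H(p_t^+, t) < 0$ by strict convexity, the function $x_1(p_t^+)$ is strictly decreasing in $t$, so $t_i := \mathcal{T}(h_i) \to -\infty$, and the top cap of $M_t$ lies in $\{x_1 > h_i\}$ for every $t < t_i$. Consider the space-time shifts $\mathcal{M}^i := \mathcal{M} - P_{h_i}$; each has its top cap at the origin at time $0$. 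By the uniform bound on the bubble-sheet scale and standard compactness for ancient noncollapsed flows, a subsequence of $\mathcal{M}^i$ converges smoothly on compact space-time subsets to a convex ancient noncollapsed flow $\mathcal{M}^\infty$ in $\mathbb{R}^4$ with bubble-sheet tangent at $-\infty$. By construction $M^\infty_0 \subset \{x_1 \le 0\}$ with the origin as rightmost point, and for each $t < 0$ the rightmost point of $M^\infty_t$ lies at some $R^\infty_t > 0$.

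Next, applying Theorem \ref{thm:strong_differential_neck} to $\mathcal{M}$ at each $X = P_{h_i}$, the uniform bound $\hat{Z}(P_{h_i}) \le C$ makes the admissible range $\tau \le \tau_\ast - 2\log \hat{Z}(P_{h_i})$ uniform in $i$, so the Gaussian $L^2$-estimate on $w^{P_{h_i}} - a e^{\tau/2}$ is uniform. Standard parabolic interior estimates upgrade this to $C^k$-closeness on compact sets, and passage to the limit yields
\begin{equation*}
u_1^{\mathcal{M}^\infty, 0}(y, \vartheta, \tau) = a e^{\tau/2} + o(e^{\tau/2}) \qquad (\tau \to -\infty)
\end{equation*}
uniformly on compact subsets of $\Gamma$, where $a = a(\mathcal{M}) > 0$ by the standing WLOG assumption of the section.

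The crux is to derive a geometric sign constraint on $u_1^{\mathcal{M}^\infty, 0}(0, 0, 0, \tau)$. Consider the two-dimensional slice $\bar{K}^\infty_\tau \cap \{y_2 = y_4 = 0\}$ of the renormalized convex body in the $(y_1, y_3)$-plane. Its upper boundary is the graph of a concave function $y_3(y_1) = \sqrt{2} + u^{\mathcal{M}^\infty, 0}(y_1, 0, 0, \tau)$; this graph drops to $0$ at the renormalized rightmost point $y_1 = e^{\tau/2} R^\infty_{-e^{-\tau}} > 0$, and by an avoidance-principle argument with the solid tangent cylinder $\mathbb{R}^2 \times D^1(\sqrt{2})$ it satisfies $y_3 \le \sqrt{2}$ with asymptote $y_3(y_1) \to \sqrt{2}$ as $y_1 \to -\infty$. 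Concavity combined with the asymptote from below forces $y_3'(y_1) \le 0$ throughout, so $u_1^{\mathcal{M}^\infty, 0}(0, 0, 0, \tau) = y_3'(0) \le 0$ for $\tau$ sufficiently negative. Comparing with the expansion above and sending $\tau \to -\infty$ gives $a \le 0$, contradicting $a > 0$.

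The main obstacle is the rigorous execution of this geometric sign step. The containment of the renormalized convex body inside the solid tangent cylinder must be extracted from the bubble-sheet tangent flow via an avoidance-principle barrier argument (using round shrinking cylinders of slightly larger radii as barriers in the distant past and taking a limit), and the left-tail asymptote $y_3 \to \sqrt{2}$ requires controlling the limit flow as $y_1 \to -\infty$, which is noncompact. A viable backup strategy is to integrate $u_1$ only on a bounded range of $y_1$ where the graph is reliably defined and leverage the identity $u = -\sqrt{2}$ at the cap, together with a direct comparison of $u^{\mathcal{M}^\infty, 0}$ at specific reference points, to extract the sign without invoking the full left-tail asymptotics.
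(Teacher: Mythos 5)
Your overall strategy (center at top caps where $Z$ is bounded, invoke the differential neck expansion with the same constant $a>0$, and derive a sign contradiction near the top cap) is the same idea as the paper's, which however carries it out directly at $X=(p_t^+,t)$ without passing to a limit flow. The genuine gap is in the step you yourself flag as the crux. The claimed containment $y_3\le\sqrt2$ of the renormalized body in the solid cylinder of radius $\sqrt2$ is false: by the mixed-convergence asymptotics \eqref{DH_asymp}, $u(0,0,\vartheta,\tau)=\tfrac{2}{\sqrt{8}|\tau|}+O(|\tau|^{-1-\gamma})>0$, so near the center the renormalized hypersurface lies strictly \emph{outside} the $\sqrt2$-cylinder; hence no avoidance argument with shrinking round cylinders can yield this containment, because the flow is not inside the shrinking solid cylinder at any sufficiently negative time. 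The left-tail asymptote $y_3(y_1)\to\sqrt2$ as $y_1\to-\infty$ at fixed $\tau$ is likewise unjustified and in general false (for a translator-like limit the renormalized fibre radius diverges as $y_1\to-\infty$ at fixed renormalized time). Since your conclusion $u_1^{\mathcal{M}^\infty,0}(0,0,0,\tau)\le 0$ is deduced precisely from these two claims, the proof as written does not close, and the proposed backup strategy is not executed.

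The step is repairable, but along different lines. The paper's route: apply \eqref{eq_diff_bubb_exp} with center $(p_t^+,t)$ directly (legitimate since $Z(p_t^+,t)$ is bounded along a sequence, so the expansion is available for all $\tau$ below a fixed threshold); the slice-radius function in the $x_1$-direction is concave by convexity, and having slope $\approx a e^{\tau/2}>0$ near the top cap it must be nondecreasing on all of $\{x_1\le x_1(p_t^+)\}$; but the tangent-flow normalization \eqref{bubble-sheet_tangent_intro} forces the fibre radius over $x_1=x_2=0$ at time $t-e^{-\tau}$ to be at least $(1-o(1))\sqrt{2|t|}\to\infty$, whereas near the cap it is only of order $e^{-\tau/2}$ with $\tau$ fixed --- a contradiction with monotonicity. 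Alternatively, your limit-flow route can be saved by observing that a concave, nonnegative slice-radius function defined on a left-infinite interval cannot have positive slope anywhere, so neither the cylinder containment nor the left-tail asymptote is needed; but then you must still verify that the limit's slice domain is genuinely left-infinite (the left extent of $\mathcal{M}^i$ from $P_{h_i}$ tends to infinity) and that the limiting renormalized flow is graphical in the relevant region so that $u_1^{\mathcal{M}^\infty,0}$ is defined and inherits the expansion from Theorem \ref{thm:strong_differential_neck} --- details your write-up leaves out.
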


\begin{proof}Recall that we arranged that $a>0$, and that by \eqref{bubble-sheet_tangent_intro} we have
\begin{equation}
\liminf_{t\to -\infty} \inf_{p\in M_t\cap \{x_1=x_2=0\}} (-2t)^{-1/2}|p|\geq 1.
\end{equation}
Hence, thanks to the assumption $\liminf_{t\to -\infty}Z(p^+_{t},t) < \infty$ and convexity, considering the expansion \eqref{eq_diff_bubb_exp} with center $(p^+_t,t)$ for $t$ very negative we obtain a contradiction with the fact that $a>0$.
\end{proof}

Let us now consider the noncompact case.

\begin{theorem}[noncompact solutions with bounded bubble-sheet scale]\label{thm_noncpt_bdd}
If $\liminf_{h\to -\infty}Z(P_h) < \infty$ and $\liminf_{h\to \infty}Z(P_h) < \infty$, then our flow $\mathcal{M}$ is selfsimilarly translating.
\end{theorem}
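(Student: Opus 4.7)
The plan is to rescale $\mathcal{M}$ so that $a(\mathcal{M})=1/\sqrt{2}$, extract limit flows at the two ends of the noncompact solution using the bounded bubble-sheet scale, identify these limits as unit-speed translators via Theorem \ref{thm:strong_differential_neck} and the translator classification of \cite{CHH_translator}, and finally apply Hamilton's Harnack inequality on $\mathcal{M}$ to propagate the resulting tip-curvature value $H\equiv 1$ to the whole flow.

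First, by a parabolic rescaling I may assume $a(\mathcal{M})=1/\sqrt{2}$, the value attached to translators moving with unit speed in the $+x_1$-direction. By hypothesis, choose sequences $h_i^\pm\to\pm\infty$ with $Z(P_{h_i^\pm})\leq Z_0<\infty$, shift each $P_{h_i^\pm}$ to the space-time origin, and invoke the compactness theory for noncollapsed flows \cite{HaslhoferKleiner_meanconvex} to extract subsequential limits $\mathcal{M}^\pm$. Each limit is a strictly convex ancient noncollapsed flow in $\mathbb{R}^4$ with bubble-sheet tangent flow at $-\infty$ and, by Theorem \ref{thm:strong_differential_neck} (the differential-neck constant $a$ is invariant under translation of the center), inherits $a(\mathcal{M}^\pm)=1/\sqrt{2}$.

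Next, I identify each $\mathcal{M}^\pm$ as a unit-speed selfsimilar translator. Since $a(\mathcal{M}^\pm)\neq 0$, Proposition \ref{prop_dichotomy} rules out any cap sequence of $\mathcal{M}^\pm$ along which $Z\to\infty$, so $Z$ is uniformly bounded on the entire cap trajectory of $\mathcal{M}^\pm$. Iterating the limit construction at the two ends of the (eternal) tip trajectory of $\mathcal{M}^\pm$ and using Hamilton's Harnack monotonicity, one extracts further selfsimilar translating sublimits; these are pinned down by Theorem \ref{thm_translators_intro} as members of the \cite{HIMW} family or as $\mathbb{R}\times$2d-bowl, and the constraint $a=1/\sqrt{2}$ uniquely selects the unit-speed normalization. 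The rigidity case of Hamilton's Harnack applied along the tip trajectory of $\mathcal{M}^\pm$, sandwiched between two equal limits, then upgrades $\mathcal{M}^\pm$ itself to a unit-speed selfsimilar translator, giving $H(P^\pm)=1$ at the origin cap.

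Back on $\mathcal{M}$, this yields $H(P_{h_i^\pm})\to 1$. Hamilton's Harnack inequality \cite{Hamilton_Harnack}, valid in our setting by \cite{DS,BLL}, gives $\partial_tH\geq 0$ and hence monotonicity of $H(P_h)$ along the tip trajectory $h\mapsto P_h$. Sandwiched between the matching limits at $\pm\infty$, this forces $H(P_h)\equiv 1$ for all $h\in\mathbb{R}$, and the rigidity case of Hamilton's Harnack concludes that $\mathcal{M}$ is selfsimilarly translating. The hard part will be identifying the limits $\mathcal{M}^\pm$ as unit-speed translators: the differential neck theorem rigidly selects the slope constant $1/\sqrt{2}$ on every limit, but promoting this to full selfsimilarity requires careful iteration of Harnack monotonicity on the cap trajectories combined with the selfsimilar translator classification from \cite{CHH_translator}.
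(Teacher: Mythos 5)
Your outer skeleton matches the paper's: normalize $a(\mathcal{M})=2^{-1/2}$, identify unit-speed translator limits at both ends via the nonvanishing differential-neck constant, deduce $H(P_h)\equiv 1$ from monotonicity of the tip curvature, and conclude by the rigidity case of Hamilton's Harnack inequality. However, the middle layer of your argument has a genuine gap. You claim that, since $a(\mathcal{M}^\pm)\neq 0$, Proposition \ref{prop_dichotomy} rules out cap sequences of $\mathcal{M}^\pm$ along which $Z\to\infty$. It does not: that proposition only describes the unbounded-scale scenario (after rescaling by $Z^{-1}$ one converges to $\mathbb{R}\times$2d-oval, with $(ZH)\to\infty$), and this scenario is perfectly compatible with $a\neq 0$ — ruling it out is precisely the hard content of Section \ref{sec_unbdd}, which cannot be invoked here (indeed the paper's treatment of the unbounded case relies on the present theorem, so your route would be circular at the level of the overall architecture). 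Consequently your iterated limit construction, designed to upgrade the full limits $\mathcal{M}^\pm$ to selfsimilar translators, is not justified; it is also unnecessary, because the same Harnack sandwich can be run directly on $\mathcal{M}$: since $h\mapsto H(P_h)$ is monotone (from $\partial_t H\geq 0$), along any bounded-$Z$ sequence the shifted flows $\mathcal{M}-P_{h_i}$ converge to a flow whose tip curvature is constant in time, hence a translator by Harnack rigidity, whose speed is then pinned to $1$ by $a=2^{-1/2}$; one never needs the limit itself to have bounded bubble-sheet scale along its whole cap trajectory.

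A second omission: at the $h\to+\infty$ end you simply assert that the limit is a strictly convex ancient noncollapsed flow with bubble-sheet tangent inheriting $a=2^{-1/2}$. Before extracting such a limit one must exclude two degenerate scenarios, which the paper handles explicitly: $Z(P_{h_i})\to 0$ (the limit would be a round shrinking $\mathbb{R}^2\times S^1$, contradicting the expansion \eqref{eq_diff_bubb_exp}), and $H(P_{h_i})\to\infty$ with $Z$ bounded (the limit would be an $\mathbb{R}\times$2d-oval becoming extinct at time $0$, again contradicting \eqref{eq_diff_bubb_exp}). Without these steps the differential-neck constant cannot be transferred to the forward limit, and the conclusion $\lim_{h\to\infty}H(P_h)=1$ is not established. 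Fixing the proof means replacing your Proposition \ref{prop_dichotomy} appeal and the iterated limits by these two exclusion arguments plus the direct constant-tip-curvature/Harnack-rigidity identification of the translator limits.
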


\begin{proof}
Without loss of generality we normalize such that $a(\mathcal{M})=2^{-1/2}$. In the following argument we will frequently use that, since $\partial_t H\geq 0$, the function $h\mapsto H(P_h)$ is monotone.

First, thanks to $\liminf_{h\to -\infty}Z(P_h) < \infty$  and Hamilton's Harnack inequality \cite{Hamilton_Harnack}, for some $h_i\to -\infty$ the sequence $\mathcal{M}^{i}=\mathcal{M}-P_{h_i}$ converges to a bubble-sheet translator, which satisfies \eqref{eq_diff_bubb_exp} with $a=2^{-1/2}$, and hence translates with speed $1$. This implies $\lim_{h\to -\infty}H(P_h)= 1$.

Next, if we had $Z(P_{h_i})\to 0$ along some sequence $h_i\to \infty$, then $\mathcal{M}^{i}=\mathcal{M}-P_{h_i}$ would converge to a round shrinking $\mathbb{R}^2\times S^1$, contradicting the expansion \eqref{eq_diff_bubb_exp}. This shows $\liminf_{h\to \infty}Z(P_h)>0$.

If we had $H(P_{h_i})\to \infty$ along some sequence with $Z(P_{h_i})$ bounded, then the flows $\mathcal{M}^{i}=\mathcal{M}-P_{h_i}$ would converge to an ancient noncollapsed limit flow $\mathcal{M}^\infty$, that becomes extinct at time $0$, and whose tangent flow at $-\infty$ is a bubble-sheet. Hence, arguing as in the proof of Proposition \ref{prop_dichotomy} (bubble-sheet scale), and observing that a round shrinking $\mathbb{R}^2\times S^1$ is now ruled out in light of the previous paragraph, we would infer that $\mathcal{M}^\infty$ is an $\mathbb{R}\times$ 2d-oval, which would again contradict the expansion \eqref{eq_diff_bubb_exp}. This shows that $\lim_{h\to \infty}H(P_h)<\infty$.

Consequently, thanks to $\liminf_{h\to \infty}Z(P_h) < \infty$  and Hamilton's Harnack inequality \cite{Hamilton_Harnack}, for some $h_i\to \infty$ the sequence $\mathcal{M}^{i}=\mathcal{M}-P_{h_i}$ converges to a bubble-sheet translator, which satisfies \eqref{eq_diff_bubb_exp} with $a=2^{-1/2}$, and hence translates with speed $1$. This implies $\lim_{h\to \infty}H(P_h)= 1$.
  
We have thus shown that $H(P_h)\equiv 1$, and this implies the assertion. Indeed, recall that by Hamilton's Harnack inequality \cite{Hamilton_Harnack} for every vector field $V$ we have
\begin{equation}\label{Ham_Harnack}
\partial_t H + 2\nabla_V H + A(V,V)\geq 0.
\end{equation}
By inserting $V=-A^{-1}\nabla H$, in addition to $H(P_h)\equiv 1$, we get that $\nabla H (P_h)\equiv 0$.
Therefore, by the rigidity case of Hamilton's Harnack inequality  \cite{Hamilton_Harnack} our ancient solution $\mathcal{M}$ is a translator.
\end{proof}

To conclude this section let us record the following corollary.

\begin{corollary}[convergence to 3d-bowl]\label{prop_conv_to_bowl}
If $\lim Z(P_h)=\infty$, then the flows that are obtained from $\mathcal{M}$ by shifting $P_h$
 to the origin and parabolically rescaling by $H(P_h)$ converge to the round 3d-bowl.
\end{corollary}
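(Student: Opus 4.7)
The plan is to extract a subsequential $C^\infty_{\mathrm{loc}}$ limit $\mathcal{M}^\infty$ of the parabolically rescaled flows $\mathcal{M}^i := H(P_{h_i})(\mathcal{M}-P_{h_i})$ via \cite[Theorem 1.14]{HaslhoferKleiner_meanconvex}, and to identify it as the round 3d-bowl using the Brendle--Choi--Angenent--Daskalopoulos--Sesum classification \cite{BC2,ADS2}. By construction $H^{\mathcal{M}^\infty}(0,0)=1$. Since the bubble-sheet scale is scale-covariant, $Z^{\mathcal{M}^i}(0,0)=(ZH)(P_{h_i})\to\infty$ thanks to Proposition \ref{prop_dichotomy}, so $Z^{\mathcal{M}^\infty}(0,0)=\infty$, which rules out the bubble-sheet $\mathbb{R}^2\times S^1$ as tangent flow at $-\infty$. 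Combined with $H^{\mathcal{M}^\infty}(0,0)>0$ excluding the plane $\mathbb{R}^3$, the only remaining possible tangent flows at $-\infty$ are the sphere $S^3$ and the neck $\mathbb{R}\times S^2$, so by \cite{BC2,ADS2} (and the self-similar nature of the shrinking sphere) the limit $\mathcal{M}^\infty$ is one of: a round shrinking $S^3$, a round shrinking $\mathbb{R}\times S^2$, the rotationally symmetric 3d-bowl, or the rotationally symmetric 3d-oval.

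The main obstacle is to rule out the three non-bowl candidates. All of them have finite extinction time, whereas the 3d-bowl is eternal, so the plan is to show that $\mathcal{M}^\infty$ is defined for all $t\in\mathbb{R}$. Hamilton's Harnack \cite{Hamilton_Harnack} gives $H^{\mathcal{M}^i}(0,t)\leq H^{\mathcal{M}^i}(0,0)=1$ for $t\leq 0$, which via noncollapsing yields locally uniform curvature bounds, so the limit is well-defined for $t\leq 0$. To extend the limit to $t>0$: in the noncompact case $\mathcal{M}$ is smooth for all time, so the flows $\mathcal{M}^i$ are defined for all $t$ with no singularity, and finite-time extinction of the limit is thereby excluded. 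In the compact (potential exotic oval) case, one needs to show that the renormalized existence time $(T-\mathcal{T}(h_i))H(P_{h_i})^2$ diverges: the time $\mathcal{T}(h_i)\to -\infty$ is forced by $h_i\to\pm\infty$ (itself needed for $Z(P_{h_i})\to\infty$ in a compact flow), and a positive lower bound on $H(P_{h_i})$ follows from the non-triviality of the bubble-sheet tangent flow at $-\infty$ together with Hamilton's Harnack and the cap distance estimate from Theorem \ref{thm:cap_distance}. Once eternity is established, $\mathcal{M}^\infty$ must be the 3d-bowl, normalized so that its tip curvature equals $1$.
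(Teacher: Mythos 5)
Your reduction to the four candidate limits (via $(ZH)(P_{h_i})\to\infty$ from Proposition \ref{prop_dichotomy}, the normalization $H^{\mathcal{M}^\infty}(0,0)=1$, and the classification of ancient noncollapsed flows with sphere or neck tangent) is fine, but the exclusion step via eternity has a genuine gap in both branches. In the noncompact branch you assert that $\mathcal{M}$ is smooth for all time; a priori a noncompact strictly convex ancient noncollapsed flow could still become extinct at a finite time (compare $\mathbb{R}\times$2d-oval, which is noncompact with finite extinction time), and eternity of $\mathcal{M}$ is essentially part of what the whole paper is trying to prove, so it is not available here. In the compact branch the key claim, a uniform positive lower bound on $H(P_{h_i})$, does not follow from the ingredients you cite: Hamilton's Harnack in the form \eqref{Harnack_curv} and $\partial_t H\geq 0$ give an upper bound, respectively monotonicity forward in time, while Theorem \ref{thm:cap_distance} gives length lower bounds, hence at best an average cap-speed bound of order $|\mathcal{T}(h_i)|^{-1/2}(\log|\mathcal{T}(h_i)|)^{10}\to 0$. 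Indeed, tip curvatures of compact ancient ovals typically tend to zero as $t\to-\infty$, and Proposition \ref{prop_dichotomy} only yields $H(P_{h_i})\gg Z(P_{h_i})^{-1}$, which is perfectly compatible with $H(P_{h_i})\to 0$. What you actually need is $(T-\mathcal{T}(h_i))H(P_{h_i})^2\to\infty$, and no valid argument for this is given; without it none of the shrinking $S^3$, the shrinking $\mathbb{R}\times S^2$, or the 3d-oval is excluded.

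The decisive observation you are missing is much cheaper and is what the paper uses: since $(ZH)(P_{h_i})\to\infty$ and at scale $Z(P_{h_i})$ the flow around $P_{h_i}$ is close to an $\mathbb{R}\times$2d-oval, the time-0 slices of the rescaled flows $\mathcal{M}^i$ contain points at distance comparable to $(ZH)(P_{h_i})\to\infty$ from the origin, so the limit $\mathcal{M}^\infty$ is noncompact; this rules out $S^3$ and the 3d-oval with no forward-in-time information at all. The paper then notes that by construction the time-0 slice lies in a halfspace with the cap at the origin and $H(0,0)=1$, and that the tangent flow at $-\infty$ is not a bubble-sheet, so the uniqueness result of \cite{BC2} (with \cite{ADS2} handling the non-strictly-convex case) identifies the limit as the round 3d-bowl, disposing of the round shrinking $\mathbb{R}\times S^2$ as well. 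Your plan omits the noncompactness of the limit entirely and instead leans on eternity, which is precisely the part that cannot be established from the cited tools.
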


\begin{proof}
Using Proposition \ref{prop_dichotomy} (bubble-sheet scale) we infer that $\lim (ZH)(P_h)=\infty$, and that any subsequential limit is a noncompact ancient noncollapsed flow $\mathcal{M}^\infty$, whose tangent flow at $-\infty$ is not a bubble-sheet. 
 Observe also that by construction the 0-times slice is contained in a halfspace, and that $H(0,0)=1$. Hence, the uniqueness result from \cite{BC2} implies the assertion.
\end{proof}

\bigskip

\section{Ruling out solutions with unbounded bubble-sheet scale}\label{sec_unbdd}

In this final section, we rule out the potential scenario that $Z(P_h)\to \infty$ for $h\to \infty$ or $h\to -\infty$.\\

Throughout this section, $\mathcal{M}=\{M_t=\partial K_t\}$ denotes a strictly convex ancient noncollapsed mean curvature flow in $\mathbb{R}^{4}$, whose tangent flow at $-\infty$ is given by \eqref{bubble-sheet_tangent_intro}, and whose bubble-sheet function satisfies \eqref{DH_asymp}. Moreover, without loss of generality we can normalize such that $\mathcal{T}(0)=0$ and $a(\mathcal{M})=2^{-1/2}$.

\begin{convention}[constants] In this final section all constants may depend on $\mathcal{M}$.
\end{convention}

We begin by observing that if $Z(P_h)\to \infty$ for $h\to \infty$ or $h\to -\infty$, then the bubble-sheet scale and the quadratic scale are comparable. Here, recalling that $\eps_0>0$ is a fixed small constant, we define the quadratic scale as $Q(X)=e^{-\frac{1}{2}\tau_q(X)}$, where $\tau_q(X)$ denotes the largest $\tau\in (-\infty,\tau_s(X)]$, such that
\begin{equation}\label{kappa_quadratic}
\left\| u^X(y,\vartheta,\tau)-\frac{y_2^2-2}{\sqrt{8}|\tau+2\log \hat{Z}(X)|} \right\|_{\mathcal{H}}\leq \frac{\eps_0}{|\tau+2\log \hat{Z}(X)|}.
\end{equation}

\begin{proposition}[quadratic scale]\label{prop_quad_scale}There exists $Z_\star<\infty$, such that if $Z(P_h)\geq Z_\star$, then we have
\begin{equation}\label{eq_quad_sc}
Q(P_h)\leq C Z(P_h),
\end{equation}
and the flow that is obtained from $\mathcal{M}$ by shifting $P_h$
 to the origin and parabolically rescaling by $H(P_h)$ is $\beta$-close to the round 3d-bowl, 
and furthermore for $\mathcal{T}(h)-t\leq C Z(P_h)^{2}$ we also have the diameter bound
\begin{equation}\label{radius_bound_level}
\sup_{p,p'\in M_t\cap \{x_1=h\}}|p-p'|\leq C (\mathcal{T}(h)-t)^{1/2}.
\end{equation}
\end{proposition}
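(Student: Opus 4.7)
The plan is to argue each claim by a contradiction-compactness argument, making uniform the two limit results already established in Section \ref{sec_bdd}: Proposition \ref{prop_dichotomy} ($\mathbb{R}\times$2d-oval limit after rescaling by $Z(P_h)^{-1}$) and Corollary \ref{prop_conv_to_bowl} (round 3d-bowl limit after rescaling by $H(P_h)$).

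Suppose first that the $\beta$-closeness to the round 3d-bowl fails along a sequence $h_i$ with $Z(P_{h_i})\to\infty$. By the compactness theorem of \cite{HaslhoferKleiner_meanconvex}, extract an ancient noncollapsed subsequential limit $\mathcal{M}^\infty$ of the parabolic rescalings $H(P_{h_i})\cdot(\mathcal{M}-P_{h_i})$ with $H(0,0)=1$. Proposition \ref{prop_dichotomy} gives $(ZH)(P_{h_i})\to\infty$, so any tangent flow of $\mathcal{M}^\infty$ at $-\infty$ has infinite relative bubble-sheet scale, excluding the bubble-sheet case. The classification \cite{BC2,ADS2} then forces $\mathcal{M}^\infty$ to be rotationally symmetric, and noncompactness (inherited from the elongated cylindrical regions of Section \ref{sec_cyl}) identifies $\mathcal{M}^\infty$ as the round 3d-bowl, contradicting the failure.

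For $Q(P_h)\leq CZ(P_h)$, suppose $Q(P_{h_i})/Z(P_{h_i})\to\infty$ and rescale by $Z(P_{h_i})^{-1}$: by Proposition \ref{prop_dichotomy} the rescaled flows converge in $C^\infty_{\mathrm{loc}}$ to an $\mathbb{R}\times$2d-oval with extinction time $0$. On this limit the bubble-sheet function at the origin coincides, via the $\mathbb{R}$-factor, with the ADS profile function of the 2d-oval, which by \eqref{bubble_sheet_quant1.1} enters the regime \eqref{kappa_quadratic} from some universal time $\tau_q^\infty>-\infty$ onwards. Since \eqref{kappa_quadratic} is a closed condition under smooth convergence, this yields a uniform upper bound on the rescaled $\tau_q(P_{h_i})$, hence $Q(P_{h_i})/Z(P_{h_i})\leq 2e^{-\tau_q^\infty/2}$, contradicting the assumption.

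For the diameter bound, write $\sigma=\mathcal{T}(h)-t\in(0,CZ(P_h)^2]$ and rescale by $Z(P_h)^{-1}$; the rescaled time is $s=-\sigma/Z(P_h)^2\in[-C,0)$. On the limit $\mathbb{R}\times$2d-oval the $\mathbb{R}$-factor is in the $x_1$-direction (as forced by the mixed asymptotics \eqref{DH_asymp}, which makes $x_1$ the long direction), so the level set $\{x_1=0\}$ at time $s$ is a cross-section of the shrinking 2d-oval, of diameter $\leq C\sqrt{|s|}$ for $s\in[-C,0)$ by the asymptotic roundness of the 2d-oval at extinction. Transferring to the rescaled original flow via $C^\infty_{\mathrm{loc}}$ convergence and rescaling back gives $\mathrm{diam}(M_t\cap\{x_1=h\})\leq CZ(P_h)\sqrt{|s|}=C\sqrt{\sigma}$. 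Rigorously: were $\mathrm{diam}/\sqrt{\sigma}$ to diverge along a sequence $(h_i,t_i)$ with $\sigma_i\leq CZ(P_{h_i})^2$, the rescaled diameter at rescaled time $s_i$ would diverge too, contradicting the smooth convergence to $\mathbb{R}\times$2d-oval.

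The main obstacle I anticipate is the near-extinction regime $\sigma\ll Z(P_h)^2$, i.e., $|s|\to 0$: capturing the sharp $\sqrt{|s|}$ decay of the level set diameter uniformly in the rescaling requires supplementing the $\mathbb{R}\times$2d-oval picture with the round 3d-bowl picture from Corollary \ref{prop_conv_to_bowl}, valid close to the cap, and matching the two descriptions across the crossover scale $\sigma\sim H(P_h)^{-2}$.
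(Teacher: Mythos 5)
Your arguments for \eqref{eq_quad_sc} and for the $\beta$-closeness to the 3d-bowl are essentially the paper's: both go through Proposition \ref{prop_dichotomy} (the $\mathbb{R}\times$2d-oval limit after rescaling by $Z(P_h)^{-1}$, combined with the ADS2 uniqueness/asymptotics to bound the quadratic scale of the limit) and through the same compactness-plus-classification route as Corollary \ref{prop_conv_to_bowl}, so those two parts are fine.

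The genuine gap is in the diameter bound \eqref{radius_bound_level}, and it is exactly the obstacle you flag at the end but do not resolve. The $C^\infty_{\mathrm{loc}}$ convergence to $\mathbb{R}\times$2d-oval only controls rescaled times $s$ in compact subsets of $[-C,0)$ bounded away from the extinction time; when $\sigma_i/Z(P_{h_i})^2\to 0$ your ``rigorous'' contradiction fails, because both the level-set diameter and $\sqrt{\sigma_i}$ tend to zero and locally uniform closeness to the limit (which is not even smooth near the extinction point) gives no control on their ratio. Moreover, the fix you propose --- matching the oval picture with the bowl picture of Corollary \ref{prop_conv_to_bowl} ``across the crossover scale $\sigma\sim H(P_h)^{-2}$'' --- cannot work as stated: since $(ZH)(P_h)\to\infty$, there is an unbounded range of intermediate scales $H(P_h)^{-2}\ll\sigma\ll Z(P_h)^2$ on which neither compactness limit says anything, so this is not a single crossover to be matched. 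The paper closes precisely this range with a different tool: by quantitative differentiation \cite{CHN_stratification}, the flow $\mathcal{M}-P_h$ is $\eps_0$-close to a neck away from a controlled number of scales between the curvature scale $H(P_h)^{-1}$ and the bubble-sheet scale $Z(P_h)$, and by the curvature estimate of \cite{HaslhoferKleiner_meanconvex} the mean curvature changes only by a bounded factor over a controlled number of scales; together these give the uniform bound $\mathrm{diam}(M_t\cap\{x_1=h\})\leq C(\mathcal{T}(h)-t)^{1/2}$ at every scale in the stated range. Without an ingredient of this type (some uniform neck-structure statement valid at all intermediate scales), your argument establishes \eqref{radius_bound_level} only for $\mathcal{T}(h)-t$ comparable to $Z(P_h)^2$, not for the whole range $\mathcal{T}(h)-t\leq CZ(P_h)^2$ needed later (e.g. in \eqref{x2_max} and the eccentricity estimates).
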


\begin{proof}
Recall from Proposition \ref{prop_dichotomy} (bubble-sheet scale) that if $Z(P_h)\to \infty$, then we have $(HZ)(P_h)\to \infty$ and the flows $\mathcal{M}^h$ that are obtained from $\mathcal{M}$ by shifting $P_{h}$ to the origin, and parabolically rescaling by $Z(P_{h})^{-1}$ converge to $\mathbb{R}\times$2d-oval, which becomes extinct at time $0$ and satisfies $Z(0,0)=1$. Together with the uniqueness result from \cite{ADS2} this implies \eqref{eq_quad_sc}. Next, the $\beta$-closeness to the round 3d-bowl follows from Corollary \ref{prop_conv_to_bowl} (convergence to 3d-bowl).
Furthermore, by quantitative differentiation \cite{CHN_stratification}, the flow $\mathcal{M}-P_h$ is $\eps_0$-close to a neck away from a controlled number of scales between the curvature scale $H(P_h)^{-1}$ and the bubble-sheet scale $Z(P_h)$. Finally, by the curvature estimate from \cite{HaslhoferKleiner_meanconvex} the mean curvature only changes by a bounded factor over a controlled number of scales. This implies \eqref{radius_bound_level}.
\end{proof}

As a consequence, let us observe that renormalized time relative to the bubble-sheet scale of the caps is captured by the eccentricity of the level sets. Here, we define the eccentricity of the $h$-level,
\begin{equation}
\varsigma(h,t)=\frac{\pi}{4}\frac{\mathcal{W}^2}{\mathcal{A}}(h,t),
\end{equation}
as ratio of the square of the width
\begin{equation}
\mathcal{W}(h,t)= \sup_{K_t\cap \{x_1=h\}}x_2-\inf_{K_t\cap \{x_1=h\}}x_2, 
\end{equation}
and the maximal section area
\begin{equation}
\mathcal{A}(h,t)=\sup_{x}\mathcal{H}^2\!\left(K_t\cap \{x_1=h,x_2=x\}\right).
\end{equation}
Note that on the oval-bowls, normalized as above, by the asymptotics from \cite{CHH_translator} we have $\varsigma(h,t)\sim \log(h-t)$. Motivated by this, we will now control the quantity $\tau_h+2\log Z(P_h)$, where
\begin{equation}
\tau_h = -\log(-t_h),\qquad\mathrm{with}\;\, t_h=t-\mathcal{T}(h).
\end{equation}
 
 \begin{corollary}[eccentricity]\label{prop:eccentricity.scale}
For every $\varepsilon>0$, there exists $N=N(\varepsilon)<\infty$, such that for all $(h,t)$ satisfying $Z(P_h)\geq Z_\star$ and $\varsigma(h,t)\geq N$ we have 
\begin{equation}
(1-\varepsilon)  \varsigma(h,t) \leq  -\tau_h-2\log Z(P_h) \leq (1+\varepsilon) \varsigma(h,t).
\end{equation}
Moreover, the estimate also holds if we assume $\tau_h+2\log Z(P_h)\leq -N$ in lieu of $\varsigma(h,t)\geq N$.
 \end{corollary}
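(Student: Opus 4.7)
The plan is to use the scale-invariance of $\varsigma(h,t)$ together with the convergence to $\mathbb{R}\times$2d-oval provided by Proposition \ref{prop_dichotomy}, reducing the claim to the sharp ADS asymptotic for the 2d-oval. After shifting $P_h$ to the origin and parabolically rescaling by $Z(P_h)^{-1}$, $\varsigma(h,t)$ is unchanged and equals the eccentricity of the rescaled flow at the level $\{\tilde{x}_1=0\}$ at rescaled time $\tilde{t}=(t-\mathcal{T}(h))/Z(P_h)^2$, whose logarithm satisfies $\log|\tilde{t}|=-\tau_h-2\log Z(P_h)$. The assertion then reduces to showing $\varsigma(h,t)=\log|\tilde{t}|(1+o(1))$ in the asymptotic regime where either quantity is large.

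I would argue by contradiction using a sequence $(h_i,t_i)$ with $Z(P_{h_i})\geq Z_\star$ violating the inequality, where either $\varsigma(h_i,t_i)\geq N_i\to\infty$ or $-\tau_{h_i}-2\log Z(P_{h_i})\geq N_i\to\infty$. Split on whether $Z(P_{h_i})$ stays bounded or diverges along a subsequence. In the divergent case, Proposition \ref{prop_dichotomy} yields smooth convergence of $\mathcal{M}^i:=(\mathcal{M}-P_{h_i})/Z(P_{h_i})$ to $\mathbb{R}\times$2d-oval, extinct at $\tilde{t}=0$ with $Z(0,0)=1$. If $|\tilde{t}_i|$ stays bounded, then the diameter bound \eqref{radius_bound_level} of Proposition \ref{prop_quad_scale} confines the relevant level sets to a fixed Euclidean ball, so the smooth convergence transfers to convergence of $\mathcal{W}_i^2$ and $\mathcal{A}_i$, hence of $\varsigma(h_i,t_i)$, to the corresponding values on $\mathbb{R}\times$2d-oval at a finite time -- contradicting $N_i\to\infty$. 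In the bounded-$Z$ case, the hypothesis forces $-\tau_{h_i}\to\infty$, so again the relevant subcase is $|\tilde{t}_i|\to\infty$.

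The main obstacle is the subcase $|\tilde{t}_i|\to\infty$: Proposition \ref{prop_dichotomy} only provides compact-set convergence, while the 2d-oval at time $\tilde{t}_i$ has long axis $\sim\sqrt{|\tilde{t}_i|\log|\tilde{t}_i|}\to\infty$, which cannot be accessed via any fixed compact subset. My plan is to avoid this by working directly in the original flow and invoking the ADS matched asymptotics. Setting $\sigma_i:=\tau_{h_i}+2\log\hat{Z}(P_{h_i})\to-\infty$, the DH asymptotic \eqref{DH_asymp} together with Theorem \ref{thm_spec_coeff} (spectral coefficients) gives $u^{P_{h_i}}(\cdot,\tau_{h_i})\approx (2-y_2^2)/(\sqrt{8}|\sigma_i|)$ in $\mathcal{H}$ with explicit error bounds, and the elongated cylindrical region estimate of Theorem \ref{thm_elong_cyl} together with the level-set propagation of Theorem \ref{thm:horizontal_propagation} extend the control to the entire level set $\{\bar{x}_1=0\}$. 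Matching with the tip regime via the ADS bowl-cylinder analysis determines the sharp half-length in $y_2$ to be $\sqrt{2|\sigma_i|}(1+o(1))$, while the central cross-sectional radius is $\sqrt{2}(1+o(1))$. Converting back via the rescaling factor $e^{-\tau_{h_i}/2}$ and plugging into the formula $\varsigma=(\pi/4)\mathcal{W}^2/\mathcal{A}$ yields $\mathcal{W}_i^2\sim 8 e^{-\tau_{h_i}}|\sigma_i|$ and $\mathcal{A}_i\sim 2\pi e^{-\tau_{h_i}}$, hence $\varsigma(h_i,t_i)\sim |\sigma_i|=-\tau_{h_i}-2\log Z(P_{h_i})$ with multiplicative factor tending to $1$, producing the desired contradiction.
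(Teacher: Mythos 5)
Your reduction and the easy subcase (bounded rescaled time $|\tilde t_i|$, using the convergence from Proposition \ref{prop_dichotomy} and the diameter bound \eqref{radius_bound_level}) are fine, but the main case $\sigma_i:=\tau_{h_i}+2\log Z(P_{h_i})\to-\infty$ has a genuine gap. You claim that \eqref{DH_asymp} together with Theorem \ref{thm_spec_coeff} gives $u^{P_{h_i}}(\cdot,\tau_{h_i})\approx (2-y_2^2)/(\sqrt{8}|\sigma_i|)$ with explicit errors. Neither input delivers this: \eqref{DH_asymp} is an asymptotic as $\tau\to-\infty$ for a \emph{fixed} center and does not transfer uniformly to the moving centers $P_{h_i}$ at the specific times $\tau_{h_i}$; and Theorem \ref{thm_spec_coeff} only gives the one-sided bound $-\tfrac{1+\kappa}{\sqrt 8 |\sigma|}\leq \alpha_2\leq \tfrac{\kappa}{|\sigma|}$, which is perfectly compatible with $\alpha_2\approx 0$ at time $\tau_{h_i}$, i.e.\ with the recentered flow still being in the pre-switch regime where the inward quadratic bending has not yet kicked in at the bubble-sheet scale. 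In that scenario the second part of Theorem \ref{thm_elong_cyl} shows the level set is $\eps_0$-cylindrical out to $|y_2|\sim\sqrt{10|\sigma_i|}$, so the half-length can exceed $\sqrt{2|\sigma_i|}(1+o(1))$ and your asserted width asymptotics (hence the lower inequality $(1-\eps)\varsigma\leq -\tau_h-2\log Z(P_h)$) fail. The step ``matching with the tip regime via the ADS bowl-cylinder analysis'' is likewise asserted rather than proved; no such matched-asymptotics statement centered at $P_h$ is available off the shelf in this paper.

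What is missing is exactly the ingredient the paper's proof runs on: Proposition \ref{prop_quad_scale} gives $Q(P_h)\leq C\,Z(P_h)$ whenever $Z(P_h)\geq Z_\star$ (this comes from the blow-down convergence to $\mathbb{R}\times$2d-oval in Proposition \ref{prop_dichotomy} together with the uniqueness result of \cite{ADS2}, not from the spectral ODE analysis), so that $\max\{\varsigma(h,t),-\tau_h-2\log Z(P_h)\}\gg1$ forces $\tau_h+2\log Q(P_h)\ll 0$. Once the quadratic and bubble-sheet scales are comparable, the sharp two-sided control of the width follows from Lemma \ref{prop_ell_dom} (lower bound, phrased in terms of $\hat Z$) and Lemma \ref{thm:width_zero_level} (upper bound, phrased in terms of $\hat Q$ and proved by a concavity/barrier argument adapting \cite[Proposition 3.10]{CHH_wing}), while $\mathcal{A}(h,t)\approx 2\pi|t_h|$, and the eccentricity identity follows. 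So your outline needs to be repaired by invoking $Q(P_h)\leq CZ(P_h)$ from Proposition \ref{prop_quad_scale} and replacing the ``matched asymptotics'' step by the central width lemma; as written, the main case does not close.
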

 
 \begin{proof}
By Proposition \ref{prop_quad_scale} (quadratic scale), the condition $\max\{\varsigma(h,t), -\tau_h-2\log Z(P_h)\}\gg 1$ guarantees that $\tau_h+2\log Q(P_h)\ll 0$. Therefore, we can obtain the assertion by applying Lemma \ref{prop_ell_dom} (ellipsoidal domains) and Lemma \ref{thm:width_zero_level} (central width) below.
\end{proof}

In the above proof we used the following standard lemma.

\begin{lemma}[central width]\label{thm:width_zero_level}
For every $\delta>0$, there exists $\tau_\delta>-\infty$, such that
\begin{equation}
\sup_{\bar{M}^X_\tau\cap \{ y_1=0 \}} |y_2|  \leq (1+\delta)\sqrt{2| {\tau+2\log \hat Q(X)}|}
\end{equation}
for $\tau\leq \tau_\delta-2\log \hat Q(X)$, where $\hat Q(X)=\max \{Q(X),1\}$.
\end{lemma}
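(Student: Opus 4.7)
My plan is as follows. By parabolic rescaling we may assume $\hat Q(X)=1$, so that \eqref{kappa_quadratic} provides the Gaussian $L^2$-approximation of $u^X$ by the quadratic $(2-y_2^2)/(\sqrt{8}|\tau+2\log\hat Z(X)|)$ for all $\tau\leq 0$, and the task reduces to showing $\sup_{\bar M_\tau^X\cap\{y_1=0\}}|y_2|\leq (1+\delta)\sqrt{2|\tau|}$ for $\tau\leq\tau_\delta$.

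The strategy is to combine a pointwise upgrade of \eqref{kappa_quadratic} with a comparison against the ADS 2d-shrinker $\Sigma_a\subset\mathbb{R}^3_{y_2,y_3,y_4}$ from \cite{ADS1}. First I would upgrade \eqref{kappa_quadratic} to pointwise $C^k$-control by combining standard interior parabolic estimates, the almost-symmetry estimate \eqref{rot_est}, and Lemma \ref{prop_ell_dom} (which provides the lower bound on the radius $\sqrt{2}+u$ needed to keep the evolution uniformly parabolic in a graphical region of size $\sim\sqrt{|\tau|}$). This gives, for $\tau$ sufficiently negative, that the cross-sectional profile $v(y_2,\vartheta,\tau):=\sqrt{2}+u^X(0,y_2,\vartheta,\tau)$ agrees with $\sqrt{2}+(2-y_2^2)/(\sqrt{8}|\tau+2\log\hat Z(X)|)$ up to error $o(|\tau|^{-1})$ uniformly in $\vartheta$ on the ellipsoidal region $\{y_2^2\leq (2-\delta)|\tau+2\log\hat Z(X)|\}$.

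Next, fix a target time $\tau_0\leq\tau_\delta$ and set $a=(1+\delta/2)\sqrt{2|\tau_0|}$. The ADS 2d-shrinker $\Sigma_a$ has tips at $y_2=\pm a$, and by \cite[Lemma 4.4 and Theorem 8.2]{ADS1} its profile $v_a$ satisfies $v_a(y_2)\geq\sqrt{2-y_2^2/|\tau_0|}+o(|\tau_0|^{-1})$ on $|y_2|\leq (1-\delta/4)a$. Combining this with the pointwise expansion above and matching the leading quadratic terms (both behave as $-y_2^2/(\sqrt{8}|\tau_0|)$), we obtain the strict inequality $v(y_2,\vartheta,\tau_0)<v_a(y_2)$ throughout the cylindrical region $|y_2|\leq (1-\delta/4)a$. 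Since the cross-section $\bar M_{\tau_0}\cap\{y_1=0\}$ is a strictly convex $2$-surface in $\mathbb{R}^3$ whose radius profile is concave in $y_2$ by Brunn--Minkowski, this strict inequality in the cylindrical region, together with the definite tip asymptotics of $\Sigma_a$ and concavity of the cross-sectional radius function, forces the cross-section to close up before $|y_2|=a$, yielding the desired bound $\sup|y_2|\leq a=(1+\delta/2)\sqrt{2|\tau_0|}<(1+\delta)\sqrt{2|\tau_0|}$.

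The main obstacle I anticipate is the tip-region matching, where the quadratic approximation for $v$ and the ADS sharp shrinker bound for $v_a$ both break down and one must argue via more delicate matched asymptotics. A clean way to sidestep this is by a compactness/contradiction argument: if the conclusion failed, one would extract a blow-up limit at the quadratic scale whose neutral-mode coefficient matches the quadratic $-1/(\sqrt{8}|\tau|)$ forced by \eqref{kappa_quadratic} but whose $y_2$-diameter strictly exceeds the sharp ADS 2d-oval diameter $\sqrt{2|\tau|}(1+o(1))$, contradicting the uniqueness of the ADS 2d-oval. This reduction uses Proposition \ref{prop_quad_scale} in the case that $Z(X)$ is large, and a direct $L^2$-asymptotic identification in the case $Z(X)$ bounded, so as to pin the limit down uniquely as the $\mathbb{R}_{y_1}$-invariant extension of the ADS 2d-oval.
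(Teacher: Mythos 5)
There is a genuine gap, in fact two. First, your opening step --- upgrading the Gaussian $L^2$ estimate \eqref{kappa_quadratic} to a pointwise expansion with error $o(|\tau|^{-1})$ \emph{uniformly on the ellipsoidal region} $\{y_2^2\leq(2-\delta)|\tau+2\log\hat Z(X)|\}$ --- is not justified by ``standard interior estimates plus almost symmetry.'' Interior estimates convert $\mathcal{H}$-smallness into pointwise control only where the Gaussian weight is bounded below, i.e.\ on sets of size $O(1)$ (or with errors that blow up like $e^{c|y|^2}$), and obtaining sharp pointwise control out to $|y_2|\sim\sqrt{|\tau|}$ is precisely the hard content of the lemma, not an input to it. Second, even granting that expansion, your comparison step does not give the sharp constant. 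The central quadratic $\sqrt2-y_2^2/(\sqrt8|\tau|)$ vanishes only at $|y_2|=2\sqrt{|\tau|}=\sqrt2\cdot\sqrt{2|\tau|}$, so concavity of the cross-sectional radius (Brunn--Minkowski) combined with the quadratic behaviour in a region $\{|y_2|\leq(1-\delta/4)a\}$ only forces the section to close at roughly $\sqrt2$ times the claimed bound, not at $(1+\delta)\sqrt{2|\tau|}$; the sharp width comes from the refined profile $(\sqrt2+u)^2\approx 2-y_2^2/|\tau|$, which your argument never accesses. Moreover $\Sigma_a$ (rotated) acts as an \emph{inner} barrier ($\vec H+\tfrac{y^\perp}{2}$ points outwards), i.e.\ it yields lower bounds for $u$ as in Lemma \ref{prop_ell_dom}; a static graph inequality $v<v_a$ on a cylindrical region is not an avoidance-principle argument and cannot cap the width from above. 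Your fallback compactness sketch does not repair this: the failure times satisfy $\tau+2\log\hat Q(X)\to-\infty$, the quantity $\sqrt{2|\tau|}$ is unbounded, and locally smooth convergence on compact sets does not transfer an asymptotic diameter bound at scale $\sqrt{|\tau|}$ between the sequence and the limit, so ``contradicting uniqueness of the ADS oval'' is not a complete argument.

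For comparison, the paper's proof first notes (as in Corollary \ref{cor_switch_time}) that \eqref{kappa_quadratic} improves going backwards in time, and then obtains the sharp upper bound by running the established ADS-type maximum-principle argument of \cite[Proposition 3.10]{CHH_wing} --- an argument for the \emph{squared} profile --- applied to $v(y,\vartheta,\tau)=(\sqrt2+u(0,y,\vartheta,\tau))^2-2+|\tau+2\log\hat Q(X)|^{-9}$, with Lemma \ref{prop_ell_dom} and the almost-symmetry estimate \eqref{curv_bd_57} supplying the needed a priori control. Some mechanism of this kind (a maximum principle for the squared profile, or an equivalent sharp outer estimate) is exactly what your proposal is missing.
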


\begin{proof} 
As in the proof of Corollary \ref{cor_switch_time} (switch time), we see that \eqref{kappa_quadratic} improves going backwards in time.
Hence, thanks to Lemma \ref{prop_ell_dom} (ellipsoidal domains) and the almost symmetry, c.f. \eqref{curv_bd_57}, we can use the argument for establishing the upper bound in \cite[Proposition 3.10]{CHH_wing}, by replacing $v(y,\tau)$ with
\begin{equation}
v(y,\vartheta,\tau)=\big(\sqrt{2}+u(0,y,\vartheta,\tau)\big)^2-2+|\tau+2\log \hat Q(X)|^{-9}.
\end{equation}
This implies the assertion.
\end{proof}

In the following subsections we will write $Z_h=Z(P_h)$ to denote the bubble-sheet scale at level $h$.

\bigskip 

\subsection{Controlling the translator ratio}

In this subsection, we consider the translator ratio
\begin{equation}
\Theta=\frac{-\nu_1}{H}.
\end{equation}

We start with the following technical lemma, which will be used to verify the assumptions of Corollary \ref{cor_geom_cons} (geometric bounds) and Theorem \ref{thm:strong_differential_neck} (strong differential neck theorem) centered at $X=P_h$.

 \begin{lemma}[shifted time]\label{prop:activate_strong_neck}
For every $\eps>0$ and $\tau_0>-\infty$, there exists $\tau_\star=\tau_\star(Z_\star,\eps,\tau_0)>-\infty$, such that if $Z_0 \geq Z_\star$, then for all $\tau \leq \tau_\star-\frac{11}{3}\log Z_0$ and all $|h|\leq 2 e^{\frac{1}{2}|\tau|}|\tau+2\log Z_0|^2$ we have
 \begin{align}
|\tau_h-\tau| \leq \eps,\qquad \tau_h\leq \tau_s(P_h) \qquad\mathrm{and}\qquad \tau_h \leq \tau_0-\tfrac{7}{2}\log \hat{Z}_h.
 \end{align}
\end{lemma}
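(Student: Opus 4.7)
The plan is to verify the three conclusions in sequence, translating the hypothesis $Z_0 \geq Z_\star$ into quantitative geometric control at heights $h$ throughout the claimed range via Proposition \ref{prop_dichotomy} (bubble-sheet scale) and Corollary \ref{prop_conv_to_bowl} (convergence to 3d-bowl).

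The first task is to obtain, for all $|h| \leq 2 e^{|\tau|/2}|\tau + 2\log Z_0|^2$, a polynomial bound $\hat Z_h \leq C Z_0^{\alpha}(1+|h|)^{\beta}$ with $\alpha$ close to $1$ and $\beta$ close to $0$, together with the matching two-sided curvature bound $c\hat Z_h^{-1} \leq H(P_h) \leq C\hat Z_h^{-1}$. When $|h| \lesssim Z_0$ this comes directly from Proposition \ref{prop_dichotomy}: rescaling by $Z_0^{-1}$ around $P_0$ yields $o(1)$-closeness to an $\mathbb{R} \times$ 2d-oval, whose bubble-sheet scale and tip curvature at rescaled position $h/Z_0$ are controlled by the 2d-oval asymptotics of \cite{ADS1}, and Corollary \ref{prop_conv_to_bowl} provides the matching curvature estimate. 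For $|h| \gg Z_0$ I would bootstrap this using the strong differential neck theorem (Theorem \ref{thm:strong_differential_neck}) centered at $P_0$, which quantifies how quickly the fiber size (and hence $Z$) can vary along the $x_1$-axis.

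The second task is to bound $|\tau_h - \tau|$. Since the cap $p^{-}_t$ moves in the $+x_1$-direction at speed $H(p^{-}_t,t)$ (monotone nondecreasing in $t$ by Hamilton's Harnack inequality \eqref{Harnack_curv}), we have $\mathcal{T}(h) = \int_0^h dh'/H(P_{h'})$, and inserting the lower bound on $H$ from the first task yields $|\mathcal{T}(h)| \leq C Z_0^{\alpha}\big(|h|^{\beta+1}+Z_0^{\beta+1}\big)$. For $|h|$ in our range and $\tau \leq \tau_\star - \tfrac{11}{3}\log Z_0$ with $\tau_\star$ chosen sufficiently negative depending on $\eps$ and $Z_\star$, this is $o(|t|)$ as $\tau \to -\infty$ and gives $|\tau_h - \tau| = O\!\left(|\mathcal{T}(h)|/|t|\right) \leq \eps$. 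With this in hand, the switch-time bound $\tau_h \leq \tau_s(P_h)$ follows from the definition \eqref{def_switch_time} combined with the expansion \eqref{DH_asymp} applied at $P_h$, which forces any $\tau_h+2\log\hat Z_h$ sufficiently negative to lie below the switch time. Finally, $\tau_h \leq \tau_0 - \tfrac{7}{2}\log \hat Z_h$ reduces, via the polynomial bound on $\hat Z_h$, to $\tau \leq \tau_0 - \tfrac{7}{2}\alpha\log Z_0 - O(\log|\tau|)$, which is verified by the choice $\tau \leq \tau_\star - \tfrac{11}{3}\log Z_0$ since $\tfrac{11}{3} > \tfrac{7}{2}\alpha$ provided $\alpha$ is close enough to $1$.

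The main obstacle is the first task: the range $|h| \leq 2 e^{|\tau|/2}|\tau + 2\log Z_0|^2$ far exceeds the natural scale $Z_0$ of the $\mathbb{R}\times$2d-oval approximation from Proposition \ref{prop_dichotomy}, so one must extend bubble-sheet control along the $x_1$-axis well beyond what the rescaling argument gives for free. The specific coefficient $\tfrac{11}{3}$ in the hypothesis is chosen precisely so that $\tfrac{11}{3} > \tfrac{7}{2}$ by the margin needed to absorb the (near-linear) growth of $\hat Z_h$ coming from this propagation, while leaving room for the $\eps$-error and the logarithmic factors in $|\tau|$.
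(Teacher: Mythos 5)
Your proposal has two genuine gaps, and both sit at the places where the paper's proof does its real work. First, the ``first task'' is not justified and is partly false as stated: the two-sided bound $c\hat Z_h^{-1}\leq H(P_h)\leq C\hat Z_h^{-1}$ cannot hold in the regime $Z_0\geq Z_\star$, since Proposition \ref{prop_dichotomy} says precisely that $(ZH)(P_h)\to\infty$ when the bubble-sheet scale blows up, and Corollary \ref{prop_conv_to_bowl} (convergence of the $H(P_h)$-rescalings to the 3d-bowl) gives no upper bound of the form $H(P_h)\leq C\hat Z_h^{-1}$. More importantly, the proposed propagation of the $\hat Z_h$-bound to $|h|\gg Z_0$ by ``bootstrapping'' Theorem \ref{thm:strong_differential_neck} centered at $P_0$ does not work: that theorem controls the localized slope only in the Gaussian $L^2$-norm, i.e.\ effectively on renormalized scales $|y|\lesssim\sqrt{2|\tau|}$, while the levels in question lie at renormalized distance up to $2|\tau+2\log Z_0|^2$; and in any case slope information around $P_0$ at time $t$ does not bound the bubble-sheet scale of the cap point $P_h$, which concerns the geometry near the cap at the different time $\mathcal{T}(h)$. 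The paper gets the needed comparison $|\log Z_h-\log Z_0|\leq 1+3\beta|\tau+2\log Z_0|$ by a different mechanism: Theorem \ref{thm:cap_distance} plus convexity show that $K_t\cap\{x_1=h\}$ is nearly congruent to $K_t\cap\{x_1=0\}$ over the whole range of $h$, so the eccentricities of the two levels agree up to a factor, and Corollary \ref{prop:eccentricity.scale} (resting on Proposition \ref{prop_quad_scale} and Lemma \ref{thm:width_zero_level}) converts eccentricity into $-\tau_h-2\log Z_h$. Likewise, the estimate $|\tau_h-\tau|\leq\eps$ does not require your integral $\int dh'/H(P_{h'})$ with control of $H$ at all intermediate caps: it follows directly from Theorem \ref{thm:cap_distance} (the cap sits at renormalized distance $\geq|\tau+2\log Z_0|^{10}$, far beyond $|\tau+2\log Z_0|^2$) together with Harnack monotonicity of $H$.

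Second, your justification of $\tau_h\leq\tau_s(P_h)$ is circular in exactly the way the lemma is designed to avoid. The asymptotics \eqref{DH_asymp} only say that the quadratic mode dominates as $\tau\to-\infty$ at a rate depending on the center point, and nothing in the definitions of $Z(X)$ or $\tau_s(X)$ guarantees a priori that the switch to definite quadratic bending occurs within a bounded number of scales of $Z(X)$; ruling out a long intermediate neck-like regime above the bubble-sheet scale is the content of Proposition \ref{prop_quad_scale} ($Q(P_h)\leq CZ_h$), which you never invoke, and even with it one still needs the quantitative bound on $\log Z_h$ from the previous step. The paper instead argues by contradiction: if $\tau_h>\tau_s(P_h)$, the ``furthermore'' clause of Theorem \ref{thm_elong_cyl} would force the width of the $h$-level set to be at least of order $\sqrt{|t|(\log|t|-2\log\hat Z_h)}$ with too large a constant, and transferring this to the $0$-level via the near-congruence of level sets contradicts the central width bound of Lemma \ref{thm:width_zero_level}. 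Your bookkeeping for the third conclusion (the $\tfrac{11}{3}$ versus $\tfrac{7}{2}$ margin) is structurally fine, but it is conditional on the unproven $\hat Z_h$-control, so the argument does not close as written.
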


\begin{proof}
To begin with, note that Theorem \ref{thm:cap_distance} (cap distance) and $\partial_t H\geq 0$ yield $|\mathcal{T}(h)|\leq \frac{1}{2}\eps |t|$ for $|h|\leq e^{\frac{1}{2}|\tau|}|\tau+2\log Z_0|^9$, which in particular implies $|\tau_h-\tau| \leq \eps$. Also note that by Theorem \ref{thm:cap_distance} (cap distance) and convexity, setting $K^h_t=K_t\cap \{x_1=h\}$, for all $h$ in the considered range we have 
\begin{equation}\label{eq:congruent_level}
(1-|\tau+2\log Z_0|^{-7})K_t^0\subset K_t^h \subset (1+|\tau+2\log Z_0|^{-7})K_t^0.
\end{equation}

Also note that by choosing $\tau_\star\leq\tau_0-1-\tfrac72 \log Z_\star$ the estimate $\tau_h \leq \tau_0-\tfrac{7}{2}\log \hat{Z}_h$ holds when $Z_h<Z_\star$. Suppose now $Z_h\geq Z_\star$. By \eqref{eq:congruent_level} the eccentricity barely changes if we change the level, specifically
\begin{equation}
(1-\beta)\varsigma(0,  t) \leq \varsigma (  h,  t) \leq (1+\beta)\varsigma(0,  t).
\end{equation}
Together with Corollary \ref{prop:eccentricity.scale} (eccentricity) and $|\tau_h-\tau|\leq 10^{-3}$, this yields
\begin{equation}\label{eq_change_log_Z}
|\log Z_h-\log Z_0| \leq 1+2\beta\varsigma(0,  t)\leq 1 + 3\beta |\tau +2\log Z_0|.
\end{equation}
Remembering that $\tau \leq \tau_\star-\frac{11}{3}\log Z_0$ by assumption, this implies $\tau_h \leq \tau_0-\tfrac{7}{2}\log \hat{Z}_h$.

Finally, if we had $\tau_{  h} > \tau_s(P_h)$, then Theorem \ref{thm_elong_cyl} (elongated cylindrical region) and $|\tau_h-\tau| \leq 10^{-3}$ would yield
$W(  h,  t)^2  \geq 39 |t|(\log|t|-2 \log \hat Z_h)$, and hence \eqref{eq:congruent_level} and \eqref{eq_change_log_Z} would imply $W( 0,  t)^2  \geq 10 |t|\log|t|$, contradicting the upper bound from Lemma \ref{thm:width_zero_level} (central width), which is applicable thanks to Proposition \ref{prop_quad_scale} (quadratic scale). This concludes the proof of the lemma.
\end{proof}

\begin{proposition}[translator ratio in central region]\label{lem:center_ratio_estimate}
Suppose that $Z_0 \geq Z_\star$. Then, for every $\varepsilon>0$ there exists $\tau^\varepsilon_\star =\tau_\star^\eps(Z_\star) >-\infty$, such that for all $\tau \leq   \tau^\varepsilon_\star-\frac{11}{3}\log  Z_0$ we have
\begin{equation}
\sup_{\{|x_1|\leq 2|\tau+2\log  Z_0|^2e^{|\tau|/2}, |x_2|\leq 10e^{|\tau|/2}\}}\left|\Theta(x,-e^{-\tau})-1\right|\leq \varepsilon.
\end{equation}
\end{proposition}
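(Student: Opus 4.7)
The plan is to apply Theorem \ref{thm:strong_differential_neck} (strong differential neck theorem) recentered at the cap $P_h$ with $h=x_1$, for each $(x,t)$ in the target region, and then to convert the resulting pointwise slope bound into bounds on $\nu_1$ and $H$. Given such a point with $t=-e^{-\tau}$ and $\tau\leq\tau_\star^\eps-\tfrac{11}{3}\log Z_0$, Lemma \ref{prop:activate_strong_neck} will verify the hypotheses of Theorem \ref{thm:strong_differential_neck} at $P_h$ with time parameter $\tau_h$, and will also yield $|\tau_h-\tau|\leq \eps/3$. Plugging in the normalization $a(\mathcal M)=2^{-1/2}$, the differential neck theorem gives an $\cH$-norm estimate on $w^{P_h}(\cdot,\tau_h)-2^{-1/2}e^{\tau_h/2}$ of size $C|\tau+2\log\hat Z(P_h)|^{-1/2}e^{\tau_h/2}$, which by standard parabolic interior estimates upgrades to an analogous pointwise bound on any fixed compact set $\{|y|\leq L_0\}$.

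Next I will locate the point $(x,t)$ in the renormalized frame at $P_h$: its renormalized coordinate $\bar y=e^{\tau_h/2}(x-p^\ast_{\cT(h)})$ satisfies $\bar y_1=0$ (since $x_1=h=(p^\ast_{\cT(h)})_1$) while $\bar y_2=e^{\tau_h/2}(x_2-(p^\ast_{\cT(h)})_2)$. The assumption $|x_2|\leq 10e^{|\tau|/2}$ together with $|\tau_h-\tau|\leq \eps/3$ reduces the matter to controlling $|(p^\ast_{\cT(h)})_2|$ at scale $e^{|\tau|/2}$, which I plan to establish via the bubble-sheet structure from Theorem \ref{thm_elong_cyl} combined with strict convexity, using also that $|\cT(h)|$ is very small compared to $|t|$ (as provided by Lemma \ref{prop:activate_strong_neck}). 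This forces $|\bar y|\leq L_0$, so the pointwise differential neck estimate specializes at $\bar y$ to a relative error $O(|\tau|^{-1/2})$ between $u_1^{P_h}(\bar y,\vartheta,\tau_h)$ and $2^{-1/2}e^{\tau_h/2}$.

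To finish, I will convert the slope bound into bounds on $\nu_1$ and $H$ at $(x,t)$. A direct computation from the bubble-sheet parametrization gives $\nu_1=-u_1+O(|\nabla u|^2+|u_\vartheta|)$, which by the cylindrical derivative bounds \eqref{std_cyl_est} and the almost symmetry estimate \eqref{rot_est} equals $-u_1$ up to a relative error $O(|\tau|^{-1})$. The quadratic profile asymptotics \eqref{DH_asymp} centered at $P_h$ likewise give $H(x,t)=2^{-1/2}e^{\tau_h/2}+o(e^{\tau_h/2})$ at the matched physical time. Dividing produces the leading-order cancellation $\Theta(x,t)=1+O(|\tau|^{-1/2})$ as $\tau\to-\infty$, and choosing $\tau_\star^\eps$ sufficiently negative then delivers the claimed bound $|\Theta-1|\leq\eps$. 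The main obstacle will be the cap-location estimate in the second paragraph, since no a priori symmetry forces $(p^\ast_{\cT(h)})_2$ to be small; resolving it hinges on extracting enough from the bubble-sheet tangent structure through Theorem \ref{thm_elong_cyl}, combined with the mild control of $M_{\cT(h)}$ at the nearly-present time $\cT(h)\approx 0$.
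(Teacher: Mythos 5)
Your outline follows the same route as the paper's proof in its main steps: use Lemma \ref{prop:activate_strong_neck} to activate Theorem \ref{thm:strong_differential_neck} centered at $P_h$ with $h=x_1$ and to get $|\tau_h-\tau|$ small, upgrade the $\mathcal{H}$-bound to a pointwise bound near the cap, and then convert $u_1^{P_h}$ into $\nu_1$ and the cylindrical radius into $H$. (One caution on the conversion: the displayed additive formula $\nu_1=-u_1+O(|\nabla u|^2+|u_\vartheta|)$ is unusable as written, since any polynomially small additive error swamps the exponentially small main term $e^{\tau_h/2}$; only the multiplicative statement $\nu_1=-u_1\big(1+O(|\tau|^{-1})\big)$, which you state afterwards, does the job.)

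The genuine gap is exactly the step you flag and postpone: you must show that every point of $\{x_1=h,\ |x_2|\leq 10e^{|\tau|/2}\}$ lies at \emph{bounded} renormalized distance (say $|y|\leq 100$) from the cap $p^\ast_{\mathcal{T}(h)}$, because the $\mathcal{H}$-estimate of Theorem \ref{thm:strong_differential_neck} upgrades to a pointwise estimate with small relative error only at renormalized distance $O(1)$: at distance $R$ one pays a Gaussian factor of order $e^{R^2/4}$ against an error of size $|\tau|^{-1/2}e^{\tau/2}$, so already $R\sim |\tau|^{1/2}$ is fatal. The tools you propose (Theorem \ref{thm_elong_cyl}, convexity, $|\mathcal{T}(h)|\ll |t|$) only localize the cap transversally at the width scale, $|x_2(p^\ast_{\mathcal{T}(h)})|\lesssim |t|^{1/2}\log|t|$ (cf. \eqref{eq_level_diam} or Lemma \ref{thm:width_zero_level}), which is a factor $\log|t|$ too weak, and no symmetry is available to do better by these means. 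The paper closes precisely this step with Corollary \ref{cor_geom_cons} (geometric bounds): the lower bound $\nu_2\geq \kappa_0/|\tau|$ (and its reflection) at renormalized distance $|y_2|\geq 10$ from the cap forces the maximal cross-section of $K_t\cap\{x_1=h\}$ to be attained at $x_{\max}(h,t)$ with $|x_{\max}(h,t)-x_2(p^\ast_{\mathcal{T}(h)})|\leq 10|t_h|^{1/2}$, which is \eqref{x2_max}; applying this both in the frame centered at $P_h$ and in the frame centered at the level-$0$ cap (the spatial origin in the normalization used) bounds the transverse drift of the caps at scale $|t|^{1/2}$ and places the whole strip $\{|x_2|\leq 10e^{|\tau|/2}\}$ inside the $\{|y|\leq 100\}$ neighborhood of $P_h$, after which your concluding computation goes through. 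Without an ingredient of this type ($\nu_2$-positivity/maximal-section localization, not just the cylindrical region estimates) the argument does not close.
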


\begin{proof}
Let $|h| \leq 2e^{|\tau|/2}|\tau+2\log Z_0|^2$. Thanks to Lemma \ref{prop:activate_strong_neck} (shifted time) we can apply Theorem \ref{thm:strong_differential_neck} (strong differential neck theorem) for $\mathcal{M}^{P_h}$, so that for $\tau\leq \tau^\eps_\star(Z_\star)-\frac{11}{3}\log Z_0$ we get
\begin{equation}
\sup_{\{|y| \leq 100\}}\left|u_1^{P_h} -2^{-\frac{1}{2}}e^{\frac{1}{2}\tau_h}\right| \leq \frac{\eps}{10}e^{\frac{1}{2}\tau_h}.
\end{equation}
Moreover, again by Lemma \ref{prop:activate_strong_neck} (shifted time) we can arrange that
\begin{equation}\label{sh_very_cl}
|\tau_h-\tau|\leq\eps/10,\qquad \tau_h\leq \tau_s(P_h),
\end{equation}
and Corollary \ref{cor_geom_cons} (geometric bounds) yields that the maximal section area in the definition of $\mathcal{A}(h,t)$ is attained at $x_2=x_{\max}(h,t)$ with
\begin{equation}\label{x2_max}
\left|x_{\max} (h,t)-x_2\big(p^\ast_{\mathcal{T}(h)}\big)\right| \leq 10|t_h|^{1/2}.
\end{equation}
Finally, using \eqref{sh_very_cl}, we see that $H$ is almost equal to $2^{-1/2}e^{\frac{1}{2}\tau}$ and that $\nu_1$ is almost equal to $u_1^{P_h}$, respectively.
Combining these estimates, the assertion follows.
\end{proof}

\begin{theorem}[translator ratio]\label{thm:speed_level}
Suppose that $Z_h \geq Z_\star$. Then, whenever $\tau_h \leq   \tau^\varepsilon_\star-\frac{11}{3}\log  Z_h$ we have
\begin{equation}
\sup_{\{|x_1-h|\leq |\tau_h +2\log   Z_h|^2e^{|\tau_h|/2}\}}\left|\Theta(x,-e^{-\tau})-1\right|\leq 2\varepsilon,
\end{equation}
\end{theorem}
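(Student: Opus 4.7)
The plan is to reduce Theorem~\ref{thm:speed_level} to Proposition~\ref{lem:center_ratio_estimate} via space-time translation to the cap $P_h$, and then to extend the resulting central-box translator-ratio estimate to the full level sets in the target $x_1$-strip using a Jacobi-field comparison in the spirit of Theorem~\ref{thm:horizontal_propagation}.

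The first step is a re-centering argument. The translated flow $\tilde{\mathcal{M}}:=\mathcal{M}-P_h$ is again an ancient noncollapsed flow in $\mathbb{R}^4$ with the same bubble-sheet tangent flow and normal form, and with $a(\tilde{\mathcal{M}})=a(\mathcal{M})=2^{-1/2}$ preserved. For $\tilde{\mathcal{M}}$ the cap at level $0$ sits at the origin at time $0$, and the bubble-sheet scale at level $0$ is $\tilde Z_0=Z_h\geq Z_\star$, so Proposition~\ref{lem:center_ratio_estimate} applies at renormalized time $\tau_h$. By translation invariance of $\Theta$ we obtain
\begin{equation}
\sup_{B}|\Theta(\cdot,t)-1|\leq\varepsilon,\qquad t=\mathcal{T}(h)-e^{-\tau_h},
\end{equation}
where $B=\{|x_1-h|\leq 2|\tau_h+2\log Z_h|^2 e^{|\tau_h|/2},\,|x_2-x_2(p^\ast_{\mathcal{T}(h)})|\leq 10\, e^{|\tau_h|/2}\}$.

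The second step propagates the bound in the $x_2$-direction using the Jacobi equation. The quantity $\phi:=H+\nu_1$ is a sum of two Jacobi fields, hence satisfies $(\partial_t-\Delta-|A|^2)\phi=0$, and the first step reads $|\phi|\leq\varepsilon H$ on $B$; note that $|\Theta-1|=|\phi|/H$. In the renormalized wing $\{y_2\geq 10\}$ centered at $P_h$ (symmetrically $\{y_2\leq -10\}$), Corollary~\ref{cor_geom_cons} gives $\nu_2\geq \kappa_0/|\tau_h+2\log \hat Z_h|>0$, so $K\nu_2$ is a positive Jacobi supersolution for any $K>0$. Mimicking Theorem~\ref{thm:horizontal_propagation}, compare $\pm\phi$ against
\begin{equation}
\Phi:=\varepsilon H+K\nu_2+\Psi_b
\end{equation}
on the space-time wing lying over the extended strip $\{|x_1-h|\leq 2|\tau_h+2\log Z_h|^2 e^{|\tau_h|/2}\}$. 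Here $\Psi_b$ is the Jacobi supersolution from Proposition~\ref{prop_Jac_super_long} recentered at $P_h$, with $b=2$ chosen so that $\Psi_b\geq 1$ on the extended vertical boundary but $\Psi_b$ is exponentially negligible on the target strip $\Omega_t=M_t\cap\{|x_1-h|\leq|\tau_h+2\log Z_h|^2 e^{|\tau_h|/2}\}$. The constant $K$ is chosen so that $|\phi|\leq\Phi$ holds on the inner horizontal boundary $\{|x_2-x_2(p^\ast_{\mathcal{T}(h)})|=10\,e^{|\tau_h|/2}\}\subset B$ (using Step~1) as well as on the outer tip boundary, where the pointwise bounds $|\nu_1|\leq|\tau_h+2\log\hat Z_h|^{-3/2}$ and $H\leq C|\tau_h+2\log\hat Z_h|\,e^{\tau_h/2}$ from Corollary~\ref{cor_geom_cons} are available and $\nu_2\to \pm 1$. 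After verifying a Cauchy condition as $t'\to-\infty$ via bubble-sheet convergence, the parabolic maximum principle yields $|\phi|\leq\Phi$ on the wing; restricting to time $t$ and $\Omega_t$ gives $|\Theta-1|\leq 2\varepsilon$ provided $K\nu_2\leq\varepsilon H$ holds on $\Omega_t$.

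The main technical obstacle is this final absorption $K\nu_2\leq\varepsilon H$ on $\Omega_t$. On the cylindrical portion of $\Omega_t$ one has $\nu_2\sim|\tau_h|^{-1}$ and $H\sim e^{\tau_h/2}$, while the outer-tip boundary forces $K\sim|\tau_h|\,e^{\tau_h/2}$, so the ratio $K\nu_2/H$ is borderline $O(1)$. Closing this gap requires replacing the crude tip bounds by sharper ones reflecting the near-translator geometry, using Theorem~\ref{thm:strong_differential_neck} (strong differential neck theorem) together with Corollary~\ref{prop:eccentricity.scale} (eccentricity) to force $|\phi|\ll|\tau_h|\,e^{\tau_h/2}$ near the tips, which in turn permits a much smaller choice of $K$ and hence the desired absorption.
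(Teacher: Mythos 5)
Your first step (recentering at $P_h$ and invoking Proposition \ref{lem:center_ratio_estimate} via Lemma \ref{prop:activate_strong_neck}) matches the paper, and so does the general idea of a Jacobi comparison in the wing $\{x_2\geq 10|t|^{1/2}\}$ with $\nu_2$ and $\Psi_b$ as in Theorem \ref{thm:horizontal_propagation}. But your argument has a genuine gap at exactly the point you flag as the ``main technical obstacle'': as you set it up, the absorption $K\nu_2\leq \varepsilon H$ is needed and fails (it is borderline $O(1)$ on the cylindrical part), and your proposed repair --- sharper tip bounds via Theorem \ref{thm:strong_differential_neck} and Corollary \ref{prop:eccentricity.scale} to shrink $K$ --- is only asserted, not carried out, and is not obviously available (the differential neck theorem controls the slope in a Gaussian-weighted region around a center, not pointwise at tip points whose own bubble-sheet scale you have not controlled). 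So the proof as written does not close.

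The obstacle is in fact an artifact of a misconception about the domain: there is no ``outer tip boundary''. The region $\Omega_t=M_t\cap\{|x_1-h|\leq 2|t_h|^{1/2}(\log|t_h|-2\log Z_h)^2,\; x_2-x_2(p^\ast_{\mathcal{T}(h)})\geq 10|t_h|^{1/2}\}$ is a compact piece of the hypersurface whose boundary consists only of the horizontal part $\{x_2-x_2(p^\ast_{\mathcal{T}(h)})=10|t_h|^{1/2}\}$ and the vertical part $\{|x_1-h|=2|t_h|^{1/2}(\cdots)^2\}$; the tips of the level sets are \emph{interior} points, where by Corollary \ref{cor_geom_cons} one still has $\nu_2>0$, so no boundary data is required there and nothing forces $K$ to be large. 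The only remaining requirement is the Cauchy condition as $t\to-\infty$, and there Corollary \ref{cor_geom_cons} gives $|\nu_1|\leq|\tau_h+2\log\hat Z_h|^{-3/2}$ and $H\leq C|\tau_h+2\log\hat Z_h|e^{\tau_h/2}$, both $o(\nu_2)$, so the coefficient in front of $\nu_2$ can be taken \emph{arbitrarily small} and then sent to zero. This is precisely how the paper argues: it compares the Jacobi fields $\varphi_\pm=\pm\nu_1\pm(1\mp\varepsilon)H$ (the $\varepsilon$-slack is folded into the comparison field, so that Proposition \ref{lem:center_ratio_estimate} gives $\varphi_\pm\leq 0$ on the horizontal boundary) with the supersolution $\delta\nu_2+\Psi_2$, checks $\varphi_\pm\leq 1=\Psi_2$ on the vertical boundary and the Cauchy condition, applies the maximum principle, and lets $\delta\to 0$; on the target strip $\Psi_2/H$ is negligible, yielding $|\Theta-1|\leq 2\varepsilon$ with no absorption step at all. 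If you replace your fixed $K$ by an arbitrary $\delta>0$ and verify the Cauchy condition as above (dropping the phantom tip boundary), your comparison scheme becomes essentially the paper's proof; as it stands, the last step is not justified.
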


 \begin{proof}
After replacing $\mathcal{M}$ by $\mathcal{M}-P_h$ we can assume without loss of generality that $h=0$. Similarly as in the proof of Theorem \ref{thm:horizontal_propagation} (level set propagation of smallness) we consider the region
 \begin{equation}
\Omega_t=\{|x_1| \leq 2|t|^{1/2}(\log |t|-2\log Z_0)^2, x_2 \geq 10|t|^{1/2}\}.
\end{equation}
We would like to compare $\varphi_\pm=\pm\nu_1\pm(1\mp\varepsilon)H$, which solves the Jacobi equation, with $\delta \nu_2 + \Psi_2$, which is a supersolution thanks to Proposition \ref{prop_Jac_super_long} (Jacobi supersolution). To this end, note first that by Proposition \ref{lem:center_ratio_estimate} (translator ratio in central region) we have $\varphi_\pm\leq 0$ on the horizontal boundary $\Omega_t\cap \{x_2=10|t|^{1/2}\}$, and also recall that $\Psi_2=1$ on the vertical boundary $\Omega_t\cap \{|x_1|=2|t|^{1/2} (\log |t|-2\log Z_0)^2\}$. Hence, using Corollary \ref{cor_geom_cons} (geometric bounds), which is applicable thanks to Lemma \ref{prop:activate_strong_neck} (shifted time), we see that 
$\varphi_\pm\leq \delta \nu_2 + \Psi_2$ on $\partial\Omega_t$, and see that the Cauchy condition is satisfied as well. Thus, applying the maximum principle we conclude that $\varphi_\pm\leq \delta \nu_2 + \Psi_2$ in $\Omega_t$. Since $\delta>0$ was arbitrary, this implies the assertion.
 \end{proof}

Finally, let us record the following corollary in terms of the eccentricity.

\begin{corollary}[translator ratio]\label{cor:speed_level_ecc}
For every $\varepsilon>0$, there is $N=N(\varepsilon,Z_\star)<\infty$ with the following significance. If $Z_h\geq Z_\star$, then for every $t\leq \mathcal{T}(h)$ satisfying $\varsigma(h,t) \geq N+\frac{7}{4}\log Z_h$ we have
\begin{equation}
\sup_{\{x_1=h\}} \left|\Theta(x,t)-1\right|\leq \varepsilon .
\end{equation}
\end{corollary}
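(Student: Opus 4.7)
\textbf{Proof plan for Corollary~\ref{cor:speed_level_ecc}.} The corollary is a direct repackaging of Theorem~\ref{thm:speed_level} in which the renormalized-time hypothesis $\tau_h \le \tau_\star^\varepsilon - \tfrac{11}{3}\log Z_h$ is replaced by the more geometric hypothesis $\varsigma(h,t) \ge N + \tfrac{7}{4}\log Z_h$. My plan is simply to convert one hypothesis into the other using Corollary~\ref{prop:eccentricity.scale} (eccentricity). Concretely, given $\varepsilon>0$, I would apply Theorem~\ref{thm:speed_level} with parameter $\varepsilon/2$ to obtain a universal constant $\tau_\star^{\varepsilon/2}>-\infty$, and then choose $N=N(\varepsilon,Z_\star)$ so large that the eccentricity hypothesis forces $\tau_h\le\tau_\star^{\varepsilon/2}-\tfrac{11}{3}\log Z_h$. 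Since the level set $\{x_1=h\}$ satisfies $|x_1-h|=0$, it lies trivially inside the strip $\{|x_1-h|\le|\tau_h+2\log Z_h|^2 e^{|\tau_h|/2}\}$ on which Theorem~\ref{thm:speed_level} provides the translator-ratio estimate, so the conclusion follows directly.

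\textbf{The key arithmetic.} The nontrivial check is that the coefficient $\tfrac{7}{4}$ appearing in the eccentricity hypothesis is consistent with the coefficient $\tfrac{11}{3}$ appearing in Theorem~\ref{thm:speed_level}. Fixing a small $\delta\in(0,1/100)$ and applying Corollary~\ref{prop:eccentricity.scale} with this $\delta$ gives some threshold $N_{\mathrm{ecc}}(\delta)<\infty$, such that whenever $\varsigma(h,t)\ge N_{\mathrm{ecc}}(\delta)$ and $Z_h\ge Z_\star$ one has
\begin{equation}
-\tau_h - 2\log Z_h \ge (1-\delta)\,\varsigma(h,t).
\end{equation}
If in addition $\varsigma(h,t)\ge N + \tfrac{7}{4}\log Z_h$ with $N\ge N_{\mathrm{ecc}}(\delta)$, then rearranging yields
\begin{equation}
-\tau_h - \tfrac{11}{3}\log Z_h \,\ge\, (1-\delta)N + \Bigl[(1-\delta)\tfrac{7}{4} + 2 - \tfrac{11}{3}\Bigr]\log Z_h.
\end{equation}
Here $(1-\delta)\tfrac{7}{4}+2-\tfrac{11}{3}\to\tfrac{15}{4}-\tfrac{11}{3}=\tfrac{1}{12}>0$ as $\delta\downarrow 0$, so for $\delta$ sufficiently small (numerically, $\delta\le\tfrac{1}{21}$ suffices) the bracketed coefficient is nonnegative. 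Since $Z_h\ge Z_\star\ge 1$, that term is $\ge 0$.

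\textbf{Completing the argument.} With the coefficient of $\log Z_h$ now nonnegative, it suffices to choose $N\ge N_\star(\varepsilon,Z_\star)\vcentcolon=\max\bigl\{N_{\mathrm{ecc}}(\delta),\,|\tau_\star^{\varepsilon/2}|/(1-\delta)\bigr\}$ to guarantee
\begin{equation}
\tau_h \le \tau_\star^{\varepsilon/2} - \tfrac{11}{3}\log Z_h,
\end{equation}
which is exactly the hypothesis of Theorem~\ref{thm:speed_level}. That theorem then yields $|\Theta-1|\le 2\cdot(\varepsilon/2)=\varepsilon$ on the strip $\{|x_1-h|\le|\tau_h+2\log Z_h|^2 e^{|\tau_h|/2}\}$, and restricting to the level set $\{x_1=h\}$ gives the claimed bound. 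I do not anticipate any real obstacle here beyond the arithmetic check $\tfrac{15}{4}>\tfrac{11}{3}$, which is the reason the exponents $\tfrac{11}{3}$ and $\tfrac{7}{4}$ were chosen in the preceding results; the only care required is to take $\delta$ small enough to absorb the tiny slack $\tfrac{1}{12}$ into the nonnegativity of the $\log Z_h$ coefficient.
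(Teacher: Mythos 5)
Your proposal is correct and follows exactly the paper's route: the paper's proof is a one-line citation of Corollary \ref{prop:eccentricity.scale} (eccentricity) combined with Theorem \ref{thm:speed_level} (translator ratio), which is precisely your conversion of the hypothesis $\varsigma(h,t)\geq N+\tfrac{7}{4}\log Z_h$ into $\tau_h\leq \tau_\star^{\varepsilon/2}-\tfrac{11}{3}\log Z_h$. Your explicit arithmetic check $(1-\delta)\tfrac{7}{4}+2>\tfrac{11}{3}$ for small $\delta$ is exactly the slack the paper relies on (the prefactor $\tfrac{7}{4}<2$ alluded to in the introduction), so there is nothing to add.
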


\begin{proof}Thanks to Corollary \ref{prop:eccentricity.scale} (eccentricity) this follows from Theorem \ref{thm:speed_level} (translator ratio).
\end{proof}

\bigskip

\subsection{Conclusion in the compact case}

In this subsection, we rule out exotic ovals in $\mathbb{R}^4$. We start by computing how the maximal section area and the width change, when we change the level.

\begin{proposition}[area and width]\label{thm:width_derivative}
For every $\varepsilon>0$, there exists $N=N(\varepsilon,Z_\star)<\infty$, such that for $(h,t)$ satisfying $Z_h\geq Z_\star$ and $\varsigma(h,t)\geq N+\frac{7}{4}\log Z_h$, we have
\begin{equation}
\left|\partial_h \mathcal{A}(h,t)-2\pi \right|\leq \eps\qquad \mathrm{and}\qquad \left|\partial_h \log \mathcal{W}(h,t)-\frac{1}{2|t_h|}\right|\leq\frac{\eps}{|t_h|}.
\end{equation}
\end{proposition}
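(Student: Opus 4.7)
My plan is to establish both identities by combining first-variation formulas at suitable extrema with Corollary \ref{cor:speed_level_ecc} (translator ratio) as the pointwise input, and then to import information from the model $\mathbb{R}\times$2d-bowl or $\mathbb{R}\times$2d-oval via Propositions \ref{prop_dichotomy} and \ref{prop_quad_scale}.

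For the area, the envelope theorem applies at the argmax $x^*(h,t)$, so $\partial_h\mathcal{A}(h,t)$ equals the partial derivative of $\mathcal{H}^2(K_t\cap\{x_1=h,x_2=x^*\})$ in $h$ with $x^*$ held fixed. Writing $M_t$ locally as a graph $x_1=F(x_2,x_3,x_4)$ (so that $F_{x_i}=-\nu_i/\nu_1$) and applying the coarea formula, I would obtain
\begin{equation}
\partial_h \mathcal{A}(h,t) = \int_{\partial S}\frac{|\nu_1|}{\sqrt{\nu_3^2+\nu_4^2}}\,d\mathcal{H}^1,
\end{equation}
where $S=K_t\cap\{x_1=h,x_2=x^*\}$. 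I would then use Corollary \ref{cor:speed_level_ecc} to replace $|\nu_1|$ by $H(1+O(\varepsilon))$. On the model $\mathbb{R}\times$2d-bowl with profile $v(r)\sim r^2/2$ the resulting integrand equals $1/|v'(r_0)|$ and integrates over the circular boundary of radius $r_0$ to $2\pi r_0/|v'(r_0)|\to 2\pi$. To transfer this to our flow, Propositions \ref{prop_dichotomy} and \ref{prop_quad_scale} give that, in the regime $\varsigma\geq N+\tfrac{7}{4}\log Z_h$, the cross-section $\partial S$ rescaled by $\sqrt{|t_h|}$ is $\varepsilon$-close to a circle of radius $\sqrt{2}$, yielding $\partial_h\mathcal{A}=2\pi+O(\varepsilon)$.

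For the width, let $p^\pm(h,t)$ be the points on $\{x_1=h\}\cap M_t$ at which $x_2$ attains its supremum and infimum. The Lagrange condition for extremising $x_2$ along the level set forces $\nu_3(p^\pm)=\nu_4(p^\pm)=0$, hence $\nu(p^\pm)=(\nu_1,\nu_2,0,0)$ with $\nu_1^2+\nu_2^2=1$. Implicit differentiation in $h$ of the defining relations $F=h$, $F_{x_3}=F_{x_4}=0$ gives $\partial_h x_2(p^\pm)=-\nu_1(p^\pm)/\nu_2(p^\pm)$. Substituting $-\nu_1=H(1+O(\varepsilon))$ from Corollary \ref{cor:speed_level_ecc}, and using $H(p^\pm)\ll 1$ (via Proposition \ref{prop_quad_scale}) so that $\nu_2=\pm1+O(H^2)$, we obtain $\partial_h\mathcal{W}=(H(p^+)+H(p^-))(1+O(\varepsilon))$. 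The ADS tip asymptotics of the 2d-oval, transferred via the $\mathbb{R}\times$2d-oval convergence from Proposition \ref{prop_dichotomy}, yield $H(p^\pm)=\mathcal{W}/(4|t_h|)(1+O(\varepsilon))$; dividing by $\mathcal{W}$ then gives $\partial_h\log\mathcal{W}=1/(2|t_h|)+O(\varepsilon/|t_h|)$.

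The hard part will be quantitative: upgrading the pointwise translator-ratio estimate and the qualitative convergence to $\mathbb{R}\times$2d-oval into uniform control of both the global boundary integral for the area and the tip curvature for the width. The tip-curvature identity $H(p^\pm)\approx\mathcal{W}/(4|t_h|)$ in particular requires careful use of the ADS 2d-oval tip asymptotics together with the bubble-sheet rescaling from Proposition \ref{prop_dichotomy}, and the eccentricity hypothesis $\varsigma\geq N+\tfrac{7}{4}\log Z_h$ is what places us firmly in this regime. Throughout, the scale hierarchy between the curvature scale $H(P_h)^{-1}$, the bubble-sheet scale $Z_h$, and the time-to-cap scale $\sqrt{|t_h|}$ must be navigated consistently.
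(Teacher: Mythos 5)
Your structural reductions are sound and, for the area half, essentially coincide with the paper's proof: the first-variation/envelope identity at the maximal section, the replacement of $|\nu_1|$ by $H(1+O(\eps))$ via Corollary \ref{cor:speed_level_ecc}, and the evaluation of the boundary integral over a nearly round $S^1$-fibre of radius about $\sqrt{2|t_h|}$ are exactly what the paper does, only written in renormalized coordinates, where $\partial_h\mathcal{A}=\int|t_h|^{1/2}(\sqrt{2}+u^{P_h})u_1^{P_h}\,d\vartheta$. One caveat: the geometric inputs you need there (that the maximal section sits within $O(|t_h|^{1/2})$ in $x_2$ of the cap, and that one is in the $\eps_0$-cylindrical region at scale $\sqrt{|t_h|}$) come from Corollary \ref{cor_geom_cons} and Lemma \ref{prop:activate_strong_neck} together with Lemma \ref{prop_ell_dom}, not from Propositions \ref{prop_dichotomy} and \ref{prop_quad_scale}, which only control the flow at the curvature and bubble-sheet scales. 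Likewise your first-variation formula at the tips, $\partial_h\mathcal{W}=\sum_\pm|\nu_1(q^\pm)|/|\nu_2(q^\pm)|$, and the reduction to the tip mean curvature via $\Theta\approx 1$ agree with the paper.

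The genuine gap is the step $H(p^\pm)=\tfrac{\mathcal{W}}{4|t_h|}(1+O(\eps))$. You propose to transfer the ADS tip asymptotics through the convergence to $\mathbb{R}\times$2d-oval in Proposition \ref{prop_dichotomy}, but that convergence is after rescaling by $Z_h^{-1}$ and is only locally uniform on compact rescaled time intervals, whereas under the hypothesis $\varsigma(h,t)\geq N+\tfrac{7}{4}\log Z_h$ Corollary \ref{prop:eccentricity.scale} forces $\log|t_h|\gtrsim N+\tfrac{15}{4}\log Z_h$, so the rescaled backward time $|t_h|/Z_h^2$ is at least of order $Z_h^{7/4}$ and is unbounded over the range of $(h,t)$ considered; no information at the time $t$ in question can be extracted from that compactness statement, so the "transfer" is unavailable. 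The paper supplies the missing uniform ingredient differently: Proposition \ref{prop_quad_scale} gives $Q(P_h)\leq CZ_h$, which makes Lemma \ref{thm:width_zero_level} (central width, a barrier/maximum-principle estimate relative to the quadratic scale) applicable, and combined with Lemma \ref{prop_ell_dom} this yields the sharp two-sided bound $\tfrac12\mathcal{W}(h,t)\approx\sqrt{2|t_h|(\log|t_h|-2\log Z_h)}$ uniformly in this regime; the tip-curvature information is then obtained not pointwise but as the sum $|H(q^+)+H(q^-)-\mathcal{W}/(2|t_h|)|\leq\tfrac{\eps}{10}\mathcal{W}/|t_h|$, by differentiating this width asymptotic in $t$, the differentiation being justified by Hamilton's Harnack inequality ($\partial_tH\geq 0$). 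Unless you substitute this (or some other tip-region analysis valid uniformly for $|t_h|\gg Z_h^2$), the width half of your argument is unproven; with that ingredient in place, the rest of your proposal goes through and matches the paper's proof.
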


\begin{proof}Recalling that the maximal section area $\mathcal{A}(h,t)$ is attained at $x_2=x_{\max}(h,t)$, we compute
\begin{equation}
\partial_h\mathcal{A}(h,t)=\int  |t_h|^{1/2}(\sqrt{2}+u^{P_h} )u^{P_h}_1(0,y_2,\vartheta,\tau_h)\, d\vartheta,
\end{equation}
where $y_2=|t_h|^{-1/2}(x_2-x_2\big(p^\ast_{\mathcal{T}(h)}))$ according to \eqref{x2_max} satisfies $|y_2|\leq 10$. Now, by Corollary \ref{cor:speed_level_ecc} (translator ratio), taking also into account Corollary \ref{prop:eccentricity.scale} (eccentricity), the integrand is very close to $1$, so we obtain
$|\partial_h\mathcal{A}(h,t)-2\pi|\leq \eps$.

Regarding the width, denoting by $q^\pm_{h,t}$ the tips at level $h$, namely the points in $M_t\cap \{x_1=h\}$ for which $x_2$ is maximal or minimal, respectively, we have
\begin{equation}
\partial_h\mathcal{W}(h,t)=\left(|\nu_1(q^+_{h,t},t)|^{-2}-1\right)^{-1/2}+\left(|\nu_1(q^-_{h,t},t)|^{-2}-1\right)^{-1/2}.
\end{equation}
Now, observing that $0<-\nu_1(q_{h,t}^\pm,t) \leq \varepsilon/10$ and applying Corollary \ref{cor:speed_level_ecc} (translator ratio), we see that figuring out the quantity on the right hand side amounts to computing the mean curvature at the tips. To this end, note that by 
Lemma \ref{prop_ell_dom} (ellipsoidal domains) and Lemma \ref{thm:width_zero_level} (central width), which is applicable thanks to Proposition \ref{prop_quad_scale} (quadratic scale), we have
\begin{equation}
\left| \frac{\tfrac12 \mathcal{W}(h,t)}{\sqrt{2 |t_h|(\log|t_h|-2\log Z_h)}}-1\right| \leq\frac{\eps}{100}.
\end{equation}
This expression can be differentiated in $t$ thanks to Hamilton's Harnack inequality \cite{Hamilton_Harnack}, yielding
\begin{equation}
\left| H(q_{h,t}^+,t)+H(q_{h,t}^-,t)-\frac{\mathcal{W}(h,t)}{2|t_h|}\right| \leq\frac{\eps}{10} \frac{\mathcal{W}(h,t)}{|t_h|}.
\end{equation}
Combining these facts,  the assertion follows.
\end{proof}

\begin{theorem}[no exotic ovals]\label{thm:compact_conclusion}
There exist no exotic ovals in $\mathbb{R}^4$.
\end{theorem}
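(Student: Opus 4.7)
The approach is by contradiction. Suppose $\mathcal{M}$ is an exotic oval in $\mathbb{R}^4$. Using the normalization $a(\mathcal{M})=2^{-1/2}>0$, Theorem~\ref{class_comp_bdd} (no bounded exotic ovals) rules out the case $\liminf_{h\to+\infty}Z(P_h)<\infty$; hence we may assume $\lim_{h\to+\infty}Z(P_h)=+\infty$, so that for every sufficiently large $h$ the cap $P_h$ has arbitrarily large bubble-sheet scale. In particular, Proposition~\ref{prop_quad_scale} (quadratic scale), Corollary~\ref{prop_conv_to_bowl} (convergence to 3d-bowl), Corollary~\ref{cor:speed_level_ecc} (translator ratio) and Proposition~\ref{thm:width_derivative} (area and width) are all applicable near such $P_h$.

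The main idea is to combine the strict positivity of $\partial_h\mathcal{A}\approx 2\pi$ and $\partial_h\log\mathcal{W}\approx\frac{1}{2|t_h|}$ on the ``good region''
\begin{equation*}
G(t)=\big\{h\in[h^-(t),h^+(t)]:\ Z_h\ge Z_\star,\ \varsigma(h,t)\ge N+\tfrac{7}{4}\log Z_h\big\},
\end{equation*}
with the boundary condition $\mathcal{A}(h^+(t),t)=\mathcal{W}(h^+(t),t)=0$ enforced by compactness of $M_t$. By Corollary~\ref{prop:eccentricity.scale} (eccentricity), membership in $G(t)$ amounts (up to fixed constants) to $|t_h|\gtrsim Z_h^{15/4}$, and the upper endpoint $h^\sharp(t):=\sup G(t)$ therefore satisfies $|t_{h^\sharp(t)}|\approx Z_{h^\sharp(t)}^{15/4}$. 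Combining this with the diameter bound \eqref{radius_bound_level} and the bowl-like cap model from Corollary~\ref{prop_conv_to_bowl}, the transition region $(h^\sharp(t),h^+(t))$ has spatial $x_1$-extent comparable to $Z_{h^\sharp(t)}^{15/4}$, and $\mathcal{A}$ on it is modeled by the cross-sectional area of a 3d-bowl, so it can lose at most an amount of order $Z_{h^\sharp(t)}^{15/4}$ in area before reaching zero at $h=h^+(t)$.

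To close the contradiction, I would integrate the area identity across $G(t)$ to obtain
\begin{equation*}
\mathcal{A}(h^\sharp(t),t)\ge 2\pi\bigl(h^\sharp(t)-h_0(t)\bigr)-o(1),\qquad h_0(t):=\inf G(t).
\end{equation*}
Along a sequence of times $t\to-\infty$ on which both $h^\sharp(t)$ and the length of $G(t)$ grow fast enough, this linear-in-$h$ lower bound on $\mathcal{A}$ at $h^\sharp(t)$ will exceed the bowl-type upper bound $\sim Z_{h^\sharp(t)}^{15/4}$ permitted in the transition region, producing the desired contradiction. The principal technical obstacle I foresee is guaranteeing that the good region is long enough, i.e.\ that $h^\sharp(t)-h_0(t)$ dominates $Z_{h^\sharp(t)}^{15/4}$; this requires controlling the lower endpoint $h_0(t)$, which I would do by combining the area identity with the parallel width identity (using that the integrand $1/|t_h|$ is non-integrable as $h\to h^+(t)$ to pin $h^\sharp(t)$ quantitatively away from $h^+(t)$) and by exploiting that, along the chosen sequence, $Z_{h^\sharp(t)}\to\infty$ while the bulk of the oval at time $t$ has much smaller bubble-sheet scales.
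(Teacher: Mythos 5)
There is a genuine gap: your proposed contradiction between the linear-in-$h$ growth of $\mathcal{A}$ on the good region and a ``bowl-type'' area bound of order $Z_{h^\sharp}^{15/4}$ in the transition region does not close, because nothing prevents $Z_{h^\sharp}$ from being as large as $(h^\sharp-t)^{1/2}$. Indeed, the only available bound in this direction (and the one the paper derives, via Proposition~\ref{prop_quad_scale}, Corollary~\ref{prop:eccentricity.scale} and $|t_h|\le |t-h|(1+2\eps)$) is $\log Z_h\le \tfrac12\log(h-t)+C$, so the time-to-cap at the threshold, $|t_{h^\sharp}|\sim e^N Z_{h^\sharp}^{15/4}$, can be as large as $(h^\sharp-t)^{15/8}$, which comfortably exceeds the linear area $\approx 2\pi(h^\sharp-t)\lesssim |t|$ (note $h_N(t)\lesssim |t|$ by Harnack). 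In other words, the area bookkeeping alone is self-consistent for $\tfrac{4}{15}\log|t|\lesssim \log Z\lesssim\tfrac12\log|t|$, and no contradiction arises; what you yourself flag as ``the principal technical obstacle'' (that the good region dominate $Z^{15/4}$) is exactly the regime in which your argument fails. In addition, the input you invoke does not justify modeling $\partial_h\mathcal{A}$ (or the $x_1$-extent) across the whole transition region by a 3d-bowl: Corollary~\ref{prop_conv_to_bowl} gives closeness to the bowl only after rescaling by $H(P_h)$, i.e.\ on compact sets at the curvature scale, whereas the transition region spans scales up to $Z_h$ and beyond (where the model is $\mathbb{R}\times$2d-oval), and a lower bound on the cap speed, needed to convert $|t_{h^\sharp}|$ into a spatial extent, is not available for the top cap at early times.

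The paper's proof avoids this entirely: it never estimates the area loss in the transition region. Instead it works only up to the transition height $h_N(t)$ and plays two estimates for the \emph{width} against each other there. On one hand, the defining relation $\varsigma(h_N,t)=\tfrac74\log Z_{h_N}+N$ together with $\mathcal{A}(h,t)\approx 2\pi(h-t)$ and $\log Z_{h}\le\tfrac12\log(h-t)+1$ gives $\mathcal{W}(h_N,t)^2\le\big((14+\beta)\log Z_{h_N}+9N\big)(h_N-t)\lesssim (7+o(1))\log|t|\,(h_N-t)$. On the other hand, integrating $\partial_h\log\mathcal{W}\approx\tfrac1{2|t_h|}\approx\tfrac1{2(h-t)}$ from the central level, where $\mathcal{W}^2\approx 8|t|\log|t|$, gives $\mathcal{W}(h_N,t)^2\ge(8-\beta)\log|t|\,(h_N-t)^{1-5\eps}|t|^{5\eps}$. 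The contradiction is the logarithmic-coefficient comparison $14\cdot\tfrac12=7<8$ (this is exactly why the exponent $\tfrac74<2$ in the differential neck theorem matters), not a power-type comparison of the kind your proposal relies on. So the missing ingredients are the width evolution $\partial_h\log\mathcal{W}$ and the role of the bound $\log Z_h\le\tfrac12\log|t|+C$ in that sharp constant comparison; without them the argument cannot be completed along the lines you sketch.
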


\begin{proof}
Suppose towards a contradiction $\mathcal{M}=\{M_t\}$ is an exotic oval in $\mathbb{R}^4$. We normalize as usual. Moreover, by Theorem \ref{class_comp_bdd} (no bounded exotic ovals) we can assume that $Z_h\geq Z_\star$ for $h\geq h_\star$.

Given $N=N(\eps,Z_\star)<\infty$ sufficiently large, for $t\ll 0$ we consider the transition height
\begin{equation}
h_N(t)=\inf\left\{ h\geq h_\star  \, : \, \varsigma(h,t)\leq \tfrac{7}{4}\log (Z_h)+N\right\}.
\end{equation}
Thanks to the $\beta$-closeness to the round 3d-bowl from Proposition \ref{prop_quad_scale} (quadratic scale) the infimum is attained at some $h<x_1(p_t^+,t)$, hence
\begin{equation}\label{w_per_def}
\mathcal{W}(h_N(t),t)^2= \left(14\log( Z_{h_N(t)})+8N\right)\frac{\mathcal{A}(h_N(t),t)}{2\pi}.
\end{equation}

Integrating the differential equation for $\mathcal{A}$ from Proposition \ref{thm:width_derivative} (area and width), and taking into account that $\lim_{t\to -\infty} |t|^{-1}\mathcal{A}(h_\star,t)= 2\pi$, for all $h\in [h_\star,h_N(t)]$ we get
\begin{equation}\label{area_2p_eps}
\left|\frac{\mathcal{A}(h,t)}{h-t}-2\pi \right|\leq \varepsilon.
\end{equation}
For later use, let us also observe that as a consequence of \eqref{area_2p_eps}, in light of Proposition \ref{prop_quad_scale} (quadratic scale) and Proposition \ref{prop:eccentricity.scale} (eccentricity scale), for all $h\in [h_\star,h_N(t)]$ we obtain
\begin{equation}\label{for_later_use}
\left| \frac{t_h}{t-h}-1\right| \leq 2\eps\qquad \mathrm{and}\qquad \log Z_{h}\leq \tfrac{1}{2}(-t_h)\leq \tfrac{1}{2}\log(h-t)+1.
\end{equation}

Now, from \eqref{w_per_def} and \eqref{area_2p_eps} we get
\begin{equation}\label{eq:threshold_level}
\mathcal{W}(h_N(t),t)^2\leq \big((14+\beta)\log( Z_{h_N(t)})+9N\big)(h_N(t)-t).
\end{equation}

On the other hand, integrating the differential equation for $\mathcal{W}$ from Proposition \ref{thm:width_derivative} (area and width), and taking into account  the first inequality from \eqref{for_later_use} and $\lim_{t\to -\infty}(|t|\log|t|)^{-1}\mathcal{W}(h_\star,t)^2= 8$, we get
\begin{equation}\label{eq:w_otherhand}
\mathcal{W}(h_N(t),t)^2 \geq (8-\beta)|t|\log|t|\left(\frac{h_N(t)-t}{-t}\right)^{1-5\eps}.
\end{equation}

Combining these two inequalities yields
\begin{equation}\label{contr_eq}
\log\left(\frac{(14+\beta)\log(Z_{h_N(t)})+9N}{(8-\beta)\log |t|}\right) \geq -5\varepsilon \log (1+|h_N(t)/t|).
\end{equation}
To conclude, note that by Hamilton's Harnack inequality \cite{Hamilton_Harnack} we have $\limsup_{t\to -\infty}|t|^{-1}h_N(t)<\infty$.
Thus, by choosing $\eps>0$ small enough we can arrange that the right hand side of \eqref{contr_eq} is bigger than $\log(15/16)$, but on the other hand by using the second inequality from \eqref{for_later_use} we get $\log Z_{h_N(t)}\leq \tfrac{1}{2}\log|t|+C$.
For $t$ sufficiently negative this gives the desired contradiction, and thus proves the theorem.
\end{proof}
 
 \bigskip

\subsection{Conclusion in the noncompact case}

In this subsection, we conclude the argument in remaining case when our flow $\mathcal{M}=\{M_t\subset\mathbb{R}^4\}$ is noncompact. We normalize as usual.

\begin{proposition}[sharp lower bound for cap speed]\label{thm:speed_lower_bound}
We have $\lim_{h\to -\infty} H(P_h)\geq 1$.
\end{proposition}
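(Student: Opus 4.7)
The plan is to argue by contradiction. Suppose $\eta := \lim_{h\to -\infty} H(P_h) < 1$. Since $\partial_t H \geq 0$ by Hamilton's Harnack inequality, the function $h \mapsto H(P_h)$ is monotone nondecreasing (as used in the proof of Theorem~\ref{thm_noncpt_bdd}), so $H(P_h) \geq \eta$ for every $h$. I first reduce to the case $Z_h\to\infty$: if instead $\liminf_{h\to-\infty} Z_h<\infty$, then Theorem~\ref{thm_noncpt_bdd} forces $\mathcal{M}$ to be selfsimilarly translating, and since the normalization $a(\mathcal{M})=2^{-1/2}$ corresponds to unit speed in the $x_1$-direction, we obtain $H(P_h)\equiv 1$, contradicting $\eta<1$. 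Assuming $Z_h\to\infty$, Propositions~\ref{prop_dichotomy} and \ref{prop_quad_scale} and Corollary~\ref{prop_conv_to_bowl} become available.

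The main step compares two expressions for the maximal section area $\mathcal{A}(h,t)$ at very eccentric $(h,t)$. Fix small $\varepsilon>0$ and choose $h_\star\ll 0$ with $Z_h\geq Z_\star$ and $H(P_h)\in[\eta,\eta(1+\varepsilon)]$ for all $h\leq h_\star$. For $h\leq h_\star$ and $t\leq\mathcal{T}(h)$, writing the cap trajectory $\phi(s)=x_1(p_s^-)$ as an integral of its speed $\phi'(s)=H(P_{\phi(s)})\in[\eta,\eta(1+\varepsilon)]$ over $s\in[t,\mathcal{T}(h)]$ gives $h-\phi(t)\leq \eta(1+\varepsilon)|t_h|$. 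Now fix such $h$ and take $t$ very negative so that $|t_h|/Z_h^2$ is enormous; choose an intermediate level $h_0\in(\phi(t),h)$ where the eccentricity first reaches the threshold $N+\tfrac{7}{4}\log Z_{h_0}$ required by Proposition~\ref{thm:width_derivative}, and integrate $|\partial_{h'}\mathcal{A}(h',t)-2\pi|\leq\varepsilon$ over $[h_0,h]$. Controlling the boundary term $\mathcal{A}(h_0,t)$ via the $\mathbb{R}\times$2d-oval limit at the (much smaller) transitional scale yields the upper bound
\[
\mathcal{A}(h,t)\leq 2\pi(\eta+O(\varepsilon))|t_h|.
\]

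On the other hand, the convergence of the $Z_h^{-1}$-rescaled flow around $P_h$ to an $\mathbb{R}\times$2d-oval with extinction time $0$ (Proposition~\ref{prop_dichotomy}), combined with the sharp Angenent-Daskalopoulos-Sesum width asymptotics $\mathcal{W}^{\text{2d-oval}}(\tilde t)^2 = 8|\tilde t|\log|\tilde t|(1+o(1))$ from \cite{ADS1}, should yield a matching lower bound $\mathcal{W}(h,t)^2 \geq 8(1-\varepsilon)|t_h|(\log|t_h|-2\log Z_h)$ once $|t_h|/Z_h^2$ is large enough. Using Corollary~\ref{prop:eccentricity.scale} (which gives $\varsigma(h,t)\leq (1+\varepsilon)(\log|t_h|-2\log Z_h)$) together with $\mathcal{W}^2=(4/\pi)\varsigma\mathcal{A}$ and the upper bound on $\mathcal{A}$, one then obtains
\[
8(1-\varepsilon)|t_h|(\log|t_h|-2\log Z_h)\leq \mathcal{W}(h,t)^2\leq 8(\eta+O(\varepsilon))(1+\varepsilon)|t_h|(\log|t_h|-2\log Z_h),
\]
forcing $\eta\geq 1-O(\varepsilon)$; letting $\varepsilon\to 0$ contradicts $\eta<1$.

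The main obstacle is upgrading Lemma~\ref{thm:width_zero_level} (an upper bound only) to a matching lower bound on $\mathcal{W}(h,t)^2$ with the sharp constant $8(1-\varepsilon)$. This should follow by propagating the sharp ADS profile asymptotics through the $\mathbb{R}\times$2d-oval limit effectively and uniformly as $|t_h|/Z_h^2\to\infty$. Verifying this rigorously, together with showing that the boundary term $\mathcal{A}(h_0,t)$ in the $\partial_h$-integration is truly subleading under the chosen scaling regime for $(h,t,h_0)$, is where the main technical effort would need to go.
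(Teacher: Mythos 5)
Your scheme is genuinely different from the paper's (you compare the area growth $\partial_h\mathcal{A}\approx 2\pi$ against the cap displacement $h-\phi(t)\le (\eta+O(\eps))|t_h|$, rather than playing the width at the transition height against the width integrated from a fixed central level), but it has a gap at exactly the point where the real difficulty of this section sits: the claim that the boundary term $\mathcal{A}(h_0,t)$ at the transition level is subleading. At the transition level one has, by Corollary~\ref{prop:eccentricity.scale}, $\log|t_{h_0}|\approx 2\log Z_{h_0}+\varsigma(h_0,t)\approx N+\tfrac{15}{4}\log Z_{h_0}$, and $\mathcal{A}(h_0,t)$ is comparable to $2\pi|t_{h_0}|$; since nothing established so far controls how fast $Z_{h'}$ grows at the intermediate levels $h'\in(\phi(t),h)$ (this uncontrolled growth is precisely the scenario being ruled out), it is entirely consistent with all the available estimates that $|t_{h_0(t)}|/|t_h|\to 1$. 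Your two bounds only clash when $|t_{h_0}|\le(1-c)|t_h|$ for some fixed $c>0$: writing $|t_{h_0}|=\lambda|t_h|$, your upper bound reads $\mathcal{A}(h,t)\lesssim 2\pi|t_h|\big(\lambda+\eta(1-\lambda)\big)$, which exceeds the lower bound $2\pi(1-O(\eps))|t_h|$ as soon as $\lambda$ is close to $1$, so no contradiction with $\eta<1$ results. Excluding this regime is the heart of the matter, and the paper's proof is built around it: it evaluates $\mathcal{W}^2=\tfrac{4}{\pi}\varsigma\mathcal{A}$ at the transition height $h_N(t)$, where $\varsigma=\tfrac74\log Z+N$, and compares with the width obtained by integrating $\partial_h\log\mathcal{W}\approx\tfrac{1}{2|t_h|}$ from the \emph{fixed} level $h_\star$, where $\mathcal{W}^2\approx 8|t|\log|t|$ by \eqref{DH_asymp}; the contradiction comes from the numerical competition $14\cdot\tfrac12=7<8$ enforced by the threshold factor $\tfrac74$ together with $2\log Z_h\le\log|t_h|\le\log|t|$, and the slow-speed hypothesis enters only to keep the factor $\big((h_N(t)-t)/|t|\big)^{5\eps}$ harmless. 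Your argument contains no counterpart to this mechanism, so it cannot handle the case where the bubble-sheet scale eats up almost the whole time scale.

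Two further points. First, the width lower bound that you flag as the main obstacle cannot be obtained the way you suggest: the convergence in Proposition~\ref{prop_dichotomy} is only locally uniform in (rescaled) time, so it gives nothing in the regime $|t_h|/Z_h^2\to\infty$, which is exactly where you need it. However, this particular estimate is available by a different route already in the paper: Lemma~\ref{prop_ell_dom} (ellipsoidal domains) centered at $X=P_h$ gives $\mathcal{W}(h,t)^2\ge \tfrac{8}{1+\delta}|t_h|\big(\log|t_h|-2\log Z_h\big)$ uniformly once $\tau_h\le\tau_{\alpha,\delta}-2\log Z_h$, i.e.\ once $|t_h|/Z_h^2$ exceeds a universal threshold; this (together with Lemma~\ref{thm:width_zero_level}) is what underlies Corollary~\ref{prop:eccentricity.scale} in the first place. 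Second, your reduction step cites Theorem~\ref{thm_noncpt_bdd}, whose hypotheses require bounded bubble-sheet scale along \emph{both} ends; what you actually need is only its first step (a translator limit with $a=2^{-1/2}$, hence unit speed, along some $h_i\to-\infty$ with bounded $Z$), which is how the paper itself phrases the reduction.
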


\begin{proof}
Recall that we have already seen in the proof of Theorem \ref{thm_noncpt_bdd} (noncompact solutions with bounded bubble-sheet scale) that $\liminf_{h\to -\infty} Z_h<\infty$ implies $\lim_{h\to -\infty} H(P_h)=1$.

Suppose now there is some $h_\star>-\infty$, such that $Z_h\geq Z_\star$ for $h\leq h_\star$. Given $N=N(\eps,Z_\star)<\infty$ sufficiently large, for $t\ll 0$ we consider the transition height
\begin{equation}
h_N(t)=\sup\left\{ h\leq h_\star  \, : \, \varsigma(h,t)\leq \tfrac{7}{4}\log (Z_h)+N\right\},
\end{equation}
which thanks to the $\beta$-closeness to the round 3d-bowl from Proposition \ref{prop_quad_scale} (quadratic scale) is attained at some $h>-x_1(p_t,t)$.

Now, suppose towards a contradiction that $\lim_{t\to -\infty}H(p_t,t)<1$. Together with Hamilton's Harnack inequality \cite{Hamilton_Harnack}, this implies
\begin{equation}\label{h_N_over_t}
\limsup_{t\to -\infty}|h_N(t)/t| < 1.
\end{equation}
Therefore, integrating the differential equations for $\mathcal{A}$ and $\mathcal{W}$ from Proposition \ref{thm:width_derivative} (area and width), similarly as in the proof of Theorem \ref{thm:compact_conclusion} (no exotic ovals), we still get \eqref{eq:threshold_level}, namely
\begin{equation}\label{eq:threshold_level_again}
\mathcal{W}(h_N(t),t)^2\leq \big((14+\beta)\log( Z_{h_N(t)})+9N\big)(h_N(t)-t),
\end{equation}
while on the other hand in lieu of \eqref{eq:w_otherhand} we now obtain
\begin{equation}\label{eq:w_otherhand_lieu_of}
\mathcal{W}(h_N(t),t)^2 \geq (8-\beta)|t|\log|t|\left(\frac{h_N(t)-t}{-t}\right)^{1+5\eps}.
\end{equation}

Combining these two inequalities yields
\begin{equation}\label{contr_eq_lieu_of}
\log\left(\frac{(14+\beta)\log(Z_{h_N(t)})+9N}{(8-\beta)\log |t|}\right) \geq 5\varepsilon \log (1-|h_N(t)/t|).
\end{equation}
To conclude, remembering \eqref{h_N_over_t}, by choosing $\eps>0$ small enough we can arrange that the right hand side of \eqref{contr_eq_lieu_of} is bigger than $\log(15/16)$. In light of $\log Z_{h_N(t)}\leq \tfrac{1}{2}\log|t|+1$ this gives the desired contradiction, and thus proves the proposition.
\end{proof}
 
 \begin{theorem}[noncompact solutions]\label{thm:noncompact_conlusion_unbounded}
Every noncompact strictly convex ancient noncollapsed flow in $\mathbb{R}^4$ is selfsimilarly translating.
\end{theorem}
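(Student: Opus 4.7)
The plan breaks into two main parts. First, when both $\liminf_{h\to-\infty}Z(P_h)<\infty$ and $\liminf_{h\to+\infty}Z(P_h)<\infty$, Theorem \ref{thm_noncpt_bdd} concludes directly. I therefore focus on the remaining case where $Z(P_h)\to\infty$ as $h\to-\infty$ (the case $h\to+\infty$ is analogous), so that Corollary \ref{prop_conv_to_bowl} (convergence to 3d-bowl) and Proposition \ref{thm:speed_lower_bound} both apply. The latter gives $\lim_{h\to-\infty}H(P_h)\geq 1$, and combined with the monotonicity of $H$ along the cap orbit (from $\partial_t H\geq 0$, via Hamilton's Harnack inequality) yields $H(P_h)\geq 1$ for every $h$.

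The technical heart of the proof is to establish the matching upper bound $H(P_h)\leq 1$ for all $h$ sufficiently negative, in the spirit of \cite[Section 6]{BC}. I would argue by contradiction, assuming $\limsup_{h\to-\infty}H(P_h)>1$. Exploiting the $\partial_h$-evolutions of $\mathcal{A}$ and $\log\mathcal{W}$ from Proposition \ref{thm:width_derivative}, together with the scale bound $\log Z_{h_N(t)}\leq \tfrac{1}{2}\log|t|+O(1)$ from Corollary \ref{prop:eccentricity.scale}, I would derive a quantitative contradiction parallel to \eqref{contr_eq_lieu_of} but with the deviation of the cap speed from $1$ carrying the opposite sign. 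Concretely, a cap speed strictly bigger than $1$ would force a level-set width estimate incompatible with the bubble-sheet asymptotic diameter $2\sqrt{2|t|\log|t|}$ coming from Lemma \ref{thm:width_zero_level}. Combined with the lower bound, this forces $H(P_h)=1$ along an initial segment of the cap orbit.

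Finally, $H\equiv 1$ along a nontrivial arc of the cap orbit, together with control of $\nabla H$ at the cap inherited from the almost-$\mathrm{O}_2$-symmetry of the bubble-sheet structure, would provide an interior space-time point where equality holds in Hamilton's Harnack inequality $\partial_t H+2\nabla_V H+A(V,V)\geq 0$. The rigidity case of this inequality \cite{Hamilton_Harnack} then forces $\mathcal{M}$ to be selfsimilarly translating, completing the proof. The main obstacle I anticipate is the second paragraph: upgrading the purely limiting information $\lim_{h\to-\infty}H(P_h)=1$ (which already follows fairly directly by matching $a(\mathcal{M})=2^{-1/2}$ to the value $a=1/\sqrt{2}$ of any limiting unit-speed bowl translator, via continuity of $a$ under smooth convergence) to the pointwise equality $H(P_h)=1$, through the delicate area/width contradiction parallel to Proposition \ref{thm:speed_lower_bound}.
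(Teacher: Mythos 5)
The decisive gap is in your second step. Since $h\mapsto H(P_h)$ is nondecreasing (from $\partial_t H\geq 0$), the hypothesis you propose to contradict, $\limsup_{h\to-\infty}H(P_h)>1$, is equivalent to $\inf_h H(P_h)>1$; refuting it yields only $\lim_{h\to-\infty}H(P_h)=1$, i.e.\ information about the \emph{infimum} of the cap speed. By monotonicity this is perfectly compatible with $H(P_h)>1$ at every finite $h$, so your conclusion that ``this forces $H(P_h)=1$ along an initial segment of the cap orbit'' does not follow, and there is no actual space-time point at which the rigidity case of Hamilton's Harnack inequality can be invoked -- the final step collapses. Nor can you weaken the contradiction hypothesis to ``$H(P_{h_0})\geq 1+\delta$ for a single very negative $h_0$'': by Harnack this only gives extra speed after time $\mathcal{T}(h_0)$, whereas the area/width machinery of Proposition \ref{thm:width_derivative} is anchored at $t\to-\infty$ through the initial conditions at the fixed level $h_\star$, so the extra travelled distance is $O(1)$ relative to $|t|$ and never enters a \eqref{contr_eq_lieu_of}-type inequality. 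Moreover, your claim that the case $h\to+\infty$ is ``analogous'' to $h\to-\infty$ is not correct: monotonicity makes the two ends play asymmetric roles (a pointwise upper bound on the cap speed must come from the $h\to+\infty$ end or from a global argument), and in the noncompact setting there is no far cap at $+\infty$ forcing a transition height, so the eccentricity mechanism of Theorem \ref{thm:compact_conclusion}/Proposition \ref{thm:speed_lower_bound} has nothing to terminate against there; even granting that a mirrored ``opposite sign'' inequality closes (which you do not verify -- the slow-speed case crucially uses \eqref{h_N_over_t}), it would again only control the limit, not $H(P_h)$ at any finite $h$.

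This is precisely why the paper, in the remaining case $Z_h\geq Z_\star$ for all $h\geq h_\star$, abandons the eccentricity mechanism for the upper bound and argues globally: a crude curvature bound on $\{x_1\geq h_\star\}$ (Claim \ref{claim_glob_curv_bound}); a lower barrier $1-\delta-\psi$ for the translator ratio $\Theta=-\nu_1/H$, built from a strictly concave solution $\psi$ of the one-dimensional heat equation and combined with Theorem \ref{thm:speed_level} and the supersolution of Proposition \ref{prop_Jac_super_long}, giving $\Theta\geq 1-\delta$ on $\{x_1\geq h_\star\}$ (Claim \ref{prop_slope_lower}); then entire graphicality of $M_t$ in the $x_1$-direction, the consequence $f_{tt}\geq 0$ of \eqref{Ham_Harnack} with $V=-\frac{H}{\nu_1}e_1^\top$ to transport the bound to $\{x_1<h_\star\}$, and a forward-in-time maximum principle, yielding the genuine pointwise bound $H\leq|\nu_1|$, hence $H(p_t,t)\leq 1$. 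Only with this in hand, matched against Proposition \ref{thm:speed_lower_bound}, does Harnack rigidity apply (and the resulting constancy of $Z_h$ then contradicts the assumption $Z_h\geq Z_\star>Z_0$). None of this appears in your sketch; your first paragraph (reduction via Theorem \ref{thm_noncpt_bdd} and the lower bound $H(P_h)\geq 1$ from Proposition \ref{thm:speed_lower_bound} plus monotonicity) is correct, but as written the proposal does not prove the theorem.
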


\begin{proof}
By the reduction from Section \ref{sec_not_and_prel} (Notation and preliminaries), we can assume that our flow $\mathcal{M}=\{M_t=\partial K_t\subset\mathbb{R}^4\}$ has a a bubble-sheet tangent at $-\infty$ given by \eqref{bubble-sheet_tangent_intro}, and that its bubble-sheet function satisfies \eqref{DH_asymp}. Also, as usual we can normalize such that $\mathcal{T}(0)=0$ and $a(\mathcal{M})=2^{-1/2}$.

By Proposition \ref{thm:speed_lower_bound} (sharp lower bound for cap speed) it is enough to show that $H(p_t,t)\leq 1$ for $t$ sufficiently negative. In case $\liminf_{h\to \infty} Z_h <\infty$ this has already been accomplished in the proof of Theorem \ref{thm_noncpt_bdd} (noncompact solutions with bounded bubble-sheet scale). Fixing a large enough constant $Z_\star=Z_\star(\mathcal{M})<\infty$, such that in particular $Z_\star > Z_0$, we can thus suppose towards a contradiction that there is some $h_\star \in (0,\infty)$, such that
\begin{equation}\label{eq:assumption_large_scale}
Z_h \geq Z_\star \quad \text{holds for all}\quad h\geq h_\star.
\end{equation}
We begin by establishing the following global curvature bound.

\begin{claim}[global curvature bound]\label{claim_glob_curv_bound} There exists $T_\star>-\infty$, such that for all $t\leq T_\star$ we have
\begin{equation}
\sup_{\{x_1\geq h_\star\}}H(x,t) \leq C|t|^{-1/2}(\log |t|)^{1/2}.
\end{equation}
\end{claim}

\begin{proof}
By Proposition \ref{prop_quad_scale} (quadratic scale) and Corollary \ref{prop:eccentricity.scale} (eccentricity) the diameter of $M_t\cap \{x_1=h\}$ is bounded by $C|t_h|^{1/2}$ in case $\log|t_h|-2\log Z_h\leq C$ and is bounded by $C|t_h|^{1/2}(\log|t_h|)^{1/2}$ in case $\log|t_h|-2\log Z_h > C$ thanks to Lemma \ref{thm:width_zero_level} (central width). Hence, the Harnack estimate \eqref{Harnack_curv} yields
\begin{equation}
\sup_{\{x_1=h\}}H(x,t) \leq C|t_h|^{-1/2}(\log |t_h|)^{1/2}.
\end{equation}
Together with the observation that $|t_h|\geq |t|$ for $h\geq 0$ and $t\leq 0$, this establishes the claim.
\end{proof}

Next, to construct a lower barrier for the translator ratio $\Theta$ at levels $h\geq h_\star$, we consider the solution $\psi$ of the heat equation $\psi_t=\psi_{xx}$ in the halfspace $\mathbb{R}_+$ with Dirichlet boundary condition $\psi(0,t)=0$ and initial condition $\psi(x,0)=1$, namely
\begin{equation}
\psi(x,t)=\frac{1}{\sqrt{4\pi t}}\int_0^\infty \left(e^{-\frac{(x-y)^2}{4t}}-e^{-\frac{(x+y)^2}{4t}}\right)dy.
\end{equation}
Note that $0\leq\psi\leq1$, with $\lim_{x\to \infty}\psi(x,t)=1$ and $\lim_{t\to \infty}\psi(x,t)=0$. Moreover, as checked in \cite[Propositon 6.9]{BC} the function $\psi$ is strictly concave, namely $\psi_{xx}< 0$. In particular, it follows that $\psi_x>0$.
 
\begin{claim}[translator ratio lower bound]\label{prop_slope_lower}
For every $\delta>0$, there exists $T_\delta \in (-\infty,T_\star]$, such that for all $t\leq T_\delta$ we have
\begin{equation}
\inf_{ \{ x_1\geq h_\star\} }\Theta(x,t)\geq 1-\delta.
\end{equation}
\end{claim}
 
\begin{proof}
By Theorem \ref{thm:speed_level} (translator ratio) there is some $T_\delta \in (-\infty,T_\star]$, such that for all $t\leq T_\delta$ we have
\begin{equation}\label{eq_from_lemm}
\inf_{\{|x_1-h_\star|\leq |t|^{1/2}(\log|t|)^2\}  }\Theta(x,t)\geq 1-\delta/2.
\end{equation}
Now, fixing $s< T_\delta$, we consider the function
\begin{equation}
\varphi^s(x,t)=1-\delta-\psi\big(2x_1-2h_\star-|t|^{1/2}(\log |t|)^2,t-s\big).
\end{equation}
Using \eqref{eq_from_lemm} and the above properties of $\psi$, we see that $\Theta> \varphi^s$ holds for $t$ close to $s$. In addition, for each $t>s$ we have $\varphi^s(x,t)<0$ for sufficiently large $x_1$.
Suppose towards a contradiction, that at some time $t\in (s,T_\delta]$ at some point $p\in \Omega_t=\{ 2x_1-2h_\star > |t|^{1/2}(\log|t|)^2\}$ we have
\begin{align}
\Theta=\varphi^s, \quad \nabla \Theta= \nabla\varphi^s, \quad\mathrm{and}\quad   (\partial_t -\Delta)\Theta \leq (\partial_t -\Delta)\varphi^s.
\end{align}
Note that
\begin{equation}
(\partial_t -\Delta)\Theta=2\nabla \log  H \cdot \nabla \Theta.
\end{equation}
Moreover, we compute
\begin{equation}
(\partial_t -\Delta) \varphi^s \leq  -\psi_t-\tfrac{1}{2}\psi_x|t|^{-1/2}\left[(\log |t|)^2+4\log |t|\right]+4|\nabla x_1|^2\psi_{xx}.
\end{equation}
Remembering that $\psi_t=\psi_{xx}<0$ and observing also that $|\nabla x_1|\geq 1/2$, this yields
\begin{equation}
2\nabla \log H \cdot ( - 2\psi_x \nabla x_1) \leq -\tfrac{1}{2}\psi_x|t|^{-1/2}\left[(\log |t|)^2+4\log |t|\right].
\end{equation}
Using the estimate $|\nabla H|\leq CH^2$ from \cite{HaslhoferKleiner_meanconvex} and Claim \ref{claim_glob_curv_bound} (global curvature bound) 
this implies
\begin{equation}
\psi_x|t|^{-1/2}\left[\tfrac12 (\log |t|)^2+2\log |t|-C(\log|t|)^{1/2}\right]\leq 0.
\end{equation}
This contradicts $\psi_x>0$, and thus shows that $\Theta>\varphi^s$ in $\Omega_t$ for all $t \in (s,T_\delta]$.
Passing $s\to -\infty$ we conclude that $\Theta\geq 1-\delta$ in $\Omega_t$ for all $t\leq T_\delta$. This proves the claim.
 \end{proof}

Continuing the proof of the theorem, by Claim \ref{prop_slope_lower} (translator ratio lower bound), possibly after decreasing $T_\star$, we have $H\leq 2|\nu_1|$ in $\{x_1 \geq h_\star\}$ for $t\leq T_\star$. 
 Together with the noncollapsing property it follows that $M_t$ can be expressed as entire graph in $x_1$-direction, namely 
\begin{equation}
M_t=\left\{(f(\xi,t),\xi):\xi\in \mathbb{R}^3\right\}.
\end{equation}
To proceed, note that plugging the vector field $V=-\frac{H}{\nu_1}e_1^\top$ into Hamilton's Harnack expression \eqref{Ham_Harnack} yields that $f_{tt}\geq 0$ everywhere. Hence, given times $t_1\leq t_2$ and any bounded domain $\Omega \subset\mathbb{R}^3$, we have
\begin{equation}
\sup_{\Omega}f_t(\cdot,t_1) \leq \sup_{\partial \Omega}f_t(\cdot,t_2).
\end{equation}
Also recall that by the graphical mean curvature flow equation we have $f_t=H/(-\nu_1)$.
Thus, we can extend the bound from Claim \ref{prop_slope_lower} (translator ratio lower bound) to the region $\{x_1 < h_\star\}$, yielding
\begin{equation}
\sup_{t\leq T_\delta}\sup_{M_t}\frac{H}{-\nu_1}\leq \frac{1}{1-\delta}.
\end{equation}
Finally, by the maximum principle this bound can be propagated forward in time. Since $\delta>0$ was arbitrary, this yields that $H\leq |\nu_1|$ for all $t\leq T_\star$. In particular, this shows that $H(p_t,t)\leq 1$ for all $t\leq T_\star$, so by the rigidity case of Hamilton's Harnack inequality \cite{Hamilton_Harnack} our flow $\mathcal{M}$ is selfsimilarly translating. In particular, $Z_h=Z_0$ for all $h$. Remembering that $Z_\star>Z_0$, this contradicts \eqref{eq:assumption_large_scale}, and thus concludes the proof of the theorem.
\end{proof}

\bigskip

\bibliography{ancient}

\bibliographystyle{alpha}

\vspace{10mm}

{\sc Kyeongsu Choi, School of Mathematics, Korea Institute for Advanced Study, 85 Hoegiro, Dongdaemun-gu, Seoul, 02455, South Korea}\\

{\sc Robert Haslhofer, Department of Mathematics, University of Toronto,  40 St George Street, Toronto, ON M5S 2E4, Canada}\\

\emph{E-mail:} choiks@kias.re.kr, roberth@math.toronto.edu

\end{document}